\newtheorem{thm}{Theorem}[section]
\newtheorem{cor}[thm]{Corollary}
\newtheorem{lem}[thm]{Lemma}
\newtheorem{prop}[thm]{Proposition}
\newtheorem{rem}[thm]{Remark}
\numberwithin{equation}{section}
\newcommand{\Om}{{\Omega}}
\newcommand{\R}{{\rm I}\!{\rm R}}
\def\n{\mathbf{n}}
\begin{document} 


\title[An indefinite concave-convex equation under a Neumann boundary condition]{An indefinite concave-convex equation under a Neumann boundary condition I} 

\vspace{1cm}


\author{Humberto Ramos Quoirin}
\address{H. Ramos Quoirin \newline Universidad de Santiago de Chile, Casilla 307, Correo 2, Santiago, Chile}
\email{\tt humberto.ramos@usach.cl}

\author{Kenichiro Umezu}
\address{K. Umezu \newline Department of Mathematics, Faculty of Education, Ibaraki University, Mito 310-8512, Japan}
\email{\tt kenichiro.umezu.math@vc.ibaraki.ac.jp}

\subjclass{35J25, 35J61, 35J20, 35B09, 35B32} \keywords{Semilinear elliptic problem, Concave-convex nonlinearity, Indefinite problem, Non-negative solution, Bifurcation, Variational methods, Loop type subcontinuum}



\begin{abstract}
We investigate the problem 
$$-\Delta u = \lambda b(x)|u|^{q-2}u +a(x)|u|^{p-2}u \mbox{ in } \Omega, \quad \frac{\partial u}{\partial \n} = 0  \mbox{ on } \partial \Omega, \leqno{(P_\lambda)}  $$
where $\Omega$ is a bounded smooth domain in $\R^N$ ($N \geq2$), $1<q<2<p$, $\lambda \in \R$, and $a,b \in C^\alpha(\overline{\Omega})$ with $0<\alpha<1$. Under some indefinite type conditions on $a$ and $b$ we prove the existence of two nontrivial non-negative solutions for $|\lambda|$ small. We characterize then the asymptotic profiles of these solutions as $\lambda \to 0$, which implies in some cases the positivity and ordering of these solutions. In addition, this asymptotic analysis suggests the existence of a loop type subcontinuum in the non-negative solutions set. We prove in some cases the existence of such subcontinuum via a bifurcation and topological analysis of a regularized version of $(P_\lambda)$. 
\end{abstract}


\maketitle



\section{Introduction and statements of main results}  
Let $\Omega$ be a bounded domain of $\R^N$ ($N\geq 2$) with smooth boundary $\partial \Omega$. This article is concerned with existence, non-existence, and multiplicity of non-negative solutions for the problem
$$ 
\begin{cases}
-\Delta u = \lambda b(x)|u|^{q-2}u+a(x)|u|^{p-2}u & \mbox{in $\Omega$}, \\
\frac{\partial u}{\partial \n} = 0 & \mbox{on $\partial \Omega$},
\end{cases} \leqno{(P_\lambda)}  
$$
where 
\begin{itemize}
  \item $\Delta = \sum_{j=1}^N \frac{\partial^2}{\partial x_j^2}$ is the usual Laplacian in $\R^N$, 
  \item $\lambda \in \R$, 
  \item $1<q<2<p<\infty$, 
  \item $a, b \in C^\alpha(\overline{\Omega})$ with $\alpha \in (0,1)$,
  \item $\mathbf{n}$ is the unit outer normal to the boundary $\partial \Omega$.\end{itemize}

By a solution of $(P_\lambda)$ we mean a classical solution of $(P_\lambda)$. A solution $u$ of $(P_\lambda)$ is said to be {\it nontrivial and non-negative} if it satisfies $u\geq 0$ on $\overline{\Omega}$ and $u\not\equiv 0$, whereas it is said to be  {\it positive} if it satisfies $u>0$ on $\overline{\Omega}$.

If $\lambda>0$ and $a,b$ are positive on some non-empty open subset of $\Omega$ then $f_\lambda(x,s)=\lambda b(x) |s|^{q-2}s+a(x)|s|^{p-2}s$ belongs to the class of {\it concave-convex} type nonlinearities.
Since the work of Ambrosetti, Brezis and Cerami \cite{ABC}, this class of problems has been widely investigated, mostly for Dirichlet boundary conditions. In \cite{ABC} the authors proved the existence of $\Lambda>0$ such that the problem
\begin{align} \label{abc}
\begin{cases}
-\Delta u = \lambda |u|^{q-2}u + |u|^{p-2} u & \mbox{in $\Omega$}, \\
u=0 & \mbox{on $\partial \Omega$},
\end{cases}
\end{align}
has a minimal positive solution $u_\lambda$ for $0<\lambda<\Lambda$, at least one positive weak solution for $\lambda=\Lambda$, and no positive solution for $\lambda>\Lambda$ \cite[Theorem 2.1]{ABC}. Moreover, if $p \leq \frac{2N}{N-2}$ when $N \geq 3$ then \eqref{abc} has a second positive solution $v_\lambda>u_\lambda$ for $\lambda<\Lambda$ \cite[Theorem 2.3]{ABC}. It was also proved that $u_{\lambda}$ is the only positive solution of \eqref{abc} which converges to $0$ in $C(\overline{\Omega})$ as $\lambda \to 0^+$ \cite[Theorem 2.2]{ABC}. Most of the previous results were extended by De Figueiredo, Gossez, and Ubilla \cite{DGU2} to a larger class of concave-convex type problems, whose prototype is the analogue of $(P_\lambda)$ for Dirichlet boundary conditions, i.e.
\begin{align} \label{ccd}
\begin{cases}
-\Delta u = \lambda b(x)|u|^{q-2}u + a(x)|u|^{p-2} u & \mbox{in $\Omega$}, \\
u=0 & \mbox{on $\partial \Omega$}.
\end{cases}
\end{align}
Here $b \geq 0$, $b \not \equiv 0$ and $a$ may change sign.  For other works dealing with non-negative solutions of  indefinite concave-convex problems under Dirichlet boundary conditions we refer to \cite{DS2,dP,W06}. 

Several differences between $(P_\lambda)$ and \eqref{ccd} may be observed. 
The most evident one arises in the definite case $a,b \geq 0$, with $a,b \not \equiv 0$. It is known from \cite{DGU1,DGU2} that in this case \eqref{ccd} has a nontrivial non-negative solution for some $\lambda>0$. This result no longer holds for $(P_\lambda)$. As a matter of fact, if $u$ is a non-negative solution of $(P_\lambda)$ then a simple integration provides
$$\int_\Omega \left(\lambda  b(x)u^{q-1}+a(x)u^{p-1} \right)=0,$$
so that $u \equiv 0$ if $\lambda >0$.

The first purpose of this work is to obtain conditions on $a$ and $b$ which guarantee the existence of a nontrivial non-negative solution of $(P_\lambda)$ for some $\lambda>0$. In particular, we shall obtain two nontrivial non-negative solutions $u_{1,\lambda}, u_{2,\lambda}$ for $\lambda>0$ sufficiently small. At this point further differences between $(P_\lambda)$ and \eqref{ccd} may be pointed out. Unlike \cite[Theorem 2.2]{ABC}, we shall see that in some cases we have $u_{1,\lambda}, u_{2,\lambda} \to 0$ in $C(\overline{\Omega})$ as $\lambda \to 0^+$ (see Theorem \ref{t1}). Furthermore, in contrast with \cite{ABC,DGU2}, the second solution $u_{2,\lambda}$ may be obtained  without the condition $p \leq \frac{2N}{N-2}$ when $N \geq 3$ (see Remark \ref{rem:thm01}). 

To the best of our knowledge, very few works have been devoted to  concave-convex problems under Neumann boundary conditions. Tarfulea \cite{Ta98} considered $(P_\lambda)$ in the case $b \equiv 1$, proving that $\int_\Omega a<0$ is a necessary and sufficient condition for the existence of a positive solution. Making use of the sub-supersolutions method, the author proved the existence of $\Lambda > 0$ such that problem $(P_\lambda)$ has at least one positive solution for $\lambda < \Lambda$ which converges to $0$ in $L^\infty (\Omega)$ as $\lambda \to 0^+$, and no positive solution for $\lambda > \Lambda$. 

Garcia-Azorero, Peral, and Rossi \cite{G-APR04} dealt with  the problem
\begin{align} \label{prob:vexcave}
\begin{cases}
-\Delta u +u = |u|^{p-2}u & \mbox{in $\Omega$}, \\
\frac{\partial u}{\partial \n} = \lambda |u|^{q-2}u & \mbox{on $\partial \Omega$}.
\end{cases}
\end{align}
By means of a variational approach, they proved  that if $1<q<2$ and $p=\frac{2N}{N-2}$ when $N>2$ then there exists $\Lambda_1 > 0$ such that  \eqref{prob:vexcave} has at least two positive solutions for $\lambda < \Lambda_1$, at least one positive solution for $\lambda = \Lambda_1$, and no positive solution for $\lambda > \Lambda_1$. 

In \cite{Al}, Alama investigated the problem 
\begin{align} \label{alprob}
\begin{cases}
-\Delta u =\mu u +b(x) u^{q-1} + \gamma u^{p-1} & \mbox{in $\Omega$}, \\
\frac{\partial u}{\partial \n} = 0 & \mbox{on $\partial \Omega$},
\end{cases}
\end{align}
where $\mu \in \R$ and $\gamma>0$. Note that when $\mu=0$ this problem can be reduced to $(P_\lambda)$ by a suitable rescaling. A special difficulty in this problem is the possible existence of {\it dead core} solutions when $b$ changes sign. Using variational, bifurcation, and sub-supersolutions techniques, the author proved existence, non-existence and multiplicity results for non-negative solutions in accordance with $\gamma$ and $\mu$. Moreover, these solutions are shown to be positive in the set where $b>0$. However, the author did not discuss the structure of the non-negative solutions set when $\mu=0$. 

The second and main purpose of this article is to investigate the existence of a subcontinuum of non-negative solutions of $(P_\lambda)$. Some works have  been devoted to this issue in the context of concave-convex nonlinearities. In \cite{K1}, Korman proved that if $\Omega$ is a ball in $\R^N$ then there exists $\lambda_0>0$ such that the problem 
\begin{align} 
\begin{cases}
-\Delta u = \lambda \left(|u|^{p-2}u +|u|^{q-2}u\right)& \mbox{in $\Omega$}, \\
u = 0 & \mbox{on $\partial \Omega$},
\end{cases}
\end{align}
has exactly two positive solutions for $\lambda<\lambda_0$, one positive solution for $\lambda=\lambda_0$ and no positive solution for $\lambda>\lambda_0$. In addition, he proved that for $\lambda\leq \lambda_0$ the positive solutions lie on a single smooth solution curve and described the behavior of this curve with respect to $\lambda$. In \cite{K2} he extended these results to a problem with a non-autonomous concave-convex nonlinearity. Delgado and Su\'arez \cite{DS2} considered the problem
\begin{align} 
\begin{cases}
\mathcal{L} u = \lambda |u|^{q-2}u +a(x)|u|^{p-2}u & \mbox{in $\Omega$}, \\
u = 0 & \mbox{on $\partial \Omega$},
\end{cases}
\end{align}
where $\mathcal{L}$ is a second order uniformly elliptic operator not necessarily self-adjoint and $a$ changes sign. They proved the existence of a unbounded subcontinuum of non-negative solutions emanating supercritically from $(\lambda,u)=(0,0)$. To the best of our knowledge, no results on the existence of a subcontinuum of non-negative solutions for $(P_\lambda)$ are known when $b$ changes sign. 

Based on the asymptotic analysis of $u_{1,\lambda}, u_{2,\lambda}$ as $\lambda \to 0^+$, we shall prove in some cases the existence of a loop type subcontinuum (see Theorem \ref{thm:ep0:exist}) in the non-negative solutions set of $(P_\lambda)$. This kind of continuum has been investigated by L\'opez-G\'omez and Molina-Meyer in \cite{LGMM} and Brown in \cite{B07} for problems involving nonlinearities that are $C^1$ at $u=0$, which is not the case for $(P_\lambda)$. For that same reason, the standard global bifurcation theory proposed by Rabinowitz \cite{Ra71} (see also L\'opez-G\'omez \cite{LG01}) does not apply to $(P_\lambda)$ in a straightforward way. We shall overcome this difficulty using a regularization procedure that will be described later. Several works have made a direct use of the global bifurcation theory. We refer to Hess and Kato \cite{HK80} for a problem with a non self-adjoint operator, to Blat and Brown \cite{BB84} for a class of nonlinear elliptic systems, to L\'opez-G\'omez and Molina-Meyer \cite{LGMM} for a study of isolas or compact solution components, to Cantrell and Cosner \cite{CC89} for diffusive logistic equations from Mathematical Biology, and to Umezu \cite{Um10} and Cano-Casanova \cite{Ca14} for nonlinear boundary conditions.

Note that if $b \geq 0$ then, by the strong maximum principle and the boundary point lemma, nontrivial non-negative solutions of $(P_\lambda)$ are positive solutions. On the other hand, it is known that if $b^- \not \equiv 0$ then {\it dead core} solutions may arise \cite{BPT}, which makes delicate the study of the non-negative solutions set of $(P_\lambda)$, as shown in \cite{Al}. For instance, when $b$ changes sign the existence of a minimal non-negative solution for $\lambda>0$ small is still unknown. Furthermore, when $a \geq 0$ and $b$ changes sign, it is not known whether the condition $\int_\Omega b \geq 0$ provides non-existence of nontrivial non-negative solutions of $(P_\lambda)$ for $\lambda >0$. 

In our existence results we shall also be concerned with stability properties of positive solutions of $(P_\lambda)$. Let us recall that a positive solution $u$ of $(P_\lambda)$ is said to be
{\it asymptotically stable} (respect. {\it unstable}) if $\gamma_1(\lambda,u)>0$ (respect. $<0$), where
$\gamma_1(\lambda, u)$ is the first eigenvalue of the linearized problem at $u$, namely, 
\begin{align}  \label{eigenvp:ulam}
\begin{cases}
-\Delta \phi = (p-1)a(x)u^{p-2}\phi + \lambda
(q-1) b(x)u^{q-2}\phi + \gamma \phi &
\mbox{in $\Omega$}, \\
\frac{\partial \phi}{\partial \mathbf{n}} = 0 & \mbox{on
$\partial \Omega$}. 
\end{cases}
\end{align}
In addition, $u$ is said to be {\it weakly stable} if $\gamma_1(\lambda,u)\geq 0$.

Throughout this article, we consider the following sets:
$$
\Omega^a_{\pm} = \{ x \in \Omega : a(x)\gtrless 0 \}, \quad \Omega^a_0 = \{ x \in \Omega : a(x) = 0 \}, \quad \Omega^b_{\pm} = \{ x \in \Omega : b(x) \gtrless 0 \}.
$$
Our main existence results for $\lambda>0$ shall be obtained under the condition 
\begin{equation}
\label{ab<0} \int_{\Omega} a <0 \quad \text{or} \quad \int_{\Omega} b <0.
\end{equation}
If either $ \int_{\Omega} a<0 \leq \int_{\Omega} b$ or $\int_{\Omega} a >0\geq \int_{\Omega} b $ then we set \begin{align}\label{u1:profile}
c^* = \left( \frac{-\int_{\Omega} b }{\int_\Omega a}\right)^{\frac{1}{p-q}}.
\end{align}
We are now in position to state out main results. 

First we follow a variational approach to show that $(P_\lambda)$ has two nontrivial non-negative solutions for $\lambda>0$ small if \eqref{ab<0} holds and $\Omega_+^a, \Omega_+^b \neq \emptyset$. This approach also provides us with the asymptotic profiles of these solutions as $\lambda \to 0^+$: 

\medskip
\begin{thm}
\label{t1}
 Assume \eqref{ab<0} and $p<\frac{2N}{N-2}$ if $N\geq 3$. Then there exists $\lambda_0>0$ such that:
\begin{enumerate}
\item If $\Omega^b_+ \neq \emptyset$ then $(P_\lambda)$ has a nontrivial non-negative solution $u_{1,\lambda}$ for $0<\lambda<\lambda_0$. Moreover there holds $u_{1,\lambda} \to 0$ in $C^2(\overline{\Omega})$ as $\lambda \to 0^+$.  More precisely:
\begin{enumerate}
\item If, in addition, $\int_\Omega b <0$ and $\lambda_n \to 0^+$ then, up to a subsequence, $\lambda_n^{-\frac{1}{2-q}}u_{1,\lambda_n} \to w_0$ in $C^2(\overline{\Omega})$ as $n \to \infty$, where $w_0$ is a nontrivial non-negative ground state solution of 
\begin{equation}
\label{plb}
\begin{cases}
-\Delta w = b(x)|w|^{q-2}w & \mbox{in $\Omega$}, \\
\frac{\partial w}{\partial \n} = 0 & \mbox{on $\partial \Omega$}.
\end{cases} 
\end{equation}

\item If, in addition, $\int_\Omega a<0\leq \int_\Omega b$ then $\lambda^{-\frac{1}{p-q}}u_{1,\lambda} \to c^*$ in $C^2(\overline{\Omega})$ as $\lambda \to 0^+$. In particular, if $\int_\Omega a<0< \int_\Omega b$ then $u_{1,\lambda}$ is an asymptotically stable positive solution of $(P_\lambda)$ for $\lambda>0$ sufficiently small. \\
\end{enumerate}
\item If $\Omega^a_+ \neq \emptyset$ then $(P_\lambda)$ has a nontrivial non-negative solution $u_{2,\lambda}$ for $0<\lambda<\lambda_0$. Moreover there holds:
\begin{enumerate}
\item If, in addition, $\int_\Omega a>0>\int_\Omega b$ then $u_{2,\lambda} \to 0$ and $\lambda^{-\frac{1}{p-q}}u_{2,\lambda} \to c^*$ in $C^2(\overline{\Omega})$ as $\lambda \to 0^+$. In particular, $u_{2,\lambda}$ is a unstable positive solution of $(P_\lambda)$ for $\lambda>0$ sufficiently small.
\item If, in addition, $\int_\Omega a = 0>\int_\Omega b$ then $u_{2,\lambda} \to 0$ in $C^2(\overline{\Omega})$ as $\lambda \to 0^+$.
\item If, in addition, $\int_\Omega a <0$ and $\lambda_n \to 0^+$ then, up to a subsequence, $u_{2,\lambda_n} \to u_{2,0}$ in $C^2(\overline{\Omega})$ as $n\to \infty$, where $u_{2,0}$ is a positive ground state solution of 
\begin{align} 
\label{pla}
\begin{cases}
-\Delta u = a(x)|u|^{p-2}u & \mbox{in $\Omega$}, \\
\frac{\partial u}{\partial \n} = 0 & \mbox{on $\partial \Omega$}. 
\end{cases} 
\end{align} 
In particular, $u_{2,\lambda}$ is a unstable positive solution of $(P_\lambda)$ for $\lambda>0$ sufficiently small.\\ 
\end{enumerate}
\end{enumerate}
\end{thm}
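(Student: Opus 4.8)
The plan is to treat $(P_\lambda)$ variationally on $H^1(\Omega)$ through the truncated energy functional
$$I_\lambda(u) = \frac12\int_\Omega |\nabla u|^2 - \frac{\lambda}{q}\int_\Omega b\,(u^+)^q - \frac1p\int_\Omega a\,(u^+)^p,$$
whose critical points solve $-\Delta u = \lambda b (u^+)^{q-1} + a(u^+)^{p-1}$ under the Neumann condition. The subcriticality $p<2^*=2N/(N-2)$ (for $N\geq 3$) makes $I_\lambda$ well defined and $C^1$ and renders the embedding $H^1(\Omega)\hookrightarrow L^p(\Omega)$ compact, which I will use throughout. First I would check that any such critical point is non-negative: testing the weak formulation against $u^-$ annihilates the nonlinear terms (supported on $\{u>0\}$) and leaves $\|\nabla u^-\|_2^2=0$, so $u^-$ is a constant; discarding the nonpositive constants then gives $u\geq 0$, whence $u$ solves $(P_\lambda)$ and elliptic regularity upgrades $u$ to $C^2(\overline\Omega)$.

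For $u_{1,\lambda}$ in part (1) I would exploit the concave term. When $\Omega^b_+\neq\emptyset$, choosing $\phi\geq 0$ supported where $b>0$ gives $I_\lambda(t\phi)\sim \tfrac{t^2}{2}\|\nabla\phi\|_2^2-\tfrac{\lambda t^q}{q}\int b\phi^q<0$ for small $t>0$ since $q<2$, while for a suitable radius $\rho=\rho(\lambda)$ one has $I_\lambda>0$ on $\partial B_\rho$. Minimizing $I_\lambda$ over $\overline{B_\rho}$ via Ekeland's principle then produces a non-negative local minimizer $u_{1,\lambda}$ with $I_\lambda(u_{1,\lambda})<0$. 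For $u_{2,\lambda}$ in part (2) I would instead use the convex term: when $\Omega^a_+\neq\emptyset$, a test function supported where $a>0$ makes $I_\lambda(t\phi)\to-\infty$ as $t\to+\infty$, so the local-minimum geometry at the origin together with the Mountain Pass Theorem yields a second non-negative critical point $u_{2,\lambda}$ at a positive energy level.

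The crux, and the step I expect to be hardest, is verifying the Palais--Smale condition, because the Neumann energy controls only $\|\nabla u\|_2$ and not the mean $\bar u=|\Omega|^{-1}\int_\Omega u$, so a $(PS)$ sequence need not be bounded a priori. Writing $u_n=\bar u_n+v_n$ with $\int_\Omega v_n=0$, the Poincar\'e--Wirtinger inequality together with the combination $I_\lambda(u_n)-\tfrac1p I_\lambda'(u_n)u_n$ bounds $\|\nabla v_n\|_2$; the remaining task is to bound $|\bar u_n|$, and this is precisely where the indefinite hypothesis \eqref{ab<0} must enter. Testing $I_\lambda'(u_n)\to 0$ against the constant $1$ yields $\int_\Omega\bigl(\lambda b(u_n^+)^{q-1}+a(u_n^+)^{p-1}\bigr)=o(1)$, and combining this integral identity with $\int_\Omega a<0$ (or $\int_\Omega b<0$) and the control on $v_n$ should force $|\bar u_n|$ to stay bounded; compactness of $H^1\hookrightarrow L^p$ then upgrades weak to strong convergence. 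I would carry this out separately at the local-minimum and the mountain-pass levels.

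Finally, for the asymptotic profiles I would rescale. In case (1a), where $\int_\Omega b<0$, set $u_{1,\lambda}=\lambda^{1/(2-q)}w_\lambda$; the equation becomes $-\Delta w_\lambda=b|w_\lambda|^{q-2}w_\lambda+\lambda^{(p-2)/(2-q)}a|w_\lambda|^{p-2}w_\lambda$, and the energy bounds inherited from $I_\lambda(u_{1,\lambda})<0$ give uniform $H^1$ (hence $C^2$) bounds, so $w_\lambda\to w_0$ solving \eqref{plb}, with $w_0$ a ground state because $u_{1,\lambda}$ is an energy minimizer. In cases (1b) and (2a) I would instead set $u=\lambda^{1/(p-q)}z$, turning the equation into $-\Delta z=\lambda^{(p-2)/(p-q)}\bigl(b|z|^{q-2}z+a|z|^{p-2}z\bigr)$; as $\lambda\to 0^+$ the right-hand side vanishes, forcing $z$ to a constant, and integrating pins the constant down to $c^*=(-\int_\Omega b/\int_\Omega a)^{1/(p-q)}$ via $c^{p-q}\int_\Omega a=-\int_\Omega b$. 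Case (2c) with $\int_\Omega a<0$ needs no rescaling: $u_{2,\lambda}$ stays bounded away from $0$ and converges to a ground state $u_{2,0}$ of the purely convex problem \eqref{pla}, while case (2b) with $\int_\Omega a=0$ is handled by reading off decay directly from the energy bound, the scaling $1/(p-q)$ there being degenerate. The stability assertions then follow from the variational type of each solution: $u_{1,\lambda}$, being a local minimizer, is weakly stable, and in case (1b) with $\int_\Omega b>0$ its near-constant profile $c^*>0$ gives $\gamma_1(\lambda,u_{1,\lambda})>0$ for small $\lambda$; whereas $u_{2,\lambda}$ is of mountain-pass type with Morse index one, yielding $\gamma_1(\lambda,u_{2,\lambda})<0$.
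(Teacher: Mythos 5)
Your plan replaces the paper's Nehari manifold construction with a local minimization plus Mountain Pass scheme for a truncated functional, and the most serious problem is that the geometry you assume at the origin fails under the actual hypothesis \eqref{ab<0}. That hypothesis only guarantees that \emph{one} of $\int_\Omega a<0$, $\int_\Omega b<0$ holds. In the regime $\int_\Omega a<0<\int_\Omega b$ (which is exactly case (1)(b) of the theorem, where $u_{1,\lambda}$ is asymptotically a positive constant $\lambda^{1/(p-q)}c^*$), a small positive constant $c$ gives $I_\lambda(c)=-\frac{\lambda}{q}c^q\int_\Omega b-\frac{1}{p}c^p\int_\Omega a<0$ for all $c>0$ small, so $0$ is not a local minimizer and there is no radius $\rho$ with $I_\lambda>0$ on $\partial B_\rho$: the infimum over any small sphere is negative, attained near the positive constant direction. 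Even when $\int_\Omega b<0$, your truncation $(u^+)$ makes $I_\lambda$ vanish identically on the ray of negative constants, so $\inf_{\partial B_\rho}I_\lambda\le 0=I_\lambda(0)$ and the strict inequality required by both Ekeland-in-a-ball and the Mountain Pass Theorem fails. The underlying difficulty is that the quadratic part $\frac12\int_\Omega|\nabla u|^2$ is degenerate on constants in the Neumann setting; the paper circumvents it entirely by working on $N_\lambda^\pm$ via the fibering maps $j_u$, where the threshold $\lambda_0$ in \eqref{lam0} (not a radius condition) is what controls the geometry.

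Two further steps are asserted rather than proved and each hides real content. First, in case (1)(a) the limit $w_0$ of $\lambda_n^{-1/(2-q)}u_{1,\lambda_n}$ could a priori be zero; ruling this out requires the quantitative rate $I_\lambda(u_{1,\lambda})<-D_0\lambda^{2/(2-q)}$ with $D_0>0$ (the paper's Lemma \ref{leqil}), not merely $I_\lambda(u_{1,\lambda})<0$, and the ground-state property of $w_0$ (and of $u_{2,0}$ in (2)(c)) requires a comparison of energy levels with the Nehari minimizer of the limit problem along suitably rescaled rays ($t_n\to 1$), which your sketch does not supply and which is not automatic for a local minimizer in a ball or a mountain-pass point. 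Second, the stability claims: local minimality only gives $\gamma_1\ge 0$, whereas (1)(b) asserts \emph{asymptotic} stability $\gamma_1>0$, which the paper obtains by a separate Lyapunov--Schmidt/Picone argument (Proposition \ref{p:u1as}); and for $u_{2,\lambda}$ the inequality $\gamma_1<0$ is read off in the paper from the elementary identity $E(u)-(p-1)A(u)-\lambda(q-1)B(u)=(2-q)E(u)-(p-q)A(u)<0$ valid on $N_\lambda^-$, a characterization your construction does not provide, so "Morse index one" would itself have to be proved.
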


\begin{rem}
\label{rem:thm01}
\strut
{\rm
\begin{enumerate}
\item  Except for (1)(a), 1(b) with $\int_\Omega b = 0$, 2(b) and (2)(c), Theorem \ref{t1} remains true without the condition $p<\frac{2N}{N-2}$ if $N\geq 3$. In the case $\Omega^b_+ \neq \emptyset$ and $\int_\Omega b<0$ a solution having similar features as $u_{1,\lambda}$ may be obtained by the sub-supersolutions method. Note that in contrast with the case of Dirichlet boundary conditions, obtaining a strict supersolution for $(P_\lambda)$ is not an easy task. We shall use the asymptotic profile of $u_{1,\lambda}$ provided by Theorem \ref{t1} (1)(a) to obtain such a supersolution, cf. Proposition \ref{psubsup}.
In the case $\int_\Omega a<0< \int_\Omega b$ we shall use
the Lyapunov-Schmidt reduction method to obtain a positive solution $u_\lambda$ such that $\lambda^{-\frac{1}{p-q}}u_{\lambda} \to c^*$ in $C^2(\overline{\Omega})$ as $\lambda \to 0^+$. The same procedure can be applied in the case $\int_\Omega a>0> \int_\Omega b$, cf. Remark \ref{rbif}. \\
\item When $\Omega^b_+$ is a non-empty subdomain of $\Omega$ and $\int_\Omega b<0$, we deduce from Theorem \ref{t1} (1)(a) that for any subset $D$ satisfying $\overline{D} \subset \Omega^b_+$ there exist $\overline{\lambda}, \overline{c} > 0$ such that $\inf_D u_{1, \lambda} \geq \overline{c}$ for $\lambda \in (0, \overline{\lambda})$. This result comes from the fact that $w_0 > 0$ in $\Omega^b_+$. \\ 
\end{enumerate}
}
\end{rem}

From Theorem \ref{t1} we infer in particular some positivity and ordering properties for $u_{1,\lambda}$ and $u_{2,\lambda}$ (cf. Remark \ref{rasyu2} (1)):

\begin{cor}
Assume $p<\frac{2N}{N-2}$ if $N\geq 3$. Let $u_{1,\lambda}$ and $u_{2,\lambda}$ be provided by Theorem \ref{t1}.
\begin{enumerate}
\item If $\Omega_+^a \neq \emptyset$ and $\int_\Omega a<0< \int_\Omega b$ then there exists $\lambda^*>0$ such that $u_{2,\lambda}>u_{1,\lambda}>0$ on $\overline{\Omega}$ for $0<\lambda<\lambda^*$.
\item If $\Omega_+^b \neq \emptyset$ and $\int_\Omega a>0>\int_\Omega b$ then there exists $\lambda^*>0$ such that $u_{2,\lambda}>u_{1,\lambda} \geq 0$  on $\overline{\Omega}$ for $0<\lambda<\lambda^*$.
\end{enumerate}

\end{cor}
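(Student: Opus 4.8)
The plan is to read off, in each scenario, the asymptotic profiles of $u_{1,\lambda}$ and $u_{2,\lambda}$ furnished by Theorem \ref{t1}, and then deduce the pointwise ordering by comparing their sizes as $\lambda\to 0^+$. The only genuinely delicate point is that two of the relevant convergences are stated along subsequences, so I would phrase the comparison as an argument by contradiction that is robust under passing to subsequences.

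Consider first case (1), where $\int_\Omega a<0<\int_\Omega b$ and $\Omega^a_+\neq\emptyset$. Since $\int_\Omega a<0\leq\int_\Omega b$, the solution $u_{1,\lambda}$ is governed by Theorem \ref{t1}\,(1)(b): it is positive and $\lambda^{-\frac{1}{p-q}}u_{1,\lambda}\to c^*$ in $C^2(\overline{\Omega})$, whence $\|u_{1,\lambda}\|_{C(\overline{\Omega})}\to 0$. Since $\int_\Omega a<0$, the solution $u_{2,\lambda}$ is governed by Theorem \ref{t1}\,(2)(c): along any sequence $\lambda_n\to 0^+$ one may extract a subsequence with $u_{2,\lambda_n}\to u_{2,0}$ in $C^2(\overline{\Omega})$, where $u_{2,0}>0$ on $\overline{\Omega}$. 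I would argue by contradiction: if $u_{2,\lambda}>u_{1,\lambda}$ failed along some $\lambda_n\to 0^+$, there would be $x_n\in\overline{\Omega}$ with $u_{2,\lambda_n}(x_n)\leq u_{1,\lambda_n}(x_n)$; passing to a subsequence, $\min_{\overline{\Omega}}u_{2,\lambda_n}\to\min_{\overline{\Omega}}u_{2,0}>0$ while $\max_{\overline{\Omega}}u_{1,\lambda_n}\to 0$, so that $u_{2,\lambda_n}>u_{1,\lambda_n}$ everywhere for large $n$, a contradiction. Together with the positivity of $u_{1,\lambda}$ this gives $u_{2,\lambda}>u_{1,\lambda}>0$ on $\overline{\Omega}$ for $\lambda$ small.

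In case (2), where $\int_\Omega a>0>\int_\Omega b$ and $\Omega^b_+\neq\emptyset$, observe first that $\int_\Omega a>0$ forces $\Omega^a_+\neq\emptyset$, so $u_{2,\lambda}$ is indeed available from Theorem \ref{t1}\,(2). Now both solutions tend to $0$, so the ordering must come from their decay rates. Since $\int_\Omega b<0$, Theorem \ref{t1}\,(1)(a) gives, along subsequences, $\lambda_n^{-\frac{1}{2-q}}u_{1,\lambda_n}\to w_0$ in $C^2(\overline{\Omega})$ with $w_0\geq 0$, hence $\|u_{1,\lambda}\|_{C(\overline{\Omega})}=O(\lambda^{\frac{1}{2-q}})$ along such subsequences; and since $\int_\Omega a>0>\int_\Omega b$, Theorem \ref{t1}\,(2)(a) yields the full limit $\lambda^{-\frac{1}{p-q}}u_{2,\lambda}\to c^*$, so $\min_{\overline{\Omega}}u_{2,\lambda}\geq\frac{c^*}{2}\lambda^{\frac{1}{p-q}}$ for $\lambda$ small. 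The crucial elementary fact is $\frac{1}{2-q}>\frac{1}{p-q}$, valid because $q<2<p$ gives $0<2-q<p-q$; thus $\lambda^{\frac{1}{2-q}}$ decays strictly faster than $\lambda^{\frac{1}{p-q}}$, and along any subsequence $\lambda^{-\frac{1}{p-q}}u_{1,\lambda}=\lambda^{\frac{1}{2-q}-\frac{1}{p-q}}\bigl(\lambda^{-\frac{1}{2-q}}u_{1,\lambda}\bigr)\to 0$ in $C(\overline{\Omega})$, whereas $\lambda^{-\frac{1}{p-q}}u_{2,\lambda}\to c^*>0$. The same contradiction argument as in case (1) then gives $u_{2,\lambda}>u_{1,\lambda}$ for $\lambda$ small, while $u_{1,\lambda}\geq 0$ holds by construction, yielding $u_{2,\lambda}>u_{1,\lambda}\geq 0$ on $\overline{\Omega}$.

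The main thing to be careful about is exactly this subsequential character of Theorem \ref{t1}\,(1)(a) and (2)(c): one cannot substitute a single asymptotic expansion into a pointwise inequality valid for all small $\lambda$. The contradiction scheme circumvents this, since it only requires that every subsequence extracted from a putative bad sequence deliver the needed uniform size estimates, which the subsequential limits do provide. Finally, the difference between the strict sign in (1) and the weak sign in (2) is dictated by the governing subcase: in (1) the relevant profile makes $u_{1,\lambda}$ positive, whereas in (2) the limiting ground state $w_0$ is only non-negative and may possess a dead core, so only $u_{1,\lambda}\geq 0$ can be asserted.
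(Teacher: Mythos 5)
Your proof is correct and follows essentially the same route as the paper's (given in Remark \ref{rasyu2}(1)): a contradiction argument exploiting the asymptotic profiles from Propositions \ref{prop:sol:Nlam+:0} and \ref{asyu2}, applied directly in case (1) and to the rescaled functions $\lambda^{-\frac{1}{p-q}}u_{i,\lambda}$ in case (2), where your explicit exponent comparison $\frac{1}{2-q}>\frac{1}{p-q}$ is precisely the justification the paper leaves implicit for $\lambda^{-\frac{1}{p-q}}u_{1,\lambda}\to 0$ along subsequences.
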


As for non-existence of nontrivial non-negative solutions of $(P_\lambda)$, we have the following result:

\begin{thm}
\label{t2}
\strut
\begin{enumerate}
\item Let $\lambda > 0$. Then the following two assertions hold: 

\begin{enumerate}

  \item Assume  $b \geq 0$ and $\int_\Omega a \geq 0$. Then $(P_\lambda)$ has no nontrivial non-negative solution.

  \item Assume that $b$ changes sign, $\Omega_+^b$ is a subdomain of $\Omega$, and $\Omega^b_- = \Omega \setminus \overline{\Omega^b_+}$. If $a \geq 0$ and $\int_\Omega b \geq 0$ then $(P_\lambda)$ has no non-negative solution taking positive values somewhere in $\Omega_+^b$.

\end{enumerate}

\item Assume $\Omega_+^a \cap \Omega_+^b \neq \emptyset$. Then there exists $\overline{\lambda}>0$ such that $(P_\lambda)$ has no nontrivial non-negative solution for $\lambda>\overline{\lambda}$.

\end{enumerate}
\end{thm}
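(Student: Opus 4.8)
My plan is to prove the three assertions separately; in each case the common device is to integrate the equation against a carefully chosen test function and to use the Neumann condition to discard the boundary terms on $\partial\Omega$. For (1)(a), since $b\ge0$ every nontrivial non-negative solution $u$ is positive on $\overline{\Omega}$ (strong maximum principle and boundary point lemma), so $u^{1-p}$ is a bounded admissible test function. I would multiply the equation by $u^{1-p}$, integrate over $\Omega$ and integrate by parts; the boundary term vanishes and what remains is
\[
(1-p)\int_{\Omega}u^{-p}|\nabla u|^{2}=\lambda\int_{\Omega}b\,u^{q-p}+\int_{\Omega}a .
\]
Because $p>2$, the left-hand side is $\le0$, whereas $\lambda\int_{\Omega}b\,u^{q-p}\ge0$ and $\int_{\Omega}a\ge0$ make the right-hand side $\ge0$. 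Hence both sides vanish, which forces $u\equiv\mathrm{const}>0$, $b\equiv0$ and $\int_\Omega a=0$; feeding the constant back into the equation gives $a\equiv0$ as well, contradicting the (standing) assumption that $a$ and $b$ are not both identically zero.

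For (1)(b) the first, and structurally decisive, step is to locate the dead core. On the subdomain $\Omega_+^b$ one has $b>0$, so $-\Delta u=\lambda b u^{q-1}+au^{p-1}\ge0$ there; thus $u$ is superharmonic and, being non-negative and positive somewhere in $\Omega_+^b$, it is positive throughout $\Omega_+^b$ by the strong maximum principle. I would then show $u>0$ on $\overline{\Omega_+^b}$: at a point $x_*\in\partial\Omega_+^b$ with $u(x_*)=0$, Hopf's lemma gives $\partial u/\partial\nu(x_*)<0$, which is incompatible either with the Neumann condition (if $x_*\in\partial\Omega$) or with the fact that an interior zero of the non-negative function $u$ is a critical point, so $\nabla u(x_*)=0$ (if $x_*\in\Gamma:=\partial\Omega_+^b\cap\Omega$). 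Using the hypothesis $\Omega_-^b=\Omega\setminus\overline{\Omega_+^b}$, this places the zero set $Z=\{u=0\}$ inside $\Omega_-^b$, where $b<0$. Next I would test the equation over $\Omega$ against the regularized weight $(u+\varepsilon)^{1-q}$ and let $\varepsilon\to0^+$: monotone convergence on the left and dominated convergence on the right (the concave term is controlled by $u^{q-1}(u+\varepsilon)^{1-q}\le1$) yield
\[
(1-q)\int_{\Omega}u^{-q}|\nabla u|^{2}=\lambda\int_{\{u>0\}}b+\int_{\Omega}a\,u^{p-q}.
\]
Since the left-hand side is $\le0$ and $\int_{\Omega}a\,u^{p-q}\ge0$, we get $\int_{\{u>0\}}b\le0$; but $Z\subset\Omega_-^b$ gives $\int_Z b\le0$, so $\int_{\{u>0\}}b=\int_\Omega b-\int_Z b\ge\int_\Omega b\ge0$. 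Both bounds are therefore equalities, forcing $|Z|=0$, then $u\equiv\mathrm{const}>0$ and finally $a\equiv b\equiv0$, which contradicts $b$ changing sign. I expect the passage to the limit in this regularized identity, in the presence of a possible dead core in $\Omega_-^b$, to be the most delicate point.

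For (2) I would fix a smooth subdomain $D\subset\subset\Omega_+^a\cap\Omega_+^b$ with $a\ge a_0>0$ and $b\ge b_0>0$ on $\overline D$, and let $(\mu_1,\phi_1)$ be the first Dirichlet eigenpair of $-\Delta$ on $D$, $\phi_1>0$. Testing the equation with $\phi_1$ (extended by zero), Green's identity together with $\phi_1=0$ and $\partial\phi_1/\partial\nu\le0$ on $\partial D$, and $u\ge0$, gives
\[
\int_{D}\bigl(\lambda b_0 u^{q-1}+a_0 u^{p-1}\bigr)\phi_1\ \le\ \int_{D}\bigl(\lambda b u^{q-1}+a u^{p-1}\bigr)\phi_1\ \le\ \mu_1\int_{D}u\,\phi_1 .
\]
The heart of the matter is the elementary observation that $\psi(s)=\lambda b_0 s^{q-2}+a_0 s^{p-2}$ blows up as $s\to0^+$ and as $s\to\infty$ (since $q<2<p$) and that its minimum over $s>0$ tends to $+\infty$ as $\lambda\to\infty$; hence there is $\overline\lambda>0$ such that, for $\lambda>\overline\lambda$, one has $\lambda b_0 s^{q-1}+a_0 s^{p-1}>\mu_1 s$ for every $s>0$. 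Multiplying this strict inequality by $\phi_1$ and integrating contradicts the display unless $u\equiv0$ on $D$. I expect the main obstacle to be precisely here: the eigenvalue argument only forces $u$ to vanish on $D\subset\Omega_+^a\cap\Omega_+^b$, and, since $q<2$, genuine dead-core solutions exist (cf. \cite{BPT}); promoting this local vanishing to the asserted global non-existence will require an extra argument excluding solutions that are supported away from $\Omega_+^a\cap\Omega_+^b$.
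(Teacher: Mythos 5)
Your three arguments are, in substance, the ones the paper uses: (1)(a) by testing with $u^{1-p}$ after the strong maximum principle gives $u>0$ on $\overline{\Omega}$; (1)(b) by testing with $(u+\varepsilon)^{1-q}$, letting $\varepsilon\to 0^{+}$, and splitting $\int_\Omega b$ over the set where $u>0$ and its complement; (2) by pairing the equation with the first Dirichlet eigenfunction of a small ball $D$ on which $a,b\ge\delta>0$ and using that $\delta s^{p-1}+\lambda\delta s^{q-1}-\lambda_1 s\ge 0$ for all $s\ge 0$ once $\lambda$ is large. Two remarks. First, in (1)(b) your preliminary step ``$u>0$ on $\overline{\Omega_+^b}$'' invokes Hopf's lemma at points of $\partial\Omega_+^b\cap\Omega$, where no interior ball condition (indeed no regularity of $\partial\Omega_+^b$) is assumed; fortunately the step is dispensable, since $b$ vanishes on $\partial\Omega_+^b\cap\Omega$ by continuity, so $\int_{Z}b\le 0$ already follows from $Z\subset\Omega\setminus\Omega_+^b$, where $b\le 0$ --- the paper works directly with $\Gamma_u=\operatorname{supp}u$ and never needs positivity of $u$ up to $\partial\Omega_+^b$. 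Second, the obstacle you flag in part (2) is genuine, but it is shared with the source: the paper's Proposition 4.1 is stated only for \emph{positive} solutions (for which $u\equiv 0$ on $D$ is an immediate contradiction), and its proof establishes exactly what yours does, namely $u\equiv 0$ on $D$; no separate argument is given there for the stronger claim about nontrivial non-negative solutions in Theorem 1.5(2). One can propagate the vanishing slightly --- on the connected component of the interior of $\{b\ge 0\}$ containing $D$ one has $(-\Delta+M)u\ge 0$ for a suitable $M>0$, so the strong maximum principle forces $u\equiv 0$ on that whole component --- but ruling out dead-core type solutions supported away from $\Omega_+^a\cap\Omega_+^b$ is not handled by this test-function argument, either in your proposal or in the paper.
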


\begin{rem} \label{rem:t2}
{\rm 
Theorem \ref{t2} holds true for $\lambda < 0$ with $b$ replaced by $-b$. Indeed, it suffices to look at the equation in $(P_\lambda)$ as $-\Delta u = (-\lambda)(-b(x))|u|^{q-2}u+ a(x)|u|^{p-2}u$. 

}\end{rem}

We consider then structure of the non-negative solutions set of $(P_\lambda)$. Under the condition 
\begin{equation}
\label{abl}
\Omega^a_+ \neq \emptyset, \quad \Omega^b_+ \neq \emptyset, \quad \int_\Omega b \leq 0 \quad \text{ and } \quad \int_\Omega a<0,
\end{equation} 
Theorem \ref{t1} asserts that $u_{1,\lambda} \to 0$ in $C^2(\overline{\Omega})$ as $\lambda \to 0^+$, and if $\lambda_n \to 0^+$ then, up to a subsequence, $u_{2, \lambda_n} \to u_{2,0}$ in $C^2(\overline{\Omega})$, where $u_{2,0}$ is a positive solution of \eqref{pla}. In addition,  this result does not depend on the sign of $\int_\Omega b$. As a consequence we may also infer the existence of two nontrivial non-negative solutions $v_{1,\lambda}$ and $v_{2,\lambda}$ for $\lambda<0$ sufficiently small. These solutions satisfy $v_{1,\lambda} \to 0$ in $C^2(\overline{\Omega})$ as $\lambda \to 0^+$, and if $\lambda_n \to 0^+$ then, up to a subsequence, 
$v_{2, \lambda_n} \to v_{2,0}$ in $C^2(\overline{\Omega})$, where $v_{2,0}$ is a positive solution of \eqref{pla}. One may then ask if these solutions lie on a loop type subcontinuum of non-negative solutions of $(P_\lambda)$. 


We shall investigate this question  by considering a regularized version of $(P_\lambda)$, namely,
$$
\begin{cases}
-\Delta u = a(x)|u|^{p-2}u + \lambda (b(x) - \epsilon) |u+\epsilon|^{q-2}u & \mbox{in} \ \Omega, \\
\frac{\partial u}{\partial \mathbf{n}} = 0 & \mbox{on} \ \partial \Omega, 
\end{cases} \leqno{(P_{\lambda,\epsilon})}  
$$
where $\lambda \in \R$ and $\epsilon > 0$. We may then look at $(P_\lambda)$ as the limit problem of $(P_{\lambda,\epsilon})$ when $\epsilon \to 0^+$. This procedure has been already used in \cite{RQU3}, where a regularized version of a nonlinear boundary condition is studied. Note that the mapping $t \mapsto |t+\epsilon|^{q-2}t$ is analytic at $t=0$ and any nontrivial non-negative solution of $(P_{\lambda,\epsilon})$ is positive on $\overline{\Omega}$. 
The unilateral global bifurcation theorem by Rabinowitz \cite[Theorem 1.27]{Ra71} (see also L\'opez-G\'omez \cite[Theorem 6.4.3]{LG01}) may then be applied to $(P_{\lambda,\epsilon})$. To this end we consider its linearized problem at $u=0$: 
\begin{align} \label{eprob151011}
\begin{cases}
-\Delta \varphi = \lambda (b-\epsilon) \epsilon^{q-2} \varphi & \mbox{in} \ \Omega, \\
\frac{\partial \varphi}{\partial \mathbf{n}} = 0 & \mbox{on} \ 
\partial \Omega. 
\end{cases}
\end{align}
Under the condition 
\begin{align} \label{a:b} 
\Omega^{b - \epsilon}_+ \neq \emptyset \quad\mbox{and} \quad \int_\Omega b \leq 0, 
\end{align}
this problem has exactly two principal eigenvalues $ \lambda=0$ and $\lambda=\lambda_\epsilon>0$, which are both simple.
%
%
%
We use the unilateral global bifurcation theory to obtain two subcontinua $\mathcal{C}_0=\mathcal{C}_0(\epsilon)$, $\mathcal{C}_1=\mathcal{C}_1 (\epsilon)$ of positive solutions of $(P_{\lambda,\epsilon})$ bifurcating from $(0,0)$ and $(\lambda_\epsilon,0)$, respectively. Moreover, we analyse the local nature of these subcontinua near the bifurcation points (Theorem \ref{thm:rprob:r}).
We turn then to the study of the global nature of $\mathcal{C}_0 (\epsilon), \mathcal{C}_1 (\epsilon)$ and their limiting nature  as $\epsilon \to 0^+$. First we show that positive solutions $(\lambda,u)$ of $(P_{\lambda, \epsilon})$ are {\it a priori} bounded in $\R \times C(\overline{\Omega})$ if the following conditions are 
assumed, where we assume $(H_2)$ following Amann and L\'opez-G\'omez \cite{ALG}: \\

\begin{itemize}

\item[$(H_0)$] There exist balls $B_1$, $B_2$ such that $\overline{B_1}, \overline{B_2} \subset \Omega$, and 
\begin{align*} 
\begin{cases}
a\geq 0, \ \ a\not\equiv 0 \ \ \mbox{and $b>0$ on $\overline{B_1}$}, & \bigskip \\
a\geq 0, \ \ a\not\equiv 0 \ \ \mbox{and $b<0$ on $\overline{B_2}$}, & 
\end{cases}
\end{align*}\\

\item [$(H_1)$] $\Omega^a_{\pm}$ are subdomains of $\Omega$ with smooth boundary and satisfy $\overline{\Omega^a_+} \subset \Omega$, $\overline{\Omega^a_+} \cup \Omega^a_- = \Omega$. \\
%

\item [$(H_2)$] Under $(H_1)$ there exist a function $\alpha^+$ which is continuous, positive, and bounded away from zero in a tubular neighborhood of $\partial \Omega^a_+$ in $\Omega^a_+$ and $\gamma>0$ such that $$a^+(x) = \alpha^+ (x) {\rm dist} (x, \partial \Omega^a_+)^{\gamma},$$
where ${\rm dist}\, (x, A)$ denotes the distance function to a set $A$. Moreover, we assume that $$
2 < p < \min \left\{ \frac{2N}{N-2}, \frac{2N+\gamma}{N-1} \right\} \quad 
\mbox{if} \ \ N>2. 
$$\\
\end{itemize}

%
%
%
%
%
%

Based on these {\it a priori} bounds and the global properties of $\mathcal{C}_0$ and $\mathcal{C}_1$, we infer that these subcontinua are both bounded, and consequently must coincide, i.e. $\mathcal{C}_0 = \mathcal{C}_1=\mathcal{C}_*$ (Theorem \ref{t:ep:global}). Thus $(P_{\lambda, \epsilon})$ has a bounded subcontinuum of positive solutions going from $(0,0)$ to  $(\lambda_\epsilon, 0)$, see Figure \ref{fig15_111402}.  We consider then the limiting profiles of $\mathcal{C}_0$ and $\mathcal{C}_1$ as $\epsilon \to 0^+$ by means of Whyburn's topological method  \cite{Wh64}. Here {\it a priori} bounds from below for positive solutions of $(P_{\lambda, \epsilon})$ with $\lambda = 0$ (Lemma \ref{lem:lameq0}) and the fact that bifurcation from zero does not occur for $(P_\lambda)$ at any $\lambda\not= 0$ (Proposition \ref{prop:nobiforip}) play an important role. The latter fact is verified under the condition
\begin{itemize}
\item [$(H_3)$] $\Omega^b_+$ and $\Omega^b_-$ are both subdomains of $\Omega$.\\
\end{itemize}

Combining the previous results, we establish:

\begin{thm} \label{thm:ep0:exist}
Assume \eqref{abl}. If $(H_0)$, $(H_1)$, $(H_2)$ and $(H_3)$ are satisfied then $(P_\lambda)$ has a {\rm loop type} subcontinuum (non-empty, closed and connected component) $\mathcal{C}$ of nontrivial non-negative solutions bifurcating at $(0,0)$, which joins $(0,0)$ to itself. Moreover:
\begin{enumerate}
\item $\mathcal{C}$ is non-trivial, i.e. $\mathcal{C} \neq \{(0,0)\}$.
\item  The only trivial solution contained in $\mathcal{C}$ is $(\lambda, u)=(0,0)$, i.e. $\mathcal{C}$ does not contain any point $(\lambda,0)$ with $\lambda \neq 0$. 
\item There exists $\delta > 0$ such that $\mathcal{C}$ does not contain any positive solution $u $ of \eqref{pla} satisfying $\Vert u \Vert_{C(\overline{\Omega})} \leq \delta$. 
\end{enumerate} 
\end{thm}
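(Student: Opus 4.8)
The plan is to establish the loop by transferring, through a limiting procedure in $\epsilon$, the bounded subcontinua of the regularized problems $(P_{\lambda,\epsilon})$ to the limit problem $(P_\lambda)$, and then to read off the three properties from the a priori bounds and the absence of bifurcation at $\lambda\neq 0$. First I would fix $\epsilon>0$ small. Since \eqref{abl} implies \eqref{a:b}, Theorem \ref{thm:rprob:r} yields the subcontinua $\mathcal{C}_0(\epsilon)$ and $\mathcal{C}_1(\epsilon)$ of positive solutions of $(P_{\lambda,\epsilon})$ bifurcating from $(0,0)$ and $(\lambda_\epsilon,0)$, and Theorem \ref{t:ep:global}, under $(H_0)$, $(H_1)$, $(H_2)$, shows that these coincide in a single bounded subcontinuum $\mathcal{C}_*(\epsilon)\subset\R\times C(\overline{\Omega})$ joining $(0,0)$ to $(\lambda_\epsilon,0)$. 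The crucial point is that the a priori bounds behind Theorem \ref{t:ep:global} can be taken uniform in $\epsilon$ for $\epsilon$ small, so that the whole family $\{\mathcal{C}_*(\epsilon)\}$ is contained in a fixed bounded set $K$ of $\R\times C(\overline{\Omega})$.

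Next I would locate the second bifurcation point in the limit. Setting $\mu=\lambda\epsilon^{q-2}$ in \eqref{eprob151011} reduces it to $-\Delta\varphi=\mu(b-\epsilon)\varphi$, whence $\lambda_\epsilon=\epsilon^{2-q}\lambda_1(b-\epsilon)$, where $\lambda_1(b-\epsilon)>0$ is the positive principal eigenvalue for the weight $b-\epsilon$ (well defined since $\int_\Omega(b-\epsilon)<0$). As $\epsilon\to 0^+$ the factor $\lambda_1(b-\epsilon)$ stays bounded while $\epsilon^{2-q}\to 0$ because $q<2$, so $\lambda_\epsilon\to 0^+$; consequently both endpoints of each loop collapse to the origin. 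I would then apply Whyburn's superior-limit theorem \cite{Wh64} to the family $\{\mathcal{C}_*(\epsilon)\}$ inside $\overline{K}$: each member is connected and contains the common point $(0,0)$, and the uniform bounds guarantee that the Whyburn limit $\mathcal{C}:=\limsup_{\epsilon\to 0^+}\mathcal{C}_*(\epsilon)$ is nonempty, compact and connected. Standard elliptic estimates applied to $(P_{\lambda,\epsilon})$ as $\epsilon\to 0^+$ identify every point of $\mathcal{C}$ with a nontrivial non-negative solution of $(P_\lambda)$, and $(0,0)\in\mathcal{C}$.

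Finally I would verify the three assertions. For (1), Lemma \ref{lem:lameq0} bounds from below the $C(\overline{\Omega})$-norm of positive solutions of $(P_{\lambda,\epsilon})$ at $\lambda=0$; such a solution lies on each $\mathcal{C}_*(\epsilon)$ and survives in the limit, so $\mathcal{C}\neq\{(0,0)\}$. For (2), a point $(\lambda,0)\in\mathcal{C}$ with $\lambda\neq 0$ would arise as a limit of positive solutions of $(P_{\lambda,\epsilon})$ shrinking to zero at some $\lambda\neq 0$, that is, a bifurcation from zero for $(P_\lambda)$ at $\lambda\neq 0$; this is excluded by Proposition \ref{prop:nobiforip} under $(H_3)$, so together with $\lambda_\epsilon\to 0$ both ends of the loop are anchored at $(0,0)$. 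For (3), the same lower bound of Lemma \ref{lem:lameq0} at $\lambda=0$ prevents positive solutions of \eqref{pla} with sufficiently small norm from belonging to $\mathcal{C}$, and one takes $\delta$ to be this uniform bound.

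The main obstacle will be the Whyburn passage to the limit: one must ensure that connectivity is genuinely preserved, so that $\mathcal{C}$ closes up into a single loop anchored at $(0,0)$ rather than breaking into two disjoint arcs or degenerating onto the trivial line. This hinges on the interplay between the uniform upper a priori bounds, the lower bound at $\lambda=0$ (Lemma \ref{lem:lameq0}), the collapse $\lambda_\epsilon\to 0$, and the exclusion of spurious bifurcation at $\lambda\neq 0$ (Proposition \ref{prop:nobiforip}); verifying that these combine to force the two endpoints $(0,0)$ and $(\lambda_\epsilon,0)$ to coalesce into the single bifurcation point of $(P_\lambda)$ is the delicate part.
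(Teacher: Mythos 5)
Your overall architecture is the paper's: regularize, get the bounded subcontinuum $\mathcal{C}_*(\epsilon)=\mathcal{C}_0(\epsilon)=\mathcal{C}_1(\epsilon)$ from Theorems \ref{thm:rprob:r} and \ref{t:ep:global}, show $\lambda_\epsilon\to 0^+$, pass to the limit by Whyburn, and use Proposition \ref{prop:nobiforip} and Lemma \ref{lem:lameq0}(2) for assertions (2) and (3). Those parts are sound (your scaling $\lambda_\epsilon=\epsilon^{2-q}\lambda_1(b-\epsilon)$ is a clean shortcut to Lemma \ref{lem:lamepto0cc}, provided you note that $\lambda_1(b-\epsilon)$ stays bounded even when $\int_\Omega b=0$, which requires exhibiting a fixed nonconstant test function $\varphi_0$ with $\int_\Omega b\,\varphi_0^2>0$ — available since $\Omega_+^b\neq\emptyset$).

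The genuine gap is in assertion (1). You write that a positive solution of $(P_{\lambda,\epsilon})$ at $\lambda=0$ \emph{lies on each} $\mathcal{C}_*(\epsilon)$, but you never justify this, and it is exactly the point where the hypothesis $\int_\Omega a<0$ from \eqref{abl} must be used. The argument is: by \eqref{asser:lams151011} with $m=b_\epsilon$, the bifurcation direction at $(0,0)$ satisfies $\mu(s)/s^{p-2}\to -\epsilon^{2-q}\int_\Omega a/\int_\Omega b_\epsilon<0$ (both integrals are negative), so $\mathcal{C}_0(\epsilon)$ enters the region $\lambda<0$; since the same component reaches $(\lambda_\epsilon,0)$ with $\lambda_\epsilon>0$, connectedness forces it to cross the hyperplane $\{\lambda=0\}$ at some $(0,u_\epsilon)$ with $u_\epsilon\neq 0$, i.e.\ at a positive solution of \eqref{pla}; only then does Lemma \ref{lem:lameq0}(2) give the uniform lower bound $\Vert u_\epsilon\Vert_{C(\overline{\Omega})}\geq C$ that survives in the Whyburn limit and yields $\mathcal{C}\neq\{(0,0)\}$. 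Without this step your loop could a priori shrink entirely to the origin as $\epsilon\to 0^+$ — the paper explicitly acknowledges it cannot rule this out in the complementary case $\int_\Omega a\geq 0$, where $\mathcal{C}_0(\epsilon)$ bifurcates supercritically and never meets $\{\lambda=0\}$ away from $0$. So the nontriviality of $\mathcal{C}$ is not a consequence of the a priori bounds and Whyburn alone; it hinges on the subcritical bifurcation direction at $(0,0)$, which your proposal omits.
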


Figure \ref{fig15_1114a} illustrates the subcontinuum provided by Theorems  \ref{thm:ep0:exist}.

\begin{rem}{\rm 
An example of $(a, b)$ satisfying conditions $(H_0)$, $(H_1)$ and $(H_3)$ can be constructed 
as in Figure \ref{fig15_1210}. 
}\end{rem}

Finally, let us mention that our regularization procedure described above can also be used to obtain subcontinua (non-necessarily of loop type) for a larger class of concave-convex type problems. We shall treat this issue in a forthcoming article.

	\begin{figure}[!htb]
{\unitlength 0.1in
\begin{picture}( 28.5000, 17.9000)(  8.3000,-24.1000)
%
\special{pn 13}%
\special{ar 2256 1516 1426 896  0.0000000  6.2831853}%
%
\special{pn 8}%
\special{ar 1824 1564 824 598  0.7817662  0.7853982}%
%
\special{pn 13}%
\special{ar 2604 1488 830 552  0.0000000  6.2831853}%
\put(29.3000,-11.8900){\makebox(0,0){$\Omega_a^+$}}%
\put(26.2700,-7.7400){\makebox(0,0){$\Omega_a^-$}}%
\put(16.5700,-10.8300){\makebox(0,0){$\Omega_b^\mp$}}%
\put(21.6100,-22.5600){\makebox(0,0){$\Omega_b^\pm$}}%
%
\special{pn 8}%
\special{pn 8}%
\special{pa 2704 1470}%
\special{pa 2704 1478}%
\special{fp}%
\special{pa 2702 1515}%
\special{pa 2702 1523}%
\special{fp}%
\special{pa 2697 1558}%
\special{pa 2696 1566}%
\special{fp}%
\special{pa 2688 1600}%
\special{pa 2686 1608}%
\special{fp}%
\special{pa 2675 1641}%
\special{pa 2673 1649}%
\special{fp}%
\special{pa 2659 1682}%
\special{pa 2656 1689}%
\special{fp}%
\special{pa 2641 1721}%
\special{pa 2636 1728}%
\special{fp}%
\special{pa 2619 1759}%
\special{pa 2614 1766}%
\special{fp}%
\special{pa 2594 1796}%
\special{pa 2590 1802}%
\special{fp}%
\special{pa 2566 1830}%
\special{pa 2561 1836}%
\special{fp}%
\special{pa 2537 1863}%
\special{pa 2530 1868}%
\special{fp}%
\special{pa 2504 1892}%
\special{pa 2499 1899}%
\special{fp}%
\special{pa 2471 1921}%
\special{pa 2465 1926}%
\special{fp}%
\special{pa 2435 1947}%
\special{pa 2429 1952}%
\special{fp}%
\special{pa 2398 1972}%
\special{pa 2391 1976}%
\special{fp}%
\special{pa 2360 1994}%
\special{pa 2353 1998}%
\special{fp}%
\special{pa 2321 2014}%
\special{pa 2314 2017}%
\special{fp}%
\special{pa 2280 2031}%
\special{pa 2273 2034}%
\special{fp}%
\special{pa 2240 2046}%
\special{pa 2232 2048}%
\special{fp}%
\special{pa 2198 2059}%
\special{pa 2191 2062}%
\special{fp}%
\special{pa 2156 2070}%
\special{pa 2149 2072}%
\special{fp}%
\special{pa 2113 2080}%
\special{pa 2105 2080}%
\special{fp}%
\special{pa 2070 2086}%
\special{pa 2062 2088}%
\special{fp}%
\special{pa 2026 2092}%
\special{pa 2018 2092}%
\special{fp}%
\special{pa 1981 2094}%
\special{pa 1973 2094}%
\special{fp}%
\special{pa 1936 2094}%
\special{pa 1928 2094}%
\special{fp}%
\special{pa 1892 2092}%
\special{pa 1884 2092}%
\special{fp}%
\special{pa 1848 2088}%
\special{pa 1840 2088}%
\special{fp}%
\special{pa 1804 2083}%
\special{pa 1796 2082}%
\special{fp}%
\special{pa 1761 2075}%
\special{pa 1753 2074}%
\special{fp}%
\special{pa 1719 2065}%
\special{pa 1711 2064}%
\special{fp}%
\special{pa 1677 2053}%
\special{pa 1670 2051}%
\special{fp}%
\special{pa 1636 2038}%
\special{pa 1628 2036}%
\special{fp}%
\special{pa 1596 2023}%
\special{pa 1589 2019}%
\special{fp}%
\special{pa 1556 2003}%
\special{pa 1549 2000}%
\special{fp}%
\special{pa 1517 1983}%
\special{pa 1510 1979}%
\special{fp}%
\special{pa 1479 1959}%
\special{pa 1473 1955}%
\special{fp}%
\special{pa 1442 1934}%
\special{pa 1436 1930}%
\special{fp}%
\special{pa 1408 1906}%
\special{pa 1402 1901}%
\special{fp}%
\special{pa 1375 1877}%
\special{pa 1369 1871}%
\special{fp}%
\special{pa 1344 1845}%
\special{pa 1339 1839}%
\special{fp}%
\special{pa 1315 1811}%
\special{pa 1311 1805}%
\special{fp}%
\special{pa 1289 1775}%
\special{pa 1285 1769}%
\special{fp}%
\special{pa 1265 1738}%
\special{pa 1262 1731}%
\special{fp}%
\special{pa 1245 1700}%
\special{pa 1242 1692}%
\special{fp}%
\special{pa 1229 1660}%
\special{pa 1226 1653}%
\special{fp}%
\special{pa 1215 1620}%
\special{pa 1212 1613}%
\special{fp}%
\special{pa 1204 1578}%
\special{pa 1202 1571}%
\special{fp}%
\special{pa 1196 1536}%
\special{pa 1196 1528}%
\special{fp}%
\special{pa 1192 1492}%
\special{pa 1192 1484}%
\special{fp}%
\special{pa 1192 1447}%
\special{pa 1194 1439}%
\special{fp}%
\special{pa 1196 1403}%
\special{pa 1198 1396}%
\special{fp}%
\special{pa 1204 1360}%
\special{pa 1206 1353}%
\special{fp}%
\special{pa 1215 1319}%
\special{pa 1217 1312}%
\special{fp}%
\special{pa 1229 1277}%
\special{pa 1232 1270}%
\special{fp}%
\special{pa 1246 1238}%
\special{pa 1250 1231}%
\special{fp}%
\special{pa 1266 1199}%
\special{pa 1271 1193}%
\special{fp}%
\special{pa 1290 1162}%
\special{pa 1294 1156}%
\special{fp}%
\special{pa 1316 1126}%
\special{pa 1321 1120}%
\special{fp}%
\special{pa 1345 1093}%
\special{pa 1351 1087}%
\special{fp}%
\special{pa 1375 1061}%
\special{pa 1382 1056}%
\special{fp}%
\special{pa 1409 1032}%
\special{pa 1415 1027}%
\special{fp}%
\special{pa 1444 1005}%
\special{pa 1450 1000}%
\special{fp}%
\special{pa 1480 978}%
\special{pa 1486 974}%
\special{fp}%
\special{pa 1518 956}%
\special{pa 1525 952}%
\special{fp}%
\special{pa 1557 935}%
\special{pa 1564 932}%
\special{fp}%
\special{pa 1597 917}%
\special{pa 1603 913}%
\special{fp}%
\special{pa 1637 900}%
\special{pa 1644 898}%
\special{fp}%
\special{pa 1678 886}%
\special{pa 1685 884}%
\special{fp}%
\special{pa 1720 874}%
\special{pa 1727 872}%
\special{fp}%
\special{pa 1762 864}%
\special{pa 1770 862}%
\special{fp}%
\special{pa 1805 856}%
\special{pa 1813 855}%
\special{fp}%
\special{pa 1849 850}%
\special{pa 1857 850}%
\special{fp}%
\special{pa 1893 846}%
\special{pa 1901 846}%
\special{fp}%
\special{pa 1937 844}%
\special{pa 1945 844}%
\special{fp}%
\special{pa 1982 846}%
\special{pa 1990 846}%
\special{fp}%
\special{pa 2027 848}%
\special{pa 2035 848}%
\special{fp}%
\special{pa 2071 852}%
\special{pa 2078 854}%
\special{fp}%
\special{pa 2114 860}%
\special{pa 2121 862}%
\special{fp}%
\special{pa 2157 868}%
\special{pa 2164 871}%
\special{fp}%
\special{pa 2199 880}%
\special{pa 2206 882}%
\special{fp}%
\special{pa 2240 892}%
\special{pa 2247 896}%
\special{fp}%
\special{pa 2281 908}%
\special{pa 2288 911}%
\special{fp}%
\special{pa 2321 926}%
\special{pa 2328 929}%
\special{fp}%
\special{pa 2360 946}%
\special{pa 2367 949}%
\special{fp}%
\special{pa 2399 966}%
\special{pa 2405 971}%
\special{fp}%
\special{pa 2436 991}%
\special{pa 2442 996}%
\special{fp}%
\special{pa 2471 1017}%
\special{pa 2477 1023}%
\special{fp}%
\special{pa 2505 1046}%
\special{pa 2511 1052}%
\special{fp}%
\special{pa 2537 1077}%
\special{pa 2542 1083}%
\special{fp}%
\special{pa 2567 1110}%
\special{pa 2572 1116}%
\special{fp}%
\special{pa 2594 1144}%
\special{pa 2599 1151}%
\special{fp}%
\special{pa 2619 1181}%
\special{pa 2624 1188}%
\special{fp}%
\special{pa 2641 1219}%
\special{pa 2644 1226}%
\special{fp}%
\special{pa 2660 1258}%
\special{pa 2662 1265}%
\special{fp}%
\special{pa 2676 1298}%
\special{pa 2678 1305}%
\special{fp}%
\special{pa 2688 1339}%
\special{pa 2690 1346}%
\special{fp}%
\special{pa 2697 1382}%
\special{pa 2698 1389}%
\special{fp}%
\special{pa 2702 1425}%
\special{pa 2704 1433}%
\special{fp}%
\end{picture}}%
	  \caption{An example of $(a, b)$ satisfying $(H_0)$, $(H_1)$ and $(H_3)$.}   			    
  	\label{fig15_1210}
	    \end{figure}
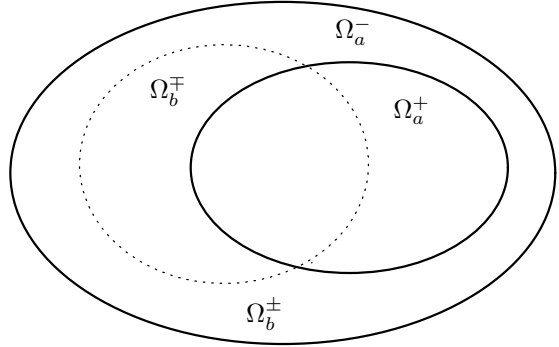 

	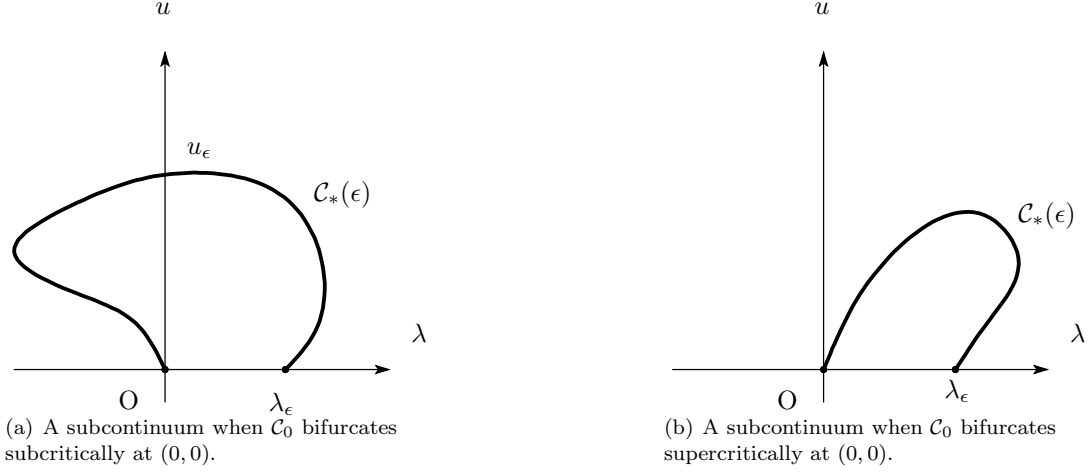
\begin{figure}[!htb] 
      \begin{center}
      \subfigure[A subcontinuum when $\mathcal{C}_0$ bifurcates subcritically  at $(0,0)$.]{
{\unitlength 0.1in
\begin{picture}( 19.6000, 21.4000)( 21.3000,-24.9000)
%
\special{pn 8}%
\special{pa 2130 2320}%
\special{pa 4090 2320}%
\special{fp}%
\special{sh 1}%
\special{pa 4090 2320}%
\special{pa 4024 2300}%
\special{pa 4038 2320}%
\special{pa 4024 2340}%
\special{pa 4090 2320}%
\special{fp}%
%
\special{pn 8}%
\special{pa 2920 2490}%
\special{pa 2920 660}%
\special{fp}%
\special{sh 1}%
\special{pa 2920 660}%
\special{pa 2900 728}%
\special{pa 2920 714}%
\special{pa 2940 728}%
\special{pa 2920 660}%
\special{fp}%
\put(27.3000,-24.8000){\makebox(0,0){O}}%
\put(42.5000,-21.3000){\makebox(0,0){$\lambda$}}%
\put(29.1000,-4.3000){\makebox(0,0){$u$}}%
\put(31.0000,-11.7000){\makebox(0,0){$u_\epsilon$}}%
%
\special{pn 20}%
\special{pa 2920 2320}%
\special{pa 2876 2230}%
\special{pa 2860 2202}%
\special{pa 2826 2148}%
\special{pa 2808 2122}%
\special{pa 2768 2078}%
\special{pa 2746 2060}%
\special{pa 2722 2042}%
\special{pa 2698 2026}%
\special{pa 2672 2010}%
\special{pa 2644 1998}%
\special{pa 2616 1984}%
\special{pa 2586 1972}%
\special{pa 2556 1958}%
\special{pa 2524 1946}%
\special{pa 2490 1934}%
\special{pa 2456 1920}%
\special{pa 2384 1892}%
\special{pa 2348 1876}%
\special{pa 2312 1858}%
\special{pa 2278 1842}%
\special{pa 2244 1824}%
\special{pa 2214 1804}%
\special{pa 2188 1786}%
\special{pa 2166 1766}%
\special{pa 2148 1746}%
\special{pa 2136 1724}%
\special{pa 2130 1704}%
\special{pa 2132 1682}%
\special{pa 2140 1662}%
\special{pa 2152 1640}%
\special{pa 2194 1598}%
\special{pa 2222 1578}%
\special{pa 2252 1558}%
\special{pa 2284 1538}%
\special{pa 2318 1518}%
\special{pa 2354 1498}%
\special{pa 2390 1480}%
\special{pa 2460 1446}%
\special{pa 2494 1432}%
\special{pa 2526 1416}%
\special{pa 2558 1402}%
\special{pa 2590 1390}%
\special{pa 2652 1366}%
\special{pa 2742 1336}%
\special{pa 2770 1328}%
\special{pa 2800 1322}%
\special{pa 2830 1314}%
\special{pa 2858 1308}%
\special{pa 2948 1296}%
\special{pa 2978 1294}%
\special{pa 3074 1288}%
\special{pa 3106 1290}%
\special{pa 3138 1290}%
\special{pa 3172 1292}%
\special{pa 3204 1296}%
\special{pa 3238 1300}%
\special{pa 3270 1304}%
\special{pa 3304 1312}%
\special{pa 3336 1318}%
\special{pa 3368 1328}%
\special{pa 3398 1338}%
\special{pa 3428 1350}%
\special{pa 3458 1364}%
\special{pa 3488 1380}%
\special{pa 3514 1398}%
\special{pa 3542 1416}%
\special{pa 3566 1436}%
\special{pa 3590 1460}%
\special{pa 3612 1484}%
\special{pa 3634 1510}%
\special{pa 3652 1538}%
\special{pa 3670 1568}%
\special{pa 3702 1628}%
\special{pa 3714 1660}%
\special{pa 3726 1694}%
\special{pa 3742 1762}%
\special{pa 3748 1796}%
\special{pa 3752 1830}%
\special{pa 3756 1866}%
\special{pa 3756 1900}%
\special{pa 3754 1934}%
\special{pa 3750 1968}%
\special{pa 3746 2000}%
\special{pa 3730 2064}%
\special{pa 3718 2094}%
\special{pa 3704 2124}%
\special{pa 3688 2152}%
\special{pa 3672 2178}%
\special{pa 3652 2204}%
\special{pa 3634 2228}%
\special{pa 3612 2254}%
\special{pa 3568 2302}%
\special{pa 3550 2320}%
\special{fp}%
\put(35.3000,-25.0000){\makebox(0,0){$\lambda_\epsilon$}}%
\put(38.5000,-14.0000){\makebox(0,0){$\mathcal{C}_*(\epsilon)$}}%
%
\special{pn 4}%
\special{sh 1}%
\special{ar 2920 2320 16 16 0  6.28318530717959E+0000}%
\special{sh 1}%
\special{ar 2920 2320 16 16 0  6.28318530717959E+0000}%
%
\special{pn 4}%
\special{sh 1}%
\special{ar 3550 2320 16 16 0  6.28318530717959E+0000}%
\special{sh 1}%
\special{ar 3550 2320 16 16 0  6.28318530717959E+0000}%
\end{picture}}%
          \label{fig15_1114d}
      }
      \hfill
    %
    %
	%
%
      \subfigure[A subcontinuum when $\mathcal{C}_0$ bifurcates supercritically  at $(0,0)$.]{
{\unitlength 0.1in
\begin{picture}( 19.6000, 21.4000)( 21.3000,-24.9000)
%
\special{pn 8}%
\special{pa 2130 2320}%
\special{pa 4090 2320}%
\special{fp}%
\special{sh 1}%
\special{pa 4090 2320}%
\special{pa 4024 2300}%
\special{pa 4038 2320}%
\special{pa 4024 2340}%
\special{pa 4090 2320}%
\special{fp}%
%
\special{pn 8}%
\special{pa 2920 2490}%
\special{pa 2920 660}%
\special{fp}%
\special{sh 1}%
\special{pa 2920 660}%
\special{pa 2900 728}%
\special{pa 2920 714}%
\special{pa 2940 728}%
\special{pa 2920 660}%
\special{fp}%
\put(27.3000,-24.8000){\makebox(0,0){O}}%
\put(42.5000,-21.3000){\makebox(0,0){$\lambda$}}%
\put(29.1000,-4.3000){\makebox(0,0){$u$}}%
\put(40.9000,-15.1000){\makebox(0,0){$\mathcal{C}_*(\epsilon)$}}%
\put(36.2000,-24.3000){\makebox(0,0){$\lambda_\epsilon$}}%
%
\special{pn 4}%
\special{sh 1}%
\special{ar 3610 2320 16 16 0  6.28318530717959E+0000}%
\special{sh 1}%
\special{ar 3610 2320 16 16 0  6.28318530717959E+0000}%
%
\special{pn 4}%
\special{sh 1}%
\special{ar 2920 2320 8 8 0  6.28318530717959E+0000}%
\special{sh 1}%
\special{ar 2920 2320 8 8 0  6.28318530717959E+0000}%
%
\special{pn 4}%
\special{sh 1}%
\special{ar 2920 2320 16 16 0  6.28318530717959E+0000}%
\special{sh 1}%
\special{ar 2920 2320 16 16 0  6.28318530717959E+0000}%
%
\special{pn 20}%
\special{pa 3610 2320}%
\special{pa 3662 2242}%
\special{pa 3678 2216}%
\special{pa 3696 2190}%
\special{pa 3714 2162}%
\special{pa 3732 2136}%
\special{pa 3752 2110}%
\special{pa 3792 2056}%
\special{pa 3812 2028}%
\special{pa 3854 1972}%
\special{pa 3874 1944}%
\special{pa 3892 1916}%
\special{pa 3908 1886}%
\special{pa 3922 1858}%
\special{pa 3932 1828}%
\special{pa 3938 1798}%
\special{pa 3942 1768}%
\special{pa 3940 1738}%
\special{pa 3934 1706}%
\special{pa 3922 1676}%
\special{pa 3908 1646}%
\special{pa 3890 1618}%
\special{pa 3868 1590}%
\special{pa 3844 1566}%
\special{pa 3820 1544}%
\special{pa 3792 1526}%
\special{pa 3764 1512}%
\special{pa 3736 1502}%
\special{pa 3706 1496}%
\special{pa 3676 1494}%
\special{pa 3646 1496}%
\special{pa 3614 1500}%
\special{pa 3584 1508}%
\special{pa 3554 1518}%
\special{pa 3522 1532}%
\special{pa 3492 1546}%
\special{pa 3462 1564}%
\special{pa 3432 1584}%
\special{pa 3372 1628}%
\special{pa 3344 1652}%
\special{pa 3288 1704}%
\special{pa 3262 1732}%
\special{pa 3190 1816}%
\special{pa 3168 1844}%
\special{pa 3148 1870}%
\special{pa 3110 1926}%
\special{pa 3076 1982}%
\special{pa 3060 2012}%
\special{pa 3018 2096}%
\special{pa 2992 2152}%
\special{pa 2980 2180}%
\special{pa 2966 2208}%
\special{pa 2956 2236}%
\special{pa 2920 2320}%
\special{fp}%
\end{picture}}%
        \label{fig15_1114ccc} 
      }
      \end{center}
      \caption{Bounded subcontinua of $(P_{\lambda, \epsilon})$ or $(P^{\quad \prime}_{\lambda, \epsilon})$.}
      \label{fig15_111402} 
    \end{figure} 

	\begin{figure}[!htb]
{\unitlength 0.1in
\begin{picture}( 19.6900, 21.4000)( 21.2100,-24.9000)
%
\special{pn 8}%
\special{pa 2130 2320}%
\special{pa 4090 2320}%
\special{fp}%
\special{sh 1}%
\special{pa 4090 2320}%
\special{pa 4024 2300}%
\special{pa 4038 2320}%
\special{pa 4024 2340}%
\special{pa 4090 2320}%
\special{fp}%
%
\special{pn 8}%
\special{pa 2920 2490}%
\special{pa 2920 660}%
\special{fp}%
\special{sh 1}%
\special{pa 2920 660}%
\special{pa 2900 728}%
\special{pa 2920 714}%
\special{pa 2940 728}%
\special{pa 2920 660}%
\special{fp}%
\put(27.3000,-24.8000){\makebox(0,0){O}}%
\put(42.5000,-21.3000){\makebox(0,0){$\lambda$}}%
\put(29.1000,-4.3000){\makebox(0,0){$u$}}%
\put(30.4000,-11.3000){\makebox(0,0){$u_0$}}%
%
\special{pn 20}%
\special{pa 2920 2320}%
\special{pa 2952 2322}%
\special{pa 3016 2322}%
\special{pa 3112 2316}%
\special{pa 3144 2312}%
\special{pa 3208 2302}%
\special{pa 3272 2286}%
\special{pa 3302 2278}%
\special{pa 3332 2268}%
\special{pa 3392 2244}%
\special{pa 3450 2216}%
\special{pa 3506 2184}%
\special{pa 3532 2166}%
\special{pa 3558 2146}%
\special{pa 3582 2126}%
\special{pa 3608 2104}%
\special{pa 3630 2082}%
\special{pa 3674 2034}%
\special{pa 3714 1982}%
\special{pa 3732 1954}%
\special{pa 3750 1924}%
\special{pa 3766 1894}%
\special{pa 3780 1864}%
\special{pa 3790 1832}%
\special{pa 3798 1802}%
\special{pa 3800 1772}%
\special{pa 3800 1742}%
\special{pa 3792 1714}%
\special{pa 3782 1686}%
\special{pa 3768 1658}%
\special{pa 3750 1632}%
\special{pa 3730 1606}%
\special{pa 3682 1558}%
\special{pa 3598 1492}%
\special{pa 3538 1452}%
\special{pa 3508 1434}%
\special{pa 3450 1398}%
\special{pa 3392 1366}%
\special{pa 3364 1350}%
\special{pa 3336 1336}%
\special{pa 3278 1310}%
\special{pa 3248 1298}%
\special{pa 3220 1286}%
\special{pa 3160 1266}%
\special{pa 3132 1258}%
\special{pa 3072 1242}%
\special{pa 3042 1236}%
\special{pa 3010 1230}%
\special{pa 2980 1226}%
\special{pa 2916 1218}%
\special{pa 2820 1212}%
\special{pa 2754 1210}%
\special{pa 2720 1210}%
\special{pa 2686 1212}%
\special{pa 2654 1212}%
\special{pa 2620 1214}%
\special{pa 2586 1218}%
\special{pa 2554 1222}%
\special{pa 2490 1234}%
\special{pa 2458 1242}%
\special{pa 2428 1252}%
\special{pa 2398 1264}%
\special{pa 2370 1278}%
\special{pa 2342 1294}%
\special{pa 2314 1312}%
\special{pa 2290 1332}%
\special{pa 2266 1354}%
\special{pa 2242 1378}%
\special{pa 2202 1434}%
\special{pa 2170 1494}%
\special{pa 2156 1526}%
\special{pa 2144 1558}%
\special{pa 2136 1592}%
\special{pa 2128 1624}%
\special{pa 2124 1658}%
\special{pa 2122 1690}%
\special{pa 2122 1722}%
\special{pa 2126 1754}%
\special{pa 2132 1784}%
\special{pa 2140 1814}%
\special{pa 2152 1842}%
\special{pa 2166 1868}%
\special{pa 2182 1894}%
\special{pa 2200 1918}%
\special{pa 2220 1942}%
\special{pa 2242 1966}%
\special{pa 2266 1988}%
\special{pa 2292 2008}%
\special{pa 2318 2030}%
\special{pa 2346 2050}%
\special{pa 2404 2088}%
\special{pa 2494 2142}%
\special{pa 2584 2192}%
\special{pa 2612 2208}%
\special{pa 2642 2226}%
\special{pa 2670 2240}%
\special{pa 2698 2256}%
\special{pa 2728 2270}%
\special{pa 2756 2282}%
\special{pa 2816 2302}%
\special{pa 2846 2310}%
\special{pa 2878 2316}%
\special{pa 2910 2320}%
\special{pa 2920 2320}%
\special{fp}%
\put(36.9000,-13.2000){\makebox(0,0){$\mathcal{C}$}}%
\end{picture}}%
	  \caption{A loop type subcontinuum of $(P_\lambda)$ when \eqref{abl} holds.} 
	\label{fig15_1114a}
	    \end{figure}
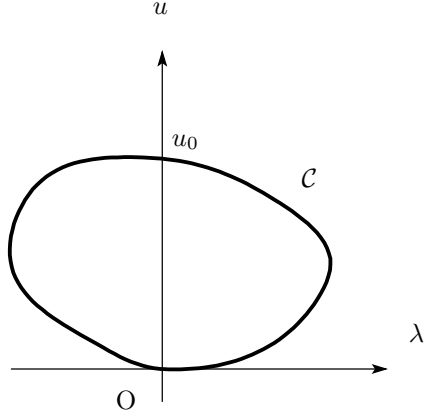 

The outline of this article is the following: in Section 2 we follow a variational approach based on the Nehari manifold method to prove Theorem \ref{t1}. In Section 3 we use the asymptotic profile of $u_{1,\lambda}$ to obtain a nontrivial non-negative solution of $(P_\lambda)$ for $\lambda>0$ small via the sub-supersolutions method. We also show that bifurcation from zero does not occur for $(P_\lambda)$ at any $\lambda > 0$. Section 4 is devoted to the proof of Theorem \ref{t2}. In Section 5 we carry out a bifurcation analysis for the regularized problem $(P_{\lambda,\epsilon})$. Finally, in Section 6 we prove Theorem \ref{thm:ep0:exist}.

\subsection{Notation}

Throughout this article we use the following notations and conventions: 
\begin{itemize}
\item The infimum of an empty set is assumed to be $\infty$.
\item Unless otherwise stated, for any $f \in L^1(\Om)$ the integral $\int_\Omega f$ is considered with respect to the Lebesgue measure, whereas for any $g \in L^1(\partial \Om)$ the integral $\int_{\partial \Om} g$  is considered with respect to the surface measure. 
\item For $r\geq 1$ the Lebesgue norm in $L^r (\Omega)$ will be denoted by $\| \cdot \|_r$ and the usual norm of $H^1(\Omega)$ by $\|\cdot \|$.
\item The strong and weak convergence are denoted by $\rightarrow$ and $\rightharpoonup$, respectively. 
\item The positive
and negative parts of a function $u$ are defined by $u^{\pm} :=\max
\{\pm u,0\}$. 
\item If $U \subset \R^N$ then we denote the closure of $U$ by $\overline{U}$ and the interior of $U$ by $\text{int }U$.
\item The support of a measurable function $f$ is denoted by supp $f$.
\end{itemize}

\bigskip

\section{The variational approach}

\bigskip

Throughout this section we assume that $p<\frac{2N}{N-2}$.
We associate to $(P_\lambda)$ the $\mathcal{C}^1$ functional $I_\lambda$ defined on $X$ by
\begin{align*}
I_\lambda (u) := \frac{1}{2} E(u) - \frac{1}{p} A(u) - \frac{\lambda}{q}B(u), 
\end{align*}
where 
\begin{align*}
E(u) = \int_\Omega |\nabla u|^2, \quad 
A(u) = \int_\Omega a(x)|u|^{p}, \quad \text{and} \quad
B(u) = \int_{\Omega} b(x)|u|^q. 
\end{align*}
Let us recall that $X=H^1(\Omega)$ is equipped with the usual norm $\|u\|=\left[\int_\Omega \left(|\nabla u|^2 +u^2\right)\right]^{\frac{1}{2}}$. 
Critical points of $I_\lambda$ are weak solutions of $(P_\lambda)$, which are also classical solutions by standard regularity.
In the sequel we shall consider the following useful subsets of $X$:
\begin{align*}
& E^+ = \{ u \in X : E(u)>0 \}, \\
& A^{\pm} = \{ u \in X : A(u)\gtrless 0 \},  \quad A_0 = \{ u \in X : A(u)=0 \}, \quad A^{\pm}_0 = A^{\pm} \cup A_0.\\ 
& B^{\pm} = \{ u \in X : B(u) \gtrless 0 \}, \quad B_0 = \{ u \in X : B(u)=0 \}, \quad B^{\pm}_0 = B^{\pm} \cup B_0.
\end{align*}
The next result will be used repeatedly in this section:

\begin{lem} \label{lem:wc}
\strut
\begin{enumerate}
  \item If $(u_n)$ is a sequence such that $u_n \rightharpoonup u_0$ in $X$ and $\limsup_n E(u_n)\leq 0$ then $u_0$ is a constant and $u_n \rightarrow u_0$ in $X$. 
  \item Assume $\int_\Omega a<0$ (respect. $\int_\Omega b<0$). If $v\not \equiv 0$ and $v \in A_0^+$ (respect. $v \in B_0^+$) then $v$ is not a constant. 
\end{enumerate}
\end{lem}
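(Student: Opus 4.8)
The plan is to treat the two parts separately. Part (1) rests on the Hilbert-space structure of $X=H^1(\Omega)$ together with the compactness of the embedding $X\hookrightarrow L^2(\Omega)$, while part (2) is an immediate consequence of the definition of $A_0^+$ (respect. $B_0^+$) once one uses that a constant can be pulled out of the integral defining $A$ (respect. $B$).

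For (1), I would first observe that $E(u_n)=\int_\Omega |\nabla u_n|^2\geq 0$ for every $n$, so the hypothesis $\limsup_n E(u_n)\leq 0$ actually forces $E(u_n)\to 0$, i.e. $\nabla u_n \to 0$ strongly in $L^2(\Omega)$. The weak convergence $u_n \rightharpoonup u_0$ in $X$ gives $\nabla u_n \rightharpoonup \nabla u_0$ in $L^2(\Omega)$, so by weak lower semicontinuity of the $L^2$-norm one gets $\|\nabla u_0\|_2 \leq \liminf_n \|\nabla u_n\|_2 = 0$. Hence $\nabla u_0 = 0$ a.e., and since $\Omega$ is a (connected) domain this means $u_0$ is a constant.

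To obtain the strong convergence I would split the $H^1$-norm as $\|u_n-u_0\|^2 = \|\nabla(u_n-u_0)\|_2^2 + \|u_n-u_0\|_2^2$. The gradient term vanishes directly: since $\nabla u_0 = 0$, we have $\|\nabla(u_n-u_0)\|_2 = \|\nabla u_n\|_2 \to 0$. The lower-order term is handled by the Rellich--Kondrachov theorem: the embedding $H^1(\Omega)\hookrightarrow L^2(\Omega)$ is compact on the bounded smooth domain $\Omega$, so $u_n \rightharpoonup u_0$ in $X$ upgrades to $u_n \to u_0$ in $L^2(\Omega)$. Combining the two estimates yields $u_n \to u_0$ in $X$.

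For (2), I would argue by contradiction. Suppose $v\in A_0^+$ with $v\not\equiv 0$ were a constant, say $v\equiv c$ with $c\neq 0$. Then $A(v)=\int_\Omega a(x)|c|^p = |c|^p \int_\Omega a < 0$, because $|c|^p>0$ and $\int_\Omega a<0$ by hypothesis; this contradicts $v\in A_0^+$, i.e. $A(v)\geq 0$. Hence $v$ cannot be a constant. The case $v\in B_0^+$ under $\int_\Omega b<0$ is identical, replacing $A,a,p$ by $B,b,q$. As for difficulties, the statement is a combination of standard functional-analytic facts and presents no real obstacle; the only point deserving a little care is the strong convergence in (1), where the gradient part and the lower-order part of the norm must be treated by different mechanisms (norm convergence to a trivial limit versus the compact embedding), and one should note that weak convergence alone would not suffice to conclude without the extra information $E(u_n)\to 0$.
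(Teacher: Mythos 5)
Your proof is correct and follows essentially the same route as the paper: in part (1) both arguments use weak lower semicontinuity of $E$ to get $E(u_0)=0$ (hence $u_0$ constant) and then combine $E(u_n)\to 0$ with the compact embedding into $L^2(\Omega)$ to upgrade to strong convergence in $X$; part (2) is the same pull-the-constant-out computation. No gaps.
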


\begin{proof} 
\strut 
\begin{enumerate}
  \item Since $u_n \rightharpoonup u_0$ in $X$ and $E$ is weakly lower semicontinuous, we have 
$$
0\leq E(u_0) \leq \liminf E(u_n) \leq 0. 
$$
Hence $E(u_0) = 0$, which implies that $u_0$ is a constant. Moreover $E(u_n) \to E(u_0)$. Since $u_n \to u_0$ in $L^2(\Omega)$ we deduce that $u_n \rightarrow u_0$ in $X$.\\
  \item If $v \in A_0^+$ is a non-zero constant then 
$0\leq A(v_0) = |v_0|^p \int_\Omega  a < 0$, which is a contradiction. 
\end{enumerate}
\end{proof}

The Nehari manifold associated to $I_\lambda$ is given by
$$
N_\lambda := \{ u \in X\setminus \{0\} : \langle I_\lambda'(u), u\rangle = 0 \} = \{ u \in X\setminus \{0\} : E(u) = A(u) + \lambda B(u) \}.
$$
We shall use the splitting $$N_\lambda= N_\lambda^+ \cup N_\lambda^- \cup N_\lambda^0,$$ where
\begin{align*}
N_\lambda^{\pm} := \left\{ u \in N_\lambda : \langle J_\lambda'(u), u \rangle \gtrless 0 \right\} &= \left\{ u \in N_\lambda : E(u) \lessgtr \lambda \frac{p-q}{p-2}B(u) \right\} \\ 
&= \left\{ u \in N_\lambda : E(u) \gtrless \frac{p-q}{2-q} A(u) \right\}, 
\end{align*}
and
$$N_\lambda^0= \{ u \in N_\lambda : \langle J_\lambda'(u), u \rangle = 0 \}.$$
Note that any nontrivial  solution of $(P_\lambda)$ belongs to $N_\lambda$. Furthermore, it follows from the implicit function theorem that $N_\lambda \setminus N_\lambda^0$ is a $C^1$ manifold and every critical point of the restriction of $I_\lambda$ to this manifold is a critical point of $I_\lambda$ (see for instance \cite[Theorem 2.3]{BZ03}), and therefore a solution of $(P_\lambda)$.

\begin{rem}
\label{runs}
{\rm
Note that any positive solution of $(P_\lambda)$ belonging to $N_\lambda^-$ is unstable. Indeed, if $u \in N_\lambda^-$ then 
$$E(u)-(p-1)A(u)-\lambda(q-1)B(u)=(2-q)E(u)-(p-q)A(u)<0.$$
It follows that $$\gamma_1(\lambda,u)=\inf\left\{ \int_\Omega \left(|\nabla \phi|^2 - (p-1)a(x)u^{p-2}\phi^2- \lambda(q-1)b(x)u^{q-2}\phi^2\right) :\ \|\phi\|_2=1\right\}<0.$$}
\end{rem}

To analyse the structure of $N_\lambda^\pm$, we consider the fibering maps corresponding to $I_\lambda$, which are set, for $u\not= 0$, as follows:
\begin{align*}
j_u(t) := I_\lambda (tu) = \frac{t^2}{2}E(u) - \frac{t^p}{p}A(u) - \lambda \frac{t^q}{q} B(u), \quad t>0. 
\end{align*}
It is easy to see that 
\begin{align*}
j_u'(1)=0 \lessgtr j_u''(1) \Longleftrightarrow u \in N_\lambda^{\pm}, 
\end{align*}
and more generally,
\begin{align*}
j_u'(t)=0 \lessgtr j_u''(t) \Longleftrightarrow tu \in N_\lambda^{\pm}. 
\end{align*}
Having this characterisation in mind, we look for conditions under which $j_u$ has a critical point. Set 
\begin{align*}
i_u(t) := t^{-q}j_u(t) = \frac{t^{2-q}}{2}E(u) - \frac{t^{p-q}}{p}A(u) - \lambda B(u), \quad t>0. 
\end{align*}
Let $u\in E^+ \cap A^+ \cap B^+$. Then $i_u$ has a global maximum $i_u(t^*)$ at some $t^* > 0$, and moreover, $t^*$ is unique. If $i_u(t^*) > 0$, then $j_u$ has a global maximum which is positive and a local minimum which is negative. Moreover, these are the only critical points of $j_u$. 
%
%
%

%
%
We shall require a condition on $\lambda$ that provides $i_u(t^*)>0$. Note that 
$$
 i_u'(t) = \frac{2-q}{2}t^{1-q}E(u) - \frac{p-q}{p}t^{p-q-1}A(u) = 0$$
if and only if $$t = t^*:= \left( \frac{p(2-q)E(u)}{2(p-q)A(u)} \right)^{\frac{1}{p-2}}.$$
Moreover
$$ i_u(t^*) = \frac{p-2}{2(p-q)} \left( \frac{p(2-q)}{2(p-q)} \right)^{\frac{2-q}{p-2}} \frac{E(u)^{\frac{p-q}{p-2}}}{A(u)^{\frac{2-q}{p-2}}} - \frac{\lambda}{q}B(u) > 0$$ if and only if 
\begin{equation}0< \lambda < C_{pq} \frac{E(u)^{\frac{p-q}{p-2}}}{B(u)A(u)^{\frac{2-q}{p-2}}}, \label{iu*>0}
\end{equation}
where $C_{pq} = \left( \frac{q(p-2)}{2(p-q)} \right) \left( \frac{p(2-q)}{2(p-q)} \right)^{\frac{2-q}{p-2}}$. 
Note that $F(u) = \frac{E(u)^{\frac{p-q}{p-2}}}{B(u)A(u)^{\frac{2-q}{p-2}}}$ satisfies $F(tu) = F(u)$ for $t>0$, i.e. $F$ is homogeneous of order $0$. \\

We introduce now
\begin{align} \label{lam0}
\lambda_0 = \inf \{ E(u)^{\frac{p-q}{p-2}} : u \in E^+ \cap A^+ \cap B^+, \ C_{pq}^{-1} B(u)A(u)^{\frac{2-q}{p-2}} = 1 \}.
\end{align}
Note that if $E^+ \cap A^+ \cap B^+ =\emptyset$ then $\lambda_0=\infty$.
We deduce then the following result, which provides sufficient conditions for the existence of critical points of $j_u$:

\begin{prop} \label{prop:t1t2} 
Assume \eqref{ab<0}. Then $\lambda_0 > 0$ and, for $0<\lambda<\lambda_0$, there holds:
\begin{enumerate}
\item If either $u \in  A^+ \cap B^-$ or $u \in E^+ \cap A^+ \cap B_0$ then, $j_u$ has a positive global maximum at some $t_1 > 0$, i.e. $j_u'(t_1) = 0 > j_u''(t_1)$ and $j_u(t) < j_u(t_1)$ for $t\not= t_1$. Moreover, $t_1$ is the unique critical point of $j_u$. \\

\item If either $u \in  A^- \cap B^+$ or $E^+ \cap A_0 \cap B^+$ then $j_u$ has a negative global minimum at some $t_1 > 0$, i.e. $j_u'(t_1) = 0 < j_u''(t_1)$ and $j_u(t) > j_u(t_1)$ for $t\not= t_1$. Moreover, $t_1$ is the unique critical point of $j_u$. \\

\item  If $u\in E^+ \cap A^+ \cap B^+$ then $j_u$ has a negative local minimum at $t_1 > 0$ and a positive global maximum at $t_2 > t_1$. Furthermore  $t_1$ and $t_2$ are the only critical points of $j_u$.

\end{enumerate}
\end{prop}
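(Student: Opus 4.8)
The plan is to establish $\lambda_0>0$ first, and then to treat cases (1)--(3) by a direct study of the fibering map $j_u$ through the sign of its derivative. For $\lambda_0>0$ I would use that $F$ is homogeneous of degree $0$, so that any $u\in E^+\cap A^+\cap B^+$ may be rescaled to satisfy the constraint $C_{pq}^{-1}B(u)A(u)^{\frac{2-q}{p-2}}=1$; hence $\lambda_0=C_{pq}\inf\{F(u):u\in E^+\cap A^+\cap B^+\}$. Arguing by contradiction, suppose $\lambda_0=0$ and take a minimizing sequence $(u_n)$ on the constraint set with $E(u_n)\to 0$. Splitting $u_n=\bar u_n+v_n$ into its mean and zero-mean parts, Poincar\'e--Wirtinger gives $v_n\to 0$ in $X$, and \eqref{ab<0} prevents $|\bar u_n|\to\infty$: otherwise $A(u_n)\sim|\bar u_n|^p\int_\Omega a$ or $B(u_n)\sim|\bar u_n|^q\int_\Omega b$ would eventually be negative, contradicting $u_n\in A^+\cap B^+$. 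Thus $(u_n)$ is bounded and, by Lemma~\ref{lem:wc}(1), converges strongly to a constant $u_0$; passing to the limit in $B(u_n)A(u_n)^{\frac{2-q}{p-2}}=C_{pq}>0$ would force $\int_\Omega a>0$ and $\int_\Omega b>0$ simultaneously, contradicting \eqref{ab<0}.

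For the case analysis I would exploit the factorization $j_u'(t)=t^{q-1}k_u(t)$, where $k_u(t)=t^{2-q}E(u)-t^{p-q}A(u)-\lambda B(u)$, so that on $(0,\infty)$ the critical points of $j_u$ are exactly the zeros of $k_u$ and the signs of $j_u'$ and $k_u$ agree. Here $k_u(0^+)=-\lambda B(u)$, while $k_u(+\infty)=-\infty$ if $A(u)>0$ and $k_u(+\infty)=+\infty$ if $A(u)\le 0$ (with $E(u)>0$). In case (2), where $A(u)<0<B(u)$ or $A(u)=0<B(u)$ with $E(u)>0$, a look at $k_u'$ shows $k_u$ is strictly increasing from $-\lambda B(u)<0$ to $+\infty$, hence has a single zero $t_1$ where $j_u$ passes from decreasing to increasing; since $j_u(0^+)=0$, this $t_1$ is the unique critical point and yields a negative global minimum. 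Case (1), where $A(u)>0>B(u)$ or $A(u)>0=B(u)$ with $E(u)>0$, is the symmetric situation: $k_u$ starts at $-\lambda B(u)\ge 0$, is unimodal and tends to $-\infty$, so it changes sign once, from $+$ to $-$, giving a unique critical point $t_1$ that is a global maximum; positivity $j_u(t_1)>0$ follows because $j_u$ increases on $(0,t_1)$ from $j_u(0^+)=0$ (and explicitly $j_u(t_1)=t_1^pA(u)(\frac12-\frac1p)>0$ when $B(u)=0$).

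Case (3), $u\in E^+\cap A^+\cap B^+$, I would settle by invoking the analysis of $i_u$ made just before \eqref{lam0}: $i_u$ has a unique global maximum $i_u(t^*)$, and by \eqref{iu*>0} the inequality $i_u(t^*)>0$ is equivalent to $\lambda<C_{pq}F(u)$. Since $\lambda<\lambda_0=C_{pq}\inf F\le C_{pq}F(u)$ for every such $u$, we obtain $i_u(t^*)>0$, and the consequence recorded there gives precisely that $j_u$ has exactly two critical points, a negative local minimum at some $t_1$ and a positive global maximum at some $t_2>t_1$. The step I expect to be the main obstacle is the proof that $\lambda_0>0$: the difficulty is the loss of compactness caused by the constant (zero-frequency) part of the minimizing sequence, and it is exactly condition \eqref{ab<0} that rules out concentration on constants and restores boundedness.
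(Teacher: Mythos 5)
Your proof is correct and follows the same overall strategy as the paper's: a contradiction argument for $\lambda_0>0$ built on a minimizing sequence whose compactness is restored by \eqref{ab<0}, an elementary sign analysis of $j_u'$ for cases (1) and (2), and the criterion $i_u(t^*)>0$ from \eqref{iu*>0} combined with $\lambda<\lambda_0\le C_{pq}F(u)$ for case (3). The one place where you genuinely diverge is in ruling out $\Vert u_n\Vert\to\infty$ along the minimizing sequence: the paper normalizes ($v_n=u_n/\Vert u_n\Vert$), notes $E(v_n)\to 0$, extracts a strongly convergent subsequence via Lemma \ref{lem:wc}(1), and lands a nonzero constant in $A_0^+\cap B_0^+$, contradicting Lemma \ref{lem:wc}(2); you instead split $u_n$ into its mean and zero-mean parts, use Poincar\'e--Wirtinger to kill the fluctuating part (since $E(u_n)\to0$), and observe that an unbounded mean would force $A(u_n)$ or $B(u_n)$ to take the sign of $\int_\Omega a$ or $\int_\Omega b$, one of which \eqref{ab<0} makes negative. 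Both devices work; yours localizes the loss of compactness explicitly in the constant mode, while the paper's is shorter because it recycles Lemma \ref{lem:wc} verbatim in both the bounded and unbounded alternatives. Your treatment of cases (1) and (2) via the factorization $j_u'(t)=t^{q-1}k_u(t)$ is also more detailed than the paper's, which simply declares these cases clear, and it correctly covers the degenerate subcases ($E(u)=0$ in (1) when $u$ is constant, $A(u)=0$ in (2)); the only cosmetic gloss is that when $B(u)=0$ one should note $k_u(t)>0$ for small $t>0$ because the term $t^{2-q}E(u)$ dominates, rather than reading off the sign from $k_u(0^+)=0$ alone.
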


\begin{proof}
First, we show that $\lambda_0 > 0$. Assume $\lambda_0 = 0$, so that we can choose $u_n \in E^+ \cap A^+ \cap B^+$ satisfying 
\begin{align*}
E(u_n) \rightarrow 0, \quad 
\mbox{and} \quad C_{pq}^{-1} B(u_n)A(u_n)^{\frac{2-q}{p-2}} = 1.\end{align*}
If $(u_n)$ is bounded in $X$ then we may assume that $u_n \rightharpoonup u_0$ for some $u_0 \in X$ and $u_n \to u_0$ in $L^p(\Omega)$ and  $L^q(\Omega)$. It follows from Lemma \ref{lem:wc}(1) that $u_0$ is a constant and $u_n \to u_0$ in $X$. From $u_n \in A^+ \cap B^+$ we deduce that $u_0 \in A_0^+ \cap B_0^+$. In addition, there holds
$$C_{pq}^{-1} B(u_0)A(u_0)^{\frac{2-q}{p-2}} = 1,$$
so that $u_0 \not \equiv 0$. From Lemma \ref{lem:wc} we get a contradiction.

Let us assume now that $\Vert u_n \Vert \to \infty$. Set $v_n = \frac{u_n}{\Vert u_n \Vert}$, so that $\Vert v_n \Vert = 1$. We may assume that $v_n \rightharpoonup v_0$ and $v_n \to v_0$ in $L^p(\Omega)$. Since $E(v_n) \to 0$ and $v_n \in A^+ \cap B^+$, we have $v_n \to v_0$ in $X$, $v_0$ is a constant, and $v_0 \in A_0^+ \cap B_0^+$. In particular, $\|v_0\|=1$, i.e. $v_0 \not \equiv 0$. Lemma \ref{lem:wc} provides again a contradiction.

Now, if $u \in  A^+ \cap B^-$ or $u \in E^+ \cap A^+ \cap B_0$ then it is clear that $j_u$ has a unique critical point, which is a global maximum point. In a similar way, if $u \in A^- \cap B^+$ then $j_u$ has a unique critical point, which is a global minimum point. Finally, if $u \in E^+ \cap A^+ \cap B^+$ then
\begin{align*}
\lambda_0 \leq C_{pq} \frac{E(u)^{\frac{p-q}{p-2}}}{B(u)A(u)^{\frac{2-q}{p-2}}}. 
\end{align*}
Thus, if $0<\lambda < \lambda_0$ then $i_u(t^*) > 0$ from \eqref{iu*>0}.
 This completes the proof. 
\end{proof} 

\begin{prop} \label{prop:Npm0}
Assume \eqref{ab<0}. Then, for $0< \lambda < \lambda_0$, we have: 
\begin{enumerate}
\item $N_\lambda^0$ is empty.
\item If $A^+ \cap B^+ \neq \emptyset$ then $N_\lambda^+$ and $N_\lambda^-$ are non-empty.  
\item If $A_0^- \cap B^+ \neq \emptyset$ then $N_\lambda^+$ is non-empty.  
\item If $A^+ \cap B_0^- \neq \emptyset$ then $N_\lambda^-$ is non-empty.
\end{enumerate}
\end{prop}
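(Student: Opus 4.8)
The plan is to derive membership of rescaled functions $tu$ in $N_\lambda^+$ or $N_\lambda^-$ from the shape of the fibering map $j_u$, which is precisely the information recorded in Proposition \ref{prop:t1t2}. The bridge is the pointwise equivalence stated before that proposition: for $t>0$, $tu\in N_\lambda^+$ exactly when $j_u'(t)=0<j_u''(t)$, and $tu\in N_\lambda^-$ exactly when $j_u'(t)=0>j_u''(t)$; likewise $tu\in N_\lambda^0$ exactly when $j_u'(t)=0=j_u''(t)$. Thus a critical point of $j_u$ at which $j_u''$ is positive (a strict local minimum) yields a point of $N_\lambda^+$, and one at which $j_u''$ is negative (a strict local maximum) yields a point of $N_\lambda^-$. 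A recurring technical point is that, to place a given $u$ in the hypotheses of Proposition \ref{prop:t1t2} that require $E(u)>0$ (the cases $E^+\cap A_0\cap B^+$ and $E^+\cap A^+\cap B_0$), I must exclude that $u$ is a nonzero constant; this is exactly where \eqref{ab<0} is used, since a nonzero constant $u$ satisfies $A(u)=|u|^p\int_\Omega a$ and $B(u)=|u|^q\int_\Omega b$, so the signs of $A(u),B(u)$ dictate the signs of $\int_\Omega a,\int_\Omega b$.

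I would treat part (1) first, as it provides the non-degeneracy needed afterwards. Suppose $u\in N_\lambda^0$. Reading the two equivalent descriptions of $N_\lambda^0$ (with equalities in place of the strict inequalities defining $N_\lambda^\pm$), I get $A(u)=\tfrac{2-q}{p-q}E(u)$ and $\lambda B(u)=\tfrac{p-2}{p-q}E(u)$. If $E(u)=0$ then $u$ is a nonzero constant with $A(u)=B(u)=0$, forcing $\int_\Omega a=\int_\Omega b=0$ and contradicting \eqref{ab<0}; hence $E(u)>0$, and then $A(u)>0$ and (as $\lambda>0$) $B(u)>0$, so $u\in E^+\cap A^+\cap B^+$. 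But as in the proof of Proposition \ref{prop:t1t2}, for such $u$ and $0<\lambda<\lambda_0$ one has $i_u(t^*)>0$ by \eqref{lam0}, \eqref{iu*>0} and the zero-homogeneity of $F$; consequently $j_u$ has exactly two critical points, both non-degenerate. This contradicts the degenerate critical point of $j_u$ at $t=1$ coming from $u\in N_\lambda^0$. Hence $N_\lambda^0=\emptyset$, and in particular every critical point of every $j_u$ satisfies $j_u''\neq 0$, so strict local minima lie in $N_\lambda^+$ and strict local maxima in $N_\lambda^-$.

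Parts (2)--(4) then reduce to locating such critical points via Proposition \ref{prop:t1t2}. For (2), take $u\in A^+\cap B^+$; a nonzero constant would force $\int_\Omega a>0$ and $\int_\Omega b>0$, contradicting \eqref{ab<0}, so $E(u)>0$ and $u\in E^+\cap A^+\cap B^+$. Proposition \ref{prop:t1t2}(3) gives a local minimum $t_1$ and a global maximum $t_2>t_1$ of $j_u$, whence $t_1u\in N_\lambda^+$ and $t_2u\in N_\lambda^-$. For (3), take $u\in A_0^-\cap B^+$: if $A(u)<0$ then Proposition \ref{prop:t1t2}(2) applies directly, while if $A(u)=0$ then a nonzero constant would give $\int_\Omega a=0$ and $\int_\Omega b>0$, again contradicting \eqref{ab<0}, so $E(u)>0$ and the case $E^+\cap A_0\cap B^+$ of Proposition \ref{prop:t1t2}(2) applies; either way $j_u$ has a global minimum $t_1$ and $t_1u\in N_\lambda^+$. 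Part (4) is symmetric: for $u\in A^+\cap B_0^-$ one uses Proposition \ref{prop:t1t2}(1), handling the boundary case $B(u)=0$ by ruling out constants through \eqref{ab<0}, and the global maximum $t_1$ of $j_u$ gives $t_1u\in N_\lambda^-$.

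The main obstacle I anticipate is the non-degeneracy step in part (1): converting the strict inequality $\lambda<\lambda_0$ into strict positivity $i_u(t^*)>0$ for every $u\in E^+\cap A^+\cap B^+$, and then confirming that the two critical points of $j_u$ carry non-vanishing second derivative. The first conversion relies on the zero-homogeneity of $F$, giving $\lambda_0=C_{pq}\inf F\le C_{pq}F(u)$ so that \eqref{iu*>0} yields $i_u(t^*)>0$; the second is cleanest after writing $j_u'(t)=t^{q-1}\bigl(t^{2-q}E(u)-t^{p-q}A(u)-\lambda B(u)\bigr)$ and observing that the bracketed factor is strictly monotone near each of its two zeros, so $j_u''$ cannot vanish there.
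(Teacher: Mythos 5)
Your proof is correct and follows essentially the same route as the paper: part (1) and the non-emptiness claims are all read off from Proposition \ref{prop:t1t2} via the fibering-map correspondence, with \eqref{ab<0} used to exclude nonzero constants so that the relevant case of that proposition applies. You merely spell out details the paper leaves implicit (that any $u\in N_\lambda^0$ must lie in $E^+\cap A^+\cap B^+$, and the non-degeneracy of the two critical points of $j_u$ there), which is consistent with the paper's one-line treatment of (1) and its remark that (3) and (4) are straightforward.
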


\begin{proof}
 From Proposition \ref{prop:t1t2} it follows that there is no $t>0$ such that $j_u'(t)=j_u''(t)=0$, i.e. $N_\lambda^0$ is empty. Let $u_0 \in A^+ \cap B^+$. Since $\int_\Omega a<0$ or $\int_\Omega b<0$, from Lemma \ref{lem:wc} it follows that $u_0 \in E^+ \cap A^+ \cap B^+$. By Proposition \ref{prop:t1t2} we infer that for $0< \lambda < \lambda_0$ there are $0<t_1<t_2$ such that $t_1 u \in N_\lambda^+$ and $t_2 u \in N_\lambda^-$. Assertions (3) and (4) are straightforward.
\end{proof}

The following result provides some properties of $N_\lambda^+$ and $N_\lambda^-$:

\begin{lem}  \label{lem:N+}
Assume \eqref{ab<0}.  Then we have:
\strut
\begin{enumerate}
 \item $N_\lambda^+ \subset B^+$ and $N_\lambda^- \subset A^+$.
  \item $N_\lambda^+$ is bounded in $X$ for $0<\lambda < \lambda_0$ . 
\end{enumerate}
\end{lem}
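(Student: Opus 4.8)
The plan is to treat the two assertions separately. The first is an immediate consequence of the sign constraints built into the definition of $N_\lambda^\pm$, while the second requires a compactness argument resting on Lemma \ref{lem:wc}.

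For (1), recall that $E(u)=\int_\Omega|\nabla u|^2\ge 0$ for every $u$, and that $p-q,\,p-2,\,2-q>0$ while $\lambda>0$. If $u\in N_\lambda^+$ then by definition $E(u)<\lambda\frac{p-q}{p-2}B(u)$; since the left-hand side is nonnegative and $\lambda\frac{p-q}{p-2}>0$, this forces $B(u)>0$, i.e. $u\in B^+$. Likewise, if $u\in N_\lambda^-$ then $E(u)<\frac{p-q}{2-q}A(u)$, and the same nonnegativity argument yields $A(u)>0$, i.e. $u\in A^+$. This settles (1) with no further work.

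For (2), I would argue by contradiction. Suppose $N_\lambda^+$ is unbounded, so there is a sequence $u_n\in N_\lambda^+$ with $\|u_n\|\to\infty$. Normalizing $v_n:=u_n/\|u_n\|$, we have $\|v_n\|=1$; up to a subsequence $v_n\rightharpoonup v_0$ in $X$ and, by the compact embeddings $X\hookrightarrow L^p(\Omega),\,L^q(\Omega)$ (valid since $p<\frac{2N}{N-2}$), $A(v_n)\to A(v_0)$ and $B(v_n)\to B(v_0)$. The decisive step is to extract decay of $E(v_n)$: homogenizing the defining inequality $E(u_n)<\lambda\frac{p-q}{p-2}B(u_n)$ gives $\|u_n\|^{2-q}E(v_n)<\lambda\frac{p-q}{p-2}B(v_n)$, and since the right-hand side stays bounded while $\|u_n\|^{2-q}\to\infty$ (here $2-q>0$), we conclude $E(v_n)\to 0$. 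By Lemma \ref{lem:wc}(1), $v_0$ is then a constant and $v_n\to v_0$ strongly in $X$; as $\|v_n\|=1$, this constant is nonzero.

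It remains to contradict \eqref{ab<0}. Homogenizing the Nehari identity $E(u_n)=A(u_n)+\lambda B(u_n)$ and dividing by $\|u_n\|^2$ gives $E(v_n)=\|u_n\|^{p-2}A(v_n)+\lambda\|u_n\|^{q-2}B(v_n)$. Since $E(v_n)\to 0$ and $\|u_n\|^{q-2}\to 0$ (as $q<2$), the term $\|u_n\|^{p-2}A(v_n)$ must tend to $0$; but $\|u_n\|^{p-2}\to\infty$, so $A(v_n)\to 0$ and hence $A(v_0)=0$, i.e. $v_0$ is a nonzero constant lying in $A_0^+$. If $\int_\Omega a<0$, this directly contradicts Lemma \ref{lem:wc}(2). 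If instead $\int_\Omega b<0$, I observe that $B(v_n)>0$ (because $u_n\in B^+$ by part (1)), so $B(v_0)\ge 0$; yet $v_0$ a nonzero constant forces $B(v_0)=|v_0|^q\int_\Omega b<0$, a contradiction. In either alternative of \eqref{ab<0} we reach an absurdity, so $N_\lambda^+$ is bounded. The main obstacle is exactly the extraction $E(v_n)\to 0$: it is here that membership in $N_\lambda^+$ (rather than $N_\lambda^-$) is essential, through the inequality $E(u)<\lambda\frac{p-q}{p-2}B(u)$, whose homogenized form pits the divergent factor $\|u_n\|^{2-q}$ against a bounded quantity; the analogous estimate fails for $N_\lambda^-$, consistent with boundedness being claimed only for $N_\lambda^+$.
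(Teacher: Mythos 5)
Your proposal is correct and follows essentially the same route as the paper: part (1) from the sign of $E$ together with the defining inequalities of $N_\lambda^\pm$, and part (2) by normalizing an unbounded sequence, forcing $E(v_n)\to 0$ from the homogenized $N_\lambda^+$ inequality, invoking Lemma \ref{lem:wc}(1) to get a nonzero constant limit, and then using the Nehari identity to place that constant in $A_0\cap B_0^+$, contradicting Lemma \ref{lem:wc}(2). The only cosmetic difference is that you split the final contradiction into the two cases of \eqref{ab<0} rather than quoting Lemma \ref{lem:wc}(2) once for $A_0^+\cap B_0^+$.
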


\begin{proof} 
\strut
\begin{enumerate}
\item Let $u \in N_\lambda^+$. Then $0\leq E(u) < \lambda \frac{p-q}{p-2}B(u)$, i.e. $u \in B^+$.
Now, if $u \in N_\lambda^-$ then $0\leq E(u) < \frac{p-q}{2-q} A(u)$, i.e. $u \in A^+$.\\
  \item Assume $(u_n) \subset N_\lambda^+$ and $\Vert u_n \Vert \to \infty$. Set $v_n = \frac{u_n}{\Vert u_n \Vert}$. It follows that $\Vert v_n \Vert = 1$, so we may assume that $v_n \rightharpoonup v_0$ , $B(v_n)$ is bounded, and $v_n \to v_0$ in $L^p(\Omega)$ (implying $A(v) \to A(v_0)$). Since $u_n \in N_\lambda^+$, we see that $$E(v_n) < \lambda \frac{p-q}{p-2} B(v_n) \Vert u_n \Vert^{q-2},$$ and thus $\limsup E(v_n) \leq 0$. Lemma \ref{lem:wc}(1) yields that $v_0$ is a constant and $v_n \to v_0$ in $X$. Consequently, $\Vert v_0 \Vert = 1$, and $v_0$ is a non-zero constant. On the other hand, since $u_n \in N_\lambda^+$, we have $v_n \in N_\lambda^+$, so $v_n \in B^+$. It follows that $v_0 \in B_0^+$. Finally, from 
  $$E(u_n)=\lambda B(u_n)+A(u_n)$$
  we deduce that $A(v_n) \to 0$, i.e. $v_0 \in A_0$. Therefore $v_0 \in A_0 \cap B_0^+$, which contradicts 
Lemma \ref{lem:wc}(2).
\end{enumerate}
\end{proof}

\begin{prop} 
\label{p1}
Assume \eqref{ab<0} and $\Omega^b_+ \neq \emptyset$.
Then there exists $u_{1,\lambda} \geq 0$ such that
$I_\lambda (u_{1,\lambda}) = \displaystyle \min_{N_\lambda^+} I_\lambda $ for $0<\lambda <\lambda_0$. In particular, $u_{1,\lambda}$ is a nontrivial non-negative solution  of $(P_\lambda)$ for $0<\lambda <\lambda_0$. 
\end{prop}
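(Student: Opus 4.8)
The plan is to minimise $I_\lambda$ over the set $N_\lambda^+$ and show the minimiser is attained at a nontrivial non-negative function, which by the Nehari manifold machinery is automatically a weak (hence classical) solution of $(P_\lambda)$. First I would check that $N_\lambda^+$ is non-empty: since $\Omega^b_+ \neq \emptyset$, one can build a test function $u_0 \in A^+ \cap B^+$ (or at least in $B^+$) so that Proposition \ref{prop:Npm0} guarantees $N_\lambda^+ \neq \emptyset$ for $0<\lambda<\lambda_0$. Next I would establish that $I_\lambda$ is bounded below on $N_\lambda^+$; indeed, on $N_\lambda$ one has $E(u)=A(u)+\lambda B(u)$, so substituting into $I_\lambda$ gives
\begin{align*}
I_\lambda(u) = \left(\tfrac{1}{2}-\tfrac{1}{q}\right)E(u) + \left(\tfrac{1}{q}-\tfrac{1}{p}\right)A(u),
\end{align*}
and on $N_\lambda^+$ we have $E(u)<\tfrac{p-q}{2-q}A(u)$ together with the boundedness of $N_\lambda^+$ from Lemma \ref{lem:N+}(2); combining these, $I_\lambda$ is bounded below (in fact negative) on $N_\lambda^+$, so $m:=\inf_{N_\lambda^+} I_\lambda$ is finite.

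I would then take a minimising sequence $(u_n)\subset N_\lambda^+$. By Lemma \ref{lem:N+}(2) the sequence is bounded in $X$, so up to a subsequence $u_n \rightharpoonup u_{1,\lambda}$ weakly in $X$, with $u_n \to u_{1,\lambda}$ strongly in $L^p(\Omega)$ and $L^q(\Omega)$ (using $p<\tfrac{2N}{N-2}$), hence $A(u_n)\to A(u_{1,\lambda})$ and $B(u_n)\to B(u_{1,\lambda})$. The decisive step is to show the convergence is strong in $X$ so that the limit lands on $N_\lambda^+$ rather than escaping to the boundary $N_\lambda^0$ (which is empty by Proposition \ref{prop:Npm0}(1)). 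The standard argument: from $u_n \in N_\lambda^+ \subset B^+$ and the expression above for $I_\lambda$ on $N_\lambda$, the infimum $m$ is strictly negative, which forces $B(u_{1,\lambda})>0$, so in particular $u_{1,\lambda}\not\equiv 0$. If $u_n \not\to u_{1,\lambda}$ strongly, then $E(u_{1,\lambda})<\liminf E(u_n)$ strictly; I would use this strict inequality in the fibering map analysis to derive a contradiction with $m$ being the infimum, by projecting $u_{1,\lambda}$ back onto $N_\lambda^+$ via its unique fibering critical point $t_1$ (Proposition \ref{prop:t1t2}) and showing $I_\lambda(t_1 u_{1,\lambda})<m$. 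This yields $u_n \to u_{1,\lambda}$ strongly in $X$, hence $u_{1,\lambda}\in \overline{N_\lambda^+}$, and since $N_\lambda^0=\emptyset$ we conclude $u_{1,\lambda}\in N_\lambda^+$ and $I_\lambda(u_{1,\lambda})=m$.

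To obtain non-negativity I would exploit that $I_\lambda(|u|)\le I_\lambda(u)$ because $E(|u|)=E(u)$ while $A$ and $B$ depend only on $|u|$; thus replacing $u_n$ by $|u_n|$ (which remains in $N_\lambda^+$ since the defining quantities $E,A,B$ are unchanged) I may assume the minimiser satisfies $u_{1,\lambda}\ge 0$. Finally, since $N_\lambda\setminus N_\lambda^0$ is a $C^1$ manifold and constrained critical points of $I_\lambda$ on it are free critical points of $I_\lambda$, the minimiser $u_{1,\lambda}$ is a weak solution of $(P_\lambda)$, and by the standard elliptic regularity already invoked in the excerpt it is a classical, nontrivial non-negative solution. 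The main obstacle I anticipate is the compactness/strong-convergence step: one must rule out that the minimising sequence degenerates onto $N_\lambda^0$ or that the weak limit fails to stay in $N_\lambda^+$, and this is where the sharp use of the empty $N_\lambda^0$, the boundedness of $N_\lambda^+$, and the strict negativity of $m$ all come together.
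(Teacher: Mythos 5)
Your overall strategy coincides with the paper's: minimise over $N_\lambda^+$, use the boundedness of $N_\lambda^+$ and the emptiness of $N_\lambda^0$, and force strong convergence by projecting the weak limit back onto $N_\lambda^+$ via the fibering map. Two intermediate claims, however, do not hold as literally stated and are precisely where the paper has to work. First, the implication ``$m<0$ forces $B(u_{1,\lambda})>0$'' is not direct: from $u_n\in B^+$ and $B(u_n)\to B(u_{1,\lambda})$ you only get $B(u_{1,\lambda})\geq 0$, and $I_\lambda(u_{1,\lambda})<0$ alone is compatible with $u_{1,\lambda}\in E^+\cap A^+\cap B_0$, since there $j_{u_{1,\lambda}}(1)=\frac{1}{2}E(u_{1,\lambda})-\frac{1}{p}A(u_{1,\lambda})$ can be negative. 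What actually excludes $B(u_{1,\lambda})=0$ is passing the defining inequality $E(u_n)<\lambda\frac{p-q}{p-2}B(u_n)$ of $N_\lambda^+$ to the limit: it yields $E(u_{1,\lambda})=0$, so $u_{1,\lambda}$ is a non-zero constant, hence $\int_\Omega b=0$, hence $\int_\Omega a<0$ by \eqref{ab<0}, and then $j_{u_{1,\lambda}}(t)=-\frac{t^p}{p}|u_{1,\lambda}|^p\int_\Omega a>0$ contradicts $I_\lambda(u_{1,\lambda})<0$. You need this chain before Proposition \ref{prop:t1t2} can be invoked to produce $t_1$.

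Second, the single comparison $I_\lambda(t_1u_{1,\lambda})<m$ does not cover all configurations. If $u_n\not\to u_{1,\lambda}$ then $j_{u_{1,\lambda}}'(1)<\liminf j_{u_n}'(1)=0$, so $1$ lies either in $(0,t_1)$ or beyond the global maximum point $t_2$ of $j_{u_{1,\lambda}}$. In the first case $j_{u_{1,\lambda}}$ decreases on $(0,t_1)$ and your value comparison $j_{u_{1,\lambda}}(t_1)\leq j_{u_{1,\lambda}}(1)<m$ works; but when $t_1<t_2<1$ one may well have $j_{u_{1,\lambda}}(t_1)>j_{u_{1,\lambda}}(1)$, and the argument breaks. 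The paper closes this case with a derivative comparison: for $t_1\leq 1$ one has $j_{u_n}'(t_1)\leq 0$ for every $n$ (each $j_{u_n}$ is non-increasing on $(0,1]$), whence $j_{u_{1,\lambda}}'(t_1)<\liminf j_{u_n}'(t_1)\leq 0$, contradicting $j_{u_{1,\lambda}}'(t_1)=0$. Apart from these two points your sketch reproduces the paper's proof; your explicit reduction to $|u_n|$ for non-negativity is a correct detail that the paper leaves implicit, since $E$, $A$ and $B$ are invariant under $u\mapsto|u|$.
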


\begin{proof} Let $0<\lambda < \lambda_0$. We consider a minimizing sequence $(u_n) \subset N_\lambda^+$, i.e.
\begin{align*}
I_\lambda (u_n) \rightarrow \inf_{N_\lambda^+}I_\lambda < 0. 
\end{align*}
Since $(u_n)$ is bounded in $X$, we may assume that $u_n \rightharpoonup u_0$ in $X$ and $u_n \to u_0$ in $L^p(\Omega)$ and $L^q(\Omega)$. It follows that $$I_\lambda (u_0) \leq \liminf I_\lambda (u_n) = \inf_{N_\lambda^+}I_\lambda (u) < 0,$$ so that 
$u_0 \not \equiv 0$. Moreover, as $u_n \in B^+$ we have $u_0 \in B_0^+$. We claim that $u_0 \in B^+$. Indeed, if $u_0 \in B_0$ then, from $$E(u_n)<\lambda \frac{p-q}{p-2}B(u_n)$$ we obtain $E(u_0)=0$, i.e. $u_0$ is a constant. So $\int_\Omega b=0$, and consequently, by \eqref{ab<0}, we have $\int_\Omega a<0$. It follows that 
$j_{u_0}(t)= -\frac{1}{p}t^p |u_0|^p \int_\Omega a>0$ for every $t>0$, which contradicts $j_{u_0}(1)=I_\lambda(u_0)<0$. Thus $u_0 \in B^+$
and by Proposition \ref{prop:t1t2} we have  $t_1 u_0 \in N_\lambda^+$ for some $t_1 > 0$. Assume $u_n \not\to u_0$. If $1<t_1$ then we have 
\begin{equation}
I_\lambda (t_1u_0) = j_{u_0}(t_1) \leq j_{u_0}(1) < \liminf j_{u_n}(1) = \liminf I_\lambda (u_n) = \inf_{N_\lambda^+}I_\lambda,
\end{equation}
which is impossible.
If $t_1\leq 1$ then $j_{u_n}'(t_1)\leq 0$ for every $n$, so that $j_{u_0}'(t_1)<\liminf j_{u_n}'(t_1)\leq 0$, which is a contradiction. Therefore  $u_n \to u_0$.
Now, since $u_n \to u_0$ we have $j_{u_0}'(1)=0 \leq j_{u_0}''(1)$.
But $j_{u_0}''(1)=0$ is impossible by Proposition \ref{prop:Npm0}(1). 
Thus $u_0 \in N_\lambda^+$ and $I_\lambda (u_0)=\displaystyle \inf_{N_\lambda^+} I_\lambda$. We set $u_{1,\lambda}=u_0$.
\end{proof}

Next we obtain a second nontrivial non-negative solution of $(P_\lambda)$, which achieves  $\displaystyle \inf_{N_\lambda^-} I_\lambda$ for $\lambda \in (0, \lambda_0)$. The following result provides some properties of $N_\lambda^-$:

\begin{lem}  \label{lem:Nlma-}
Assume \eqref{ab<0}.  Then, for $0<\lambda < \lambda_0$, we have $I_\lambda (u) > 0$ for any $u\in N_\lambda^-$. 
\end{lem}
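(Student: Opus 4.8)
The plan is to read off the sign of $I_\lambda(u) = j_u(1)$ directly from the global shape of the fibering map $j_u$, which is completely described by Proposition \ref{prop:t1t2}. The starting point is Lemma \ref{lem:N+}(1), which gives $N_\lambda^- \subset A^+$; thus every $u \in N_\lambda^-$ satisfies $A(u) > 0$, and I would organise the argument according to the sign of $B(u)$, i.e. whether $u \in A^+ \cap B^-$, $u \in A^+ \cap B_0$, or $u \in A^+ \cap B^+$.

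First I would record the constraint structure. Since $u \in N_\lambda^-$ we have $j_u'(1) = 0$ (membership in $N_\lambda$) together with $j_u''(1) < 0$. Moreover, whenever $B(u) \geq 0$ the Nehari identity $E(u) = A(u) + \lambda B(u)$ forces $E(u) \geq A(u) > 0$, so that $u \in E^+$; this is precisely the extra hypothesis needed to invoke the relevant parts of Proposition \ref{prop:t1t2} in the cases $u \in A^+ \cap B_0$ and $u \in A^+ \cap B^+$. In the remaining case $u \in A^+ \cap B^-$ no condition on $E(u)$ is required.

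Next I would apply Proposition \ref{prop:t1t2} case by case. If $u \in A^+ \cap B^-$ or $u \in E^+ \cap A^+ \cap B_0$, then by Proposition \ref{prop:t1t2}(1) the map $j_u$ has a unique critical point $t_1 > 0$, which is a positive global maximum; since $j_u'(1) = 0$, necessarily $t_1 = 1$, whence $I_\lambda(u) = j_u(1) = j_u(t_1) > 0$. If instead $u \in E^+ \cap A^+ \cap B^+$, then by Proposition \ref{prop:t1t2}(3) the map $j_u$ has exactly two critical points $0 < t_1 < t_2$, namely a negative local minimum at $t_1$ (with $j_u''(t_1) > 0$) and a positive global maximum at $t_2$. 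Since $j_u''(1) < 0 \neq j_u''(t_1)$, the critical point $t = 1$ cannot equal $t_1$, so $t = 1 = t_2$, and again $I_\lambda(u) = j_u(t_2) > 0$. In every case $I_\lambda(u) > 0$, as claimed.

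I expect no serious obstacle: the statement is essentially a bookkeeping consequence of the global description of $j_u$ already established. The only points requiring care are ensuring that each $u \in N_\lambda^-$ genuinely falls under one of the configurations covered by Proposition \ref{prop:t1t2} --- in particular verifying $u \in E^+$ in the cases $B(u) \geq 0$ via the Nehari identity --- and correctly matching the sign of $j_u''(1)$ to the maximising critical point $t_2$ in the two-critical-point case (where the emptiness of $N_\lambda^0$ from Proposition \ref{prop:Npm0}(1) underlies the nondegeneracy of the critical points).
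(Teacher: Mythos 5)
Your proof is correct and follows essentially the same route as the paper: both reduce to $u\in A^+$ via Lemma \ref{lem:N+}, split according to the sign of $B(u)$, check membership in $E^+$ where Proposition \ref{prop:t1t2} requires it, and identify $t=1$ with the positive global maximum of $j_u$. The only (harmless) variation is that you obtain $E(u)>0$ from the Nehari identity $E(u)=A(u)+\lambda B(u)$ with $\lambda>0$, whereas the paper deduces it from the non-constancy of $u$ via Lemma \ref{lem:wc}(2).
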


\begin{proof} 
Let $u \in N_\lambda^-$. By Lemma \ref{lem:N+} we know that $u \in A^+$. If $u \in B_0^+$ then $u$ is non-constant, i.e. $u \in E^+$. Hence either $u \in A^+ \cap B_0^+ \cap E^+$ or $u \in A^+ \cap B^-$. In both cases, by Proposition \ref{prop:t1t2} we have that $t=1$ is the global maximum point of $j_u$ and $I_\lambda(u)=j_u(1)>0$. 
\end{proof}

\begin{prop} 
\label{p2}
Assume \eqref{ab<0} and $\Omega^a_+ \neq \emptyset$.
Then there exists $u_{2,\lambda} \geq 0$ such that
$I_\lambda (u_{2,\lambda}) = \displaystyle \min_{N_\lambda^-} I_\lambda$ for $\lambda \in (0, \lambda_0)$. In particular, $u_{2,\lambda}$ is a nontrivial non-negative solution  of $(P_\lambda)$ for $\lambda \in (0, \lambda_0)$. 
\end{prop}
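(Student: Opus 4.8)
The plan is to produce $u_{2,\lambda}$ as a minimizer of $I_\lambda$ over the submanifold $N_\lambda^-$, following the template of Proposition \ref{p1} but with the roles of $A$ and $B$ interchanged and with the extra difficulty that $N_\lambda^-$ is not known to be bounded. Since $\Omega_+^a \neq \emptyset$ one can pick $u \in A^+$, and then Proposition \ref{prop:Npm0} (with Lemma \ref{lem:wc}) gives $N_\lambda^- \neq \emptyset$, while Lemma \ref{lem:Nlma-} gives $I_\lambda > 0$ on $N_\lambda^-$; hence $m^- := \inf_{N_\lambda^-} I_\lambda \in [0,\infty)$ is well defined. Because $E$, $A$, $B$ (and thus $I_\lambda$ and membership in $N_\lambda^-$) are unchanged under $u \mapsto |u|$, I would fix a minimizing sequence $(u_n) \subset N_\lambda^-$ with $u_n \geq 0$ and $I_\lambda(u_n) \to m^-$; its weak limit $u_0$ will then be nonnegative, so it suffices to show the infimum is attained.

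The hard part, and the main obstacle, is the boundedness of $(u_n)$, since (unlike $N_\lambda^+$ in Lemma \ref{lem:N+}(2)) the set $N_\lambda^-$ may be unbounded. I would argue by contradiction: if $\|u_n\| \to \infty$, set $v_n = u_n/\|u_n\|$ and assume $v_n \rightharpoonup v_0$ with $v_n \to v_0$ in $L^p$ and $L^q$. Dividing the Nehari identity $E(u_n)=A(u_n)+\lambda B(u_n)$ by $\|u_n\|^2$ and using $q<2<p$ forces $\|u_n\|^{p-2}A(v_n)$ to stay bounded while $\|u_n\|^{q-2}B(v_n)\to 0$; as $A(v_n)>0$ this yields $A(v_n)\to 0$, so $A(v_0)=0$, and $\lim_n E(v_n)=\lim_n \|u_n\|^{p-2}A(v_n)=:\mu$. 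If $\mu>0$, the identity $I_\lambda = (\tfrac12-\tfrac1q)E+(\tfrac1q-\tfrac1p)A$ on $N_\lambda$ gives $I_\lambda(u_n)=\|u_n\|^2[(\tfrac12-\tfrac1q)E(v_n)+(\tfrac1q-\tfrac1p)\|u_n\|^{p-2}A(v_n)]\to +\infty$, contradicting $I_\lambda(u_n)\to m^-$. If $\mu=0$, then $E(v_n)\to 0$, so by Lemma \ref{lem:wc}(1) $v_0$ is a nonzero constant with $A(v_0)=0$; hence $\int_\Omega a=0$, and \eqref{ab<0} forces $\int_\Omega b<0$, whence $B(v_0)<0$ and $B(u_n)=\|u_n\|^q B(v_n)\to -\infty$. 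The identity $I_\lambda=(\tfrac12-\tfrac1p)A+\lambda(\tfrac12-\tfrac1q)B$ on $N_\lambda$, together with $A(u_n)>0$, then again gives $I_\lambda(u_n)\to+\infty$, a contradiction. Thus $(u_n)$ is bounded, and I may assume $u_n \rightharpoonup u_0$ in $X$, $u_n\to u_0$ in $L^2$, $L^p$, $L^q$.

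The second delicate point is to exclude vanishing and to locate $u_0$ in $A^+$. If $u_0=0$, then $A(u_n),B(u_n)\to 0$, so $E(u_n)\to 0$ by the Nehari identity and $\|u_n\|\to 0$; normalizing and using the $N_\lambda^-$ inequality $\|u_n\|^{p-2}A(v_n)>\tfrac{2-q}{p-q}E(v_n)$ with $\|u_n\|\to 0$ forces $E(v_n)\to 0$ and $B(v_0)=0$, so $v_0$ is a nonzero constant with $\int_\Omega b=0$, hence $\int_\Omega a<0$ by \eqref{ab<0} and $A(v_0)<0$, contradicting $A(v_n)>0$; so $u_0\neq 0$. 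An analogous argument gives $u_0\in A^+$: if $A(u_0)=0$, the same $N_\lambda^-$ inequality yields $E(u_n)\to 0$, so $u_0$ is a nonzero constant, forcing $\int_\Omega a=0$, $\int_\Omega b<0$ and $B(u_0)<0$; passing to the limit in $E(u_n)=A(u_n)+\lambda B(u_n)$ then gives $0=E(u_0)=\lambda B(u_0)<0$, a contradiction.

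Finally, with $u_0\in A^+\setminus\{0\}$, Proposition \ref{prop:t1t2} supplies a unique $t_1>0$ at which $j_{u_0}$ attains its (positive) global maximum and $t_1u_0\in N_\lambda^-$ (the only excluded configuration, $u_0\in A^+\cap B_0$ with $u_0$ constant, is ruled out by \eqref{ab<0}). Since $t=1$ is the global maximum of $j_{u_n}$ for every $u_n\in N_\lambda^-$, the chain $m^- = \lim_n j_{u_n}(1) \ge \lim_n j_{u_n}(t_1) = j_{u_0}(t_1)+\tfrac{t_1^2}{2}(\lim_n E(u_n)-E(u_0)) \ge I_\lambda(t_1u_0)\ge m^-$ forces every inequality to be an equality; hence $\lim_n E(u_n)=E(u_0)$ (so $u_n\to u_0$ in $X$), $I_\lambda(t_1u_0)=m^-$, and $j_{u_0}(1)=j_{u_0}(t_1)=\max j_{u_0}$, which gives $t_1=1$ by uniqueness of the maximizer. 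Therefore $u_0\in N_\lambda^-$ with $I_\lambda(u_0)=m^-$; since $N_\lambda^0=\emptyset$ (Proposition \ref{prop:Npm0}(1)), $u_0$ is a critical point of $I_\lambda$ on the $C^1$ manifold $N_\lambda\setminus N_\lambda^0$, hence a solution of $(P_\lambda)$, and setting $u_{2,\lambda}=u_0\geq 0$ finishes the proof. The decisive step remains the boundedness argument, where the possible unboundedness of $N_\lambda^-$ forces the blow-up analysis above and an essential use of the sign conditions \eqref{ab<0}.
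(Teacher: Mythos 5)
Your proposal is correct and follows essentially the same route as the paper: minimization of $I_\lambda$ over $N_\lambda^-$, boundedness of the minimizing sequence via normalization and Lemma \ref{lem:wc}, exclusion of vanishing and of $u_0 \in A_0$ using \eqref{ab<0}, projection back onto $N_\lambda^-$ via Proposition \ref{prop:t1t2}, and the global-maximum property of $j_{u_n}$ together with $N_\lambda^0 = \emptyset$ to conclude. The only differences are cosmetic — you reach the boundedness contradiction by forcing $I_\lambda(u_n) \to +\infty$ rather than by showing the normalized limit is a forbidden constant, and you close with a forced-equality chain instead of the paper's contradiction argument for strong convergence — but both rest on the same identities and lemmas.
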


\begin{proof} 
Since $I_\lambda (u) > 0$ for $u \in N_\lambda^-$, we can choose $u_n \in N_\lambda^-$ such that 
\begin{align*}
I_\lambda (u_n) \rightarrow \inf_{N_\lambda^-}I_\lambda  \geq 0. 
\end{align*}
We claim that $(u_n)$ is bounded in $X$. Indeed, there exists $C>0$ such that $I_\lambda (u_n) \leq C$. Since $u_n \in N_\lambda$, we deduce 
\begin{align}
\label{en-}
\left( \frac{1}{2} - \frac{1}{p} \right) E(u_n) - \lambda \left( \frac{1}{q} - \frac{1}{p} \right) B(u_n) =I_\lambda(u_n)\leq C. 
\end{align}
Assume $\Vert u_n \Vert \to \infty$ and set $v_n = \frac{u_n}{\Vert u_n \Vert}$, so that $\Vert v_n \Vert = 1$. We may assume that $v_n \rightharpoonup v_0$, and $v_n \to v_0$ in $L^p(\Omega)$ and  $L^q(\Omega)$. Then, from
\begin{align*} 
\left( \frac{1}{2} - \frac{1}{p} \right) E(v_n) \leq \lambda \left( \frac{1}{q} - \frac{1}{p} \right) B(v_n)\Vert u_n \Vert^{q-2} + C\| u_n \|^{-2}, 
\end{align*}
we infer that $\limsup E(v_n) \leq 0$. Lemma \ref{lem:wc}(1) yields that $v_0$ is a constant, and $v_n \to v_0$ in $X$, which implies $\Vert v_0 \Vert = 1$. On the other hand, since $u_n \in N_\lambda^-$, we have
$u_n \in A^+$, so that $v_n \in A^+$ and consequently $v_0 \in A_0^+$, which is impossible if $\int_\Omega a<0$. Let us assume now $\int_\Omega b<0$. Since $$E(u_n)>\lambda \frac{p-q}{p-2} B(u_n),$$
from \eqref{en-} we obtain
$$\lambda \frac{(p-q)(q-2)}{2pq} 	 B(u_n) \leq C,$$
so that $$ \lambda \frac{(p-q)(q-2)}{2pq} B(v_n) \leq C\|u_n\|^{-q}.$$
It follows that $v_0 \in B_0^+$ and we get a contradiction. Hence $(u_n)$ is bounded. 
We may then assume that $u_n \rightharpoonup u_0$ in $X$ and $u_n \to u_0$ in $L^p(\Omega)$ and $L^q(\Omega)$. If $u_0 \equiv 0$ then we set $v_n=\frac{u_n}{\|u_n\|}$. From
$$E(u_n)<\frac{p-q}{2-q}A(u_n),$$ we get
$$E(v_n)<\frac{p-q}{2-q}A(v_n)\|u_n\|^{p-2} \rightarrow 0.$$
So we can assume that $v_n \rightarrow v_0$ with $v_0$ constant and non-zero.
Moreover, from $$E(u_n)=\lambda B(u_n)+A(u_n)$$ we deduce that $B(v_n) \rightarrow 0$, i.e. $B(v_0)=0$, so $v_0 \in B_0 \cap A_0^+$, which contradicts our assumption. 
Thus $u_0 \not \equiv 0$. 
Note also that $E(u_0)\leq \frac{p-q}{2-q}A(u_0)$. We claim that $u_0 \in A^+$. Indeed, if $u_0 \in A_0$ then $E(u_0)=0$ i.e. $u_0$ is a non-zero constant. Hence $\int_\Omega a=0$. By \eqref{ab<0} we must have $\int_\Omega b<0$ and consequently $u_0 \in B^-$. But from $E(u_n)=A(u_n)+\lambda B(u_n)$ we obtain $E(u_0)\leq \lambda B(u_0)$, and consequently $B(u_0)\geq 0$, which is a contradiction. Therefore $u_0 \in A^+$. Furthermore, if $E(u_0)=0$ then $u_0$ is a non-zero constant so $u_0 \in B^-$. Summing up, we have either $u_0 \in A^+ \cap B^-$ or $u_0 \in A^+ \cap B_0^+ \cap E^+$. By Proposition \ref{prop:t1t2} we infer the existence of $t_2>0$ such that $t_2 u_0 \in N_\lambda^-$. 
Assume now $u_n\not\to u_0$. Then, since $u_n \in N_\lambda^-$, we get
$$I_\lambda(t_2 u_0)<\liminf I_\lambda (t_2 u_n)\leq \liminf I_\lambda(u_n)=\inf_{N_\lambda^-} I_\lambda,$$
which is a contradiction. Therefore $u_n \rightarrow u_0$. In particular, we get $j_{u_0}'(1)=0$ and $j_{u_0}''(1)\leq 0$. Since $N_\lambda^0$ is empty for $\lambda \in (0,\lambda_0)$, we infer that $u_0 \in N_\lambda^-$ and $I_\lambda (u_0)=\displaystyle \inf_{N_\lambda^-} I_\lambda$. We set $u_{2,\lambda}=u_0$.
\end{proof}

We discuss now the asymptotic profiles of $u_{1,\lambda}, u_{2,\lambda}$ as $\lambda \to 0^+$. 

\begin{lem}
\label{leqil}
 Assume $\int_\Omega b<0$. Then, for $0<\lambda<\lambda_0$, there holds
 \begin{equation}
 \label{eqil}
 I_{\lambda}(u_{1,\lambda})<-D_0\lambda^{\frac{2}{2-q}} -D_1 \lambda^{\frac{p}{2-q}},
\end{equation}  
for some $D_0>0$ and $D_1\geq 0$.
\end{lem}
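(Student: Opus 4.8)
The plan is to bound the critical value $I_\lambda(u_{1,\lambda}) = \min_{N_\lambda^+} I_\lambda$ from above by evaluating $I_\lambda$ along a single well-chosen ray. Since $\Omega^b_+ \neq \emptyset$ (this is implicit, as $u_{1,\lambda}$ is furnished by Proposition \ref{p1}), I would fix once and for all a non-negative, non-constant $w \in X$ supported in $\Omega^b_+$, so that $E(w) > 0$ and $B(w) = \int_{\Omega^b_+} b\,|w|^q > 0$. By Proposition \ref{prop:t1t2} the fibering map $j_w$ then has, for every $\lambda \in (0,\lambda_0)$, a critical point $t_1 = t_1(\lambda) > 0$ with $t_1 w \in N_\lambda^+$: a global minimum if $A(w) \le 0$ (case (2)) and a local minimum if $A(w) > 0$ (case (3)). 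In either case $I_\lambda(u_{1,\lambda}) \le j_w(t_1)$, and the whole problem reduces to estimating $j_w(t_1)$.

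The key computation is a rescaling that produces the exponent $\tfrac{2}{2-q}$. Substituting $t = \lambda^{\frac{1}{2-q}} s$ and using $\lambda\cdot \lambda^{\frac{q}{2-q}} = \lambda^{\frac{2}{2-q}}$, one gets
\[
j_w\!\left(\lambda^{\frac{1}{2-q}} s\right) = \lambda^{\frac{2}{2-q}}\, g(s) \;+\; \lambda^{\frac{p}{2-q}}\,\frac{s^p}{p}\bigl(-A(w)\bigr), \qquad g(s) := \frac{s^2}{2}E(w) - \frac{s^q}{q}B(w).
\]
Because $q < 2$ and $B(w) > 0$, the function $g$ has a strictly negative global minimum, attained at $s_* = (B(w)/E(w))^{\frac{1}{2-q}}$, with
\[
g(s_*) = \left(\tfrac12 - \tfrac1q\right) B(w)^{\frac{2}{2-q}} E(w)^{-\frac{q}{2-q}} \; =: \; -D_0 \; < \; 0 .
\]
This $-D_0\,\lambda^{\frac{2}{2-q}}$ is the leading term of the claimed bound, and the second term will come from the convex contribution $\lambda^{\frac{p}{2-q}}\frac{s^p}{p}(-A(w))$.

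To finish I would distinguish the sign of $A(w)$. If $A(w) > 0$, I compare $t_1$ with the minimizer $t^\sharp = \lambda^{\frac{1}{2-q}} s_*$ of the truncated map $t \mapsto \frac{t^2}{2}E(w) - \lambda\frac{t^q}{q}B(w)$; since $A(w) > 0$ forces $j_w'(t^\sharp) = -(t^\sharp)^{p-1}A(w) < 0$, we have $t^\sharp < t_1$, and as $j_w$ is decreasing on $(0,t_1)$ this yields $j_w(t_1) < j_w(t^\sharp) = -D_0\lambda^{\frac{2}{2-q}} - D_1\lambda^{\frac{p}{2-q}}$ with $D_1 = \frac{s_*^p}{p}A(w) > 0$. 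If instead $A(w) \le 0$, then $t_1$ is the global minimum of $j_w$, so $j_w(t_1) = \lambda^{\frac{2}{2-q}}\min_s\bigl[g(s) + \lambda^{\frac{p-2}{2-q}}\frac{s^p}{p}(-A(w))\bigr]$; bounding $\lambda^{\frac{p-2}{2-q}} < \lambda_0^{\frac{p-2}{2-q}}$ and using $-A(w) \ge 0$ replaces the $\lambda$-dependent coefficient by a fixed one, leaving a negative constant times $\lambda^{\frac{2}{2-q}}$, i.e. $D_1 = 0$. In both cases strictness of \eqref{eqil} is recovered by decreasing $D_0$ slightly. The main obstacle is precisely this last point: one must secure the estimate uniformly on all of $(0,\lambda_0)$, not merely for $\lambda$ small, and must handle the convex regime $A(w) > 0$ in which the relevant point $t_1$ is only a local minimum of $j_w$ — this is what forces the comparison with the truncated fibering map rather than a direct evaluation at $t^\sharp$.
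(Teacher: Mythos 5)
Your proof is correct and is essentially the paper's own argument: both bound $\min_{N_\lambda^+} I_\lambda$ by evaluating $I_\lambda$ along a fixed ray in $E^+\cap B^+$ at the minimizer $t^\sharp=\lambda^{\frac{1}{2-q}}s_*$ of the truncated fibering map $t\mapsto \frac{t^2}{2}E(w)-\lambda\frac{t^q}{q}B(w)$, and both split on the sign of $A(w)$; your version has the additional merit of making the $\lambda$-independence of $D_0$ explicit by fixing the test direction once and for all, where the paper relies tacitly on the $0$-homogeneity of $B(u)^{2/(2-q)}E(u)^{-q/(2-q)}$ along the ray.

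Two small points. First, in the case $A(w)>0$ the inference ``$j_w'(t^\sharp)<0$, hence $t^\sharp<t_1$'' is incomplete as stated: $j_w'$ is also negative on $(t_2,\infty)$, so you must in addition rule out $t^\sharp>t_2$. This follows from the identity $t_2^{2-q}E(w)=(t^\sharp)^{2-q}E(w)+t_2^{p-q}A(w)>(t^\sharp)^{2-q}E(w)$, obtained by combining $j_w'(t_2)=0$ with $(t^\sharp)^{2-q}E(w)=\lambda B(w)$; the paper's proof glosses over exactly the same point when it asserts $I_\lambda(u)\le \tilde I_\lambda(t_0u)$. Second, your absorption of the term $\lambda^{\frac{p}{2-q}}\frac{s^p}{p}(-A(w))$ into the leading constant when $A(w)<0$ uses $\lambda<\lambda_0$ and therefore implicitly assumes $\lambda_0<\infty$ (recall $\lambda_0=\infty$ when $E^+\cap A^+\cap B^+=\emptyset$); in that case one should simply keep the coefficient $D_1=\frac{s_*^p}{p}A(w)$, which is then negative. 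This is in fact what the paper's own computation produces in its second case, at the cost (harmless for the application in Proposition \ref{prop:sol:Nlam+:0}, where only the limit $\lambda\to 0^+$ matters) of the stated sign condition $D_1\ge 0$.
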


\begin{proof}
We have $ I_{\lambda}(u_{1,\lambda})\leq I_\lambda(u)$ for any $u \in N_\lambda^+$. Let us take $u \in N_\lambda^+$. Thus $u \in B^+$ and it follows that $u$ is non-constant, i.e. $u \in E^+$.
\begin{itemize}
\item If $u \in A_0^+$ then $$I_\lambda(u) \leq  \tilde{I}_\lambda(u):=\frac{1}{2}E(u)-\frac{\lambda}{q}B(u).$$
Thus $I_\lambda(tu)\leq \tilde{I}_\lambda(tu)$ for every $t>0$.
Note that $\tilde{I}_\lambda(tu)$ has a global minimum point $t_0$ given by
$$t_0=\left(\frac{\lambda B(u)}{E(u)}\right)^{\frac{1}{2-q}}.$$
and $$\tilde{I}_\lambda(t_0u)=-\frac{2-q}{2q}\lambda t_0^q B(u)= -\frac{2-q}{2q}\frac{(\lambda B(u))^{\frac{2}{2-q}}}{E(u)^{\frac{q}{2-q}}}=-D_0\lambda^{\frac{2}{2-q}},$$
where $D_0=\frac{2-q}{2q}\frac{ B(u)^{\frac{2}{2-q}}}{E(u)^{\frac{q}{2-q}}}$.
It follows that $$I_\lambda(u)\leq \tilde{I}_\lambda(t_0u)= -D_0\lambda^{\frac{2}{2-q}}$$
with $D_0>0$. \\

\item If $u \in A^-$ then we consider $t_0$ as in the previous item to obtain $$I_\lambda(u) \leq I_\lambda (t_0u)=-D_0\lambda^{\frac{2}{2-q}}-D_1 \lambda^{\frac{p}{2-q}},$$
where $D_0=\frac{2-q}{2q}\frac{ B(u)^{\frac{2}{2-q}}}{E(u)^{\frac{q}{2-q}}}$ and $D_1=\frac{1}{p} \left( \frac{B(u)}{E(u)}\right)^{\frac{p}{2-q}} A(u)$.
\end{itemize}
Therefore in both cases we have $$I_\lambda(u_{1,\lambda})<-D_0\lambda^{\frac{2}{2-q}}-D_1 \lambda^{\frac{p}{2-q}}$$
for some $D_0>0$ and $D_1 \geq 0$.

\end{proof}

We determine now the asymptotic profile of $u_{1,\lambda}$ as $\lambda \to 0^+$:

\begin{prop} \label{prop:sol:Nlam+:0}
 Assume \eqref{ab<0} and $\Omega_+^b \neq \emptyset$. Then $ u_{1,\lambda} \to 0$ in $C^2(\overline{\Omega})$ as $\lambda \to 0^+$. More precisely:
\begin{enumerate}
\item If $\int_\Omega b <0$ and $\lambda_n \to 0^+$ then, 
 up to a subsequence, $\lambda_n^{-\frac{1}{2-q}}u_{1,\lambda_n} \to w_0$ in $C^2(\overline{\Omega})$, where $w_0$ is a nontrivial non-negative ground state solution of 
\begin{equation*}
\begin{cases}
-\Delta w =  b(x)|w|^{q-2}w & \mbox{in $\Omega$}, \\
\frac{\partial w}{\partial \n} = 0 & \mbox{on $\partial \Omega$}.
\end{cases} 
\end{equation*}
\item If $\int_\Omega a<0\leq \int_\Omega b$ then $\lambda^{-\frac{1}{p-q}}u_{1,\lambda} \to c^*$ in $C^2(\overline{\Omega})$ as $\lambda \to 0^+$. In particular, if $\int_\Omega a < 0 < \int_\Omega b$ then $u_{1,\lambda}$ is positive on $\overline{\Omega}$ for $\lambda > 0$ sufficiently small. 
\end{enumerate}
\end{prop}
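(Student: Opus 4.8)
The plan is to treat the two refined statements (1) and (2) by case-adapted rescalings, noting that \eqref{ab<0} forces either $\int_\Omega b<0$ (case (1)) or $\int_\Omega a<0\le\int_\Omega b$ (case (2)), so these are exhaustive. In each case the rescaling will write $u_{1,\lambda}=\lambda^{\sigma}\,(\text{something bounded in }X)$ with $\sigma>0$, which already yields $u_{1,\lambda}\to 0$ in $X$; the general claim $u_{1,\lambda}\to 0$ in $C^2(\overline\Omega)$ then follows from elliptic regularity applied to $(P_\lambda)$, so the whole content is the sharp rescaled limit in each case.

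For case (1), fix $\lambda_n\to 0^+$ and set $w_n=\lambda_n^{-1/(2-q)}u_{1,\lambda_n}$. A direct computation shows $w_n$ solves $-\Delta w_n=b\,|w_n|^{q-2}w_n+\lambda_n^{(p-2)/(2-q)}a\,|w_n|^{p-2}w_n$ under the Neumann condition, that the rescaled Nehari identity $E(w_n)=B(w_n)+\lambda_n^{(p-2)/(2-q)}A(w_n)$ holds, and that $u_{1,\lambda_n}\in N_{\lambda_n}^+$ becomes $E(w_n)<\tfrac{p-q}{p-2}B(w_n)$, so $w_n\in B^+$. I would first prove $(w_n)$ is bounded in $X$: if $\|w_n\|\to\infty$, then $E(w_n)<\tfrac{p-q}{p-2}B(w_n)\le C\|w_n\|^q$ gives $E(w_n/\|w_n\|)\to 0$, so by Lemma \ref{lem:wc}(1) the normalized sequence converges strongly to a constant $k>0$, whence $B(w_n/\|w_n\|)\to k^q\int_\Omega b<0$, contradicting $w_n\in B^+$. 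Once bounded, $w_n\rightharpoonup w_0$ with $A(w_n)\to A(w_0)$ and $B(w_n)\to B(w_0)$ (compact embedding $X\hookrightarrow L^p\cap L^q$), and passing to the limit in the equation shows $w_0$ solves \eqref{plb}; testing with $w_0$ gives $E(w_0)=B(w_0)$, while the rescaled Nehari identity gives $E(w_n)\to B(w_0)=E(w_0)$, hence $w_n\to w_0$ strongly in $X$. Then the rescaled form of Lemma \ref{leqil}, namely $\tfrac12 E(w_0)-\tfrac1q B(w_0)\le -D_0<0$, forces $w_0\not\equiv 0$, and $w_0\ge 0$ since $u_{1,\lambda_n}\ge 0$. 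Finally, to see $w_0$ is a ground state of \eqref{plb} I set $I_0(w):=\tfrac12 E(w)-\tfrac1q B(w)$ and compare: for any admissible $w\in E^+\cap B^+$, project $\lambda_n^{1/(2-q)}w$ into $N_{\lambda_n}^+$ (possible by Proposition \ref{prop:t1t2} for $\lambda_n$ small), and use $I_{\lambda_n}(u_{1,\lambda_n})=\min_{N_{\lambda_n}^+}I_{\lambda_n}$ together with the limit computation to obtain $I_0(w_0)\le I_0(w)$.

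For case (2), fix $\lambda_n\to 0^+$ and set $v_n=\lambda_n^{-1/(p-q)}u_{1,\lambda_n}$; now $v_n$ solves $-\Delta v_n=\lambda_n^{(p-2)/(p-q)}\bigl(b\,|v_n|^{q-2}v_n+a\,|v_n|^{p-2}v_n\bigr)$, the rescaled Nehari identity reads $E(v_n)=\lambda_n^{(p-2)/(p-q)}\bigl(A(v_n)+B(v_n)\bigr)$, and $N_{\lambda_n}^+$ gives $E(v_n)<\tfrac{p-q}{p-2}\lambda_n^{(p-2)/(p-q)}B(v_n)$. Here $\int_\Omega a<0$ is what yields boundedness: if $\|v_n\|\to\infty$, normalization and the last inequality again give $E(v_n/\|v_n\|)\to 0$, so the normalized sequence tends to a constant $k>0$ with $A(v_n/\|v_n\|)\to k^p\int_\Omega a<0$, so $A(v_n)\le -c\|v_n\|^p$; the Nehari identity then forces $B(v_n)\ge -A(v_n)\ge c\|v_n\|^p$, contradicting $B(v_n)\le C\|v_n\|^q$ since $p>q$. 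With $(v_n)$ bounded, the prefactor $\lambda_n^{(p-2)/(p-q)}\to 0$ gives $E(v_n)\to 0$, so $v_n\to c\ge 0$ (a constant) in $X$, and elliptic regularity (right-hand side tending to $0$) upgrades this to $C^2$. To identify $c$, I test the equation with $v_n-c$: since $v_n\to c$ uniformly the right-hand side is $o\bigl(\lambda_n^{(p-2)/(p-q)}\bigr)$, hence $E(v_n)=o\bigl(\lambda_n^{(p-2)/(p-q)}\bigr)$, and dividing the Nehari identity by $\lambda_n^{(p-2)/(p-q)}$ yields $c^p\int_\Omega a+c^q\int_\Omega b=0$, i.e.\ $c=c^*$. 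When $\int_\Omega b>0$ one rules out $c=0$ by using the constant $\lambda^{1/(p-q)}c^*\in N_\lambda^+$ as competitor, giving $I_\lambda(u_{1,\lambda})\le -C_0\lambda^{p/(p-q)}$, which is incompatible with the $o\bigl(\lambda^{p/(p-q)}\bigr)$ energy forced by $c=0$; then $v_n\to c^*>0$ gives the positivity of $u_{1,\lambda}$. Uniqueness of the limit promotes subsequential convergence to convergence of the whole family.

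The main obstacle throughout is the compactness and non-degeneracy package: proving the rescaled sequences are bounded in $X$ and, above all, that their limits do not collapse (to $0$ in case (1), or to the wrong constant in case (2)). Both boundedness arguments rest on Lemma \ref{lem:wc} combined with a sign obstruction — $w_n\in B^+$ against $\int_\Omega b<0$ in case (1), and the Nehari balance against $\int_\Omega a<0$ in case (2) — while non-degeneracy is secured by the strictly negative energy bound of Lemma \ref{leqil} in case (1) and by the explicit constant competitor in case (2). A secondary technical point is the $C^2$ convergence itself: the concave term $|u|^{q-2}u$ is not smooth at $0$, so one must combine $L^s$ (Agmon--Douglis--Nirenberg) estimates with Schauder estimates, exploiting that the relevant limit is either bounded away from $0$ (so the nonlinearity is smooth there) or governed by a right-hand side tending to $0$.
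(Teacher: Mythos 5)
Your proposal is correct and follows the paper's strategy almost step for step: the same case split dictated by \eqref{ab<0}, the same rescalings $\lambda^{-\frac{1}{2-q}}u_{1,\lambda}$ and $\lambda^{-\frac{1}{p-q}}u_{1,\lambda}$, boundedness via Lemma \ref{lem:wc} against the sign obstructions $\int_\Omega b<0$ (through $B^+$) and $\int_\Omega a<0$ (through the Nehari balance), non-degeneracy in case (1) via Lemma \ref{leqil}, and the same projection-onto-$N_{\lambda}^{+}$ comparison for the ground state property. The one place where you genuinely diverge is the exclusion of the degenerate limit $w_0=0$ in case (2) when $\int_\Omega b>0$: the paper tests the rescaled equation with the singular function $(w_n+\varepsilon)^{1-q}$ and derives the integral inequality $\int_\Omega b\leq -\int_\Omega a\,w_0^{p-q}$ (which requires the claim that $b\leq 0$ off the support of $w_n$), whereas you use the explicit constant $\lambda^{\frac{1}{p-q}}c^*$, check it lies in $N_\lambda^{+}$ precisely because $\int_\Omega b>0$, and play the resulting bound $I_\lambda(u_{1,\lambda})\leq -C_0\lambda^{\frac{p}{p-q}}$ against the $o\bigl(\lambda^{\frac{p}{p-q}}\bigr)$ energy that $c=0$ would force. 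Your competitor argument is self-contained, avoids the support/sign discussion entirely, and fits the variational framework more naturally; the paper's test-function computation, on the other hand, yields the quantitative inequality $0\leq\int_\Omega b\leq -\int_\Omega a\,w_0^{p-q}$, which also settles the borderline case $\int_\Omega b=0$ in one stroke (in your scheme that case is handled separately, but trivially, since then $c^*=0$ and there is nothing to exclude). Both routes are sound.
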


\begin{proof} 
First we show that $u_{1,\lambda}$ remains bounded in $X$ as $\lambda \to 0^+$. Indeed,  assume that $\lambda_n \to 0$ and $\|u_n\| \to \infty$, where $u_n=u_{1,\lambda_n}$. We set $v_n = 
\frac{u_n}{\Vert u_n \Vert}$ and assume that for some $v_0 \in X$ we have $v_n \rightharpoonup v_0$ in $X$, 
and $v_n \to v_0$ in $L^p(\Omega)$ and $L^q(\Omega)$. Since $u_n
\in N_{\lambda_n}$, we have
\begin{align*}
E(v_n) \Vert u_n \Vert^{2-p} = A(v_n) + \lambda_n B(v_n)
\Vert u_{n} \Vert^{q-p}.  
\end{align*}
Passing to the limit  we obtain $A(v_0)=0$, i.e. $v_0 \in A_0$.
From $u_n \in N_{\lambda_n}^+$ we have 
\begin{align*}
E(v_n) < \lambda_n \frac{p-q}{p-2} B(v_n) \Vert u_n \Vert^{q-2}, 
\end{align*}
so that $\limsup E(v_n) \leq 0$. By Lemma \ref{lem:wc}(1) we infer that $v_n \to v_0$ in $X$ and
$v_0$ is a non-zero constant. But $v_0 \in A_0 \cap B_0^+$, which contradicts Lemma \ref{lem:wc}(2).  Therefore
$(u_n)$ is bounded in $X$.

Hence we may assume that $u_n \rightharpoonup u_{0}$ in $X$ and $u_n \to u_{0}$ 
in $L^p(\Omega)$ and $L^q(\Omega)$.
From
\begin{equation}
\label{eu1}
E(u_n) < \lambda_n \frac{p-q}{p-2} B(u_n). 
\end{equation}
 we get $\limsup E(u_n)\leq 0$. 
Lemma \ref{lem:wc}(2) provides that $u_{0}$ is a constant and $u_n \to u_{0}$ in $X$. 
Since $u_n \in B^+$, we have $u_0 \in B_0^+$. Finally, from
$$E(u_n)=\lambda_n B(u_n)+A(u_n)$$
we infer that $$0=E(u_0)\leq A(u_0),$$
i.e. $u_0 \in A_0^+$. Thus $u_0 \in A_0^+ \cap B_0^+$, and by Lemma \ref{lem:wc}(2) we deduce that $u_0 \equiv 0$. Thus we have proved that $u_n \to 0$ in $X$. By standard regularity we get $u_n \to 0$ in $C^2(\overline{\Omega})$.

Next we obtain the precise profile of $u_n$. We consider two cases:
\begin{enumerate}
\item Assume $\int_\Omega b <0$. Let $w_n = \lambda_n^{-\frac{1}{2-q}}u_n$. We claim that $(w_n)$ 
is bounded in $X$. Indeed, from \eqref{eu1}
we have $$E(w_n) < \frac{p-q}{p-2} B(w_n).$$ 
Let us assume that $\Vert w_n \Vert \to \infty$ and set
$\psi_n = \frac{w_n}{\Vert w_n \Vert}$. We may assume that
$\psi_n \rightharpoonup \psi_0$ and $\psi_n \to \psi_0$ in $L^p(\Omega)$ 
and  $L^q(\Omega)$. It follows that 
\begin{align*}
E(\psi_n) < \frac{p-q}{p-2}  B(\psi_n) 
\Vert w_\lambda \Vert^{q-2}, 
\end{align*}
so that $\limsup E(\psi_n) \leq 0$. By Lemma \ref{lem:wc}(1) we infer that $\psi_0$ is a constant and $\psi_n \to \psi_0$ in $X$. On the other hand, from $u_n \in B^+$ we have $\psi_n \in B^+$ and consequently $\psi_0 \in B_0^+$. From \eqref{ab<0} we infer that $\psi_0 \equiv 0$, which contradicts $\| \psi_0\|=1$. Hence $(w_n)$ is bounded is $X$ and
we may assume that $w_n \rightharpoonup w_0$ in $X$ and $w_n \to w_0$ in $L^p(\Omega)$ 
and $L^q(\Omega)$. Note that $w_n$ satisfies
\begin{align} \label{vlam:wsol}
\int_\Omega \nabla w_n \nabla w - \lambda_n^{\frac{p-2}{2-q}} 
\int_\Omega a(x) w_n^{p-1} w -  \int_\Omega  
b(x)w_n^{q-1}w = 0, \quad \forall w \in X.
\end{align}
Taking $w=w_n - w_0$  we deduce that 
$w_n \to w_0$ in $X$ and consequently in $C^2(\overline{\Omega})$. Moreover, $w_0$ is a solution of
\eqref{plb}.
We claim that $w_0\not \equiv 0$. Indeed, 
by Lemma \ref{leqil} we have
$$ I_{\lambda_n}(u_n)<-D_0\lambda_n^{\frac{2}{2-q}}-D_1 \lambda_n^{\frac{p}{2-q}},$$
with $D_0>0$ and $D_1 \geq 0$.
Hence $$\frac{\lambda_n^{\frac{2}{2-q}}}{2} E(w_n)- \frac{\lambda_n^{\frac{p}{2-q}}}{p} A(w_n) -\frac{\lambda_n^{\frac{2}{2-q}}}{q} B(w_n) <-D_0\lambda_n^{\frac{2}{2-q}}-D_1 \lambda_n^{\frac{p}{2-q}},$$
so that 
$$\frac{1}{2}E(w_n)-\frac{\lambda_n^{\frac{p-2}{2-q}}}{p} A(w_n)-B(w_n)<-D_0-D_1 \lambda_n^{\frac{p-2}{2-q}}.$$
We obtain then
$$\frac{1}{2}E(w_0)-B(w_0)\leq -D_0,$$
and consequently $w_0 \not \equiv 0$. 

It remains to prove that $w_0$ is a ground state solution of \eqref{plb}, i.e. 
$$I_b(w_0)=\min_{N_b} I_b,$$
where $$I_b(u)=\frac{1}{2}E(u) - \frac{1}{q}B(u)$$ for $u \in X$ and $$N_b=\{u \in X \setminus \{0\};\ \langle I_b'(u),u \rangle =0\}=\{u \in X \setminus \{0\}; \ E(u)=B(u)\}$$
is the Nehari manifold associated to $I_b$. Since $\int_\Omega b<0$ it is easily seen that there exists $w_b\neq 0$ such that $I_b(w_b)=\min_{N_b} I_b$. Note that since $w_0$ is a nontrivial solution of \eqref{plb} we have
$w_0 \in N_b$ and consequently $I_b(w_b)\leq I_b(w_0)$. We prove now the reverse inequality. Since $w_b$ is non-constant, we have $w_b \in B^+ \cap E^+$. We set $u_b=\lambda^{\frac{1}{2-q}} w_b$. Let $\lambda_n \to 0^+$. Since $u_b \in B^+ \cap E^+$ for every $n$ there exists $t_n>0$ such that $t_n u_b \in N_{\lambda_n}^+$. 
Hence $$ t_n^2 E(u_b)<\lambda_n \frac{p-q}{p-2}t_n^q B(u_b),$$ i.e.
$$t_n^{2-q}<\frac{p-q}{p-2}\frac{B(w_b)}{E(w_b)}=\frac{p-q}{p-2}.$$
We may then assume that $t_n \to t_0$. We claim that $t_0=1$. Indeed, note that from $t_n u_b \in N_{\lambda_n}^+$ we infer that 
$$t_n^2 E(u_b) =\lambda_n t_n^qB(u_b)+t_n^pA(u_b)$$
so $$t_n^{2-q} E(w_b) =B(w_b)+t_n^{p-q}\lambda_n^{\frac{p-2}{2-q}}A(w_b).$$
From $E(w_b)=B(w_b)$ we infer that $t_0=1$, as claimed. Now, from
$$I_{\lambda_n}(u_{1,{\lambda_n}})\leq I_{\lambda_n} (t_n u_b)$$
it follows that
$$I_{\lambda_n}(u_{1,\lambda_n}) \leq \left( \frac{1}{2} - \frac{1}{q} \right)t_n^2E(u_b) - \left( \frac{1}{p}-\frac{1}{q}  \right)  t_n^pA(u_b).$$
Hence $$\frac{\lambda_n^{\frac{2}{2-q}}}{2} E(w_n)- \frac{\lambda_n^{\frac{p}{2-q}}}{p} A(w_n) -\frac{\lambda_n^{\frac{2}{2-q}}}{q} B(w_n)  \leq \frac{q-2}{2q} t_n^2\lambda_n^{\frac{2}{2-q}} E(w_b) - \frac{q-p}{pq} \lambda_n^{\frac{p}{2-q}} t_n^pA(w_b),$$
i.e.
$$\frac{1}{2}E(w_n)-\frac{\lambda_n^{\frac{p-2}{2-q}}}{p} A(w_n)-\frac{1}{q}B(w_n)\leq \frac{q-2}{2q}t_n^2E(w_b) - \frac{q-p}{pq} \lambda_n^{\frac{p-2}{2-q}} t_n^pA(w_b).$$
Since $w_n \to w_0$ in $X$ we obtain $$I_b(w_0)\leq \left( \frac{1}{2} - \frac{1}{q} \right)E(w_b) =I_b(w_b).$$
Therefore $I_b(w_0)=I_b(w_b)$, as claimed. 
\\

\item Assume now $\int_\Omega a<0\leq \int_\Omega b$ and set $w_n = \lambda_n^{-\frac{1}{p-q}}u_n$. We claim that $(w_n)$ 
is bounded in $X$. Indeed, since $u_n \in N_{\lambda_n}^+$, 
we have $$E(w_n) < \frac{p-q}{p-2} \lambda_n^{\frac{p-2}{p-q}}B(w_n).$$ 
Let us assume that $\Vert w_n \Vert \to \infty$ and set
$\psi_n = \frac{w_n}{\Vert w_n \Vert}$. We may assume that
$\psi_n \rightharpoonup \psi_0$ and $\psi_n \to \psi_0$ in $L^p(\Omega)$ 
and  $L^q(\Omega)$. It follows that 
\begin{align*}
E(\psi_n) < \frac{p-q}{p-2} \lambda_n^{\frac{p-2}{p-q}} B(\psi_n) 
\Vert w_n \Vert^{q-2}, 
\end{align*}
so that $\limsup E(\psi_n) \leq 0$. By Lemma \ref{lem:wc}(1) we infer that $\psi_0$ is a constant and $\psi_n \to \psi_0$ in $X$. On the other hand, from $u_n \in N_{\lambda_n}$ it follows that $$0\leq A(u_n) + \lambda_n B(u_n),$$ so that $$-B(\psi_n)\Vert w_n \Vert^{q-p} 
\leq A(\psi_n).$$ Taking the limit  we get $0\leq A(\psi_0)$, 
which contradicts Lemma \ref{lem:wc}(2). Hence $(w_n)$ is bounded in $X$  and
we may assume that $w_n \rightharpoonup w_0$ in $X$ and $w_n \to w_0$ in $L^p(\Omega)$ 
and $L^q(\Omega)$. It follows that $\limsup E(w_n) \leq 0$, 
and by Lemma \ref{lem:wc}(1) we get that $w_0$ is a constant and $w_n \to w_0$ in $X$. So $w_n \to w_0$ in $C^2(\overline{\Omega})$.

It remains to show that $w_0 = c^*$. We note that $w_n$ satisfies 
\begin{align} \label{vlam:wsol1}
\int_\Omega \nabla w_n \nabla w - \lambda_n^{\frac{p-2}{p-q}} 
\int_\Omega a w_n^{p-1} w - \lambda_n^{\frac{p-2}{p-q}} \int_\Omega  
b w_n^{q-1}w = 0, \quad \forall w \in X, 
\end{align}
since $u_n$ is a solution of $(P_{\lambda_n})$. We infer that
\begin{align*}
\int_\Omega a w_n^{p-1} + \int_\Omega  b w_n^{q-1} = 0.
\end{align*}
Passing to the limit, we see that either $w_0= 0$ or 
$w_0 = c^*$. However, 
taking now $w = (w_n+\varepsilon)^{1-q}$ with $\varepsilon >0$ in \eqref{vlam:wsol1}, we obtain
\begin{align*}
0 > (1-q) \int_\Omega |\nabla w_n|^2 (w_n+\varepsilon)^{-q} = \lambda^{\frac{p-2}{p-q}} 
\left( \int_\Omega a \frac{w_n^{p-1}}{(w_n+\varepsilon)^{q-1}} + \int_\Omega b \left( \frac{w_n}{w_n + \varepsilon}\right)^{q-1}\right).
\end{align*}
so that $$\int_\Omega b \left( \frac{w_n}{w_n + \varepsilon}\right)^{q-1} < - \int_\Omega a \frac{w_n^{p-1}}{(w_n+\varepsilon)^{q-1}}.$$
Letting $\varepsilon \to 0$ and using the Lebesgue dominated convergence theorem, we get
$$\int_{\text{supp } w_n} b \leq -\int_\Omega a w_n^{p-q}.$$
In particular, since $b \leq 0$ on $\Omega \setminus \text{supp } w_n$, we have
$$\int_{\Omega} b \leq -\int_\Omega a w_n^{p-q}.$$
Letting now $n \to \infty$, we obtain
$$0\leq \int_{\Omega} b \leq -\int_\Omega a w_0^{p-q}.$$
Now, if $\int_{\Omega} b=0$ then $c^*=0$, so $w_0 =0$.
On the other hand, if $\int_{\Omega} b>0$ then we must have $w_0 \neq 0$, i.e.  $w_0 = c^*$. In both cases
we obtain $\lambda_n^{-\frac{1}{p-q}} u_n \to c^*$ in $X$. By elliptic regularity we deduce  that $\lambda^{-\frac{1}{p-q}} u_{1,\lambda} \to c^*$ in $C^2(\overline{\Omega})$. 
Additionally if $\int_\Omega a < 0 < \int_\Omega b$ then $c^* > 0$, so that, by continuity,  $u_{1, \lambda} > 0$ on $\overline{\Omega}$ for $\lambda > 0$ sufficiently small. 
\end{enumerate}
\end{proof}

We consider now the asymptotic behavior of $u_{2,\lambda}$ as $\lambda \to 0^+$:

\begin{lem} \label{lem:u2lam<C} 
Assume \eqref{ab<0} and $\Omega^a_+ \neq \emptyset$. Then there exists a constant $C>0$ such that 
 $\| u_{2,\lambda} \| \leq C$ as $\lambda \to 0^+$. 
\end{lem}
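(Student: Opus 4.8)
The plan is to argue by contradiction, exploiting that the minimax level $\inf_{N_\lambda^-} I_\lambda$ stays bounded above as $\lambda \to 0^+$, together with the inequalities that define $N_\lambda^-$. First I would record a uniform upper bound for $I_\lambda(u_{2,\lambda})$. Since $\Omega^a_+ \neq \emptyset$, fix once and for all a nonnegative $\varphi \in C_c^\infty(\Omega^a_+)$ with $\varphi \not\equiv 0$; then $A(\varphi) > 0$ and $\varphi \in E^+ \cap A^+$, so by Proposition~\ref{prop:t1t2} the fibering map $j_\varphi$ attains a positive global maximum at some $t_\lambda > 0$ with $t_\lambda \varphi \in N_\lambda^-$, whence $\inf_{N_\lambda^-} I_\lambda \le I_\lambda(t_\lambda\varphi) = \max_{t>0} j_\varphi(t)$. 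As $\lambda \to 0^+$, $j_\varphi$ converges uniformly on compacta to $t \mapsto \frac{t^2}{2}E(\varphi) - \frac{t^p}{p}A(\varphi)$, whose maximum is a fixed finite number; hence $I_\lambda(u_{2,\lambda}) \le C_0$ for some $C_0 > 0$ and all small $\lambda$.

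Next, suppose for contradiction that $\lambda_n \to 0^+$ and $\|u_n\| \to \infty$, where $u_n := u_{2,\lambda_n}$, and set $v_n = u_n/\|u_n\|$, so that, up to a subsequence, $v_n \rightharpoonup v_0$ and $v_n \to v_0$ in $L^p(\Omega)$ and $L^q(\Omega)$. Using $u_n \in N_{\lambda_n}$ and $I_{\lambda_n}(u_n) = \left(\frac12 - \frac1p\right)E(u_n) - \lambda_n\left(\frac1q - \frac1p\right)B(u_n) \le C_0$, dividing by $\|u_n\|^2$ and noting that $\lambda_n \le \lambda_0$, that $B(v_n)$ is bounded, and that $\|u_n\|^{q-2}\to 0$, I obtain $\limsup E(v_n) \le 0$. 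By Lemma~\ref{lem:wc}(1), $v_0$ is a nonzero constant and $v_n \to v_0$ in $X$. If $\int_\Omega a < 0$ this already yields a contradiction: by Lemma~\ref{lem:N+}(1) we have $u_n \in A^+$, hence $v_0 \in A_0^+$, whereas $A(v_0) = |v_0|^p\int_\Omega a < 0$.

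The remaining, and hardest, case is $\int_\Omega b < 0$, where $E$, $A$ and $B$ all stay bounded and the obstruction must instead be read off from the equation. Since $v_0$ is a nonzero constant, $B(v_n) \to |v_0|^q\int_\Omega b < 0$, so $\lambda_n B(u_n) = \lambda_n\|u_n\|^q B(v_n) \le 0$ for large $n$; inserting this into the identity $I_{\lambda_n}(u_n) = \left(\frac12 - \frac1p\right)E(u_n) - \left(\frac1q - \frac1p\right)\lambda_n B(u_n)$ gives $\left(\frac12-\frac1p\right)E(u_n) \le C_0$, so $E(u_n)$ is bounded. Decomposing $u_n = \overline{u}_n + \tilde u_n$ with $\overline{u}_n = |\Omega|^{-1}\int_\Omega u_n$ and $\int_\Omega \tilde u_n = 0$, the identity $\|\nabla \tilde u_n\|_2^2 = E(u_n)$ and the Poincar\'e--Wirtinger inequality bound $\tilde u_n$ in $X$, while $\overline{u}_n \to \infty$. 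I would then divide the weak formulation of $(P_{\lambda_n})$ by $\overline{u}_n^{p-1}$ and let $n \to \infty$: the gradient term $\int_\Omega \nabla(\tilde u_n/\overline{u}_n^{p-1})\cdot\nabla\phi \to 0$ because $E(u_n)$ is bounded while $\overline{u}_n\to\infty$, the concave term vanishes since $\lambda_n\overline{u}_n^{q-p}\to 0$, and $(u_n/\overline{u}_n)^{p-1}\to 1$ in $L^{p/(p-1)}(\Omega)$; this forces $\int_\Omega a\phi = 0$ for every $\phi \in X$, i.e. $a \equiv 0$, contradicting $\Omega^a_+ \neq \emptyset$. The main obstacle is precisely this last case: because all the integral quantities remain bounded, none of the Nehari-type comparisons used so far can close it, and one is driven to the mean/oscillation splitting together with the limiting linear Neumann equation $-\Delta\psi = a$; the delicate points are establishing the boundedness of $E(u_n)$ and justifying the passage to the limit in the nonlinear terms.
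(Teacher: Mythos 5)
Your proof is correct, and its two main ingredients coincide with the paper's: a uniform upper bound on $I_\lambda(u_{2,\lambda})$ obtained by testing $\inf_{N_\lambda^-}I_\lambda$ against a fixed element of $E^+\cap A^+$ (the paper uses the Dirichlet principal eigenfunction of $-\Delta\varphi=\lambda a\varphi$, you use a bump in $\Omega^a_+$; both work, though your appeal to ``uniform convergence on compacta'' should be replaced by the observation that $j_\varphi(t)\leq \frac{t^2}{2}E(\varphi)-\frac{t^p}{p}A(\varphi)+\frac{\lambda_0 t^q}{q}|B(\varphi)|$, a fixed function with finite supremum, which is essentially how the paper bounds $t_2(\lambda)$), and then a blow-up argument giving $\limsup E(v_n)\leq 0$, hence $v_n\to v_0$ a nonzero constant. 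Where you diverge is in closing the contradiction: you split according to which half of \eqref{ab<0} holds, disposing of $\int_\Omega a<0$ via $N_\lambda^-\subset A^+$ and Lemma \ref{lem:wc}(2), and treating $\int_\Omega b<0$ by first extracting the bound $E(u_n)\leq C$, performing the mean/oscillation decomposition $u_n=\overline{u}_n+\tilde u_n$, and passing to the limit in the equation normalized by $\overline{u}_n^{p-1}$. The paper avoids the case split entirely: it divides the weak formulation directly by $\Vert u_n\Vert^{p-1}$, notes that the gradient term and the concave term carry factors $\Vert u_n\Vert^{2-p}$ and $\lambda_n\Vert u_n\Vert^{q-p}$ and so vanish, and concludes $\int_\Omega a v_0^{p-1}\phi=0$ for all $\phi$, whence $av_0^{p-1}\equiv 0$ and $v_0=0$, contradicting $\Vert v_0\Vert=1$ --- this works under either sign condition in \eqref{ab<0}, so the boundedness of $E(u_n)$ and the Poincar\'e--Wirtinger splitting in your second case, while correct, are superfluous. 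What your organization buys is that the first case needs no information from the PDE itself (only the Nehari constraint), at the cost of a longer argument in the second case.
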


\begin{proof} 
First we show that there exists a constant $C_1 > 0$ such that $I_\lambda (u_{2,\lambda}) \leq C_1$ for every
$\lambda \in (0,\lambda_0)$. 
To this end, we consider the following eigenvalue problem with the Dirichlet boundary condition. 
\begin{align} \label{eigenprob:D}
\begin{cases}
-\Delta \varphi = \lambda a(x)\varphi & \mbox{in $\Omega$}, \\ 
\varphi = 0 & \mbox{on $\partial \Omega$}.  
\end{cases}
\end{align}
We denote by $\lambda_D = \lambda_D(\Omega)$ the positive 
principal eigenvalue of \eqref{eigenprob:D} and by $\varphi_D = \varphi_D(\Omega)$ a positive eigenfunction associated to $\lambda_D$. Taking $\varphi_D^{p-1}$ as test function, we see that $\int_\Omega a \varphi_D^p=\int_\Omega |\nabla \varphi_D|^2>0$, i.e. $\varphi_D \in E^+ \cap A^+$. By Proposition \ref{prop:t1t2} there exists $t_2(\lambda)$ such that 
$t_2(\lambda) \varphi_D \in N_\lambda^-$. 
Note that $$0<j_{\varphi_D}(t_2(\lambda))=\frac{t_2(\lambda)^2}{2}E(\varphi_D)-\frac{t_2(\lambda)^p}{p}A(\varphi_D)-\lambda \frac{t_2(\lambda)^q}{q}B(\varphi_D).$$
Thus $t_2(\lambda)$ stays bounded as $\lambda \to 0^+$.
Consequently, there holds
$$I_\lambda(t_2(\lambda) \varphi_D)=\frac{q-2}{2q}t_2(\lambda)^2 E(\varphi_D)+\frac{p-q}{pq} t_2(\lambda)^p A(\varphi_D)\leq \frac{p-q}{pq} t_2(\lambda)^p A(\varphi_D)\leq C,$$
for some constant $C>0$.

We assume now that $\lambda_n \to 0^+$ and  $\|u_n\| \to \infty$, where $u_n=u_{2,\lambda_n}$.  We set $v_n=\frac{u_n}{\|u_n\|}$ and assume that $v_n \rightharpoonup v_0$ in $X$ and $v_n \to v_0$ in $L^p(\Omega)$ and $L^q(\Omega)$.
Since $I_{\lambda_n}(u_n) =\displaystyle \min_{N_{\lambda_n}^-} I_{\lambda_n}$, we have 
\begin{align*}
\left( \frac{1}{2} - \frac{1}{p} \right)E(u_n) - \left( \frac{1}{q} - \frac{1}{p} \right) \lambda_n  B(u_n) =I_{\lambda_n} (u_n) \leq C.  
\end{align*}
Hence
\begin{align*}
\left( \frac{1}{2} - \frac{1}{p} \right) E(v_n) \leq \left( \frac{1}{q} - \frac{1}{p} \right) 
\lambda_n B(v_n)\Vert u_{n} \Vert^{q-2} + C_1 \Vert u_n \Vert^{-2}. 
\end{align*}
Letting $\lambda_n \to 0^+$ we obtain $\limsup_\lambda E(v_n)\leq 0$, and by Lemma \ref{lem:wc} we infer that $v_0$ is a constant and $v_n \to v_0$ in $X$. In particular, $\Vert v_0 \Vert = 1$. Moreover, from
$$\int_\Omega \left(\nabla u_n \nabla \phi -\lambda_n bu_n^{q-1} \phi - au_n^{p-1} \phi\right)=0 \quad \forall \phi \in X,$$
we get $$\int_\Omega av_0^{p-1}\phi=\lim \int_\Omega av_n^{p-1}\phi=0\quad \forall \phi \in X,$$ which provides 
$av_0^{p-1} \equiv 0$, so that $v_0 =0$, and we get a contradiction. 
Therefore $(u_{2,\lambda})$ stays bounded in $X$ as $\lambda \to 0^+$. 
\end{proof}

\begin{prop}
\label{asyu2}
Assume  \eqref{ab<0} and $\Omega^a_+ \neq \emptyset$.
\begin{enumerate}
\item If $\int_\Omega a \geq 0>\int_\Omega b$ then $u_{2,\lambda} \to 0$ in $C^2(\overline{\Omega})$ as $\lambda \to 0^+$. If, in addition, $\int_\Omega a>0$ then $\lambda^{-\frac{1}{p-q}}u_{2,\lambda} \to c^*$ in $C^2(\overline{\Omega})$ as $\lambda \to 0^+$. In this case $u_{2,\lambda}$ is a unstable positive solution of $(P_\lambda)$ for $\lambda>0$ sufficiently small.\\
\item If $\int_\Omega a <0$ and $\lambda_n \to 0^+$ then, up to a subsequence, $u_{2,\lambda_n} \to u_{2,0}$ in $C^2(\overline{\Omega})$, where $u_{2,0}$ is a positive ground state solution of \eqref{pla}. In this case $u_{2,\lambda}$ is a unstable positive solution of $(P_\lambda)$ for $\lambda>0$ sufficiently small.
\end{enumerate}
\end{prop}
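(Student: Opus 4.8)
The plan is to run, for $u_n := u_{2,\lambda_n}$ with $\lambda_n\to0^+$, the same Nehari-type compactness scheme used for $u_{1,\lambda}$ in Proposition \ref{prop:sol:Nlam+:0}, but now exploiting the $N_\lambda^-$ structure. By Lemma \ref{lem:u2lam<C} the family $(u_n)$ is bounded in $X$, so up to a subsequence $u_n\rightharpoonup u_0$ in $X$ and $u_n\to u_0$ in $L^p(\Omega)$ and $L^q(\Omega)$. Passing to the limit in the weak form of $(P_{\lambda_n})$ and using $\lambda_n\to0$ together with the boundedness of $B(u_n)$, one sees that $u_0$ is a weak (hence classical) solution of \eqref{pla}. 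Testing $(P_{\lambda_n})$ with $u_n$ gives $E(u_n)=A(u_n)+\lambda_n B(u_n)\to A(u_0)=E(u_0)$, so $\|u_n\|\to\|u_0\|$ and therefore $u_n\to u_0$ strongly in $X$, and then in $C^2(\overline{\Omega})$ by elliptic regularity. In particular $I_{\lambda_n}(u_n)\to I_0(u_0)$, where $I_0(u):=\frac12 E(u)-\frac1p A(u)$. It remains to identify $u_0$ in each case and to transfer positivity; instability will follow in every case from $u_{2,\lambda}\in N_\lambda^-$ via Remark \ref{runs}.

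For case (2), where $\int_\Omega a<0$, I would first show $u_0\not\equiv0$. Suppose not; then $u_n\to0$ in $X$ and, setting $v_n=u_n/\|u_n\|$, the inequality $E(u_n)<\frac{p-q}{2-q}A(u_n)$ valid on $N_{\lambda_n}^-$ gives $E(v_n)<\frac{p-q}{2-q}A(v_n)\|u_n\|^{p-2}\to0$, so $\limsup E(v_n)\le0$. By Lemma \ref{lem:wc}(1), $v_0$ is a nonzero constant and $v_n\to v_0$; but $v_n\in A^+$ forces $A(v_0)\ge0$, contradicting $A(v_0)=v_0^p\int_\Omega a<0$. Hence $u_0\not\equiv0$, so $u_0\in N_0:=\{u\neq0:E(u)=A(u)\}$ is a nontrivial solution of \eqref{pla}. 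To see it is a ground state, I would take any $w\in N_0$ and project it onto $N_{\lambda_n}^-$: since $w\in E^+\cap A^+$, Proposition \ref{prop:t1t2} provides $t_n>0$ with $t_nw\in N_{\lambda_n}^-$, and because $w\in N_0$ the $\lambda=0$ projection is $t=1$, whence $t_n\to1$. Then $I_{\lambda_n}(u_n)\le I_{\lambda_n}(t_nw)\to I_0(w)$, and letting $n\to\infty$ yields $I_0(u_0)\le I_0(w)$; as $w$ was arbitrary, $u_0$ minimizes $I_0$ over $N_0$. Finally $u_0\ge0$, $u_0\not\equiv0$ solve $-\Delta u_0=a u_0^{p-1}$, so the strong maximum principle and the boundary point lemma give $u_0>0$ on $\overline{\Omega}$; the $C^2$-convergence then makes $u_{2,\lambda}$ positive for small $\lambda$.

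For case (1), where $\int_\Omega a\ge0>\int_\Omega b$, the point is that now $c_0:=\inf_{N_0}I_0=0$: when $\int_\Omega a\ge0$ one can drive the homogeneous quotient $E(u)^{p/(p-2)}/A(u)^{2/(p-2)}$ to zero along near-constant functions in $A^+$ (a nonzero constant if $\int_\Omega a>0$, a suitable small perturbation of a constant if $\int_\Omega a=0$; such functions have $B<0$, hence project cleanly onto $N_\lambda^-$ by Proposition \ref{prop:t1t2}(1)). This shows $\min_{N_\lambda^-}I_\lambda\to0$; combined with $\min_{N_\lambda^-}I_\lambda\ge0$ (Lemma \ref{lem:Nlma-}) and $I_{\lambda_n}(u_n)\to I_0(u_0)$ it forces $I_0(u_0)=0$, hence $u_0\equiv0$, i.e. $u_{2,\lambda}\to0$ in $C^2(\overline{\Omega})$. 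When moreover $\int_\Omega a>0$, I would pass to the rescaled profile $w_n:=\lambda_n^{-1/(p-q)}u_n$, which solves $-\Delta w_n=\lambda_n^{(p-2)/(p-q)}\left(a w_n^{p-1}+b w_n^{q-1}\right)$ and satisfies the integral identity $\int_\Omega a w_n^{p-1}+\int_\Omega b w_n^{q-1}=0$. Once $(w_n)$ is shown bounded, the $N^-$ inequality rescales to $E(w_n)<\frac{p-q}{2-q}\lambda_n^{(p-2)/(p-q)}A(w_n)\to0$, so by Lemma \ref{lem:wc}(1) $w_n\to w_0$ with $w_0$ constant, and the limiting identity $w_0^{p-1}\int_\Omega a+w_0^{q-1}\int_\Omega b=0$ leaves only $w_0=0$ or $w_0=c^*$; a further normalization argument using $\int_\Omega b<0$ rules out $w_0=0$, giving $\lambda^{-1/(p-q)}u_{2,\lambda}\to c^*$ in $C^2(\overline{\Omega})$ and positivity of $u_{2,\lambda}$ for small $\lambda$.

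The main obstacle is the boundedness of the rescaled family $w_n=\lambda_n^{-1/(p-q)}u_n$ in case (1): unlike the $N_\lambda^+$ situation of Proposition \ref{prop:sol:Nlam+:0}(2), membership in $N_\lambda^-$ gives no upper control on $E$, so boundedness cannot be read off a single fibering inequality. The device I would use is the scaling identity $\lambda_n^{(p-2)/(p-q)}\|w_n\|^{p-2}=\|u_n\|^{p-2}\to0$: assuming $\|w_n\|\to\infty$ and setting $\psi_n=w_n/\|w_n\|$, the inequality $E(w_n)<\frac{p-q}{2-q}\lambda_n^{(p-2)/(p-q)}A(w_n)$ then yields $E(\psi_n)<\frac{p-q}{2-q}A(\psi_n)\|u_n\|^{p-2}\to0$, so $\psi_n\to\psi_0$ with $\psi_0>0$ a nonzero constant; dividing the integral identity by $\|w_n\|^{q-1}$ gives $\|w_n\|^{p-q}\int_\Omega a\,\psi_n^{p-1}+\int_\Omega b\,\psi_n^{q-1}=0$, whose left-hand side diverges to $+\infty$ because $\int_\Omega a>0$ while the second term stays bounded and negative, a contradiction. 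The same device, with $\|w_n\|\to0$ in place of $\|w_n\|\to\infty$, rules out $w_0=0$.
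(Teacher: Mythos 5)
Your proposal is correct and follows the same overall architecture as the paper's proof: boundedness from Lemma \ref{lem:u2lam<C}, strong convergence in $X$ and then in $C^2(\overline{\Omega})$ to a non-negative solution of \eqref{pla}, the rescaling $w_\lambda=\lambda^{-1/(p-q)}u_{2,\lambda}$ when $\int_\Omega a>0$, identification of the constant limit through the integrated equation, projection of competitors onto $N_\lambda^-$ for the ground-state property, and instability via Remark \ref{runs}. Two sub-steps are handled by genuinely different devices. First, to get $u_{2,\lambda}\to 0$ when $\int_\Omega a\ge 0$ the paper simply invokes the fact that \eqref{pla} admits no nontrivial non-negative solution unless $\int_\Omega a<0$ (Lemma \ref{lem:lameq0}(1), an integration of the equation against $u^{1-p}$); your energy-level argument (driving $\inf_{N_\lambda^-}I_\lambda$ to $0$ with near-constant test functions in $A^+\cap B^-$ and forcing $I_0(u_0)=0$) is valid but longer, and to pass from $I_0(u_0)=0$ to $u_0\equiv 0$ you still need the small extra observation that a nonzero constant cannot solve \eqref{pla} since $a\not\equiv 0$. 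Second, for the boundedness of the rescaled family the paper tests the minimality of $u_{2,\lambda}$ against the explicit constant $c_\lambda=\left(-\lambda\int_\Omega b/\int_\Omega a\right)^{1/(p-q)}\in N_\lambda^-$, obtaining $I_{\lambda_n}(u_n)\le D\lambda_n^{p/(p-q)}$ and hence $E(\psi_n)\to 0$; your alternative---dividing the integral identity by $\Vert w_n\Vert^{q-1}$ so that $\Vert w_n\Vert^{p-q}\int_\Omega a\,\psi_n^{p-1}\to+\infty$ while $\int_\Omega b\,\psi_n^{q-1}$ stays bounded and negative---is a clean substitute that bypasses the energy comparison, and the same identity rules out $w_0=0$ essentially as the paper does. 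The only points you gloss are routine: $t_n\to 1$ requires first showing $t_n$ is bounded and bounded away from $0$ (as in the paper), and positivity of $u_{2,\lambda}$ for \emph{all} small $\lambda>0$, rather than along the chosen subsequence, needs the usual contradiction argument with sequences since the $C^2$-limit is only subsequential.
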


\begin{proof}
Let $\lambda_n \to 0^+$. By Lemma \ref{lem:u2lam<C}, up to a subsequence, we have $u_n=u_{2,\lambda_n} \rightharpoonup u_0$ in $X$ and $u_n \to u_0$ in $L^p(\Omega)$ and $L^q(\Omega)$. Since $u_n$ is a solution of $(P_{\lambda_n})$ it follows that $u_n \rightarrow u_0$ in $C^2(\overline{\Omega})$ and $u_0$ is a non-negative solution of \eqref{pla}.
This problem has a nontrivial non-negative solution if and only if $\int_\Omega a<0$. Hence $u_0 \equiv 0$ if $\int_\Omega a \geq 0$. 
\begin{enumerate}
\item Let us now assume that $\int_\Omega a>0>\int_\Omega b$.
We set $w_n=\lambda_n^{-\frac{1}{p-q}} u_n$. Then $w_n$ is a non-negative solution of
\begin{align}
\begin{cases} \label{eqw}
-\Delta w = \lambda^{\frac{p-2}{p-q}}a(x)w^{p-1} + \lambda^{\frac{p-2}{p-q}}b(x)w^{q-1} & \mbox{in $\Omega$}, \\ 
\frac{\partial w}{\partial \n} = 0 & \mbox{on $\partial \Omega$}.  
\end{cases}
\end{align}
for $\lambda=\lambda_n$. We claim that $(w_n)$ is bounded in $X$. Indeed, assume that $\|w_n\| \to \infty$ and $\psi_n = \frac{w_n}{\|w_n\|} \rightharpoonup \psi_0$ in $X$ with $\psi_n \to \psi_0$ in $L^p(\Omega)$ and $L^q(\Omega)$. Let $c_\lambda=\left(\frac{-\lambda \int_\Omega  b}{\int_\Omega a}\right)^{\frac{1}{p-q}}$.  We use now the fact that  $c_\lambda \in N_\lambda^-$ for any $\lambda >0$. Hence
$$I_{\lambda_n}(u_n)\leq I_{\lambda_n} (c_{\lambda_n})=D \lambda_n^{\frac{p}{p-q}},$$
where
$D=\frac{p-q}{pq} \frac{\left(-\int_\Omega  b \right)^{\frac{p}{p-q}}}{\left(\int_\Omega a\right)^{\frac{q}{p-q}}}$.
Thus $$\frac{p-2}{2p}\lambda_n^{\frac{2}{p-q}} E(w_n)-\frac{p-q}{pq} \lambda_n^{\frac{p}{p-q}} B(w_n) \leq D \lambda_n^{\frac{p}{p-q}},$$
so that $$\frac{p-2}{2p} E(w_n)-\frac{p-q}{pq} \lambda_n^{\frac{p-2}{p-q}} B(w_n) \leq D \lambda_n^{\frac{p-2}{p-q}}.$$
Dividing the latter inequality by $\|w_n\|^2$ we get $E(\psi_n) \to 0$, and consequently $\psi_n \to \psi_0$ in $X$ and $\psi_0$ is a constant. Furthermore, integrating \eqref{eqw} we obtain
\begin{equation}
\label{int1}\int_\Omega aw_n^{p-1} + \int_\Omega  bw_n^{q-1}=0,
\end{equation}
so that $\int_\Omega a\psi_n^{p-1} \to 0$, i.e. $\int_\Omega a\psi_0^{p-1} =0$, and consequently $\int_\Omega a =0$, which is a contradiction. Therefore $(w_n)$ is bounded in $X$. We may assume then that $w_n \rightharpoonup w_0$ in $X$ and $w_n \rightarrow w_0$ in $L^p(\Omega)$ and $L^q(\Omega)$. It follows that
$$\int_\Omega \nabla w_0 \nabla \phi =0,\quad \forall \phi \in X.$$
Hence $w_0$ is a constant and $w_n \to w_0$ in $X$, and consequently in $C^2(\overline{\Omega})$.
It remains to show that $w_0 \neq 0$. If $w_0 =0$ then we set again $\psi_n = \frac{w_n}{\|w_n\|}$. From
$$E(w_n)<\frac{p-2}{p-q} \lambda_n^{\frac{p-2}{p-q}} A(w_n),$$
we infer that $E(\psi_n) \to 0$, so that $\psi_n \to \psi_0$ in $X$ and $\psi_0$ is a constant. Moreover, from 
$$0\leq A(w_n)+B(w_n)$$
we have $$-\|w_n\|^{p-q} A(\psi_n)\leq B(\psi_n),$$
so that $B(\psi_0) \geq 0$. By Lemma \ref{lem:wc}(2) we get a contradiction. Therefore we have proved that $w_0$ is a non-zero constant. Finally, from \eqref{int1} we obtain
$$w_0^{p-1} \int_\Omega a=-w_0^{q-1} \int_\Omega  b,$$ i.e.
$w_0 = c^*$. In particular, we infer that $u_{2,\lambda}$ is positive for $\lambda>0$ sufficiently small. Finally, from Remark \ref{runs} we infer that $u_{2,\lambda}$ is unstable whenever it is positive.\\
\item Let us assume now $\int_\Omega a<0$ and show that $u_{2,0}$ is a positive ground state solution of \eqref{pla}, i.e. 
$$I_a(u_{2,0})=\min_{N_a} I_a,$$
where $$I_a(u)=\frac{1}{2}E(u) - \frac{1}{p}A(u)$$ for $u \in X$ and $$N_a=\{u \in X \setminus \{0\};\ \langle I_a'(u),u \rangle =0\}=\{u \in X \setminus \{0\}; \ E(u)=A(u)\}$$
is the Nehari manifold associated to $I_a$. Since $\int_\Omega a<0$ it is easily seen that there exists $u_a\neq 0$ such that $I_a(u_a)=\min_{N_a} I_a$. Note that since $u_{2,0}$ is a nontrivial solution of \eqref{pla} we have
$u_{2,0} \in N_a$ and consequently $I_a(u_a)\leq I_a(u_{2,0})$. We prove now the reverse inequality. Since $u_a$ is non-constant, we have $u_a \in A^+ \cap E^+$. Thus for any $\lambda<\lambda_0$ there exists $t_{\lambda}>0$ such that $t_{\lambda}u_0 \in N_\lambda^-$. Thus
$$t_{\lambda}^2 \frac{E(u_a)}{2} - \lambda t_{\lambda}^q\frac{B(u_a)}{q}-t_{\lambda}^p \frac{A(u_a)}{p}=I_\lambda(t_{\lambda}u_a)>0,$$
which implies that $t_{\lambda}$ remains bounded as $\lambda \to 0$. 
We may then assume that $t_{\lambda} \to t_0$  as $\lambda \to 0$. We claim that $t_0=1$. Indeed, first note that from $t_{\lambda}u_a \in N_\lambda^-$ we have $$t_{\lambda}^2 E(u_a)<\frac{p-q}{2-q} t_{\lambda}^p A(u_a),$$
and consequently $t_{\lambda}^{p-2}>\frac{2-q}{p-q} \frac{E(u_a)}{A(u_a)}$. Hence $t_0>0$.
In addition, from $t_{\lambda}u_a \in N_\lambda$ we have
$$t_{\lambda}^2 E(u_a) =\lambda t_{\lambda}^qB(u_a)+t_{\lambda}^pA(u_a)$$
so $$t_{\lambda}^{2-q} E(u_a) =t_{\lambda}^{p-q}A(u_a)+o(1)$$
as $\lambda \to 0$. Since $E(u_a)=A(u_a)$ we infer that $t_0=1$, as claimed. Now, from
$$I_{\lambda}(u_{2,\lambda})\leq I_\lambda (t_{\lambda}u_a)$$
it follows that
$$\frac{1}{2}E(u_{2,\lambda}) - \frac{\lambda}{q}B(u_{2,\lambda})- \frac{1}{p}A(u_{2,\lambda})\leq \left( \frac{1}{2} - \frac{1}{p} \right)t_{\lambda}^2E(u_a) - \left( \frac{1}{q} - \frac{1}{p} \right) \lambda t_{\lambda}^qB(u_a).$$
Letting $\lambda \to 0$ and using that $u_{2,\lambda} \to u_{2,0}$ in $X$ we obtain $$I_a(u_{2,0})\leq \left( \frac{1}{2} - \frac{1}{p} \right)E(u_a) =I_a(u_a).$$
Therefore $I_a(u_{2,0})=I_a(u_a)$, as claimed. 

Finally, let us show that $u_{2,\lambda}$ is positive on $\overline{\Omega}$ for $\lambda >0$ sufficiently small. Indeed, assume by contradiction that for every $\lambda>0$ there exists $0<\mu <\lambda$ such that $u_{2,\mu}$ is non-negative but vanishes somewhere on $\overline{\Omega}$. Then we obtain a sequence $\mu_n \to 0^+$ such that $u_n=u_{2,\mu_n}$ are non-negative solutions vanishing somewhere on $\overline{\Omega}$.  But up to a subsequence, $(u_n)$ converges in 
$C^2(\overline{\Omega})$ to a positive function, which is a contradiction.
The proof is now complete.
\end{enumerate}
\end{proof}

\begin{rem} \label{rasyu2}
\strut
{\rm 
\begin{enumerate}

\item If $\Omega_+^a \neq \emptyset$ and $\int_\Omega a<0< \int_\Omega b$ then Propositions \ref{prop:sol:Nlam+:0} and \ref{asyu2} provide us with some $\lambda^*>0$ such that $u_{2,\lambda}>u_{1,\lambda}$ for $0<\lambda<\lambda^*$. Indeed, assume on the contrary that there are two sequences $\lambda_n \to 0^+$ and $x_n \in \overline{\Omega}$ such that $u_{2,\lambda_n} (x_n) \leq u_{1,\lambda_n}(x_n)$ and $u_{2,\lambda_n} \to u_{2,0}$ in $C^2(\overline{\Omega})$. Let $\epsilon = \frac{\min_{\overline{\Omega}} u_{2,0}}{2} > 0$. Since $u_{1,\lambda_n} \to 0$ in $C^2(\overline{\Omega})$, there exists $n_0 \in \mathbb{N}$ such that $\max_{\overline{\Omega}} u_{1,\lambda_n} < \epsilon$ for $n\geq n_0$. It follows that $u_{1,\lambda_n}(x_n) < \epsilon$ for $n\geq n_0$, and thus that $u_{2,\lambda_n}(x_n) < \epsilon$ for such $n$. However, since $u_{2,\lambda_n} \to u_{2,0}$ in $C^2(\overline{\Omega})$, there exists $n_1 \in \mathbb{N}$ such that  $\min_{\overline{\Omega}} u_{2,\lambda_n}>\epsilon$ for $n\geq n_1$, which is a contradiction.  The same result holds if $\Omega_+^b \neq \emptyset$ and $\int_\Omega a>0>\int_\Omega b$. Indeed, one can apply a similar argument to $w_{1,\lambda}=\lambda^{-\frac{1}{p-q}}u_{1,\lambda}$ and $w_{2,\lambda}=\lambda^{-\frac{1}{p-q}}u_{2,\lambda}$. We use now the fact that $w_{2,\lambda} \to c^*$ in $C^2(\overline{\Omega})$ as $\lambda \to 0^+$ and if $\lambda_n \to 0^+$ then, up to a subsequence, $w_{1,\lambda_n} \to 0$ in $C^2(\overline{\Omega})$.\\

\item One can show that ground state solutions of \eqref{pla} converge to $0$ in $C^2(\overline{\Omega})$ as $\int_\Omega a \nearrow 0$. More precisely, let $a_n=a^+ -\delta_n a^-$ where $\delta_n$ is a sequence such that $\delta_n \searrow \delta_0=\frac{\int_\Omega a^+}{\int_\Omega a^-}$. Then $a_n \nearrow a_0=a^+ - \delta_0 a^-$ and $\int_\Omega a_0 =0$. We denote by $u_n$ a ground state solution of \eqref{pla} with $a=a_n$. First we show that $(u_n)$ is bounded in $X$. If not, we set $v_n=\frac{u_n}{\|u_n\|}$ and assume that $v_n \rightharpoonup v_0$ in $X$. We set $A_n(u)=\int_\Omega a_n|u|^p$ and $I_n(u)=\frac{1}{2}E(u)-\frac{1}{p}A_n(u)$ for $u \in X$. In addition, we denote by $N_n$ the Nehari manifold associated to $I_n$. 
Let $\phi \in X$ be such that $a^- \phi \equiv 0$ and $a^+ \phi \not \equiv 0$. Then
$$I_n(t\phi)=\frac{t^2}{2}E(\phi) -\frac{t^p}{p}\int_\Omega a^+|\phi|^p=C(t).$$
Since $\phi$ is non-constant there exists a unique $t_0>0$ such that $t_0\phi \in N_n$ for every $n$. It follows that $$\frac{p-2}{2p}E(u_n)= I_n(u_n)\leq I_n(t_0\phi)=C(t_0).$$
Consequently we have $E(v_n) \to 0$, so that $v_n \rightarrow v_0$ and $v_0$ is a constant. Moreover, since $$\int_\Omega \nabla u_n \nabla w=\int_\Omega a_n u_n^{p-1} w \quad \forall w \in X,$$
we deduce that $$\int_\Omega a_n v_n^{p-1} w \to 0\quad \forall w \in X$$
Thus $$\int_\Omega a_0 v_0^{p-1} w =0\quad \forall w \in X$$ and consequently $a_0 v_0^{p-1} \equiv 0$. Hence $v_0 =0$, which contradicts $\|v_n\|=1$ for every $n$. Therefore $(u_n)$ is bounded and, up to a subsequence, we have $u_n \rightarrow u_0$ in $X$. Moreover $u_0$ is a solution of \eqref{pla} with $a=a_0$. Finally, since $\int_\Omega a_0=0$ we infer that $u_0 \equiv 0$, i.e. $u_n \to 0$ in $X$, and consequently in $C^2(\overline{\Omega})$.

\end{enumerate} }
\end{rem}

\bigskip

\section{Some results via sub-supersolutions}

\bigskip

We use now the asymptotic profile of $u_{1,\lambda}$ as $\lambda \to 0$ to show that for $\lambda>0$ sufficiently small a solution of $(P_\lambda)$ can be obtained by the sub-supersolutions method. In particular, the assumption $p<2^*$ can be dropped.

\begin{prop}
\label{psubsup}
Assume that $\Omega^b_+ \neq \emptyset$ and $\int_\Omega b<0$. Then there exists $\Lambda_0>0$ such that $(P_\lambda)$ has a nontrivial non-negative solution $U_\lambda$ for $0<\lambda<\Lambda_0$. Moreover $U_\lambda \to 0$ in $X$ as $\lambda \to 0^+$.
\end{prop}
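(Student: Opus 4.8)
The plan is to use the sub-supersolution method, exploiting that it produces a classical solution from an ordered pair of bounded sub- and supersolutions with \emph{no} growth restriction on the nonlinearity: since only the values of $f_\lambda(x,s)=\lambda b(x)s^{q-1}+a(x)s^{p-1}$ for $s$ in a bounded interval intervene, one fixes a large shift $M$ making $s\mapsto f_\lambda(x,s)+Ms$ nondecreasing on that interval and runs the monotone iteration; the resulting solution lies in $L^\infty$ and is lifted to $C^2(\overline{\Omega})$ by elliptic bootstrap. This is exactly why the assumption $p<\frac{2N}{N-2}$ can be dropped here. Guided by Theorem \ref{t1}(1)(a), I would look for $U_\lambda$ of size $\lambda^{\frac{1}{2-q}}$; writing $U_\lambda=\lambda^{\frac{1}{2-q}}W$ turns $(P_\lambda)$ into the perturbed profile problem $-\Delta W=b(x)W^{q-1}+\lambda^{\frac{p-q}{2-q}}a(x)W^{p-1}$ under Neumann conditions, whose parameter $\sigma:=\lambda^{\frac{p-q}{2-q}}$ tends to $0$. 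It therefore suffices to produce, for all small $\sigma$, a bounded ordered pair $0\le\underline{W}\le\overline{W}$ of sub- and supersolutions with $\underline{W}\not\equiv0$; then $U_\lambda=\lambda^{\frac{1}{2-q}}W_\lambda\to0$ in $X$.

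The subsolution is the easy ingredient. I would take a ball $B$ with $\overline{B}\subset\Omega^b_+$, let $\phi_1>0$ be a first Dirichlet eigenfunction of $-\Delta$ on $B$ extended by zero to $\Omega$, and set $\underline{W}=\delta\phi_1$ for a small fixed $\delta>0$. On $B$ one has $-\Delta\underline{W}=\delta\lambda_1(B)\phi_1$, and since $b\ge b_0>0$ there, the inequality $\delta\lambda_1(B)\phi_1\le b\,\underline{W}^{q-1}+\sigma a\,\underline{W}^{p-1}$ holds for $\delta,\sigma$ small, because the concave term is of order $\delta^{q-1}$ and dominates both $\delta\phi_1$ and the convex term of order $\sigma\delta^{p-1}$; outside $B$ the function vanishes and the downward corner across $\partial B$ only helps (Kato's inequality). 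Thus $\underline{W}$ is a genuine nontrivial subsolution uniformly in small $\sigma$.

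The supersolution is the heart of the matter, and the obstacle flagged in Remark \ref{rem:thm01}. Here the profile $w_0$, a nontrivial non-negative ground state of \eqref{plb} whose existence uses only $\int_\Omega b<0$ and is independent of $p$, enters. A naive multiple $(1+\eta)w_0$ is \emph{not} a supersolution: on $\{b<0\}\cap\{w_0>0\}$ one computes $-\Delta[(1+\eta)w_0]-b[(1+\eta)w_0]^{q-1}=\big[(1+\eta)-(1+\eta)^{q-1}\big]b\,w_0^{q-1}<0$, and the Neumann condition simultaneously forbids curing this with an additive constant. My plan is instead to perturb along the linearization: set $\overline{W}=w_0+\eta\Phi$, where $\Phi>0$ satisfies $L\Phi>0$ for the linearized operator $L=-\Delta-(q-1)b(x)w_0^{q-2}$ under Neumann conditions. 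On $\{w_0>0\}$ the defect expands as $\eta(L\Phi)+o(\eta)$, while on the dead core $\{w_0=0\}$ one has $b<0$ and the relevant term $-b(\eta\Phi)^{q-1}>0$ is favorable; choosing $\Phi$ to be the principal eigenfunction of $L$ then yields a strict supersolution of \eqref{plb} with a uniform positive defect, so that for $\sigma$ small the bounded convex perturbation $\sigma a\overline{W}^{p-1}$ is absorbed. Shrinking $\delta$ guarantees $\underline{W}\le\overline{W}$.

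The real difficulty concentrated in the last step is the positivity of the principal eigenvalue of $L$, i.e.\ the strict stability of the ground state $w_0$: $L$ carries the singular weight $w_0^{q-2}$, which blows up along the free boundary of the dead core, so the eigenvalue problem must be set up with care. The weight is fortunately sign-favorable there, since $b<0$ makes the potential $-(q-1)bw_0^{q-2}$ positive and large near the free boundary, and the radial nondegeneracy $\left.\frac{d^2}{dt^2}I_b(tw_0)\right|_{t=1}=(2-q)E(w_0)>0$ reflects the expected minimality of $w_0$; establishing $\lambda_1(L)>0$ rigorously, together with a positive lower bound on $\Phi$ ensuring the convex term is uniformly absorbed, is where the bulk of the work lies. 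Once the ordered pair is in hand, the sub-supersolution theorem provides $W_\lambda\in[\underline{W},\overline{W}]$, which is uniformly bounded in $X$ by elliptic estimates, hence a nontrivial non-negative classical solution $U_\lambda=\lambda^{\frac{1}{2-q}}W_\lambda$ of $(P_\lambda)$ for $0<\lambda<\Lambda_0$, with $\|U_\lambda\|\le C\lambda^{\frac{1}{2-q}}\to0$.
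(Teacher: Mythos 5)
Your scaffolding (the rescaling $U_\lambda=\lambda^{\frac{1}{2-q}}W$, the sub-supersolution framework that dispenses with $p<\frac{2N}{N-2}$, and the subsolution $\delta\phi_1$ built from a first Dirichlet eigenfunction on a ball in $\Omega^b_+$ extended by zero) matches the paper's proof. But the supersolution, which you correctly identify as the heart of the matter, is where your argument has a genuine gap. You propose $\overline{W}=w_0+\eta\Phi$ with $\Phi$ a principal eigenfunction of the singular linearized operator $L=-\Delta-(q-1)b\,w_0^{q-2}$, and your construction hinges on three unproven facts: that a principal eigenpair of $L$ exists in a usable sense despite the weight $w_0^{q-2}$ blowing up along the free boundary of the dead core; that $\lambda_1(L)>0$ (the radial nondegeneracy $(2-q)E(w_0)>0$ only controls the direction along the ray $tw_0$ and does not give strict stability of the ground state); and that the Taylor remainder in the expansion of $(w_0+\eta\Phi)^{q-1}$ is uniformly $o(\eta)$. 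The last point fails as stated: the remainder is of order $\eta^2 w_0^{q-3}\Phi^2$, which degenerates precisely in the transition region where $w_0$ is small but positive and $b<0$, so the term $\eta\lambda_1\Phi$ cannot be shown to dominate there without substantial further work. You acknowledge this is "where the bulk of the work lies," but it is exactly the step that is not carried out, so the proof is incomplete.

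The paper avoids all of this with a much simpler device: instead of perturbing the ground state $w_0$ of \eqref{plb} along a linearization, it perturbs the \emph{weight}. Choose $\delta>0$ small with $\int_\Omega(b+\delta)<0$ and let $w_\delta$ be a nontrivial non-negative solution of $-\Delta w=(b+\delta)|w|^{q-2}w$ under the Neumann condition (this is a purely concave problem, so its solvability needs no restriction on $p$). Then $\overline{u}=\lambda^{\frac{1}{2-q}}w_\delta$ is a supersolution of $(P_\lambda)$ if and only if, pointwise where $w_\delta>0$ and $a>0$, one has $\delta\geq\lambda^{\frac{p-2}{2-q}}a(x)w_\delta^{p-q}$, which holds for all $\lambda\leq\Lambda_0:=\bigl(\delta\|a^+\|_\infty^{-1}\|w_\delta\|_\infty^{q-p}\bigr)^{\frac{2-q}{p-2}}$; where $w_\delta=0$ or $a\leq0$ the inequality is automatic. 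The built-in surplus $\delta\,w_\delta^{q-1}$ absorbs the convex term with no spectral analysis, no dead-core asymptotics, and a uniform, explicit $\Lambda_0$. I would recommend replacing your eigenfunction perturbation by this weight-perturbation trick; the rest of your argument (ordering after shrinking $\delta$ in the subsolution, the comparison theorem of Hess, and the conclusion $\|U_\lambda\|\leq C\lambda^{\frac{1}{2-q}}\to0$) then goes through as in the paper.
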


\begin{proof}
First we obtain a supersolution of $(P_\lambda)$. To this end, we consider the problem
$$ 
\begin{cases}
-\Delta w = (b(x)+\delta)|w|^{q-2}w & \mbox{in $\Omega$}, \\
\frac{\partial w}{\partial \n} = 0 & \mbox{on $\partial \Omega$}.
\end{cases} 
$$
If $\delta>0$ is such that $\int_\Omega (b+\delta)<0$ then this problem has a nontrivial non-negative solution $w_\delta$.
We set $\overline{u}=\lambda^{\frac{1}{2-q}} w_\delta$.
Then $\overline{u}$ is a weak supersolution of $(P_\lambda)$ if
$$\lambda^{\frac{1}{2-q}}\int_\Omega (b(x)+\delta)w_\delta(x)^{q-1}v\geq \lambda^{\frac{p-1}{2-q}} \int_\Omega a(x) w_\delta(x)^{p-1} v +\lambda^{\frac{1}{2-q}} \int_\Omega b(x)w_\delta(x)^{q-1}v$$
for every non-negative $v \in X$.
It suffices then to have
$$\lambda^{\frac{1}{2-q}}(b(x)+\delta)w_\delta(x)^{q-1}\geq \lambda^{\frac{p-1}{2-q}}  a(x) w_\delta(x)^{p-1}  +\lambda^{\frac{1}{2-q}}b(x)w_\delta(x)^{q-1} $$
for {\it a.e. } $x \in \Omega$.
If $w_\delta(x)=0$ or $a(x) \leq 0$ then the latter inequality is clearly satisfied. Now, if $w_\delta(x)>0$ and $a(x)>0$ then it is equivalent to
$$\delta \geq \lambda^{\frac{p-2}{2-q}} a(x)w_\delta^{p-q},$$
which is satisfied if $$\lambda \leq \Lambda_0:=\left(\delta\|a^+\|_{\infty}^{-1} \|w_\delta\|_\infty^{q-p}\right)^{\frac{2-q}{p-2}}.$$
On the other hand, since $\Omega^b_+ \neq \emptyset$ there exist a subdomain $\Omega' \subset \Omega$ and $\delta'>0$ such that $b \geq \delta'$ in $\Omega'$. Let $\phi_1'$ be a positive eigenfunction associated to $\lambda_1'$, the first eigenvalue of $-\Delta u=\lambda u$ in $\Omega'$, $u=0$ in $\partial \Omega'$. We extend $\phi_1'$ by zero to $\Omega \setminus \Omega'$ and set $\underline{u}=\varepsilon \phi_1'$, where $\varepsilon>0$. Then we have, for a non-negative $v \in X$,
$$\int_\Omega \nabla \underline{u} \nabla v =\varepsilon \int_{\Omega'} \nabla \phi_1' \nabla v=\varepsilon \left(\int_{\partial \Omega'} \frac{\partial \phi_1'}{\partial \n}v+\lambda_1' \int_{\Omega'} \phi_1' v\right)\leq \varepsilon \lambda_1' \int_{\Omega'} \phi_1' v,$$
since $\frac{\partial \phi_1'}{\partial \n}<0$ on $\partial \Omega'$.
Hence $\underline{u}$ is a weak subsolution of $(P_\lambda)$ if, for {\it a.e.} $x \in \Omega'$, we have
$$\varepsilon \lambda_1' \phi_1' \leq a(\varepsilon \phi_1')^{p-1} + \lambda b (\varepsilon \phi_1')^{q-1},$$
i.e.
$$ \lambda_1' (\varepsilon\phi_1')^{2-q} \leq a(\varepsilon \phi_1')^{p-q} + \lambda b.$$ 
This inequality is clearly satisfied for $\varepsilon>0$ sufficiently small since $b \geq \delta'>0$ in $\Omega'$. Finally, taking $\varepsilon>0$ smaller if necessary, we have $\varepsilon \phi_1' \leq \overline{u}$ in $\Omega$. By \cite[Theorem 2]{H} we deduce that $(P_\lambda)$ has a solution $U_\lambda$ which satisfies $\varepsilon \phi_1' \leq U_\lambda \leq \lambda^{\frac{1}{2-q}} w_\delta$ in $\Omega$ for $\lambda<\Lambda_0$. In particular, we have $U_\lambda \to 0$ in $C(\overline{\Omega})$, and consequently in $X$, as $\lambda \to 0^+$. 
\end{proof}

We prove now that bifurcation of nontrivial non-negative solutions from zero can not occur at any $\lambda>0$.

\begin{lem} \label{l:boundbelow}
\label{l1}
Assume that $\Omega^b_+$ is a subdomain of $\Omega$. Let $\overline{\lambda}>0$ and $D$ be a subdomain such that $\overline{D}\subset \Omega^b_+$. Then there exists $C_{\overline{\lambda}}>0$ such that $u \geq C_{\overline{\lambda}}$ in $\overline{D}$ for every $u \in B^+$ which is a non-negative solution of $(P_\lambda)$ for $\lambda \geq \overline{\lambda}$. 
\end{lem}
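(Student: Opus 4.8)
The plan is to exploit the sublinearity $1<q<2$: at points of $\Omega^b_+$ where $u$ is small, the concave term $\lambda b\,u^{q-1}$ dominates $a\,u^{p-1}$, so $u$ is locally superharmonic and, more usefully, the reaction $s\mapsto \lambda b(x)s^{q-1}+a(x)s^{p-1}$ is increasing on a uniform interval $[0,s_0]$. This restores just enough comparison structure for a sliding (sweeping) argument against a fixed Dirichlet eigenfunction. Concretely, I would fix a smooth subdomain $\Omega'$ with $\overline D\subset\Omega'$ and $\overline{\Omega'}\subset\Omega^b_+$, put $\delta:=\min_{\overline{\Omega'}}b>0$, and let $(\mu_1,\phi)$ be the first Dirichlet eigenpair of $-\Delta$ on $\Omega'$, so $\phi>0$ in $\Omega'$, $\phi=0$ on $\partial\Omega'$, $-\Delta\phi=\mu_1\phi$, normalized by $\|\phi\|_\infty=1$. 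Since $q-1\in(0,1)$ and $p-q,\,2-q>0$, there is $s_0>0$ depending only on $\overline\lambda,\delta,\mu_1,\|a\|_\infty$ (hence uniform in $\lambda\ge\overline\lambda$ and in $u$) such that for all $x\in\overline{\Omega'}$ and $0<s\le s_0$ one has both $\lambda b(x)s^{q-1}+a(x)s^{p-1}\ge\tfrac{\overline\lambda\delta}{2}s^{q-1}>0$ and $\lambda b(x)+a(x)s^{p-q}-\mu_1 s^{2-q}>0$, using $\lambda b\ge\overline\lambda\delta$ on $\overline{\Omega'}$.

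Step 1 (strict positivity on $\Omega^b_+$). I would first show that every admissible $u$ is strictly positive on $\Omega^b_+$. If $u(x_0)=0$ for some $x_0\in\Omega^b_+$, then on a small ball $B\subset\Omega^b_+$ around $x_0$ with $b\ge\delta_0>0$ and $u<s_0$ (possible since $u(x_0)=0$ and $u$ is continuous), the first inequality yields $-\Delta u\ge0$, so $u$ is a non-negative superharmonic function with an interior zero; the strong minimum principle forces $u\equiv0$ on $B$. Hence $\{u=0\}$ is open and closed in the connected set $\Omega^b_+$, so $u\equiv0$ there. But then $B(u)=\int_{\Omega^b_-}b\,u^q\le0$, contradicting $u\in B^+$. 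Therefore $u>0$ on $\Omega^b_+$, and in particular $u>0$ on the compact set $\overline{\Omega'}$.

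Step 2 (uniform lower bound by sweeping). With $u>0$ on $\overline{\Omega'}$ available, set $c^*:=\sup\{c\in[0,s_0]:u\ge c\phi\text{ on }\overline{\Omega'}\}$, which is well defined and positive because $u\ge\min_{\overline{\Omega'}}u>0$ and $\phi\le1$. Suppose $c^*<s_0$. Then $u\ge c^*\phi$ with equality at some point, and since $u>0=c^*\phi$ on $\partial\Omega'$, the contact point $x^*$ lies in $\Omega'$; there $w:=u-c^*\phi\ge0$ attains an interior minimum $0$, whence $-\Delta w(x^*)\le0$. Writing $s:=u(x^*)=c^*\phi(x^*)\in(0,s_0]$ and combining the equation for $u$ with $-\Delta\phi=\mu_1\phi$ gives $-\Delta w(x^*)=s^{q-1}\big(\lambda b(x^*)+a(x^*)s^{p-q}-\mu_1 s^{2-q}\big)>0$ by the second inequality, a contradiction. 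Thus $c^*=s_0$, i.e. $u\ge s_0\phi$ on $\overline{\Omega'}$, and since $\overline D\subset\Omega'$ compactly we conclude $u\ge s_0\min_{\overline D}\phi=:C_{\overline\lambda}>0$ on $\overline D$, with $C_{\overline\lambda}$ independent of $u$ and of $\lambda\ge\overline\lambda$.

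I expect the main obstacle to be that the naive strategy — locating an interior minimum of $u$ on a subdomain and reading off the equation — fails, because the minimum over a subdomain may sit on its boundary, where no a priori information is available, and because the reaction is not globally monotone, so no comparison principle is at hand. Both difficulties are resolved simultaneously: establishing strict positivity first lets the eigenfunction be slid under $u$ and forces the contact into the interior, while restricting to the uniform window $[0,s_0]$ makes the reaction increasing there thanks to sublinearity. The hypothesis $\lambda\ge\overline\lambda$ enters only through $\lambda b\ge\overline\lambda\delta$, which is precisely why $C_{\overline\lambda}$ stays uniform even as $\lambda\to\infty$; note also that, being entirely pointwise, the argument does not require $p<\tfrac{2N}{N-2}$.
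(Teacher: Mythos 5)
Your proof is correct, and it follows the same two-stage outline as the paper's --- first strict positivity of $u$ on $\Omega^b_+$, then a lower bound by a multiple of a first Dirichlet eigenfunction of a subdomain compactly contained in $\Omega^b_+$ --- but it justifies the key comparison step by a different mechanism. The paper obtains positivity by writing $(-\Delta+M)u\geq 0$ in $\Omega^b_+$ for $M$ large and invoking the strong maximum principle (your route via local superharmonicity where $u$ is small, plus connectedness of $\Omega^b_+$, is an equally valid variant), and then gets the eigenfunction bound by freezing coefficients ($a_0=\sup_D a^-$, $b_0=\inf_D b$), checking that $u$ is a supersolution and $c_{\overline{\lambda}}\phi_1$ a subsolution of the resulting concave Dirichlet problem, and citing the weak comparison principle for concave nonlinearities \cite[Lemma 3.3]{ABC}. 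You instead prove the comparison by hand with a sweeping argument, exploiting that the reaction $s\mapsto \lambda b(x)s^{q-1}+a(x)s^{p-1}-\mu_1 s$ is positive on a uniform window $(0,s_0]$; this is more elementary and self-contained, at the cost of the explicit contact-point computation. Two further points in your favour: by running the comparison on an intermediate domain $\Omega'$ with $\overline{D}\subset\Omega'$ and $\overline{\Omega'}\subset\Omega^b_+$ you obtain a genuinely constant lower bound on $\overline{D}$, whereas the paper compares on $D$ itself, where $\phi_1$ vanishes on $\partial D$ and the constant bound requires exactly your enlargement; and your observation that only the product $\lambda b\geq\overline{\lambda}\delta$ enters, so that $C_{\overline{\lambda}}$ is uniform over all $\lambda\geq\overline{\lambda}$, matches the paper's choice $c_{\overline{\lambda}}=\min\bigl\{(\overline{\lambda}b_0/2\lambda_1)^{1/(2-q)},\overline{\delta}\bigr\}$. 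The only cosmetic imprecision is in your Step 1, where the smallness threshold guaranteeing $-\Delta u\geq 0$ must be chosen locally, depending on the local lower bound of $b$ near $x_0$ rather than on the $\delta$ attached to $\overline{\Omega'}$; this is clearly what you intend and does not affect the argument.
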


\begin{proof}
We use a variant of a comparison principle for concave problems due to 
Ambrosetti-Brezis-Cerami \cite[Lemma 3.3]{ABC}. Let $u \in B^+$ be a nontrivial non-negative solution of $(P_\lambda)$ for $\lambda \geq \overline{\lambda}$. First we claim that $u > 0$ in $\Omega^b_+$. Indeed, since $u \in B^+$, we deduce that $u$ is positive somewhere in $\Omega^b_+$. It follows that there exists a constant $M>0$ large such that $(-\Delta + M)u \geq 0$
and $(-\Delta u + M)u \not\equiv 0$ in $\Omega^b_+$. The strong maximum principle provides then the desired conclusion. 

We apply now \cite[Lemma 3.3]{ABC} to the following concave problem 
\begin{align} \label{p:concave}
\begin{cases}
-\Delta v = -a_0 v^{p-1} + \lambda b_0 v^{q-1} & \mbox{in $D$}, \\
v = 0 & \mbox{on $\partial D$}, 
\end{cases}
\end{align}
where $a_0 = \sup_D a^-$ and $b_0 = \inf_D b$. It is clear that $u$ is a supersolution of 
\eqref{p:concave}. Next we construct a subsolution of \eqref{p:concave}. To this end, we use a positive eigenfunction $\phi_1$ associated to the first eigenvalue $\lambda_1 > 0$ of the Dirichlet eigenvalue problem
\begin{align*}
\begin{cases}
-\Delta \phi = \lambda \phi & \mbox{in $D$}, \\
\phi = 0 & \mbox{on $\partial D$}.
\end{cases}
\end{align*}
We normalize $\phi_1$ by $\Vert\phi_1 \Vert_{\mathcal{C}(\overline{D})} = 1$. 
Then 
\begin{align*}
-\Delta (\delta \phi_1) -\left\{ -a_0 (\delta \phi_1)^{p-1} + \lambda b_0 (\delta \phi_1)^{q-1} \right\} 
&\leq (\delta \phi_1)^{q-1} \left\{ \lambda_1 \delta^{2-q} + a_0 \delta^{p-q} - \overline{\lambda} b_0 \right\} \\
& \leq (\delta \phi_1)^{q-1} \left\{ 2 \lambda_1 \delta^{2-q} - \overline{\lambda} b_0 \right\}
\end{align*}
if $x\in D$, $\lambda \geq \overline{\lambda}$ and $0<\delta \leq \overline{\delta}$ for some $\overline{\delta}$ sufficiently small. Thus $c_{\overline{\lambda}} \, \phi_1$ is a subsolution of \eqref{p:concave} for $\lambda \geq \overline{\lambda}$ if we set 
\begin{align*}
c_{\overline{\lambda}} = \min \left\{ \left( \frac{\overline{\lambda} \, b_0}{2\lambda_1} \right)^{\frac{1}{2-q}}, \ \overline{\delta} \right\}>0.
\end{align*}
The comparison principle ensures then that $c_{\overline{\lambda}} \phi_1 \leq u$ in $\overline{D}$, from which the desired conclusion follows. 
\end{proof}

\begin{prop}
\label{prop:nobifposi}
Under the assumptions of Lemma \ref{l:boundbelow}, bifurcation from zero never occurs for $(P_\lambda)$ at any $\lambda > 0$. More precisely, it never occurs that there exist $\lambda_n$ and nontrivial non-negative solutions $u_{\lambda_n}$ of $(P_{\lambda_n})$ such that $\lambda_n \to \lambda^*>0$ and $u_n \to 0$ in $C(\overline{\Omega})$. 
\end{prop}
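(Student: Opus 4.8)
The plan is to argue by contradiction and reduce everything to the a priori lower bound of Lemma \ref{l:boundbelow}. Suppose there exist $\lambda_n \to \lambda^* > 0$ and nontrivial non-negative solutions $u_n$ of $(P_{\lambda_n})$ with $u_n \to 0$ in $C(\overline{\Omega})$. By standard elliptic regularity this convergence also holds in $C^2(\overline{\Omega})$, so in particular $t_n := \|u_n\| \to 0^+$ while $t_n > 0$. The heart of the matter is to show that $u_n \in B^+$ for $n$ large; once this is established, fixing a subdomain $D$ with $\overline{D} \subset \Omega^b_+$ and taking $\overline{\lambda} = \lambda^*/2$ (so that $\lambda_n \geq \overline{\lambda}$ eventually), Lemma \ref{l:boundbelow} yields $u_n \geq C_{\overline{\lambda}} > 0$ on $\overline{D}$, which is incompatible with $u_n \to 0$ in $C(\overline{\Omega})$.

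To prove $B(u_n) > 0$, I would rescale. Set $v_n = u_n/t_n$, so that $\|v_n\| = 1$ and $v_n \geq 0$, and write the weak formulation of $(P_{\lambda_n})$ tested against $\phi \in X$, divided by $t_n$:
\[
\int_\Omega \nabla v_n \nabla \phi = \lambda_n t_n^{q-2} \int_\Omega b\, v_n^{q-1}\phi + t_n^{p-2}\int_\Omega a\, v_n^{p-1}\phi .
\]
The convex remainder $R_n(\phi) := t_n^{p-2}\int_\Omega a\, v_n^{p-1}\phi = t_n^{-1}\int_\Omega a\, u_n^{p-1}\phi$ tends to $0$ for each fixed $\phi$: using $u_n^{p-1} \leq \|u_n\|_\infty^{p-2}\, u_n$ one bounds $|R_n(\phi)| \leq \|a\|_\infty \|u_n\|_\infty^{p-2}\|v_n\|_2\|\phi\|_2 \to 0$. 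Note that this uses only the uniform smallness of $u_n$, so the argument does not require $p < \frac{2N}{N-2}$. Since $q < 2$, the embedding $X \hookrightarrow L^q(\Omega)$ is compact, and passing to a subsequence we have $v_n \rightharpoonup v_0$ in $X$ with $v_n \to v_0$ in $L^2(\Omega)$ and $L^q(\Omega)$; in particular $v_0 \geq 0$ and $\int_\Omega \nabla v_n \nabla \phi \to \int_\Omega \nabla v_0 \nabla \phi$.

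The decisive point is that $\lambda_n t_n^{q-2} \to +\infty$, and the plan is to convert this divergence into a lower bound for $E(v_n)$. Suppose, for contradiction, that $\liminf_n E(v_n) = 0$; passing to a subsequence with $E(v_n)\to 0$, Lemma \ref{lem:wc}(1) gives that $v_0$ is a constant and $v_n \to v_0$ in $X$, so $\|v_0\| = 1$ and $v_0 \equiv c \neq 0$. Choosing then $\phi \geq 0$ supported in $\Omega^b_+$ with $\phi \not\equiv 0$, we get $\int_\Omega b\, v_n^{q-1}\phi \to c^{q-1}\int_\Omega b\,\phi > 0$, whence $\lambda_n t_n^{q-2}\int_\Omega b\, v_n^{q-1}\phi \to +\infty$; but the displayed identity forces this quantity to converge to $\int_\Omega \nabla v_0 \nabla \phi = 0$, a contradiction. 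Hence $\liminf_n E(v_n) > 0$. Taking $\phi = v_n$ in the displayed identity gives $\lambda_n t_n^{q-2} B(v_n) = E(v_n) - R_n(v_n)$, whose right-hand side is bounded below by a positive constant for $n$ large; since $\lambda_n t_n^{q-2} > 0$, this yields $B(v_n) > 0$, hence $B(u_n) = t_n^q B(v_n) > 0$, i.e. $u_n \in B^+$, completing the reduction.

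The main obstacle is precisely the degeneracy of the concave term at $u = 0$: because $s \mapsto s^{q-1}$ has infinite slope at the origin, $(P_\lambda)$ admits no linearization at zero, and the factor $\lambda_n t_n^{q-2}$ in the rescaled equation diverges. The whole argument is organized around turning this divergence into usable information—ruling out a nonzero constant limit $v_0$ on $\Omega^b_+$ and thereby extracting the sign of $B(u_n)$—rather than attempting to pass to a limit equation directly, which is exactly where the standard bifurcation-from-zero machinery fails.
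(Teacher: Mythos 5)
Your proof is correct, and it takes a genuinely different route through the middle of the argument. Both proofs argue by contradiction, normalize $v_n = u_n/\Vert u_n\Vert$, and rest on Lemma \ref{l:boundbelow}, but they use that lemma in opposite directions. The paper applies it in contrapositive form at the outset to conclude $B(u_n)\leq 0$ for $n$ large, then invokes the Nehari apparatus of Section 2 (namely $N_\lambda^+\subset B^+$, so $u_n\in N_{\lambda_n}^-\cup N_{\lambda_n}^0$ and $E(u_n)\leq \frac{p-q}{2-q}A(u_n)$) to force $E(v_n)\to 0$ and hence $v_n\to v_0$ with $\Vert v_0\Vert=1$, while a separate computation ($\lambda_n\int_\Omega b v_n^{q-1}\phi\to 0$ for every $\phi$, giving $bv_0^{q-1}\equiv 0$ and $v_0$ constant) forces $v_0\equiv 0$ — a contradiction. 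You instead prove directly that $B(u_n)>0$ for $n$ large, by first ruling out $E(v_n)\to 0$ (since otherwise $v_n$ would converge to a positive constant and the divergent coefficient $\lambda_n\Vert u_n\Vert^{q-2}$ tested against a bump function supported in $\Omega^b_+$ would be incompatible with the rescaled equation), and only then apply Lemma \ref{l:boundbelow} to contradict $u_n\to 0$ in $C(\overline{\Omega})$. Your version buys two things: it is independent of the Nehari-manifold formalism, and, as you observe, the pointwise estimate $u_n^{p-1}\leq \Vert u_n\Vert_\infty^{p-2}u_n$ lets you dispense with any compactness of the embedding into $L^p(\Omega)$, whereas the paper's write-up nominally extracts $v_n\to v_0$ in $L^p(\Omega)$. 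The only cosmetic looseness is that your positivity of $B(u_n)$ is established along an extracted subsequence rather than the full sequence; since the entire argument is a proof by contradiction, obtaining the contradiction along a subsequence suffices, so this is not a gap.
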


\begin{proof}
Assume by contradiction that $\lambda_n \to \lambda^*>0$ and $u_n$ is a nontrivial non-negative solution of $(P_{\lambda_n})$ with $u_n \to 0$ in 
$C(\overline{\Omega})$. By Lemma \ref{l1} we must have $u_n \in B_0^-$ for $n$ large enough. Moreover, we have $u_n \to 0$ in $X$. We set $v_n= \frac{u_n}{\|u_n\|}$ and assume that $v_n \rightharpoonup v_0$ in $X$ and $v_n \to v_0$ in $L^p(\Omega)$ and $L^q(\Omega)$. Given $\phi \in X$ we have
\begin{equation}
\label{en}
\int_\Omega \left(\nabla u_n \nabla \phi -au_n^{p-1} \phi - \lambda_n bu_n^{q-1}\phi\right)=0.
\end{equation}
Hence $\lambda_n \int_\Omega bv_n^{q-1}\phi \to 0$, and consequently
$\int_\Omega bv_0^{q-1}\phi=0$. Since this holds for any $\phi \in X$, we deduce that $bv_0 \equiv 0$. Taking $\phi=v_0$ in \eqref{en} we obtain
$$\int_\Omega \nabla u_n \nabla v_0= \int_\Omega au_n^{p-1}v_0.$$
It follows that $\int_\Omega \nabla v_n \nabla v_0 \to 0$, so $\int_\Omega |\nabla v_0|^2=0$ i.e. $v_0$ is a constant. Therefore $v_0 \equiv 0$. Now, since $u_n \in B_0^-$ for $n$ large enough and $N_{\lambda_n}^+ \subset B^+$, we have $u_n \in N_{\lambda_n}^- \cup N_{\lambda_n}^0$, i.e. 
$$E(u_n) \leq \frac{p-q}{2-q} A(u_n)$$ for $n$ large enough.
We infer that $\limsup E(v_n) \leq 0$, so that $v_n \to 0$, which contradicts $\|v_n\|=1$. The proof is now complete.
\end{proof}

\bigskip
\section{Nonexistence results}
\bigskip

\begin{prop}
\label{p:nonex}
Assume $\Omega_+^a \cap \Omega_+^b \neq \emptyset$. Then there exists $\overline{\lambda}>0$ such that $(P_\lambda)$ has no positive solution for $\lambda>\overline{\lambda}$.
\end{prop}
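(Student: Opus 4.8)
The plan is to adapt the Ambrosetti--Brezis--Cerami nonexistence argument (\cite[proof of Theorem 2.1]{ABC}) to an interior subdomain where both coefficients are bounded below by positive constants, exploiting that for large $\lambda$ the right-hand side $\lambda b(x)u^{q-1}+a(x)u^{p-1}$ dominates a large multiple of $u$. Since $a,b\in C^\alpha(\overline{\Omega})$ and $\Omega_+^a\cap\Omega_+^b\neq\emptyset$, I would first fix an open ball $B$ with $\overline{B}\subset\Omega_+^a\cap\Omega_+^b$, so that $a_0:=\min_{\overline{B}}a>0$ and $b_0:=\min_{\overline{B}}b>0$. Let $\lambda_1=\lambda_1(B)>0$ be the first Dirichlet eigenvalue of $-\Delta$ on $B$ and $\phi_1>0$ a corresponding eigenfunction; by Hopf's boundary lemma $\frac{\partial \phi_1}{\partial \n}<0$ on $\partial B$.

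The key elementary estimate is that the one-variable function $g(t):=\lambda b_0 t^{q-1}+a_0 t^{p-1}$ satisfies $g(t)\geq C_\lambda\, t$ for every $t\geq 0$, where $C_\lambda:=\min_{t>0}\bigl(\lambda b_0 t^{q-2}+a_0 t^{p-2}\bigr)$. Because $q-2<0<p-2$, this minimum is attained at the interior point $t_*=\bigl(\lambda b_0(2-q)/(a_0(p-2))\bigr)^{1/(p-q)}$, and a direct substitution shows $C_\lambda = c\,\lambda^{\frac{p-2}{p-q}}$ for a constant $c=c(a_0,b_0,p,q)>0$; in particular $C_\lambda\to\infty$ as $\lambda\to\infty$. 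Consequently, on $\overline{B}$ any positive solution $u$ of $(P_\lambda)$ satisfies the pointwise inequality $-\Delta u=\lambda b u^{q-1}+a u^{p-1}\geq \lambda b_0 u^{q-1}+a_0 u^{p-1}=g(u)\geq C_\lambda\, u$, where the favorable-sign convex term has simply been retained and bounded below.

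I would then test this inequality against $\phi_1$ and integrate over $B$. Green's identity gives $\int_B \phi_1(-\Delta u)=\lambda_1\int_B u\phi_1+\int_{\partial B} u\,\frac{\partial \phi_1}{\partial \n}$, and since $u>0$ on $\partial B$ while $\frac{\partial \phi_1}{\partial \n}<0$ there, the boundary term is nonpositive, whence $\int_B \phi_1(-\Delta u)\leq \lambda_1\int_B u\phi_1$. Combining this with the pointwise bound yields $C_\lambda\int_B u\phi_1\leq \int_B \phi_1(-\Delta u)\leq \lambda_1\int_B u\phi_1$. As $u>0$ forces $\int_B u\phi_1>0$, we may cancel to obtain $C_\lambda\leq\lambda_1$. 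Choosing $\overline{\lambda}$ so that $C_\lambda>\lambda_1$ for all $\lambda>\overline{\lambda}$, which is possible since $C_\lambda\to\infty$, produces a contradiction, proving that $(P_\lambda)$ has no positive solution for $\lambda>\overline{\lambda}$.

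I expect the only delicate points to be bookkeeping rather than conceptual: verifying the correct sign of the boundary term through Hopf's lemma, and the explicit scaling computation showing that $C_\lambda$ grows like $\lambda^{(p-2)/(p-q)}$ and in particular diverges. The choice of an \emph{interior} ball with a \emph{Dirichlet} eigenfunction is essential here, rather than a Neumann eigenfunction of $\Omega$: it lets me localize to the region supplied by the hypothesis $\Omega_+^a\cap\Omega_+^b\neq\emptyset$, where both coefficients are bounded below, while at the same time producing a boundary term of the right (nonpositive) sign, which is precisely what makes the eigenvalue comparison go through.
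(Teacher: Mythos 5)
Your argument is correct and is essentially the paper's own proof: both localize to a ball compactly contained in $\Omega_+^a\cap\Omega_+^b$ where $a,b$ are bounded below by positive constants, test against the Dirichlet principal eigenfunction of that ball (whose Hopf boundary term has the favorable sign), and conclude from the fact that $\inf_{s>0}\bigl(\lambda b_0 s^{q-2}+a_0 s^{p-2}\bigr)$ exceeds $\lambda_1(B)$ for $\lambda$ large. Your explicit computation that this infimum scales like $\lambda^{(p-2)/(p-q)}$ simply makes quantitative the step the paper states as ``for $\lambda$ large enough $\delta s^{p-1}+\lambda\delta s^{q-1}-\lambda_1's\geq 0$ for every $s\geq 0$.''
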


\begin{proof}
Assume that $(P_\lambda)$ has a solution $u \geq 0$. By continuity, there exists $\delta>0$ and a ball $D \subset \Omega$ such that $a,b \geq \delta >0$ in $D$.
Let $\varphi>0$ be an eigenfunction associated to $\lambda_1'=\lambda_1(D)$, i.e. $\varphi$ is a solution of $-\Delta \varphi=\lambda_1 \varphi$ in $D$, $\varphi=0$ on $\partial D$. Then $$\int_{D} \nabla \varphi \nabla u = \lambda_1' \int_D  \varphi u +\int_{\partial D} \frac{\partial \varphi}{\partial \n} u.$$
On the other hand, extending $\varphi$ by zero to $\Omega$ and using it as test function in $(P_\lambda)$, we have
$$\int_\Omega \nabla \varphi \nabla u =\int_\Omega \left(au^{p-1} + \lambda bu^{q-1}\right) \varphi.$$
Hence
$$0\geq \int_{\partial D} \frac{\partial \varphi}{\partial \n} u=\int_D \left(au^{p-1} + \lambda bu^{q-1}-\lambda_1'u\right) \varphi \geq \int_D  \left( \delta u^{p-1} + \lambda \delta u^{q-1}-\lambda_1'u\right) \varphi$$
But for $\lambda$ large enough we have $\delta s^{p-1} + \lambda \delta s^{q-1}-\lambda_1's \geq 0$ for every $s \geq 0$. 
Therefore for such $\lambda$ we must have $u \equiv 0$.
\end{proof}

\begin{prop}
Let $\lambda > 0$. Then the following two assertions hold:
\strut
\begin{enumerate}
\item Assume  $b \geq 0$ and $\int_\Omega a \geq 0$. Then $(P_\lambda)$ has no nontrivial non-negative solution.
\item Assume that $b$ changes sign, $\Omega_+^b$ is a subdomain of $\Omega$, and $\Omega^b_- = \Omega \setminus \overline{\Omega^b_+}$. If $a \geq 0$ and $\int_\Omega b \geq 0$ then $(P_\lambda)$ has no non-negative solution taking positive values somewhere in $\Omega_+^b$.
\end{enumerate}
\end{prop}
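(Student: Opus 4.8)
Both assertions are proved by contradiction using a single nonlinear test function tailored so that the Neumann equation collapses into an integral identity whose every term has a definite sign; the equality case then forces $u$ to be constant, which the data rule out. The guiding observation is that the naive choice $\phi\equiv 1$ yields $\lambda\int_\Omega b\,u^{q-1}+\int_\Omega a\,u^{p-1}=0$, but the weights $u^{q-1},u^{p-1}$ prevent one from comparing with $\int_\Omega a$ or $\int_\Omega b$ directly; the point is therefore to pick a power of $u$ that strips off exactly the right weight. I will also use, as recalled in the introduction, that when $b\ge 0$ the strong maximum principle and the boundary point lemma force any nontrivial non-negative solution of $(P_\lambda)$ to be positive on $\overline\Omega$.

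For assertion (1), suppose $u$ is a nontrivial non-negative solution. Since $b\ge 0$ we have $u>0$ on $\overline\Omega$, hence $u\ge m>0$ and $\phi=u^{1-p}\in C^2(\overline\Omega)$ is admissible. Testing $(P_\lambda)$ with $\phi$ gives
\[
(1-p)\int_\Omega u^{-p}|\nabla u|^2=\lambda\int_\Omega b\,u^{q-p}+\int_\Omega a .
\]
The left-hand side is $\le 0$ since $p>1$, while the right-hand side is $\ge 0$ since $b\ge 0$, $\lambda>0$ and $\int_\Omega a\ge 0$; both must vanish. From the left-hand side $\nabla u\equiv 0$, so $u\equiv c>0$, and the equation then reads $\lambda b\,c^{q-1}+a\,c^{p-1}\equiv 0$, i.e. $a=-\lambda c^{q-p}b\le 0$; combined with $\int_\Omega a\ge 0$ this forces $a\equiv 0$ (and then $b\equiv 0$), contradicting the standing assumption $a\not\equiv 0$ (otherwise constants would solve $(P_\lambda)$).

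For assertion (2), suppose $u\ge 0$ solves $(P_\lambda)$ and $u>0$ somewhere in $\Omega^b_+$. On $\Omega^b_+$ we have $b>0$ and $a\ge 0$, so $-\Delta u\ge 0$ there; as $\Omega^b_+$ is a subdomain, the strong maximum principle gives $u>0$ throughout $\Omega^b_+$. Now test with $\phi=(u+\eps)^{1-q}$, $\eps>0$: since $q>1$ the left-hand side $(1-q)\int_\Omega(u+\eps)^{-q}|\nabla u|^2$ is $\le 0$, whence
\[
\lambda\int_\Omega b\,\Big(\tfrac{u}{u+\eps}\Big)^{q-1}+\int_\Omega a\,\frac{u^{p-1}}{(u+\eps)^{q-1}}\le 0 .
\]
Letting $\eps\to 0^+$ (dominated convergence for the first integral, monotone convergence for the second, using $a\ge 0$) gives $\lambda\int_{\{u>0\}}b+\int_\Omega a\,u^{p-q}\le 0$. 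Because $u>0$ on $\Omega^b_+$, the set $\{u=0\}$ is disjoint from $\Omega^b_+$, so $b\le 0$ on $\{u=0\}$ (here we use $\Omega^b_-=\Omega\setminus\overline{\Omega^b_+}$); hence $\int_{\{u=0\}}b\le 0$, i.e. $\int_{\{u>0\}}b\ge\int_\Omega b\ge 0$. As $\int_\Omega a\,u^{p-q}\ge 0$ too, both nonnegative terms vanish. Feeding this back, the right-hand side of the regularized identity tends to $0$, so the left-hand side does as well, forcing $\int_{\{u>0\}}u^{-q}|\nabla u|^2=0$ and hence $\nabla u\equiv 0$ (the gradient vanishes a.e.\ on $\{u=0\}$ too); thus $u\equiv c>0$, and evaluating the equation at a point of $\Omega^b_+$ yields $0=-\Delta u=\lambda b\,c^{q-1}+a\,c^{p-1}>0$, a contradiction.

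The main obstacle is the limiting argument in (2): the equation is sign-indefinite on $\Omega^b_-$, so one can neither test with $u^{1-q}$ (not admissible where $u$ vanishes) nor argue pointwise. The regularization $(u+\eps)^{1-q}$, together with the structural hypotheses that $\Omega^b_+$ be connected and $\Omega^b_-=\Omega\setminus\overline{\Omega^b_+}$, is exactly what confines $\{u=0\}$ to $\{b\le 0\}$ and delivers the decisive inequality $\int_{\{u>0\}}b\ge\int_\Omega b$; the remaining care is the routine interchange of limit and integral via dominated and monotone convergence.
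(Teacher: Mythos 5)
Your proof is correct and follows essentially the same route as the paper's: test with $u^{1-p}$ for assertion (1) after using the strong maximum principle, and with the regularized power $(u+\eps)^{1-q}$ for assertion (2), then pass to the limit and compare $\int_{\{u>0\}}b$ with $\int_\Omega b$ using that $b\le 0$ off $\Omega^b_+$. Your treatment of the equality/degenerate cases (forcing $u$ constant and reading off a pointwise contradiction) is in fact slightly more careful than the paper's, which relies on strict inequalities that could fail in those borderline situations.
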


\begin{proof}
\strut
\begin{enumerate}
\item Let $u \geq 0$ be a nontrivial solution of $(P_\lambda)$. Since $b \geq 0$, by the strong maximum principle, we have $u>0$ on $\overline{\Omega}$. Thus we may take $u^{1-p}$ as test function to get 
$$(1-p)\int_\Omega |\nabla u|^2 (u+\varepsilon)^{-p} -\int_\Omega a - \lambda \int_{\Omega} bu^{q-p}=0.$$
Hence $$\int_\Omega a <- \lambda \int_{\Omega} bu^{q-p} <0.$$\\

\item  Let $u \geq 0$ be a solution of $(P_\lambda)$ such that $u(x_0)>0$ for some $x_0 \in \Omega_+^b$. Since $\Omega_+^b$is a subdomain, by the strong maximum principle we have $u>0$ in $ \Omega_+^b$. Given $\varepsilon >0$, we take $w=(u+\varepsilon)^{1-q}$ to get
$$(1-q)\int_\Omega |\nabla u|^2 (u+\varepsilon)^{-q} -\int_\Omega au^{p-1} (u+\varepsilon)^{1-q} - \lambda \int_{\Omega} b \left(\frac{u}{u+\varepsilon}\right)^{q-1}=0.$$
Since $q>1$ we obtain
$$\lambda \int_{\Gamma_u} b \left(\frac{u}{u+\varepsilon}\right)^{q-1}<-\int_\Omega au^{p-1} (u+\varepsilon)^{1-q},$$
where $\Gamma_u= \text{supp }u$.
Letting $\varepsilon \to 0$ and using the Lebesgue dominated convergence theorem, we get
$$\lambda \int_{\Gamma_u} b \leq -\int_\Omega au^{p-q}.$$
Now, since $b < 0$ in $\Omega \setminus \Gamma_u$
we have $$ \int_{\Omega} b=\int_{\Gamma_u} b + \int_{\Omega \setminus \Gamma_u} b < \int_{\Gamma_u} b\leq -\lambda^{-1}\int_\Omega au^{p-q}.$$
and the conclusion follows.
\end{enumerate}
\end{proof}

\bigskip
\section{Bifurcation for a regularized problem} 
\bigskip

In this section we deal with the following Neumann boundary value problem with $\lambda \in \R$ and $\epsilon >0$:
\begin{align} \label{rp:m}
\begin{cases}
-\Delta u = a(x)|u|^{p-2}u + \lambda m(x) |u+\epsilon|^{q-2}u & \mbox{in} \ \Omega, \\
\frac{\partial u}{\partial \mathbf{n}} = 0 & \mbox{on} \ \partial \Omega.
\end{cases} 
\end{align}
Here $m \in C^\alpha (\overline{\Omega})$, $\alpha \in (0,1)$, satisfies 
\begin{align} \label{c:m}
\Omega^m_+ \ne \emptyset, \quad \mbox{and} \ \ \int_\Omega m < 0. 
\end{align}
Linearizing \eqref{rp:m} at $u=0$ we obtain
\begin{align} \label{lep:m} 
\begin{cases}
-\Delta \varphi = \lambda m \epsilon^{q-2} \varphi & \mbox{in} \ \Omega, \\
\frac{\partial \varphi}{\partial \mathbf{n}} = 0 & \mbox{on} \ 
\partial \Omega. 
\end{cases}
\end{align}
%
%
%
Under \eqref{c:m} this problem has exactly two principal eigenvalues 
$\lambda=0$ and $\lambda=\lambda_{m, \epsilon} > 0$, 
which are both simple. We denote by $\varphi_{m, \epsilon}$ a positive eigenfunction associated to $\lambda_{m, \epsilon}$ which is normalized as $\Vert \varphi_{m, \epsilon} \Vert_{C(\overline{\Omega})} = 1$. 
Note that $\varphi_{m, \epsilon} > 0$ on $\overline{\Omega}$. \\

We state now the main result of this section for \eqref{rp:m}:

%
%

\begin{thm} \label{thm:rprob:r}
Let $1<q<2<p$ and $0<\epsilon \leq 1$. Assume \eqref{c:m}. Then \eqref{rp:m} possesses exactly two bifurcation points $(0, 0)$, $(\lambda_{m, \epsilon}, 0)$ on $\{ (\lambda, 0) : \lambda \in \R \}$ from which emanate two subcontinua of positive solutions $\mathcal{C}_0=\mathcal{C}_0(m, \epsilon)$, $\mathcal{C}_1=\mathcal{C}_1 (m, \epsilon)$, respectively. Moreover, the following assertions hold: 
\begin{enumerate}

\item Let $Z$ be any complement of $\langle 1 \rangle$ in $C^{2+\alpha}(\overline{\Omega})$. Then the set $\{ (\lambda, u) \}$ of nontrivial solutions of 
\eqref{rp:m} around $(0, 0)$ is parametrized as
$$
(\lambda, u) = ( \mu (s), \ s(1 + z (s))  ).
$$
with $s \in (-s_0, s_0)$, for some $s_0 >0$. Here $\mu: (-s_0, s_0) \to \R$ and $z: (-s_0, s_0) \to Z$ are continuous and satisfy $\mu (0) =z (0)= 0$. So $\mathcal{C}_0$ is described exactly by $\{ (\mu (s), s(1 + z (s))) : s \in [0, s_0) \}$ around $(0,0)$. Furthermore: 
\begin{enumerate}

  \item $\mu (s)$ satisfies 
\begin{align} \label{asser:lams151011}
\lim_{s\to 0} \frac{\mu (s)}{s^{p-2}} = - \epsilon^{2-q} \frac{\int_\Omega a}{\int_\Omega m}; 
\end{align}

  \item  If, in addition, $p>2$ is an integer and $\int_\Omega a = 0$, then 
$\mu (s)$ is analytic at $s=0$, and its derivatives $\mu^{(k)}$ satisfy 
\begin{align} \label{mu:deri}
\mu^{(k)}(0)=0< \mu^{(2p-4)}(0) \quad \text{for } 1\leq k < 2p-4. 
\end{align}

\end{enumerate}

\item Let $W$ be any complement of $\langle \varphi_{m, \epsilon} \rangle$ in 
$C^{2+\alpha}(\overline{\Omega})$. Then the set $\{ (\lambda, u) \}$ of nontrivial solutions of \eqref{rp:m} around $(\lambda_{m, \epsilon}, 0)$ is parametrized as
$$
(\lambda, u) = ( \gamma (s), \ s(\varphi_{m, \epsilon} + w (s))  ), 
$$
with $s \in (-s_0, s_0)$, for some $s_0 >0$. Here $\gamma: (-s_0, s_0) \to \R$ and $w: (-s_0, s_0) \to W$ are continuous and satisfy $\gamma (0) = \lambda_{m, \epsilon}$ and 
$w (0)=0$. So $\mathcal{C}_1$ is described exactly by $\{ (\gamma (s), 
s(\varphi_{m ,\epsilon} + w (s)) : s\in [0, s_0) \}$ around $(\lambda_{m, \epsilon}, 0)$. 
\\ 

\item Regarding the global nature of $\mathcal{C}_0$ and $\mathcal{C}_1$, we have the following: \\

\begin{enumerate}

  \item $\mathcal{C}_0 \cup \mathcal{C}_1$ does not meet $(\lambda, 0)$ 
except $\lambda = 0$ and $\lambda= \lambda_{m, \epsilon}$.

  \item The following alternative holds: either $\mathcal{C}_0 = \{ (\lambda, u) \}$ and $\mathcal{C}_1 = \{ (\lambda, u) \}$ are both unbounded in $\R \times C(\overline{\Omega})$ or they coincide. 

\end{enumerate}

\end{enumerate}
\end{thm}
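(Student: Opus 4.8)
The plan is to recast \eqref{rp:m} as a bifurcation problem and apply the local theorem of Crandall--Rabinowitz at each of the two principal eigenvalues, followed by the unilateral global theory of Rabinowitz \cite[Theorem 1.27]{Ra71} (see also \cite[Theorem 6.4.3]{LG01}). Writing $T=(-\Delta+I)^{-1}$ under the Neumann condition, which is compact on $C(\overline{\Omega})$, a function $u$ solves \eqref{rp:m} if and only if $u=T[u+a|u|^{p-2}u+\lambda m|u+\epsilon|^{q-2}u]$. Since $\epsilon>0$, the map $t\mapsto|t+\epsilon|^{q-2}t$ is analytic near $t=0$ with $|t+\epsilon|^{q-2}t=\epsilon^{q-2}t+O(t^2)$, and $a|u|^{p-2}u=o(\|u\|)$; hence the linearization at $u=0$ is $u\mapsto T[(1+\lambda m\epsilon^{q-2})u]$, whose characteristic values are exactly the $\lambda$ for which \eqref{lep:m} has a nontrivial solution. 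By the spectral description recalled before the statement, the only such $\lambda$ admitting a positive eigenfunction are $\lambda=0$ (eigenfunction $\equiv 1$) and $\lambda=\lambda_{m,\epsilon}$ (eigenfunction $\varphi_{m,\epsilon}$), both simple.

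First I would check the Crandall--Rabinowitz transversality condition at each point. At $(\lambda_{m,\epsilon},0)$ it reduces to $\int_\Omega m\,\varphi_{m,\epsilon}^2\neq 0$; testing \eqref{lep:m} with $\varphi_{m,\epsilon}$ gives $\int_\Omega|\nabla\varphi_{m,\epsilon}|^2=\lambda_{m,\epsilon}\epsilon^{q-2}\int_\Omega m\,\varphi_{m,\epsilon}^2$, and since the left-hand side is positive ($\varphi_{m,\epsilon}$ is non-constant) the condition holds. At $(0,0)$ the kernel is $\langle 1\rangle$ and transversality reduces to $\int_\Omega m\neq 0$, which is part of \eqref{c:m}. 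The Crandall--Rabinowitz theorem then yields the local parametrizations $(\lambda,u)=(\mu(s),s(1+z(s)))$ and $(\lambda,u)=(\gamma(s),s(\varphi_{m,\epsilon}+w(s)))$ of assertions (1) and (2); for $s>0$ small the factors $1+z(s)$ and $\varphi_{m,\epsilon}+w(s)$ are positive, so these solutions are positive. To obtain \eqref{asser:lams151011} I would integrate \eqref{rp:m} over $\Omega$, i.e. project onto $\langle 1\rangle$, which kills $-\Delta u$ and leaves $s^{p-2}\int_\Omega a(1+z(s))^{p-1}+\mu(s)\int_\Omega m(s(1+z(s))+\epsilon)^{q-2}(1+z(s))=0$; letting $s\to 0$ with $z(s)\to 0$ and $(u+\epsilon)^{q-2}\to\epsilon^{q-2}$ yields the stated limit $-\epsilon^{2-q}\int_\Omega a/\int_\Omega m$.

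The main work is assertion (1)(b). When $p$ is an integer, $u^{p-1}$ is a polynomial in $u$, so together with the analyticity of $t\mapsto|t+\epsilon|^{q-2}t$ the analytic version of the Crandall--Rabinowitz theorem makes $\mu$ and $z$ analytic in $s$. I would then push the Lyapunov--Schmidt reduction one order further, choosing $Z$ to be the zero-mean functions and writing $P_Z$ for the corresponding projection. The auxiliary equation reads $-\Delta z=P_Z[a\,s^{p-2}(1+z)^{p-1}+\mu\,m(s(1+z)+\epsilon)^{q-2}(1+z)]$; since $\mu$ will turn out to be of higher order, the leading behavior is $z(s)=s^{p-2}\zeta+O(s^{p-1})$ with $-\Delta\zeta=a$ (solvable because $\int_\Omega a=0$) and $\int_\Omega\zeta=0$. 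Inserting this into the projected equation and using $\int_\Omega a=0$ to cancel the constant term, the first nonvanishing $a$-contribution is $(p-1)s^{2(p-2)}\int_\Omega a\zeta$, which must balance $\mu(s)\epsilon^{q-2}\int_\Omega m$; this forces $\mu(s)=c\,s^{2p-4}+o(s^{2p-4})$ with $c=-(p-1)\epsilon^{2-q}\,\frac{\int_\Omega a\zeta}{\int_\Omega m}$. Finally $\int_\Omega a\zeta=\int_\Omega(-\Delta\zeta)\zeta=\int_\Omega|\nabla\zeta|^2>0$ (as $a\not\equiv 0$) and $\int_\Omega m<0$ give $c>0$, whence $\mu^{(k)}(0)=0$ for $1\le k<2p-4$ and $\mu^{(2p-4)}(0)>0$, which is \eqref{mu:deri}.

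For the global assertion (3) I would continue $\mathcal{C}_0$ and $\mathcal{C}_1$ as subcontinua of positive solutions via the unilateral theorem. Any nontrivial non-negative solution of \eqref{rp:m} is positive: writing $-\Delta u=c(x)u$ with $c=a|u|^{p-2}+\lambda m|u+\epsilon|^{q-2}\in L^\infty$ and applying the strong maximum principle to $-\Delta+(M-c)$ for $M>\|c\|_\infty$. Consequently a point $(\lambda_*,0)$ can lie on $\mathcal{C}_0\cup\mathcal{C}_1$ only if positive solutions accumulate there, and rescaling such a sequence by its norm produces a non-negative solution of \eqref{lep:m} at $\lambda_*$, i.e. a positive principal eigenfunction; hence $\lambda_*\in\{0,\lambda_{m,\epsilon}\}$, which proves (3)(a). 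For (3)(b), the global alternative states that each continuum is either unbounded in $\R\times C(\overline{\Omega})$ or returns to the trivial line at a bifurcation point other than its own; by (3)(a) this return point can only be $(\lambda_{m,\epsilon},0)$ for $\mathcal{C}_0$ and $(0,0)$ for $\mathcal{C}_1$, and the local uniqueness of the solution set near a simple bifurcation point (parts (1) and (2)) then forces $\mathcal{C}_0=\mathcal{C}_1$. Thus either both are unbounded or they coincide. I expect the delicate steps to be the analyticity together with the order and sign computation in (1)(b), and the rescaling argument in (3)(a) that excludes bifurcation of positive solutions away from the two principal eigenvalues.
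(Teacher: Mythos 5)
Your proposal follows essentially the same route as the paper: Crandall--Rabinowitz local bifurcation together with the unilateral global theorem for the existence and structure of $\mathcal{C}_0$ and $\mathcal{C}_1$, integration of the equation over $\Omega$ to obtain \eqref{asser:lams151011}, and a Lyapunov--Schmidt reduction onto the constants for \eqref{mu:deri}. The only organizational difference is in assertion (1)(b), where the paper computes the partial derivatives of the reduced bifurcation function $\Phi(\lambda,t)$ at the origin up to order $2p-3$ and then applies the implicit function theorem to the factored equation, whereas you expand $z(s)$ and $\mu(s)$ directly in powers of $s$; both yield the same positive leading coefficient (your $\zeta$ equals $w_{p-1,a}/(p-1)!$ in the paper's notation), so the two arguments coincide in substance.
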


\subsubsection*{Proof of Theorem \ref{thm:rprob:r}}  

Under \eqref{c:m} we observe that the principal eigenvalues $\lambda=0$ and $\lambda=\lambda_{m, \epsilon}$ both satisfy the {\it transversality condition} of Crandall and Rabinowitz. Hence the standard local bifurcation theory \cite[Theorem 1.7]{CR71} and the unilateral global bifurcation theory \cite[Theorem 1.27]{Ra71} (see also \cite[Theorem 6.4.3]{LG01}) are applicable at $(0,0)$ and $(\lambda_{m, \epsilon}, 0)$. We obtain then two subcontinua $\mathcal{C}_0$, $\mathcal{C}_1$ of positive solutions of \eqref{rp:m} 
emanating from $(0,0)$ and $(\lambda_{m, \epsilon}, 0)$, respectively. Moreover, assertions (1), (2) and (3) are promptly verified, except \eqref{asser:lams151011} and \eqref{mu:deri}.

Let us show \eqref{asser:lams151011} in assertion (1)(a). 
From assertion (1) we deduce that
\begin{align*}
\int_\Omega \{ a (s+sz)^{p-1} + \mu m (s+sz+\epsilon)^{q-2}(s+sz) \} = 0. 
\end{align*}
It follows that 
\begin{align*} 
\frac{\mu (s)}{s^{p-2}} = -\frac{\int_\Omega a (1+z)^{p-2}}{\int_\Omega m (s+sz+\epsilon)^{q-2}(1+z)} \longrightarrow -\frac{\int_\Omega a}{\int_\Omega m \epsilon^{q-2}}, \quad \text{as } s\to 0^+, 
\end{align*}
as desired. 

Next we verify \eqref{mu:deri} in assertion (1)(b). Following the Lyapunov-Schmidt method, we reduce \eqref{rp:m} to a bifurcation equation around the origin in $\R^2$. Let $w= Q u = u - \frac{1}{\Omega}\int_\Omega u$, where $Q$ is defined as a linear mapping from $L^2(\Omega)$ to $\{ w \in L^2(\Omega) : \int_\Omega w = 0 \}$. We also write $t = \frac{1}{|\Omega|} \int_\Omega u$, so that $u = t + w$. Using $Q$ we decompose \eqref{rp:m} orthogonally in the following way: for $|\lambda|< \lambda^*$ and $u \in U$, a small neighborhood of $0$ in $\mathcal{C}^{2+\alpha}(\overline{\Omega})$, we have
\begin{align}
& Q(-\Delta u) = Q(au^{p-1} + \lambda m (u+\epsilon)^{q-2}u), \label{bifeq1}\\
& (1-Q)(-\Delta u) = (1-Q)(au^{p-1} + \lambda m (u+\epsilon)^{q-2}u). \label{bifeq2}
\end{align}
By applying the implicit function theorem we see that \eqref{bifeq1} is uniquely solvable at $(\lambda, t, w) = (0,0,0)$ by some $w = w(\lambda, t)$ which is analytic at $(0,0)$ and satisfies $w(\lambda, 0) = 0$ for $\lambda$ sufficiently small. We plug $u=t+w(\lambda, t)$ in \eqref{bifeq2}, to obtain the following bifurcation equation around the origin in $\R^2$:
\begin{align} \label{151012Phi}
\Phi (\lambda, t) := 
\int_\Omega a (t+w(\lambda, t))^{p-1} + \lambda \int_\Omega m (t+w(\lambda, t) + \epsilon)^{q-2}(t+w(\lambda, t)) = 0. 
\end{align}
Note that $\Phi$ is also analytic at $(0,0)$. 

We shall now analyse $\Phi$ in \eqref{151012Phi} for $(\lambda, t)$ around $(0,0)$ using its Taylor series expansion. As a preliminary, we compute the partial derivatives of $w(\lambda, t)$ at $(0,0)$:

\begin{lem} \label{lem:w151011}
\strut
\begin{enumerate}
  \item $\frac{\partial^k w}{\partial \lambda^k}(0,0) = 0$ for every 
$k\geq 0$. 
  \item $\frac{\partial w}{\partial t}(0,0)=0$. 
  \item $\frac{\partial^2 w}{\partial t \partial \lambda}(0,0) (= \frac{\partial^2 w}{\partial \lambda \partial t}(0,0))$ is a unique solution of the problem 
\begin{align*}
\begin{cases}
-\Delta w = \epsilon^{q-2}Q[m] & \mbox{in} \ \Omega, \\
\frac{\partial w}{\partial \mathbf{n}} = 0 & \mbox{on} \ \partial \Omega, \\
\int_\Omega w = 0. & 
\end{cases}
\end{align*}
  \item For every integer $k\geq 2$ we have
$$
\frac{\partial^k w}{\partial t^k}(0,0)= \left\{ 
\begin{array}{ll}
w_{k,a}, & p=k+1, \\
0, & p>k+1, 
\end{array} \right.
$$ 
where $w_{k,a}$ is the unique solution of the problem 
\begin{align*}
\begin{cases}
-\Delta w = (k!)a & \mbox{in} \ \Omega, \\
\frac{\partial w}{\partial \mathbf{n}} = 0 & \mbox{on} \ \partial \Omega, \\
\int_\Omega w = 0. & 
\end{cases}
\end{align*}
\end{enumerate}
\end{lem}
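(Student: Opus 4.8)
The plan is to differentiate the defining identity for $w=w(\lambda,t)$ repeatedly and evaluate at $(0,0)$, exploiting that $w$ and its first order derivatives vanish there. First I would put \eqref{bifeq1} in a workable form. Since $t$ is constant we have $-\Delta u=-\Delta w$, and the homogeneous Neumann condition gives $\int_\Omega(-\Delta w)=0$, so $Q(-\Delta w)=-\Delta w$. Writing $N(\lambda,t,w):=a(t+w)^{p-1}+\lambda m(t+w+\epsilon)^{q-2}(t+w)$ for the nonlinearity, equation \eqref{bifeq1} becomes
$$-\Delta w = Q\bigl[\,N(\lambda,t,w)\,\bigr],\qquad \int_\Omega w = 0,$$
together with $\partial w/\partial\mathbf{n}=0$. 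The operator $-\Delta$ acting on zero-mean functions under Neumann boundary conditions is boundedly invertible, and $Q$ always returns a zero-mean right-hand side; this is exactly what guarantees that each derivative of $w$ is the unique zero-mean solution of the corresponding linear Neumann problem. Throughout I would use that the implicit function theorem yields $w(\lambda,0)\equiv 0$, that $\partial_w N(0,0,0)=0$ (its two terms carry the factors $(t+w)^{p-2}$ and $\lambda$), and that $-\Delta$, $\partial_\lambda$, $\partial_t$ and the continuous linear projection $Q$ all commute.

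Assertion (1) is then immediate: differentiating $w(\lambda,0)\equiv 0$ in $\lambda$ any number of times gives $\partial_\lambda^k w(0,0)=0$. For assertion (2) I would differentiate the displayed identity once in $t$. The $a$-term produces the factor $(t+w)^{p-2}$, which vanishes at $(0,0)$ because $p>2$, while the $\lambda$-term carries the factor $\lambda$ and hence vanishes on $\{\lambda=0\}$; thus the right-hand side is $0$ at $(0,0)$, and $\partial_t w(0,0)$ solves $-\Delta\partial_t w(0,0)=0$ with zero mean and Neumann data, forcing $\partial_t w(0,0)=0$ (a constant of zero mean must be zero).

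For assertion (3) I would differentiate the identity once in $\lambda$ and once in $t$ and evaluate at $(0,0)$, using $w=\partial_t w=\partial_\lambda w=0$ there (parts (1)--(2)). In the chain-rule expansion every term carrying a factor $w$, $\partial_t w$, $\partial_\lambda w$, or the factor $\partial_w N(0,0,0)=0$ drops out, leaving only the explicit mixed partial $\partial_t\partial_\lambda N(0,0,0)$. Since $\partial_\lambda N=m(t+w+\epsilon)^{q-2}(t+w)$, a direct computation at $t+w=0$ gives $\partial_t\partial_\lambda N(0,0,0)=m\,\epsilon^{q-2}$. Hence $-\Delta\,\partial_\lambda\partial_t w(0,0)=\epsilon^{q-2}Q[m]$ with zero mean and Neumann data, and invertibility identifies $\partial_\lambda\partial_t w(0,0)$ with the unique solution of the stated problem (note $\int_\Omega Q[m]=0$).

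Finally, for assertion (4) I would restrict to the slice $\lambda=0$ and set $v(t)=w(0,t)$, so that $-\Delta v=Q[a(t+v)^{p-1}]$ with $v(0)=0$; differentiating $k$ times yields $-\Delta v^{(k)}(0)=Q\bigl[\,a\,\tfrac{d^k}{dt^k}(t+v)^{p-1}\big|_{0}\,\bigr]$. The key observation is that $g(t):=t+v(t)$ satisfies $g(0)=0$ and $g'(0)=1+\partial_t w(0,0)=1$ by part (2), so $g(t)=t\,h(t)$ with $h$ analytic and $h(0)=1$, whence $(t+v)^{p-1}=t^{p-1}h(t)^{p-1}$ vanishes to order exactly $p-1$ at $t=0$ (here $p$ is an integer, so $t^{p-1}$ is a genuine power). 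Consequently $\tfrac{d^k}{dt^k}(t+v)^{p-1}\big|_{0}=0$ for $2\le k<p-1$, giving $\partial_t^k w(0,0)=0$; and for $k=p-1$ this derivative equals $(p-1)!$, so that, using $\int_\Omega a=0$ (whence $Q[a]=a$ and the limiting Neumann problem is solvable), $-\Delta\,\partial_t^{p-1}w(0,0)=(p-1)!\,a$, i.e. $\partial_t^{p-1}w(0,0)=w_{p-1,a}$. The main point to manage carefully is the bookkeeping in the high-order differentiations of assertions (3) and (4)---discarding all cross terms that involve the vanishing quantities $w,\partial_t w,\partial_\lambda w$ and the vanishing low-order jet of $(t+w)^{p-1}$---together with the observation that $\int_\Omega a=0$ is precisely what renders the Neumann problem in (4) solvable and lets $Q[a]$ be replaced by $a$.
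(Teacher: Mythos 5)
Your proof is correct, and for assertions (1)--(3) it follows the paper's argument essentially verbatim: differentiate the reduced equation $-\Delta w = Q\left[a(t+w)^{p-1}+\lambda m(t+w+\epsilon)^{q-2}(t+w)\right]$ in $\lambda$ and $t$, evaluate at $(0,0)$, use $w(\lambda,0)\equiv 0$, $w_t(0,0)=w_\lambda(0,0)=0$ and $\partial_w N(0,0,0)=0$ to discard all but the explicit terms, and invoke unique solvability of the zero-mean Neumann problem. Where you genuinely diverge is assertion (4): the paper differentiates $\eta=Q[a(p-1)(t+w)^{p-2}(1+w_t)]$ repeatedly via the Leibniz and chain rules and checks case by case which products survive at $t+w=0$, obtaining $\frac{\partial^{k-1}\eta}{\partial t^{k-1}}(0,0)=Q[(p-1)(p-2)\cdots(p-k)a]$ when $p=k+1$ and $0$ when $p>k+1$; you instead restrict to the slice $\lambda=0$ and observe that $g(t)=t+w(0,t)$ factors as $t\,h(t)$ with $h(0)=1$ (using only parts (1)--(2)), so that $(t+w)^{p-1}=t^{p-1}h(t)^{p-1}$ vanishes to order exactly $p-1$ and its $k$-th $t$-derivative at $0$ is $0$ for $k<p-1$ and $(p-1)!=k!$ for $k=p-1$. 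Your version buys a combinatorics-free justification of the vanishing of the low-order jet, at the price of being tied to integer $p$ --- which is precisely the hypothesis under which the lemma is applied --- while the paper's explicit term-by-term differentiation yields the same data in the form it reuses when computing the derivatives of $\Phi$ in the subsequent lemma. Both arguments correctly use $\int_\Omega a=0$ to replace $Q[a]$ by $a$ and to make the Neumann problem defining $w_{k,a}$ solvable.
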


\begin{rem}  \label{151014rem:w} {\rm 
%
Lemma \ref{lem:w151011}(2)(4) tells us that 
\begin{align*}
\begin{cases}
\frac{\partial^j w}{\partial t^j}(0,0) = 0, & 1\leq j < p-1, \\
\frac{\partial^{p-1} w}{\partial t^{p-1}}(0,0) = w_{p-1, a}. & 
\end{cases}
\end{align*}
}\end{rem}


\begin{proof}[Proof of Lemma \ref{lem:w151011}]
We denote the partial derivatives of $w$ simply by $w_\lambda, w_t, w_{\lambda \lambda}, w_{tt}, w_{\lambda t}$.
\strut
\begin{enumerate}
  \item By the uniqueness ensured by the implicit function theorem, we see that $w(\lambda, 0)=0$ for $\lambda$ close to $0$. This provides assertion (1). \\
  \item Note that $w = w(\lambda, t)$ satisfies 
\begin{align*}
\begin{cases}
-\Delta w = Q\left[ a(t+w)^{p-1} + \lambda m (t+w+\epsilon)^{q-2}(t+w) \right] 
& \mbox{in $\Omega$}, \\
\frac{\partial w}{\partial \mathbf{n}} = 0 & \mbox{on $\partial \Omega$}. 
\end{cases}
\end{align*}
Differentiating this problem with respect to $t$ we obtain
\begin{align} \label{151012wt}
\begin{cases}
-\Delta w_t = Q[ a(p-1)(t+w)^{p-2}(1+w_t)  & \\ 
\hspace*{2cm} + \lambda m \{ (q-2)(t+w+\epsilon)^{q-3}(1+w_t)(t+w) & \\ 
\hspace*{2.5cm} + (t+w+\epsilon)^{q-2}(1+w_t) \} ] & \mbox{in $\Omega$}, \\
\frac{\partial w_t}{\partial \mathbf{n}} = 0 & \mbox{on $\partial \Omega$}. 
\end{cases}
\end{align}
Putting $(\lambda, t)=(0,0)$ here we deduce 
\begin{align*}
-\Delta w_t(0,0) = 0 \quad\mbox{in $\Omega$}, \qquad 
\int_\Omega w_t = 0,
\end{align*}
which yields Assertion (2).\\
  \item Differentiating \eqref{151012wt} with respect to $\lambda$ we get
\begin{align} \label{151012wtlam} 
\begin{cases}
-\Delta w_{t \lambda} = Q[ a(p-1)\{ (p-2)(t+w)^{p-3}w_\lambda (1+w_t) + 
(t+w)^{p-2}w_{t \lambda} \} & \\ 
+ m \{ (q-2)(t+w+\epsilon)^{q-3}(1+w_t)(t+w) 
+ (t+w+\epsilon)^{q-2}(1+w_t) \} & \\ 
+ \lambda m \{ (q-2)(q-3)(t+w+\epsilon)^{q-4} w_\lambda (1+w_t) (t+w) & \\
\hspace*{1cm} + (q-2) (t+w+\epsilon)^{q-3} w_{t \lambda} (t+w) & \\ 
\hspace*{1.3cm} + (q-2) (t+w+\epsilon)^{q-3} (1+w_t)w_\lambda & \\ 
\hspace*{0.7cm} + (q-2)(t+w+\epsilon)^{q-3} w_\lambda (1+w_t) & \\ 
\hspace*{1cm} + (t+w+\epsilon)^{q-2}w_{t \lambda}\}] & \mbox{in $\Omega$}, \\
\frac{\partial w_t}{\partial \mathbf{n}} = 0 & \mbox{on $\partial \Omega$}. 
\end{cases}
\end{align}
Putting $(\lambda, t)=(0,0)$ here we deduce 
\begin{align*}
-\Delta w_{t \lambda}(0,0) = Q[m \epsilon^{q-2}] \quad\mbox{in $\Omega$}, 
\qquad \int_\Omega w_{t \lambda} = 0, 
\end{align*}
which yields assertion (3).\\
  \item We consider $\frac{\partial^k w}{\partial t^k}(0,0)$ for $k\geq 2$. 
We shall differentiate \eqref{151012wt} with respect to $t$ repeatedly and put $(\lambda, t)=(0,0)$ therein. To this end it is enough to consider the first term on the right-hand side of the equation in \eqref{151012wt}, i.e. the term $\eta := Q[a(p-1)(t+w)^{p-2}(1+w_t)]$. For instance, let us discuss the case $k=2$: we consider the derivative 
\begin{align*}
\eta_t = Q[a(p-1)(p-2)(t+w)^{p-3}(1+w_t) + a(p-1)(t+w)^{p-2}w_{tt}]. 
\end{align*} 
It follows that 
\begin{align*}
\eta_t(0,0) = \left\{ \begin{array}{ll}
Q[(p-1)(p-2)a], & \text{ if }p=3, \\
0, &  \text{ if } p>3. 
\end{array} \right. 
\end{align*}
More generally, putting $\eta = Q[a(p-1)(t+w)^{p-2}(1+w_t)]$ we obtain 
\begin{align*}
\frac{\partial^{k-1} \eta}{\partial t^{k-1}}(0,0) = 
\left\{ \begin{array}{ll}
Q[(p-1)(p-2)\cdots (p-k)a], &  \text{ if } p=k+1, \\
0, &  \text{ if } p>k+1. 
\end{array} \right. 
\end{align*}
Since $\int_\Omega a = 0$, it follows that 
\begin{align*}
-\Delta \frac{\partial^k w}{\partial t^k}(0,0) = 
\left\{ \begin{array}{ll}
(p-1)(p-2)\cdots (p-k)a, &  \text{ if }p=k+1, \\
0, &  \text{ if }p>k+1, 
\end{array} \right. 
\end{align*}
which yields assertion (4). 
\end{enumerate}
The proof of Lemma \ref{lem:w151011} is now complete. 
\end{proof}

Using Lemma \ref{lem:w151011}, we obtain the partial derivatives of $\Phi$ at $(0,0)$:

\begin{lem} \label{lem:Phi151011}
\strut
\begin{enumerate}
  \item $\frac{\partial^k \Phi}{\partial \lambda^k}(0,0) = 0$ for every integer $k\geq 0$. 
  \item $\frac{\partial \Phi}{\partial t}(0,0) = \frac{\partial^2 \Phi}{\partial t^2}(0,0) = 0$. 
  \item $\frac{\partial^2 \Phi}{\partial t \partial \lambda}(0,0) = \frac{\partial^2 \Phi}{\partial \lambda \partial t}(0,0) = \epsilon^{q-2}\int_\Omega m < 0$.  \item For every integer $k \geq 2$ we have
\begin{align} \label{1510122k-1}
\frac{\partial^{2k -1}\Phi}{\partial t^{2k-1}}(0,0) = 
\left\{ \begin{array}{ll}
C_k \int_\Omega \left\vert \nabla \frac{\partial^k w}{\partial t^k}(0,0)\right\vert^2, &  \text{ if } p=k+1, \\
0, &  \text{ if } p>k+1 
\end{array} \right. 
\end{align}
for some constant $C_k > 0$, and 
\begin{align} \label{1510122k}
\frac{\partial^{2k}\Phi}{\partial t^{2k}}(0,0) = 0, \quad  \text{ if } p\geq k + 2.
\end{align}
\end{enumerate}
\end{lem}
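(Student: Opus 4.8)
The plan is to organise everything around the splitting $\Phi = F + \lambda G$, where $F(\lambda,t) = \int_\Omega a\,(t+w)^{p-1}$ and $G(\lambda,t) = \int_\Omega m\,(t+w+\epsilon)^{q-2}(t+w)$, and to abbreviate $u = t + w(\lambda,t)$. I will use repeatedly the elementary facts, supplied by Lemma \ref{lem:w151011} and Remark \ref{151014rem:w}, that $w(\lambda,0)=0$, that at $(0,0)$ one has $u=0$, $u_t = 1+w_t = 1$ and $w_\lambda = 0$, and that the intermediate $t$-derivatives $\frac{\partial^j u}{\partial t^j}(0,0) = \frac{\partial^j w}{\partial t^j}(0,0)$ vanish for $2\le j \le p-2$ while $\frac{\partial^{p-1}w}{\partial t^{p-1}}(0,0) = w_{p-1,a}$. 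Since $p>2$ is an integer, $u^{p-1}$ is an honest power, so the $t$-derivatives of the integrand of $F$ can be handled by the Fa\`a di Bruno formula.

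The assertions (1)--(3) are then short. For (1), $w(\lambda,0)=0$ forces $u\equiv 0$ on $\{t=0\}$, hence $\Phi(\lambda,0)\equiv 0$; differentiating in $\lambda$ gives $\frac{\partial^k\Phi}{\partial\lambda^k}(0,0)=0$. The crucial observation for \emph{all} pure $t$-derivatives is that the term $\lambda G$ carries an explicit factor $\lambda$, so $\frac{\partial^n}{\partial t^n}(\lambda G)$ vanishes at $\lambda=0$; thus $\frac{\partial^n\Phi}{\partial t^n}(0,0) = \frac{\partial^n F}{\partial t^n}(0,0)$ for every $n\ge 1$, and (2) and (4) reduce to statements about $F$ alone. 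Since $u(0,0)=0$ and $p-2>0$, a direct computation gives $F_t(0,0)=0$, and $F_{tt}(0,0)=0$ (the only borderline case $p=3$ produces the factor $2\int_\Omega a = 0$); this is (2). For (3), the mixed derivative equals $F_{\lambda t}(0,0) + G_t(0,0)$, where $F_{\lambda t}(0,0)=0$ because every term contains $u$ or $u_\lambda=w_\lambda(0,0)=0$, and $G_t(0,0)=\epsilon^{q-2}\int_\Omega m$ (the term $(q-2)(u+\epsilon)^{q-3}u_t\,u$ drops since $u(0,0)=0$). By \eqref{c:m} this is negative.

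The heart of the matter, and the main obstacle, is the combinatorics behind (4). By the Fa\`a di Bruno formula, $\frac{\partial^n (u^{p-1})}{\partial t^n}(0,0)$ is a sum over partitions $n = \sum_j j\,m_j$ of terms proportional to $\frac{(p-1)!}{(p-1-m)!}\,u^{p-1-m}\prod_j\big(\tfrac{\partial^j u}{\partial t^j}\big)^{m_j}$, with $m=\sum_j m_j$. Evaluated at $u(0,0)=0$, only the terms with $m=p-1$ survive: fewer derivative factors leave a positive power of $u$, and more than $p-1$ annihilate the power. Hence I must split $n$ into \emph{exactly} $p-1$ parts; and because $\frac{\partial^j u}{\partial t^j}(0,0)=0$ for $2\le j\le p-2$, every admissible part has size $1$ or size $\ge p-1$. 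If $r$ parts have size $\ge p-1$ then the sum obeys $n \ge (p-1) + r(p-2)$. This single inequality does all the work: for $n=2k-1$ with $p>k+1$, or $n=2k$ with $p\ge k+2$ (recall $k\ge 2$), it forces $r=0$, so the only partition that could contribute is the all-ones one; this occurs only when $n=p-1$, and it then carries the factor $\int_\Omega a = 0$. In every case the derivative vanishes, giving the stated zeros in \eqref{1510122k-1} and \eqref{1510122k}.

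It remains to treat the surviving case $p=k+1$, $n=2k-1=2p-3$ in \eqref{1510122k-1}. Here the inequality $n\ge(p-1)+r(p-2)$ allows $r=1$, and the unique admissible partition consists of $p-2$ parts of size $1$ together with one part of size $p-1$; parts of size $\ge p$ are excluded (they would leave too little room for the remaining parts), so the undetermined high-order derivatives of $w$ never enter. The corresponding term equals $\frac{(2p-3)!}{(p-2)!}\,w_{p-1,a}$ pointwise, whence $\frac{\partial^{2k-1}F}{\partial t^{2k-1}}(0,0) = \frac{(2p-3)!}{(p-2)!}\int_\Omega a\,w_{p-1,a}$. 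Finally I would use $-\Delta w_{p-1,a} = (p-1)!\,a$ with the Neumann boundary condition and integrate by parts: the boundary term vanishes, so $\int_\Omega a\,w_{p-1,a} = \frac{1}{(p-1)!}\int_\Omega|\nabla w_{p-1,a}|^2$. Writing $k=p-1$ gives $C_k = \frac{(2k-1)!}{(k-1)!\,k!} > 0$ and the claimed gradient expression, with $w_{p-1,a} = \frac{\partial^k w}{\partial t^k}(0,0)$. The points demanding care throughout are keeping the power count exact, so that the undetermined derivatives $\frac{\partial^j w}{\partial t^j}(0,0)$ with $j\ge p$ are never invoked, and the systematic use of $\int_\Omega a=0$ to annihilate the all-ones partition.
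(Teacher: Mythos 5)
Your proof is correct, and for items (1)--(3) it follows the same route as the paper: evaluate the low-order derivatives of $\Phi$ directly, using $w(\lambda,0)\equiv 0$, $w_t(0,0)=w_\lambda(0,0)=0$, $u_t(0,0)=1$, and $\int_\Omega a=0$ to kill the borderline case $p=3$ in $\Phi_{tt}(0,0)$. The genuine difference is in item (4). The paper isolates $\zeta=\int_\Omega a(p-1)\{(p-2)(t+w)^{p-3}(1+w_t)^2+(t+w)^{p-2}w_{tt}\}$, writes $\partial_t^j\Phi=\partial_t^{j-2}\zeta+\lambda H_j$, computes $\zeta_t(0,0)$ and $\zeta_{tt}(0,0)$ explicitly (i.e.\ only the case $k=2$), and asserts that the general case follows \emph{in a similar way} by the chain rule. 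You instead dispose of the $m$-term once and for all by observing that pure $t$-derivatives of $\lambda G$ vanish at $\lambda=0$, and you organize $\partial_t^n\int_\Omega a\,u^{p-1}$ via the Fa\`a di Bruno partition formula; since $u(0,0)=0$ and $p$ is an integer, only partitions into exactly $p-1$ parts survive, each part of size $1$ or $\geq p-1$, and the single inequality $n\geq(p-1)+r(p-2)$ settles every case at once. This shows cleanly that the undetermined derivatives $\partial_t^j w(0,0)$ with $j\geq p$ never enter, that the cases $p>k+1$ (resp.\ $p\geq k+2$) reduce to the all-ones partition annihilated by $\int_\Omega a=0$, and that for $p=k+1$ the unique surviving partition gives $\frac{(2p-3)!}{(p-2)!}\int_\Omega a\,w_{p-1,a}$, which integration by parts with $-\Delta w_{p-1,a}=(p-1)!\,a$ turns into the stated gradient expression with $C_k=\frac{(2k-1)!}{(k-1)!\,k!}$ --- consistent with the paper's value $C_2=3$. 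In short, your argument actually carries out, uniformly in $k$, the induction the paper only sketches, at the cost of a slightly heavier combinatorial setup.
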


\begin{rem} \label{151014remPhi}{\rm 
%
Lemma \ref{lem:Phi151011}(4) tells us that 
\begin{align*}
\begin{cases}
\frac{\partial^j \Phi}{\partial t^j}(0,0) = 0, &  \text{ if } 3\leq j< 2p-3, \\ 
\frac{\partial^{2p-3} \Phi}{\partial t^{2p-3}}(0,0)>0. & 
\end{cases}
\end{align*}
}\end{rem}


\begin{proof}[Proof of Lemma \ref{lem:Phi151011}]
We denote by $\Phi_\lambda, \Phi_t, \Phi_{\lambda \lambda}, \Phi_{\lambda t}$ the derivatives of $\Phi$. 
\strut
\begin{enumerate}
  \item It is straightforward from assertion (1) in Lemma \ref{lem:w151011}. 
  \item Differentiating \eqref{151012Phi} with respect to $t$ we obtain
\begin{align} 
\Phi_t &= \int_\Omega a(p-1)(t+w)^{p-2}(1+w_t) \nonumber \\ 
& + \lambda \int_\Omega m \{ (q-2)(t+w+\epsilon)^{q-3}(1+w_t)(t+w) 
+ (t+w+\epsilon)^{q-2}(1+w_t) \} \label{151012Phit}
\end{align}
It follows that $\Phi_t(0,0) = 0$. Once again we differentiate \eqref{151012Phit} 
with respect to $t$ to obtain
\begin{align} 
\Phi_{tt} &= \int_\Omega a(p-1)\{ (p-2)(t+w)^{p-3}(1+w_t)^2 + (t+w)^{p-2}w_{tt}\} \nonumber \\ 
& + \lambda h(\lambda, t), 
\label{151012Phitt}
\end{align}
where $h$ is bounded. It follows that $\Phi_{tt} (0,0) = 0$ when $p>3$. In addition, this remains true if $p=3$ since $\int_\Omega a = 0$. Assertion (2) is then proved.\\
  \item Differentiating \eqref{151012Phit} with respect to $\lambda$ we get
\begin{align*}
\Phi_{t \lambda} 
& = \int_\Omega a(p-1)\{ (p-2)(t+w)^{p-3}w_\lambda (1+w_t) 
+ (t+w)^{p-2}w_{t \lambda} \} \\ 
&+ \int_\Omega m \{ (q-2)(t+w+\epsilon)^{q-3}(1+w_t)(t+w) 
+ (t+w+\epsilon)^{q-2}(1+w_t) \} \\
& + \lambda \int_\Omega m \{ (q-2)(q-3)(t+w+\epsilon)^{q-4}w_\lambda (1+w_t)(t+w) \\
& \hspace*{1.5cm} + (q-2)(t+w+\epsilon)^{q-3}w_{t \lambda}(t+w) \\
& \hspace*{1.7cm} + (q-2)(t+w+\epsilon)^{q-3}(1+w_t)w_\lambda \\
& \hspace*{1.3cm} + (q-2)(t+w+\epsilon)^{q-3}w_\lambda (1+w_t) \\
& \hspace*{1.5cm} + (t+w+\epsilon)^{q-2}w_{t \lambda} \}. 
\end{align*}
Putting $(\lambda, t)=(0,0)$ here, it follows that 
\begin{align*}
\Phi_{t \lambda}(0,0) = \int_\Omega m \epsilon^{q-2}, 
\end{align*}
which yields Assertion (3).\\
  \item In \eqref{151012Phitt} we put
\begin{align*}
\zeta (\lambda, t) = \int_\Omega a(p-1)\{ (p-2)(t+w)^{p-3}(1+w_t)^2 + (t+w)^{p-2}w_{tt}\} 
\end{align*}
Then we deduce from \eqref{151012Phitt} that 
\begin{align}  \label{151014zetadef}
\frac{\partial^j \Phi}{\partial t^j} 
= \frac{\partial^{j-2} \zeta}{\partial t^{j-2}} + \lambda H_j(\lambda, t), 
\end{align}
where $H_j$ is bounded. Let us verify \eqref{1510122k-1} in the case $k=2$: we consider $j=3$ in \eqref{151014zetadef} and observe that
\begin{align} 
\zeta_t &= 
\int_\Omega a(p-1)\left\{ (p-2)(p-3)(t+w)^{p-4}(1+w_t)^3 \right. \nonumber \\
& \left. \hspace*{2.3cm} + 3(p-2)(t+w)^{p-3}(1+w_t)w_{tt} + (t+w)^{p-2}w_{ttt} \right\}. 
\label{151012zetat}
\end{align}
Here it is understood that $(t+w)^\ell = 0$ if $\ell < 0$. Putting $(\lambda, t)=(0,0)$ here, it follows from Lemma \ref{lem:w151011}(4) that 
\begin{align*}
\Phi_{ttt}(0,0) = \zeta_t(0,0) = \left\{ 
\begin{array}{ll}
6\int_\Omega aw_{tt}(0,0) = 3 \int_\Omega |\nabla w_{tt}(0,0)|^2, & \text{ if }
p=3, \\ 
0, & \text{ if } p>3. 
\end{array} \right. 
\end{align*}
Here we have used that $\int_\Omega a = 0$ and $(t+w)^\ell=0$ at $(\lambda, t)=(0,0)$ for some integer $\ell\geq 1$. Thus Assertion \eqref{1510122k-1} with $k=2$ has been verified. 

We prove now \eqref{1510122k} in the case $k=2$: we consider $j=4$ in \eqref{151014zetadef} and differentiate \eqref{151012zetat} with respect to $t$ once more to obtain
\begin{align*}
\zeta_{tt} &= \int_\Omega a(p-1)\left\{ 
(p-2)(p-3)(p-4)(t+w)^{p-5}(1+w_t)^4 \right. \\
& \hspace*{2.3cm} + 6(p-2)(p-3)(t+w)^{p-4}(1+w_t)^2w_{tt} \\ 
& \hspace*{2.3cm} + 3(p-2)(t+w)^{p-3}w_{tt}^2 + 4(p-2)(t+w)^{p-3}(1+w_t)w_{ttt} \\ 
& \left. \hspace*{2.3cm} + (t+w)^{p-2}w_{tttt} \right\}
\end{align*}
When $p\geq 4$ we deduce $\zeta_{tt}(0,0)=0$ in view of Remark \ref{151014rem:w}. Hence Assertion \eqref{1510122k} with $k=2$ has been verified. 
In a similar way,  we can verify assertions \eqref{1510122k-1} and \eqref{1510122k} for the general case $k>2$, using the differential chain rule. 
\end{enumerate}
The proof of Lemma \ref{lem:Phi151011} is now complete. 
\end{proof}

We conclude now the verification of \eqref{mu:deri}: from Lemma \ref{lem:Phi151011}(1)-(3) and the fact that $\Phi$ is analytic at $(0,0)$, we deduce that the Taylor series expansion of $\Phi$ at $(0,0)$ is provided by
\begin{align*}
\Phi (\lambda, t) = t \left( \lambda \frac{\partial^2 \Phi}{\partial t \partial \lambda}(0,0) + \Psi (\lambda, t) \right)
\end{align*}
where $\Psi (\lambda, t)$ is a higher order term. We put 
\begin{align*}
\xi (\lambda, t) := \lambda \frac{\partial^2 \Phi}{\partial t \partial \lambda}(0,0) + \Psi (\lambda, t). 
\end{align*}
Note that $\xi (0,0)=0$ and 
\begin{align*}
\frac{\partial \xi}{\partial \lambda}(0,0) = \frac{\partial^2 \Phi}{\partial t \partial \lambda}(0,0) < 0. 
\end{align*}
Hence the implicit function theorem can be applied to deduce that the solution set of $\xi (\lambda, t) = 0$ near $(0,0)$ is explicitly given by a function $\lambda (t)$ satisfying $\lambda (0) = 0$.

We see that $\lambda'(0) = - \frac{\frac{\partial \xi}{\partial t}(0,0)}{\frac{\partial \xi}{\partial \lambda}(0,0)}=0$, since $\frac{\partial \xi}{\partial t}(0,0) = \frac{\partial^2 \Phi}{\partial t^2}(0,0) = 0$ from Lemma \ref{lem:Phi151011}(2). However, since $\frac{\partial^j \xi}{\partial t^j}(0,0) = \frac{\partial^{j+1} \Phi}{\partial t^{j+1}}(0,0)$, Remark \ref{151014remPhi} provides that 
\begin{align*}
\begin{cases}
\frac{\partial^j \xi}{\partial t^j}(0,0)=0 \text{ if }2\leq j < 2p-4, \\
\frac{\partial^{2p-4} \xi}{\partial t^{2p-4}}(0,0) > 0. & 
\end{cases}
\end{align*}
This implies that 
\begin{align*}
\begin{cases}
\lambda^{(j)}(0)=0  \text{ if } 1\leq j < 2p-4, \\
\lambda^{(2p-4)}(0)= - \frac{\frac{\partial^{2p-4} \xi}{\partial t^{2p-4}}(0,0)}{\frac{\partial \xi}{\partial \lambda}(0,0)} > 0. 
\end{cases}
\end{align*}
Assertion \eqref{mu:deri} is now proved. 

The proof of Theorem \ref{thm:rprob:r} is now complete. \qed \\

\begin{rem}
\label{rbif}
{\rm 
As pointed out in Remark \ref{rem:thm01}(1), assertion (1)(b) with $\int_\Omega b > 0$ and assertion (2)(a) in Theorem \ref{t1} hold true without the restriction $p<\frac{2N}{N-2}$, $N>2$. Indeed, this is verified by a rescaling argument for $(P_\lambda)$ with $v = \lambda^{-\frac{1}{p-q}}u$, the Lyapunov-Schmidt reduction of the rescaled problem 
developed in this section, and an application of the implicit function theorem to the reduced problem. More precisely, by the rescaling $v = \lambda^{-\frac{1}{p-q}}u$ with $\lambda > 0$ and $u\geq 0$, we reduce $(P_\lambda)$ to the problem (see \eqref{eqw}). 
\begin{align} \label{prob:resca}
\begin{cases}
-\Delta v = \mu (a v^{p-1} + b v^{q-1}) & \mbox{in $\Omega$}, \\
\frac{\partial v}{\partial \mathbf{n}} = 0 & \mbox{on $\partial \Omega$}
\end{cases}
\end{align}
with $\mu = \lambda^{\frac{p-2}{p-q}}$. Employing the Lyapunov-Schmidt method 
with the linear mapping $w = Qv = v - \frac{1}{|\Omega|} \int_\Omega v$ as in \eqref{bifeq1} and \eqref{bifeq2}, we have the following bifurcation equation in $\R^2$:
\begin{align} \label{eq:Phi:mut} 
\begin{cases}
& \!\!\!\! \Phi (\mu, t) = \int_\Omega \left\{ a (t + w(\mu, t))^{p-1} + b (t + w(\mu, t))^{q-1} 
\right\} = 0, \quad (\mu, t) \simeq (0, c^*),  \\
& \!\!\!\! \Phi (0, c^*) = 0.
\end{cases}
\end{align}
Here $t = \frac{1}{|\Omega|} \int_\Omega v$ and $w = w(\mu, t)$ is the unique solution in $C^{2+\alpha}(\overline{\Omega})$ of the following boundary value problem defined in a neighborhood of $(\mu, t, w) = (0, c^*, 0)$:
\begin{align*}
\begin{cases}
-\Delta w = \mu Q\left( a(t+w)^{p-1} + b(t+w)^{q-1} \right) & \mbox{in $\Omega$}, \\
\frac{\partial w}{\partial \mathbf{n}} = 0 & \mbox{on $\partial \Omega$}. 
\end{cases}
\end{align*} 
The existence and uniqueness of $w(\mu, t)$ is ensured by the implicit function theorem. Then we can prove that if $\left( \int_\Omega a \right) \left( \int_\Omega b \right) < 0$ then 
\begin{align*}
\Phi_t(0, c^*) = (p-1) (c^*)^{p-2} \left( \int_\Omega a \right) + (q-1) (c^*)^{q-2} 
\left( \int_\Omega b \right) = (q-p) (c^*)^{q-2} \left( \int_\Omega b \right) \not= 0.  
\end{align*}
Still by the implicit function theorem, there exists a unique solution $t(\mu)$ of \eqref{eq:Phi:mut}, i.e. 
\begin{align*}
\mbox{\eqref{eq:Phi:mut}}  
\ \Longleftrightarrow \ \left\{ \begin{array}{ll} 
& \! \!\!\!\! t = t(\mu) \ \ \mbox{for $\mu \simeq 0$}, \\ 
& \! \!\!\!\! t(0)=c^*.  
\end{array} \right. 
\end{align*}
Hence \eqref{prob:resca}  has a positive solution $\hat{v}_\lambda = v_\mu = t(\mu) + w(\mu, t(\mu))$ with $\mu = \lambda^{\frac{p-2}{p-q}}$ bifurcating to the region $\lambda > 0$ from $\{ (0,c) : \mbox{$c$ is a constant} \}$ at $(0,c^*)$, i.e. satisfying $\hat{v}_0 = c^*$. Moreover, this solution is unique and $\hat{v}_\lambda \to c^*$ in $C^{2+\alpha}(\overline{\Omega})$ as $\lambda \to 0^+$. Finally, going back to $(P_\lambda)$ by the rescaling $u = \lambda^{\frac{1}{p-q}} v$, we have a positive solution $u_\lambda = \lambda^{\frac{1}{p-q}} \hat{v}_\lambda$ of $(P_\lambda)$ and $\lambda^{-\frac{1}{p-q}} u_\lambda \to c^*$ in $C^{2+\alpha}(\overline{\Omega})$ as $\lambda \to 0^+$, as desired. 
}\end{rem}

As a byproduct relying on the uniqueness result for $\hat{v}_\lambda$ in Remark \ref{rbif}, we show that the variational positive solution $u_{1,\lambda}$ of $(P_\lambda)$ given by Proposition \ref{p1} is asymptotically stable for $\lambda > 0$ sufficiently small when $\int_\Omega a < 0 < \int_\Omega b$.

\begin{prop}  \label{p:u1as}
Under the assumptions of Proposition \ref{p1}, if $\int_\Omega a < 0 < \int_\Omega b$ then $u_{1,\lambda}$ is asymptotically stable for $\lambda > 0$ sufficiently small.  
\end{prop}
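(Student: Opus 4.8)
The plan is to use the precise profile of $u_{1,\lambda}$ to turn the stability question into a perturbation of the zero eigenvalue of the Neumann Laplacian. By Proposition \ref{prop:sol:Nlam+:0}(2), $u_{1,\lambda}>0$ on $\overline{\Omega}$ for $\lambda$ small and $v_\lambda := \lambda^{-\frac{1}{p-q}}u_{1,\lambda} \to c^*$ in $C^2(\overline{\Omega})$ as $\lambda \to 0^+$, with $c^*>0$; by the uniqueness of the bifurcating positive solution in Remark \ref{rbif}, $v_\lambda = \hat v_\lambda$ for $\lambda$ small, so this convergence holds along the full limit $\lambda \to 0^+$. Setting $\mu = \lambda^{\frac{p-2}{p-q}}$ and substituting $u_{1,\lambda} = \lambda^{\frac{1}{p-q}}v_\lambda$ into the linearized problem \eqref{eigenvp:ulam}, the computation $\lambda \cdot \lambda^{\frac{q-2}{p-q}} = \lambda^{\frac{p-2}{p-q}} = \mu$ rewrites \eqref{eigenvp:ulam} as
$$-\Delta\phi = \mu V_\lambda\,\phi + \gamma\phi \ \text{ in } \Omega, \qquad \frac{\partial\phi}{\partial\n} = 0 \ \text{ on } \partial\Omega,$$
where $V_\lambda := (p-1)a\,v_\lambda^{p-2} + (q-1)b\,v_\lambda^{q-2}$ converges uniformly on $\overline{\Omega}$ to $V_0 := (p-1)a\,(c^*)^{p-2} + (q-1)b\,(c^*)^{q-2}$, since $v_\lambda \to c^*>0$ uniformly keeps $v_\lambda$ bounded away from $0$.

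Next I would compute the sign of $\int_\Omega V_0$, which governs the first-order perturbation. Using $(c^*)^{p-q} = -\int_\Omega b / \int_\Omega a$ from \eqref{u1:profile}, one factors out $(c^*)^{q-2}$ to obtain
$$\int_\Omega V_0 = (c^*)^{q-2}\left[(p-1)(c^*)^{p-q}\int_\Omega a + (q-1)\int_\Omega b\right] = (q-p)(c^*)^{q-2}\int_\Omega b < 0,$$
because $p>q$, $c^*>0$ and $\int_\Omega b>0$. Hence $\frac{1}{|\Omega|}\int_\Omega V_\lambda \to \frac{1}{|\Omega|}\int_\Omega V_0 < 0$, so there are constants $\beta_0>0$ and $M>0$ with $-\frac{1}{|\Omega|}\int_\Omega V_\lambda \geq \beta_0$ and $\Vert V_\lambda\Vert_\infty \leq M$ for $\lambda$ small.

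The heart of the proof is a coercivity lower bound for $\gamma_1(\lambda,u_{1,\lambda}) = \inf\{\int_\Omega(|\nabla\phi|^2 - \mu V_\lambda\phi^2) : \Vert\phi\Vert_2 = 1\}$. I would decompose an admissible $\phi$ as $\phi = s\,|\Omega|^{-1/2} + \psi$ with $\int_\Omega\psi = 0$ and $s^2 + \Vert\psi\Vert_2^2 = 1$, so that $\int_\Omega|\nabla\phi|^2 = \int_\Omega|\nabla\psi|^2 \geq \Lambda_2\Vert\psi\Vert_2^2$, where $\Lambda_2>0$ is the first positive Neumann eigenvalue of $-\Delta$ on $\Omega$. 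Expanding $-\mu\int_\Omega V_\lambda\phi^2$ into its $s^2$, cross, and $\psi^2$ contributions and bounding the latter two via Cauchy--Schwarz, the bound on $\Vert V_\lambda\Vert_\infty$, and Young's inequality, one absorbs them so that for $\lambda$ small
$$\int_\Omega(|\nabla\phi|^2 - \mu V_\lambda\phi^2) \geq \tfrac{\Lambda_2}{4}\Vert\psi\Vert_2^2 + \tfrac{\mu\beta_0}{2}s^2 \geq \min\left\{\tfrac{\Lambda_2}{4},\ \tfrac{\mu\beta_0}{2}\right\} > 0.$$
Taking the infimum gives $\gamma_1(\lambda,u_{1,\lambda}) \geq \frac{\mu\beta_0}{2} > 0$ for $\lambda>0$ small, which is the asserted asymptotic stability.

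The main obstacle is precisely this last estimate: since $\gamma_1(\lambda,u_{1,\lambda}) \to 0$ as $\lambda \to 0^+$ (the limiting operator is the Neumann Laplacian, whose principal eigenvalue $0$ is attained by constants), positivity cannot be read off from the limit and one must extract the correct first-order term in $\mu$. The splitting separates the constant direction, where the quadratic form is $\sim \mu\beta_0 s^2>0$ thanks to the sign $\int_\Omega V_0<0$, from its orthogonal complement, where the Dirichlet energy dominates the $O(\mu)$ potential term through the spectral gap $\Lambda_2$; verifying that the cross term is genuinely of lower order in $\mu$ is the delicate point that makes these two competing contributions compatible.
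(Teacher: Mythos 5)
Your proof is correct, but it takes a genuinely different route from the paper's. The paper first invokes the uniqueness of the bifurcating solution from Remark \ref{rbif} to identify $v_\mu=\lambda^{-\frac{1}{p-q}}u_{1,\lambda}$ with the Lyapunov--Schmidt solution, and then obtains positivity of $\hat\gamma_{1,\mu}$ by a Picone-type identity: integrating $(-\Delta v_\mu)\hat{\phi}^2/v_\mu$ against the equation for the principal eigenfunction $\hat\phi$ yields $\hat{\gamma} = \int_\Omega \bigl\vert \frac{\hat{\phi}}{v_\mu}\nabla v_\mu - \nabla\hat{\phi}\bigr\vert^2 + \mu M_\mu \geq \mu M_\mu$ with $M_\mu = \int_\Omega\bigl(-(p-2)av_\mu^{p-2}+(2-q)bv_\mu^{q-2}\bigr)\hat{\phi}^2 \to \frac{p-q}{|\Omega|}\bigl(\int_\Omega b\bigr)^{\frac{p-2}{p-q}}\bigl(-\int_\Omega a\bigr)^{\frac{2-q}{p-q}}>0$. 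You instead bound the Rayleigh quotient from below directly, splitting $\phi$ into its mean and mean-zero parts and playing the spectral gap $\Lambda_2$ against the $O(\mu)$ potential; the quantity $-\frac{1}{|\Omega|}\int_\Omega V_0=(p-q)(c^*)^{q-2}\frac{1}{|\Omega|}\int_\Omega b$ driving your constant-direction term is exactly the paper's $\lim_{\mu\to 0}M_\mu$, so the two arguments detect the same first-order coefficient. The paper's identity is shorter and needs no spectral-gap absorption, but it requires continuity of the principal eigenpair in $\mu$ (supplied by the implicit function theorem); your variational estimate avoids any information about the eigenfunction at the cost of the Young-inequality bookkeeping. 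A minor remark: your appeal to the uniqueness in Remark \ref{rbif} is superfluous for your argument, since Proposition \ref{prop:sol:Nlam+:0}(2) already gives the full-limit convergence $\lambda^{-\frac{1}{p-q}}u_{1,\lambda}\to c^*$ in $C^2(\overline{\Omega})$, which is all you use.
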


\begin{proof}
With the aid of the argument in Remark \ref{rbif}, it suffices to show that the positive solution $v_\mu = t(\mu) + w(\mu, t(\mu))$ of \eqref{prob:resca} is asymptotically stable for $\mu > 0$ sufficiently small. To this end, recalling \eqref{eigenvp:ulam}, we investigate the sign of the first eigenvalue $\hat{\gamma} = \hat{\gamma}_{1,\mu}$ of the following eigenvalue problem to discuss the linearized stability of $v_\mu$. 
\begin{align*}
\begin{cases}
-\Delta \phi = \mu \left( (p-1) a v_\mu^{p-2} + (q-1) b v_\mu^{q-2} \right) \phi + \gamma \phi & \mbox{in $\Omega$}, \\
\frac{\partial \phi}{\partial \mathbf{n}} = 0 & \mbox{on $\partial \Omega$}. 
\end{cases}
\end{align*}
By $\hat{\phi} = \hat{\phi}_{1, \mu}$ we denote a positive eigenfunction associated with $\hat{\gamma}_{1,\mu}$ and normalized as $\int_\Omega \hat{\phi}_{1,\mu}^2 = 1$. Note that $\hat{\gamma}_{1, 0} = 0$ and $\hat{\phi}_{1, 0} = |\Omega|^{-\frac{1}{2}}$, and moreover that the mapping $\mu \mapsto \left( \hat{\gamma}_{1,\mu}, \hat{\phi}_{1,\mu} \right)$ is continuous in $\R \times C^{2+\alpha}(\overline{\Omega})$ for $\mu$ close to $0$ by the implicit function theorem. We consider $\int_\Omega (- \Delta v_\mu) \frac{\hat{\phi}^2}{v_\mu}$. By the divergence theorem we have
\begin{align} \label{hatgammbb}
\hat{\gamma} = \left( \int_\Omega \left\vert \frac{\hat{\phi}}{v_\mu} \nabla v_\mu - \nabla \hat{\phi} \right\vert^2 \right) + \mu M_\mu \geq \mu M_\mu, 
\end{align}
where $M_\mu = \int_\Omega \left( -(p-2) a v_\mu^{p-2} + (2-q) b v_\mu^{q-2} \right) \hat{\phi}^2$. Since $v_\mu \to c^* =\left( \frac{\int_\Omega b}{-\int_\Omega a}\right)^{\frac{1}{p-q}}$ in $C(\overline{\Omega})$ as $\mu \to 0$, we deduce that
$$
M_\mu \longrightarrow \frac{(p-q)}{|\Omega|} \left( \int_\Omega b \right)^{\frac{p-2}{p-q}} \left( -\int_\Omega a \right)^{\frac{2-q}{p-q}} > 0 \quad \text{as } \mu \to 0.  
$$
Hence it follows from \eqref{hatgammbb} that $\hat{\gamma} > 0$ for $\mu > 0$ sufficiently small, as desired. 
\end{proof}

\bigskip
\section{Existence of loop type subcontinua: results and expectations} 
\bigskip


Let $\epsilon > 0$, and let $b_\epsilon = b - \epsilon$. 
Then we consider the following regularized problem for $(P_\lambda)$. 
$$ 
\begin{cases}
-\Delta u = \lambda b_\epsilon (x)|u + \epsilon|^{q-2}u+a(x)|u|^{p-2}u & \mbox{in $\Omega$}, \\
\frac{\partial u}{\partial \n} = 0 & \mbox{on $\partial \Omega$}. 
\end{cases} \leqno{(P_{\lambda, \epsilon})}   
$$

First we establish an {\it a priori} bound on $|\lambda|$ for nontrivial non-negative  solutions of $(P_{\lambda, \epsilon})$.


\begin{prop}  \label{p:boundslam}
Assume that there exists a ball $B$ such that $\overline{B} \subset \Omega$ with the condition 
$$ 
a\geq 0,  \ \ a\not\equiv 0 \quad\mbox{and} \ \ b>0 \quad \mbox{on} \ \ \overline{B}. 
$$
Then there exists constants $\overline{\lambda} > 0$ and $\epsilon_0 > 0$ such that 
$(P_{\lambda, \epsilon})$ has no nontrivial non-negative solutions for any $\lambda \geq \overline{\lambda}$ and $\epsilon \in (0, \epsilon_0]$.
\end{prop}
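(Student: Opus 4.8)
The plan is to adapt the Dirichlet-eigenfunction comparison of Proposition \ref{p:nonex} to the regularized nonlinearity, the new feature being that all constants must be chosen uniformly in $\epsilon$. Recall that any nontrivial non-negative solution of $(P_{\lambda,\epsilon})$ is positive on $\overline{\Omega}$, so I may assume $u>0$ on $\overline{\Omega}$. Since $a\geq 0$, $a\not\equiv 0$ and $b>0$ on $\overline{B}$, continuity lets me pick a smaller ball $B'$ with $\overline{B'}\subset B$ and a constant $\delta>0$ such that $a\geq\delta$ and $b\geq\delta$ on $\overline{B'}$. I then set $\epsilon_0=\delta/2$, so that $b_\epsilon=b-\epsilon\geq\delta/2$ on $\overline{B'}$ for every $\epsilon\in(0,\epsilon_0]$. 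Let $\varphi>0$ be the first Dirichlet eigenfunction of $-\Delta$ on $B'$, with eigenvalue $\lambda_1'=\lambda_1(B')$ and $\varphi=0$ on $\partial B'$, extended by zero to $\Omega$ (so $\varphi\in H^1(\Omega)$).

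Testing $(P_{\lambda,\epsilon})$ against $\varphi$, whose support lies in $\overline{B'}$, and applying Green's identity on $B'$ (using $-\Delta\varphi=\lambda_1'\varphi$ and $\partial\varphi/\partial\n\leq 0$ on $\partial B'$ by Hopf's lemma), I obtain
\begin{align*}
0 \geq \int_{\partial B'} u\,\frac{\partial\varphi}{\partial\n}
= \int_{B'}\Big(a\,u^{p-1} + \lambda\, b_\epsilon\,(u+\epsilon)^{q-2}u - \lambda_1'\,u\Big)\varphi.
\end{align*}
The crux is to show that the integrand on the right is strictly positive on $B'$ when $\lambda$ is large, uniformly in $\epsilon\in(0,\epsilon_0]$, which will contradict $u,\varphi>0$. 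On $\overline{B'}$ this integrand is bounded below by $G_\epsilon(u)$, where $G_\epsilon(s):=\delta s^{p-1}+\tfrac{\lambda\delta}{2}(s+\epsilon)^{q-2}s-\lambda_1' s$.

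To control $G_\epsilon$ I would decouple $\epsilon$ by monotonicity: since $1<q<2$, the map $t\mapsto t^{q-2}$ is decreasing, so $(s+\epsilon)^{q-2}\geq(s+\epsilon_0)^{q-2}$ for $\epsilon\leq\epsilon_0$, and it suffices to force $\tilde H(s):=\delta s^{p-2}+\tfrac{\lambda\delta}{2}(s+\epsilon_0)^{q-2}$ above $\lambda_1'$ for all $s\geq 0$. I would split into two regions: first fix $K>0$ with $\delta K^{p-2}\geq\lambda_1'$, so that $\tilde H(s)>\delta s^{p-2}\geq\lambda_1'$ for $s\geq K$, independently of $\lambda$; then, on $[0,K]$, use $(s+\epsilon_0)^{q-2}\geq(K+\epsilon_0)^{q-2}$ and choose $\overline{\lambda}$ so large that $\tfrac{\overline{\lambda}\delta}{2}(K+\epsilon_0)^{q-2}>\lambda_1'$. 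For $\lambda\geq\overline{\lambda}$ this gives $\tilde H(s)>\lambda_1'$ for every $s\geq 0$, hence $G_\epsilon(s)>0$ for $s>0$. Since $u>0$ on $\overline{B'}$ and $\varphi>0$ in $B'$, the right-hand integral is then strictly positive, contradicting $\int_{\partial B'}u\,\partial\varphi/\partial\n\leq 0$; thus no nontrivial non-negative solution can exist for $\lambda\geq\overline{\lambda}$ and $\epsilon\in(0,\epsilon_0]$.

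I expect the main obstacle to be exactly this uniform lower bound on $\tilde H$. The regularized term $(s+\epsilon)^{q-2}s$ is linear near $s=0$ (unlike the singular $s^{q-1}$) and decays like $s^{q-1}$ for large $s$, so the concave term alone cannot dominate the linear term $\lambda_1' s$ for all $s$; it is the combination of the $\epsilon$-decoupling via monotonicity with the convex term $\delta s^{p-1}$ taking over for large $s$ that closes the argument. This is precisely where the hypothesis $a\not\equiv 0$ on $\overline{B}$ is indispensable, as it furnishes the sub-ball $B'$ on which $a\geq\delta>0$.
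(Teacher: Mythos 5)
Your proof is correct and follows essentially the same strategy as the paper's: test against a Dirichlet principal eigenfunction supported in a ball, exploit $(s+\epsilon)^{q-2}\geq(s+\epsilon_0)^{q-2}$ to decouple $\epsilon$, and split $s\geq 0$ into a large region handled by the superlinear term and a bounded region handled by the concave term for $\lambda$ large. The only (harmless) deviation is that you use the unweighted eigenvalue problem on a sub-ball $B'$ where $a\geq\delta>0$, whereas the paper uses the $a$-weighted Dirichlet eigenvalue problem \eqref{eigenprob:D} on $B$ itself, which lets the factor $a$ cancel between the terms $a s^{p-q}$ and $\lambda_1 a s^{2-q}$ and so avoids shrinking the ball.
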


\begin{proof}
The proof is carried out as for Proposition \ref{p:nonex}, with small modifications. Let $\varphi  = \varphi_D(B) \in C^2(\overline{B})$ be a positive eigenfunction associated with the first eigenvalue $\lambda_1 =\lambda_D(B) > 0$ of \eqref{eigenprob:D} with $\Omega$ replaced by $B$.  We extend $\varphi$ to the whole $\Omega$ by setting $\varphi=0$ in $\Omega\setminus \overline{B}$. Then $\varphi \in H^1(\Omega)$.

Let $u \in C^2(\overline{\Omega})$ be a nontrivial non-negative solution of $(P_{\lambda, \epsilon})$. 
Note that $u>0$ in $\overline{\Omega}$. Let $\epsilon_0 > 0$ be such that $b_{\epsilon_0} > 0$ in $\overline{B}$. 
By the divergence theorem, we deduce that $\int_B \nabla \cdot u\nabla \varphi= \int_{\partial B} u \frac{\partial \varphi}{\partial \mathbf{n}} < 0$. It follows that 
\begin{align*}
\int_B \nabla u \nabla \varphi - \lambda_1 \int_B au\varphi < 0. 
\end{align*}
On the other hand, the function $u$ satisfies 
\begin{align*}
\int_\Omega \nabla u \nabla w - \int_\Omega au^{p-1}w - \lambda \int_\Omega 
b_\epsilon (u+\epsilon)^{q-2}uw = 0, \quad \forall w \in H^1(\Omega). 
\end{align*}
Taking $w = \varphi$, we have 
\begin{align*}
\int_B \nabla u \nabla \varphi = \int_B au^{p-1}\varphi + \lambda \int_B b_\epsilon (u+\epsilon)^{q-2}u\varphi. 
\end{align*}
We deduce then that
\begin{align*}
\int_B u^{q-1}\varphi  \left( au^{p-q} + \lambda b_\epsilon \left( \frac{u}{u+\epsilon} \right)^{2-q} - \lambda_1 au^{2-q} \right)< 0. 
\end{align*}
Let us set
\begin{align*}
h(x,s) =  a(x)s^{p-q} + \lambda b_\epsilon (x)\left( \frac{s}{s+\epsilon} \right)^{2-q} - \lambda_1 a(x)s^{2-q}, \quad \text{for } (x,s)\in \overline{B}\times [0, \infty). 
\end{align*} 
For our purpose it suffices to show that there exists $\overline{\lambda}$ 
such that $h\geq 0$ if $\lambda \geq \overline{\lambda}$,  $(x,s)\in \overline{B}\times [0, \infty)$, and $\epsilon \in (0, \epsilon_0]$. Indeed, note that
\begin{align*}
h(x,s) \geq as^{p-q} - \lambda_1 a s^{2-q} = as^{2-q}(s^{p-2} - \lambda_1) \geq 0 
\end{align*}
if $s\geq s_0 := \lambda_1^{\frac{1}{p-2}}$ and $x\in \overline{B}$. Next we observe that
\begin{align*}
\frac{\left( \frac{s}{s+\epsilon}\right)^{2-q}}{s^{2-q}} = \left( \frac{1}{s+\epsilon}\right)^{2-q} \geq \left( \frac{1}{s_0 + \epsilon_0} \right)^{2-q}
\end{align*}
if $0\leq s\leq s_0$ and $\epsilon \in (0, \epsilon_0]$. Hence, if $0\leq s\leq s_0$, $x \in \overline{B}$, and $\epsilon \in (0, \epsilon_0]$, then 
\begin{align*}
h \geq \lambda b_\epsilon \left( \frac{s}{s+\epsilon}\right)^{2-q} - \lambda_1 a s^{2-q} 
\geq \left( \lambda \min_{\overline{B}} b_{\epsilon_0} \left( \frac{1}{s_0 + \epsilon_0} \right)^{2-q} - \lambda_1 \Vert a \Vert_{C(\overline{B})} \right) s^{2-q}, 
\end{align*}
So,  if in addition,
\begin{align*}
\lambda \geq \overline{\lambda} := \frac{\lambda_1 \Vert a \Vert_{C(\overline{B})}(s_0 + \epsilon_0)^{2-q}}{\min_{\overline{B}} b_{\epsilon_0}} 
\end{align*}
then $h\geq 0$, as desired. The proof of Proposition \ref{p:boundslam} is now complete. 
\end{proof}

The following result is a direct consequence of Proposition \ref{p:boundslam}:

\begin{cor}  \label{cor:boundslam}
Assume $(H_0)$. Then there exists constant $\overline{\lambda} > 0$ and 
$\epsilon_0 > 0 $ such that $(P_{\lambda, \epsilon})$ has no nontrivial non-negative  solutions for any $|\lambda| \geq \overline{\lambda}$ and $\epsilon \in (0, \epsilon_0]$. 
\end{cor}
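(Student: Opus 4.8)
The plan is to split the argument according to the sign of $\lambda$, using the two balls furnished by $(H_0)$: the case $\lambda \geq \overline{\lambda}$ follows by a direct appeal to Proposition \ref{p:boundslam} on $B_1$, while the case $\lambda \leq -\overline{\lambda}$ is handled by repeating the proof of Proposition \ref{p:boundslam} on $B_2$. First I would apply Proposition \ref{p:boundslam} with $B = B_1$, the ball on which $a \geq 0$, $a \not\equiv 0$ and $b > 0$; this yields $\overline{\lambda}_1 > 0$ and $\epsilon_0^{(1)} > 0$ such that $(P_{\lambda,\epsilon})$ has no nontrivial non-negative solution whenever $\lambda \geq \overline{\lambda}_1$ and $\epsilon \in (0, \epsilon_0^{(1)}]$.

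For $\lambda \leq -\overline{\lambda}_2$ I would mirror the proof of Proposition \ref{p:boundslam} on $B_2$, the ball with $a \geq 0$, $a \not\equiv 0$ and $b < 0$. The crucial observation is that on $\overline{B_2}$ we have $b_\epsilon = b - \epsilon < 0$ for every $\epsilon > 0$, so when $\lambda < 0$ the coefficient $\lambda b_\epsilon$ of the regularized concave term is positive, which is exactly the sign exploited in the original proof. Concretely, any nontrivial non-negative solution $u$ is positive on $\overline{\Omega}$ by the strong maximum principle; testing the equation against the zero-extended principal Dirichlet eigenfunction $\varphi$ of $-\Delta \varphi = \lambda_1 a \varphi$ on $B_2$ (with $\lambda_1 = \lambda_D(B_2) > 0$, which exists since $a \geq 0$, $a \not\equiv 0$) and using $\frac{\partial \varphi}{\partial \mathbf{n}} < 0$ on $\partial B_2$ together with $u>0$, one reaches the same strict inequality
$$
\int_{B_2} u^{q-1}\varphi\left( a u^{p-q} + \lambda b_\epsilon \left( \tfrac{u}{u+\epsilon} \right)^{2-q} - \lambda_1 a u^{2-q} \right) < 0.
$$
Setting $h(x,s) = a s^{p-q} + \lambda b_\epsilon (s/(s+\epsilon))^{2-q} - \lambda_1 a s^{2-q}$, I would then show $h \geq 0$ on $\overline{B_2} \times [0,\infty)$ once $|\lambda|$ is large: for $s \geq s_0 := \lambda_1^{1/(p-2)}$ the bound $h \geq a s^{2-q}(s^{p-2} - \lambda_1) \geq 0$ holds (the middle term is already nonnegative), while for $0 \leq s \leq s_0$ one drops $a s^{p-q} \geq 0$, uses $(s/(s+\epsilon))^{2-q} \geq (s_0+\epsilon_0)^{-(2-q)} s^{2-q}$, and bounds $|b_\epsilon| = \epsilon - b \geq -\max_{\overline{B_2}} b > 0$ uniformly in $\epsilon$, obtaining $h \geq 0$ as soon as $|\lambda| \geq \overline{\lambda}_2 := \lambda_1 \|a\|_{C(\overline{B_2})} (s_0+\epsilon_0)^{2-q} / (-\max_{\overline{B_2}} b)$. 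This contradicts the strict inequality and gives nonexistence for $\lambda \leq -\overline{\lambda}_2$. I would then conclude by taking $\overline{\lambda} = \max(\overline{\lambda}_1, \overline{\lambda}_2)$ and $\epsilon_0 = \min(\epsilon_0^{(1)}, \epsilon_0^{(2)})$, which covers all $|\lambda| \geq \overline{\lambda}$.

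The only mildly delicate point, and the place where one must be careful rather than simply quote Proposition \ref{p:boundslam}, is that the $\lambda < 0$ case is \emph{not} a literal instance of that proposition: the regularization appearing in $(P_{\lambda,\epsilon})$ uses $b - \epsilon$, whereas the regularization of $-b$ would be $-b - \epsilon$, and these differ by $2\epsilon$, so one cannot merely substitute $-b$ for $b$ and reuse the statement. This is harmless because the nonexistence mechanism depends only on the sign $\lambda b_\epsilon > 0$ and on a uniform positive lower bound for $|b_\epsilon|$ on $\overline{B_2}$, both of which hold here; consequently the computation of Proposition \ref{p:boundslam} transfers verbatim upon replacing $b$ by the (already negative) $b_\epsilon$ and $\lambda$ by $|\lambda|$, and no smallness of $\epsilon$ is even needed to secure the correct sign.
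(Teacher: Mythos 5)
Your proposal is correct and follows essentially the same route as the paper, which for $\lambda<0$ simply rewrites the equation as $-\Delta u = au^{p-1}+(-\lambda)(-b_\epsilon)(u+\epsilon)^{q-2}u$ and reruns the argument of Proposition \ref{p:boundslam} on $B_2$. Your extra observation that this is not a literal instance of the proposition (since $-b_\epsilon=\epsilon-b$ rather than $-b-\epsilon$) but that the mechanism only needs $\lambda b_\epsilon>0$ and a uniform lower bound for $|b_\epsilon|$ on $\overline{B_2}$ is exactly the point the paper leaves implicit, and your quantitative bound for $\overline{\lambda}_2$ is consistent with the one obtained there.
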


\begin{proof}
It suffices to note that if $u$ is a nontrivial non-negative solution of $(P_{\lambda, \epsilon})$ for some $\lambda < 0$, then 
$-\Delta u = a u^{p-1} + (-\lambda)(-b_\epsilon) (u+\epsilon)^{q-2}u$ in $\Omega$.  
\end{proof}

Next we obtain {\it a priori} bounds in $C(\overline{\Omega})$ for nontrivial non-negative solutions of $(P_{\lambda, \epsilon})$. We recall that 
$$
\Omega^a_{\pm} = \{ x \in \Omega : a\gtrless 0 \}, \quad \Omega^a_0 = \{ x \in \Omega : a = 0 \}, \quad \Omega^b_{\pm} = \{ x \in \Omega : b\gtrless 0 \}. 
$$
We assume that $\Omega^a_{\pm}$ are both subdomains of $\Omega$ with smooth boundary and satisfy $(H_1)$, i.e.
\begin{align*}
\overline{\Omega^a_+} \subset \Omega, \quad \overline{\Omega^a_+} \cup \Omega^a_- = \Omega. 
\end{align*}
%

\begin{prop}  \label{lem:bound:norm}
Assume $(H_1)$ and let $\Lambda > 0$. Suppose there exists a constant $C_1 > 0$ such that $\Vert u \Vert_{\mathcal{C}(\overline{\Omega^a_+})} \leq C_1$ for all nontrivial non-negative solutions $u$ of $(P_{\lambda, \epsilon})$ with $\lambda \in [0, \Lambda]$ and $\epsilon \in (0, 1]$. Then there exists $C_2>0$ such that $\Vert u \Vert_{C(\overline{\Omega})} \leq C_2$ for all nontrivial non-negative solutions $u$ of $(P_{\lambda, \epsilon})$ with $\lambda \in [0, \Lambda]$ and $\epsilon \in (0, 1]$. 
\end{prop}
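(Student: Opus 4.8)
The plan is to exploit the fact that on $\Omega^a_-$ the only genuinely superlinear term, $a|u|^{p-2}u=au^{p-1}$, carries a favorable (nonpositive) sign, so that $u$ restricted to $\Omega^a_-$ is a subsolution of a purely \emph{sublinear} problem whose solutions are a priori bounded; the given bound on $\overline{\Omega^a_+}$ then enters as boundary data on the interface $\partial\Omega^a_+$.

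First I would record the reduction. Any nontrivial non-negative solution of $(P_{\lambda,\epsilon})$ is positive on $\overline{\Omega}$, so $u>0$ and the equation reads $-\Delta u=au^{p-1}+\lambda b_\epsilon(u+\epsilon)^{q-2}u$. For $\lambda\in[0,\Lambda]$ and $\epsilon\in(0,1]$ one has $|\lambda b_\epsilon|\le \Lambda(\|b\|_\infty+1)=:C_0$, and since $q-2<0$ we get $(u+\epsilon)^{q-2}u\le u^{q-1}$; hence the sublinear term is dominated by $C_0u^{q-1}$, uniformly in $(\lambda,\epsilon)$. Because $a\le 0$ on $\overline{\Omega^a_-}$, it follows that
$$-\Delta u\le C_0u^{q-1}\ \text{ in }\Omega^a_-,\qquad \tfrac{\partial u}{\partial\mathbf n}=0\ \text{ on }\partial\Omega\cap\partial\Omega^a_-,\qquad u\le C_1\ \text{ on }\partial\Omega^a_+,$$
the last inequality coming from the hypothesis (recall $\partial\Omega^a_+\subset\overline{\Omega^a_+}$ and, by $(H_1)$, $\overline{\Omega^a_+}\subset\Omega$, so that $\partial\Omega\subset\overline{\Omega^a_-}$ and the whole outer boundary carries the Neumann condition). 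In other words, $u$ is a weak subsolution on $\Omega^a_-$ of the concave problem associated with $g(s)=C_0s^{q-1}$, under mixed Dirichlet--Neumann conditions.

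Next I would construct a supersolution $\overline W$ on $\overline{\Omega^a_-}$, bounded uniformly in $(\lambda,\epsilon)$. I take $\overline W$ to solve the \emph{linear} mixed problem $-\Delta\overline W=C_0K^{q-1}$ in $\Omega^a_-$, with $\overline W=C_1$ on $\partial\Omega^a_+$ and $\tfrac{\partial\overline W}{\partial\mathbf n}=0$ on $\partial\Omega\cap\partial\Omega^a_-$, for a constant $K$ to be fixed. By the maximum principle $0<\overline W\le C_1+C_0K^{q-1}L$, where $L$ bounds the relevant solution operator; since $q-1<1$ this upper bound is sublinear in $K$, so $K$ may be chosen so large that $\overline W\le K$, whence $-\Delta\overline W=C_0K^{q-1}\ge C_0\overline W^{q-1}$ and $\overline W$ is a supersolution, with $\|\overline W\|_{C(\overline{\Omega^a_-})}\le K$ depending only on $C_0,C_1,q$ and $\Omega^a_-$. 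As $u\le C_1=\overline W$ on $\partial\Omega^a_+$ and both satisfy the homogeneous Neumann condition on $\partial\Omega\cap\partial\Omega^a_-$, the comparison principle for the sublinear nonlinearity $g$ (for which $s\mapsto g(s)/s$ is decreasing; cf. the concave comparison of Ambrosetti--Brezis--Cerami invoked in Lemma \ref{l:boundbelow}) yields $u\le\overline W\le K$ on $\overline{\Omega^a_-}$. Combining with $\|u\|_{C(\overline{\Omega^a_+})}\le C_1$ gives $\|u\|_{C(\overline{\Omega})}\le C_2:=\max\{C_1,K\}$.

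The main obstacle is the interface $\partial\Omega^a_+$, where $a$ vanishes: there the damping effect of the good term $au^{p-1}$ degenerates, and a naive pointwise maximum-principle argument (which at an interior maximum $x_0$ only gives $u(x_0)^{p-q}\le C_0/|a(x_0)|$) breaks down precisely in the collar where $|a|$ is small — indeed a large maximum is forced toward $\partial\Omega^a_+$. The supersolution route bypasses this because the sublinear problem is globally well posed and the bound on $\overline{\Omega^a_+}$ feeds in as Dirichlet data on the interface. The two points requiring care are thus the uniform construction of the bounded supersolution and the validity of the concave comparison principle under mixed boundary conditions with nonhomogeneous Dirichlet data; the Neumann portion is harmless, contributing no boundary term in the weak comparison and removing any need for a separate Hopf-type argument when the maximum of $u$ happens to lie on $\partial\Omega$.
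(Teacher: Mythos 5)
Your proposal is correct and follows essentially the same route as the paper: on $\Omega^a_-$ the solution is a subsolution of a concave mixed Dirichlet--Neumann problem with the hypothesized bound $C_1$ as interface data, a uniformly bounded supersolution is built from the torsion-type function on $\Omega^a_-$, and the Ambrosetti--Brezis--Cerami concave comparison principle closes the argument. The only (harmless) difference is that you discard the favorable term $-a^-u^{p-1}$ and compare against the pure sublinear nonlinearity $C_0 s^{q-1}$, whereas the paper retains it in the comparison problem.
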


\begin{proof}
The argument relies on the use of the comparison principle for a concave problem: consider the problem
\begin{align} \label{p:a-}
\begin{cases}
-\Delta v = -a^- v^{p-1} + \lambda b^+ (v+\epsilon)^{q-2}v & \mbox{in $\Omega^a_-$}, \\
v =  C_1 & \mbox{on $\partial \Omega^a_+$}, \\
\frac{\partial v}{\partial \mathbf{n}} = 0 & \mbox{on $\partial \Omega$}. 
\end{cases}
\end{align}
Let $u$ be a nontrivial non-negative solution of $(P_{\lambda, \epsilon})$ with $\lambda\in [0, \Lambda]$ and $\epsilon \in (0, 1]$. The strong maximum principle and boundary point lemma ensure $u>0$ in $\overline{\Omega^a_-}$.   Since $u\leq C_1$ on $\partial \Omega^a_-$ from the assumption and $b_\epsilon < b \leq b^+$, $u$ is a subsolution of \eqref{p:a-}, that is,  
\begin{align*}
\begin{cases} 
-\Delta u \leq -a^- u^{p-1} + \lambda b^+ (u+\epsilon)^{q-2}u & \mbox{in $\Omega^a_-$}, \\
u \leq C_1 & \mbox{on $\partial \Omega^a_+$}, \\
\frac{\partial u}{\partial \mathbf{n}} = 0 & \mbox{on $\partial \Omega$}.
\end{cases}
\end{align*}
We next construct a supersolution of  \eqref{p:a-}. Consider the unique positive solution $w_0$ of the problem 
$$
\begin{cases}
-\Delta w = 1 & \mbox{in $\Omega^a_-$}, \\
w=0 & \mbox{on $\partial \Omega^a_+$}, \\
\frac{\partial w}{\partial \mathbf{n}} = 0 & \mbox{on $\partial \Omega$}.
\end{cases}
$$
Set $\overline{w} = C(w_0+1)$, $C>0$. Then $\overline{w} > C_1$ on $\partial \Omega^a_+$ and $\frac{\partial \overline{w}}{\partial \mathbf{n}} = 0$ on $\partial \Omega$ if $C$ is large. Moreover 
\begin{align*}
-\Delta \overline{w} - \left\{ -a^- \overline{w}^{p-1} + \lambda b^+ (\overline{w} + 
\epsilon)^{q-2}\overline{w} \right\} 
\geq C \left\{ 1 - \lambda b^+ \overline{w}^{q-2} (w_0 + 1) \right\} >0 \quad \mbox{in  $\Omega^a_-$}.  
\end{align*}
Hence $\overline{w}$ is a supersoltuion of \eqref{p:a-}, where $C$ can be chosen independently of $\lambda\in [0,\Lambda]$ and $\epsilon \in (0, 1]$. By using a variant of \cite[Lemma 3.3]{ABC}, we deduce $u\leq \overline{w}$ in $\overline{\Omega^a_-}$, so that $u\leq C_2 := C_1 + \max_{\overline{\Omega^a_-}} \overline{w}$ in $\overline{\Omega}$, as desired. 
The proof of Proposition \ref{lem:bound:norm} is complete. \\ 
\end{proof}

Proposition \ref{lem:bound:norm} can be extended to $\lambda <0$ as follows:

\begin{cor} \label{cor:bound:norm}
Assume $(H_1)$ and let $\Lambda > 0$. Suppose that there exists a constant $C_1 > 0$ such that $\Vert u \Vert_{\mathcal{C}(\overline{\Omega^a_+})} \leq C_1$ for all nontrivial non-negative solutions $u$ of $(P_{\lambda., \epsilon})$ with $|\lambda|\leq \Lambda$ and $\epsilon \in (0, 1]$. Then there exists $C_2>0$ such that $\Vert u \Vert_{C(\overline{\Omega})} \leq C_2$ for all nontrivial non-negative solutions $u$ of $(P_{\lambda, \epsilon})$ with $|\lambda|\leq \Lambda$ and $\epsilon \in (0, 1]$. 
\end{cor}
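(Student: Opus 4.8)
The plan is to reduce the case $\lambda \in [-\Lambda, 0)$ to the situation already settled by Proposition \ref{lem:bound:norm}, exactly in the spirit of the reformulation used in the proof of Corollary \ref{cor:boundslam}. For $\lambda \in [0, \Lambda]$ the desired bound $\Vert u \Vert_{C(\overline{\Omega})} \leq C_2^+$ is immediate from Proposition \ref{lem:bound:norm}, so it suffices to produce a constant $C_2^-$ controlling $\Vert u \Vert_{C(\overline{\Omega})}$ for all nontrivial non-negative solutions $u$ of $(P_{\lambda, \epsilon})$ with $\lambda \in [-\Lambda, 0)$ and $\epsilon \in (0,1]$; the constant asserted in the statement is then $C_2 = \max\{C_2^+, C_2^-\}$.

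To handle $\lambda < 0$, I would first rewrite $(P_{\lambda,\epsilon})$ with a positive concave parameter. Setting $\mu = -\lambda \in (0, \Lambda]$ and recalling $b_\epsilon = b - \epsilon$, a nontrivial non-negative solution $u$ satisfies
\[
-\Delta u = a(x)u^{p-1} + \mu\,(\epsilon - b(x))(u+\epsilon)^{q-2}u \quad \text{in } \Omega .
\]
Thus $u$ solves the same regularized problem with the weight $b_\epsilon$ replaced by $-b_\epsilon = \epsilon - b$ and with the now positive parameter $\mu$. The only structural fact about the weight used in the proof of Proposition \ref{lem:bound:norm} is the uniform upper bound on its positive part, and here this is supplied by $(\epsilon - b)^+ \leq 1 + b^-$, valid uniformly for $\epsilon \in (0,1]$ precisely because $\epsilon \leq 1$.

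The core of the argument then repeats the comparison scheme of Proposition \ref{lem:bound:norm}, with $b^+$ replaced by the fixed bounded nonnegative function $1 + b^-$. In $\Omega^a_-$ we have $a = -a^-$ and $\mu \geq 0$, so, using $(u+\epsilon)^{q-2}u \geq 0$,
\[
-\Delta u = -a^- u^{p-1} + \mu(\epsilon - b)(u+\epsilon)^{q-2}u \leq -a^- u^{p-1} + \mu(1 + b^-)(u+\epsilon)^{q-2}u ,
\]
while the hypothesis gives $u \leq C_1$ on $\partial \Omega^a_+$. Hence $u$ is a subsolution of the concave problem obtained from \eqref{p:a-} by substituting $\mu$ for $\lambda$ and $1 + b^-$ for $b^+$. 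Taking the same supersolution $\overline{w} = C(w_0 + 1)$, where $w_0$ solves $-\Delta w_0 = 1$ in $\Omega^a_-$, $w_0 = 0$ on $\partial \Omega^a_+$ and $\frac{\partial w_0}{\partial \n} = 0$ on $\partial \Omega$, the supersolution inequality reduces to $1 - \mu(1 + b^-)\,\overline{w}^{\,q-2}(w_0+1) > 0$, which holds for $C$ large uniformly in $\mu \in [0, \Lambda]$ and $\epsilon \in (0,1]$, since $q < 2$ makes the factor $\overline{w}^{\,q-2}$ small as $C \to \infty$. The variant of \cite[Lemma 3.3]{ABC} then yields $u \leq \overline{w}$ in $\overline{\Omega^a_-}$, whence $\Vert u \Vert_{C(\overline{\Omega})} \leq C_2^- := C_1 + \max_{\overline{\Omega^a_-}} \overline{w}$.

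The main (and rather mild) obstacle is to verify that every constant produced above — the supersolution amplitude $C$ and the resulting $C_2^-$ — can be chosen independently of $\epsilon \in (0,1]$ after the weight has been replaced. This is exactly the content of the uniform estimate $(\epsilon - b)^+ \leq 1 + b^-$, so no new difficulty beyond the normalization $\epsilon \leq 1$ arises, and the corollary follows.
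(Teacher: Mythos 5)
Your proof is correct and follows essentially the same route as the paper: the case $\lambda\in[-\Lambda,0)$ is rewritten with the positive parameter $-\lambda$ and the weight $-b_\epsilon=\epsilon-b$, which is dominated uniformly in $\epsilon\in(0,1]$ by $1+b^-$, and the comparison argument of Proposition \ref{lem:bound:norm} is then repeated for the concave problem with $b^+$ replaced by $b^-+1$. This is exactly the paper's proof, so nothing further is needed.
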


\begin{proof} 
We discuss the case $-\Lambda \leq \lambda \leq 0$. Note that any nontrivial non-negative solution $u$ of $(P_{\lambda, \epsilon})$ with $\lambda \in [-\Lambda, 0]$ and $\epsilon \in (0, 1]$ satisfies 
\begin{align*}
-\Delta u = a u^{p-1} + (-\lambda) (-b_\epsilon) (u+\epsilon)^{q-2}u \quad \mbox{in $\Omega^a_-$}. 
\end{align*}
with $- \lambda \in [0, \Lambda]$. Instead of \eqref{p:a-} we consider the following concave problem
\begin{align*}
\begin{cases}
-\Delta v = -a^- v^{p-1} + (-\lambda) (b^- + 1) (v+\epsilon)^{q-2}v & \mbox{in $\Omega^a_-$}, \\
v =  C_1 & \mbox{on $\partial \Omega^a_+$}, \\
\frac{\partial v}{\partial \mathbf{n}} = 0 & \mbox{on $\partial \Omega$}. 
\end{cases}
\end{align*}
Then $u$ is a subsolution of this problem. 
The rest of the argument is the same as in the proof of Proposition \ref{lem:bound:norm}. 
\end{proof}

Based on Corollary \ref{cor:bound:norm}, we use an argument from Amann and Lopez-Gomez \cite{ALG} (see also Section 6 of  L\'opez-G\'omez, Molina-Meyer and Tellini \cite{LGMMT13}) to obtain {\it a priori} bounds in 
$C(\overline{\Omega})$ for positive solutions of $(P_{\lambda, \epsilon})$:

\begin{prop} \label{prop:bounds:norm2}
Assume $(H_1)$ and $(H_2)$. Then,
%
%
%
for any $\Lambda > 0$ there exists $C_\Lambda >0$ such that $\Vert u \Vert_{C(\overline{\Omega})} \leq C_\Lambda$ for all nontrivial non-negative solutions of $(P_{\lambda, \epsilon})$ with $\lambda \in [-\Lambda, \Lambda]$ and $\epsilon \in (0, 1]$. 
%
\end{prop}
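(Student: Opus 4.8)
The plan is to reduce the global $C(\overline{\Omega})$ estimate to an estimate on the ``positive'' region $\overline{\Omega^a_+}$, and then to obtain the latter by a rescaling (blow-up) argument. By Corollary \ref{cor:bound:norm} it suffices to produce a constant $C_1>0$ with $\Vert u\Vert_{C(\overline{\Omega^a_+})}\leq C_1$ for every nontrivial non-negative solution $u$ of $(P_{\lambda,\epsilon})$ with $|\lambda|\leq \Lambda$ and $\epsilon\in(0,1]$. Note that $(H_1)$ ensures $\overline{\Omega^a_+}$ is compact and contained in $\Omega$, so near each of its points $u$ solves the interior equation and the Neumann condition on $\partial\Omega$ is irrelevant to the local analysis. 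I argue by contradiction: if no such $C_1$ exists, there are sequences $\lambda_n\in[-\Lambda,\Lambda]$, $\epsilon_n\in(0,1]$ and solutions $u_n>0$ with $M_n:=\max_{\overline{\Omega^a_+}}u_n=u_n(x_n)\to\infty$ and, up to a subsequence, $x_n\to x_0\in\overline{\Omega^a_+}$.

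A preliminary observation makes everything uniform in $\epsilon$: since $q-2<0$ and $\epsilon_n\leq 1$,
\begin{equation*}
0\leq (u_n+\epsilon_n)^{q-2}u_n\leq (u_n+\epsilon_n)^{q-1}\leq (M_n+1)^{q-1},
\end{equation*}
so the regularized concave term $\lambda_n b_{\epsilon_n}(u_n+\epsilon_n)^{q-2}u_n$ is dominated by $C(M_n+1)^{q-1}$ with $C$ independent of $n$ and $\epsilon_n$. This is precisely where the singular factor $|u+\epsilon|^{q-2}$ is harmless and the bounds do not deteriorate as $\epsilon\to 0^+$.

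If $\liminf_n \mathrm{dist}(x_n,\partial\Omega^a_+)>0$, then $x_0\in\Omega^a_+$ and $a(x_0)>0$. With $\rho_n=M_n^{-(p-2)/2}\to 0$ and $v_n(y)=M_n^{-1}u_n(x_n+\rho_n y)$ one has $0\leq v_n\leq 1$, $v_n(0)=1$, and, on expanding balls, $-\Delta v_n=a(x_n+\rho_n y)v_n^{p-1}+o(1)$, the error coming from the bound above together with $q-1<p-1$. Standard interior elliptic estimates give local $C^2$ convergence to a non-negative $v$ with $v(0)=1$ solving $-\Delta v=a(x_0)v^{p-1}$ in $\R^N$; since $p<\frac{2N}{N-2}$ the Gidas--Spruck Liouville theorem forces $v\equiv 0$, a contradiction.

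The remaining and main difficulty is the degenerate boundary case $d_n:=\mathrm{dist}(x_n,\partial\Omega^a_+)\to 0$, where $a(x_0)=0$ and $a$ vanishes at rate $\gamma$ by $(H_2)$. Here I would follow Amann and L\'opez-G\'omez \cite{ALG} (see also \cite{LGMMT13}): using $a(x_n)\sim \alpha^+(x_0)d_n^\gamma$, choose $\rho_n=(a(x_n)M_n^{p-2})^{-1/2}$ and split according to the ratio $\rho_n/d_n$. If $\rho_n/d_n\to 0$, the blow-up concentrates at a scale finer than the distance to the interface, $a(x_n+\rho_n y)/a(x_n)\to 1$, and the limit is again the whole-space problem $-\Delta v=v^{p-1}$, excluded by Gidas--Spruck. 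If $\rho_n/d_n$ stays bounded away from $0$, one rescales instead by $d_n$ about the nearest interface point, flattening $\partial\Omega^a_+$ to a hyperplane $\{y_N=0\}$ and producing in the limit a degenerate, indefinite Liouville problem of the form $-\Delta v=\alpha^+(x_0)\,(y_N)_+^{\gamma}\,v^{p-1}$ on $\R^N$ (with an absorption term on the side $y_N<0$ inherited from $\Omega^a_-$); the subcriticality condition $p<\frac{2N+\gamma}{N-1}$ in $(H_2)$ is exactly what forbids a nontrivial bounded non-negative solution, again contradicting $v(0)=1$. The delicate points are choosing the scale so that the concave term stays $o(1)$ in the limit and justifying the passage to this weighted limit problem, and I expect this boundary analysis to be the main obstacle. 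Once all cases are excluded, the bound on $\overline{\Omega^a_+}$ holds and Corollary \ref{cor:bound:norm} upgrades it to the claimed uniform bound in $C(\overline{\Omega})$.
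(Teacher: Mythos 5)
Your proposal follows essentially the same route as the paper: the paper's proof of this proposition consists precisely of invoking Corollary \ref{cor:bound:norm} to reduce the global bound to a bound on $\overline{\Omega^a_+}$ and then appealing to the blow-up argument of Amann and L\'opez-G\'omez \cite{ALG} (see also \cite{LGMMT13}) for that bound, which is exactly the strategy you sketch. Your write-up is in fact more detailed than the paper's, and correctly identifies where the two halves of the exponent restriction in $(H_2)$ enter (Gidas--Spruck in the interior, the weighted Liouville problem with $p<\frac{2N+\gamma}{N-1}$ at the degenerate interface).
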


Now we assume \eqref{a:b}. Note that $b_\epsilon$ satisfies \eqref{c:m} with $m = b_\epsilon$. Then, by applying Theorem \ref{thm:rprob:r} with $m = b_\epsilon$, $(P_{\lambda, \epsilon}) $ possesses exactly two bifurcation points $(0,0)$ and $(\lambda_\epsilon, 0)$, where $\lambda_\epsilon = \lambda_{b_\epsilon, \epsilon}$, from which there bifurcate subcontinua $\mathcal{C}_0 (\epsilon) = \mathcal{C}_0(b_\epsilon, \epsilon)$ and $\mathcal{C}_1 (\epsilon) = \mathcal{C}_1(b_\epsilon, \epsilon)$ of positive solutions, respectively, and $\mathcal{C}_0(\epsilon)$ and $\mathcal{C}_1(\epsilon)$ satisfy assertions (1)-(3) in Theorem \ref{thm:rprob:r} with $m=b_\epsilon$. Moreover, the bifurcation point $(\lambda_\epsilon, 0)$ tends to $(0,0)$:

\begin{lem} \label{lem:lamepto0cc}
$\displaystyle{\lim_{\epsilon \to 0^+} \lambda_\epsilon = 0}$. 
\end{lem}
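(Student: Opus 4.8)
The plan is to combine a simple scaling of the indefinite weight with the minimax characterization of the positive principal eigenvalue. Since $\epsilon^{q-2}$ is a positive constant, the linearized problem \eqref{lep:m} with $m=b_\epsilon=b-\epsilon$, namely $-\Delta\varphi=\lambda\,\epsilon^{q-2}(b-\epsilon)\varphi$ in $\Omega$ with $\frac{\partial\varphi}{\partial\n}=0$ on $\partial\Omega$, has its positive principal eigenvalue related to that of the $\epsilon$-free-factored problem $-\Delta\psi=\mu(b-\epsilon)\psi$ by
\begin{align*}
\lambda_\epsilon=\epsilon^{2-q}\mu_\epsilon,
\end{align*}
where $\mu_\epsilon$ denotes the positive principal eigenvalue of the latter. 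Indeed, if $\varphi$ is a positive eigenfunction for $\lambda_\epsilon$, then $\lambda_\epsilon\epsilon^{q-2}$ is a positive principal eigenvalue of the $\mu$-problem, whence $\mu_\epsilon=\lambda_\epsilon\epsilon^{q-2}$. Because $b_\epsilon$ satisfies \eqref{c:m} (that is, $\Omega^{b-\epsilon}_+\neq\emptyset$ and $\int_\Omega(b-\epsilon)<0$ under \eqref{a:b}), $\mu_\epsilon$ admits the standard variational characterization for an indefinite-weight Neumann eigenvalue problem,
\begin{align*}
\mu_\epsilon=\inf\left\{\frac{\int_\Omega|\nabla\varphi|^2}{\int_\Omega(b-\epsilon)\varphi^2}:\ \varphi\in H^1(\Omega),\ \int_\Omega(b-\epsilon)\varphi^2>0\right\}.
\end{align*}

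The key step is then to produce an $\epsilon$-independent upper bound for $\mu_\epsilon$ by testing this Rayleigh quotient against a fixed function. Since $\Omega^{b-\epsilon}_+\neq\emptyset$ forces $\Omega^b_+\neq\emptyset$, I would fix a ball $B$ with $\overline{B}\subset\Omega^b_+$ and $b\geq\delta>0$ on $\overline{B}$, and choose $\varphi_0\in C_c^\infty(B)\setminus\{0\}$. Then $\int_\Omega b\,\varphi_0^2\geq\delta\int_B\varphi_0^2>0$, so for all $\epsilon>0$ small enough that $\epsilon\int_\Omega\varphi_0^2\leq\tfrac12\int_\Omega b\,\varphi_0^2$ we have $\int_\Omega(b-\epsilon)\varphi_0^2\geq\tfrac12\int_\Omega b\,\varphi_0^2>0$, and therefore
\begin{align*}
0<\lambda_\epsilon=\epsilon^{2-q}\mu_\epsilon\leq\epsilon^{2-q}\,\frac{\int_\Omega|\nabla\varphi_0|^2}{\int_\Omega(b-\epsilon)\varphi_0^2}\leq\epsilon^{2-q}\,\frac{2\int_\Omega|\nabla\varphi_0|^2}{\int_\Omega b\,\varphi_0^2}.
\end{align*}
As $1<q<2$ we have $2-q>0$, so $\epsilon^{2-q}\to0$ as $\epsilon\to0^+$ and the right-hand side tends to $0$; combined with the lower bound $\lambda_\epsilon>0$ (already guaranteed by the two-principal-eigenvalue statement following \eqref{lep:m}), this yields $\lambda_\epsilon\to0$.

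I do not expect any genuine obstacle here: the entire content is the scaling identity $\lambda_\epsilon=\epsilon^{2-q}\mu_\epsilon$ together with the boundedness of $\mu_\epsilon$, and the conclusion follows because the vanishing factor $\epsilon^{2-q}$ comes precisely from $q<2$. The only point deserving care is invoking the minimax description of the positive principal eigenvalue of an indefinite-weight Neumann problem, which is the same classical fact underlying the assertion, stated just after \eqref{lep:m}, that \eqref{lep:m} has exactly two simple principal eigenvalues $\lambda=0$ and $\lambda=\lambda_{m,\epsilon}>0$; one may alternatively bypass the variational formula altogether and simply observe that $\mu_\epsilon$ stays bounded as $\epsilon\to0^+$ by testing against the fixed $\varphi_0$ above.
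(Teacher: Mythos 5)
Your proof is correct, and it reaches the conclusion by a route that is organized differently from the paper's, although the underlying mechanism (the factor $\epsilon^{2-q}$ with $q<2$ doing all the work) is the same. The paper fixes $\lambda>0$, introduces the auxiliary eigenvalue $\mu_{1,\epsilon}(\lambda)$ of $-\Delta\phi=\lambda b_\epsilon\epsilon^{q-2}\phi+\mu(\lambda)\phi$, and shows that a suitable test function makes the quadratic form $\int_\Omega|\nabla\varphi|^2-\lambda\epsilon^{2-q}\int_\Omega b_\epsilon\varphi^2$ negative for $\epsilon$ small, whence $\mu_{1,\epsilon}(\lambda)<0$ and $\lambda_\epsilon<\lambda$; it needs two separate cases, $\int_\Omega b<0$ (test with the principal eigenfunction $\varphi_1(b)$ of the weight $b$) and $\int_\Omega b=0$ (test with a function making $\mu_1(\lambda)<0$). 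You instead factor out the scaling explicitly, writing $\lambda_\epsilon=\epsilon^{2-q}\mu_\epsilon$ with $\mu_\epsilon$ the positive principal eigenvalue of the weight $b-\epsilon$, and bound $\mu_\epsilon$ uniformly via the classical variational characterization, testing with a single fixed bump $\varphi_0\in C_c^\infty(B)$, $\overline{B}\subset\Omega^b_+$. This buys two things: the case distinction on the sign of $\int_\Omega b$ disappears (your $\varphi_0$ works whenever $\Omega^b_+\neq\emptyset$ and $\int_\Omega b\leq 0$, which is exactly \eqref{a:b}), and you get the quantitative rate $\lambda_\epsilon=O(\epsilon^{2-q})$ rather than just convergence. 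The only external input is the Rayleigh-quotient formula for the positive principal eigenvalue of an indefinite-weight Neumann problem with $\int_\Omega(b-\epsilon)<0$; this is the same classical fact the paper already invokes when it asserts that \eqref{lep:m} has exactly the two simple principal eigenvalues $0$ and $\lambda_{m,\epsilon}>0$, so no gap arises, and your fallback remark (deducing $\mu>\mu_\epsilon$ from the negativity of the form at $\mu$, exactly as the paper does with $\mu_{1,\epsilon}$) covers the point in any case.
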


\begin{proof}
We consider the eigenvalue problem
\begin{align}  \label{epr:ep:phi}
\begin{cases}
-\Delta \phi = \lambda b_\epsilon \epsilon^{q-2} \phi + \mu(\lambda) \phi & \mbox{in $\Omega$}, \\
\frac{\partial \phi}{\partial \mathbf{n}} = 0 & \mbox{on $\partial \Omega$}. 
\end{cases}
\end{align}
This problem has the smallest eigenvalue  $\mu_{1, \epsilon}(\lambda)$ which satisfies 
\begin{align*}
\begin{cases}
\mu_{1, \epsilon}(\lambda) = 0, & \text{ for } \lambda = 0, \lambda_\epsilon, \\
\mu_{1,\epsilon}(\lambda) > 0, & \text{ for } 0<\lambda< \lambda_\epsilon, \\
\mu_{1, \epsilon}(\lambda) < 0, & \text{ for } \lambda > \lambda_\epsilon. 
\end{cases}
\end{align*}

First we consider the case $\int_\Omega b < 0$. Since $\Omega^b_+ \ne \emptyset$, the eigenvalue problem 
\begin{align*}
\begin{cases}
-\Delta \varphi = \lambda b \varphi & \mbox{in $\Omega$}, \\
\frac{\partial \varphi}{\partial \mathbf{n}} = 0 & \mbox{on $\partial \Omega$}, 
\end{cases}
\end{align*}
has a unique positive prinicipal eigenvalue $\lambda_1(b)$ with a positive eigenfunction $\varphi_1(b)$. It follows that $\int_\Omega b \varphi_1(b)^2 = \frac{1}{\lambda_1(b)} \int_\Omega |\nabla \varphi_1(b)|^2 > 0$. Then there exist $\epsilon_1 > 0$ and $c_1 > 0$ such that $\int_\Omega b_\epsilon \varphi_1(b)^2 > c_1$ if $0<\epsilon \leq \epsilon_1$. 
Let $\lambda > 0$. Then we deduce 
for such $\epsilon$
\begin{align*}
\int_\Omega |\nabla \varphi_1(b)|^2 - \lambda \epsilon^{2-q} \int_\Omega b_\epsilon  \varphi_1(b)^2 < \int_\Omega |\nabla \varphi_1(b)|^2 - \lambda \epsilon^{2-q} c_1. 
\end{align*}
Let $\epsilon_2 = \left( \frac{\int_\Omega |\nabla \varphi_1(b)|^2}{\lambda c_1}\right)^{\frac{1}{2-q}}$. If $0<\epsilon \leq \min (\epsilon_1, \epsilon_2)$ then 
\begin{align*}
\int_\Omega |\nabla \varphi_1(b)|^2 - \lambda \epsilon^{2-q} \int_\Omega b_\epsilon \varphi_1(b)^2 < 0. 
\end{align*}
This implies $\mu_{1, \epsilon}(\lambda)< 0$, and hence, $\lambda_\epsilon < \lambda$, as desired.

Next we consider the case $\int_\Omega b = 0$. In this case, the eigenvalue problem 
\begin{align*}
\begin{cases}
-\Delta \phi = \lambda b \phi + \mu (\lambda) \phi & \mbox{in $\Omega$}, \\
\frac{\partial \phi}{\partial \mathbf{n}} = 0 & \mbox{on $\partial \Omega$} 
\end{cases}
\end{align*}
has the smallest eigenvalue $\mu_1(\lambda)$, which is negative for every $\lambda > 0$. 
Let $\lambda > 0$. Since $\mu_1(\lambda) < 0$, we can choose $\phi$ such that 
$\int_\Omega |\nabla \phi|^2 - \lambda \int_\Omega b \phi^2 < 0$. 
Note that $\phi$ is not a constant, so that $\int_\Omega b \phi^2 > \frac{1}{\lambda} \int_\Omega |\nabla \phi|^2 > 0$. 
Then there exist $\epsilon_1 > 0$ and $c_1 > 0$ such that if $0<\epsilon \leq \epsilon_1$ then $\int_\Omega b_\epsilon \phi^2 > c_1$. The rest of the proof in this case is the same as in the previous case. The proof of Lemma \ref{lem:lamepto0cc} is complete. 
\end{proof}

Now Corollary \ref{cor:boundslam} and Proposition \ref{prop:bounds:norm2} provide sufficient conditions under which $\mathcal{C}_0(\epsilon)$ and $\mathcal{C}_1(\epsilon)$ are bounded in $\R \times C(\overline{\Omega})$, and consequently coincide:

\begin{thm} \label{t:ep:global}
Let $1<q<2<p$ and $\epsilon > 0$. Assume \eqref{a:b}, $(H_0)$, $(H_1)$ and $(H_2)$, and in addition, $0<\epsilon \leq \epsilon_0$, where $\epsilon_0$ is given by Corollary \ref{cor:boundslam}. 
Then the subcontinua $\mathcal{C}_0 (\epsilon)$ and $\mathcal{C}_1 (\epsilon)$ obtained in Theorem \ref{thm:rprob:r} with $m = b_\epsilon$ are bounded in $\R\times C(\overline{\Omega})$, uniformly in $\epsilon > 0$ small. 
Consequently, $\mathcal{C}_0 (\epsilon) = \mathcal{C}_1 (\epsilon)$ (see Figure  \ref{fig15_111402}). 
\end{thm}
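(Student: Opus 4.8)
The plan is to combine the two a priori bounds established above---the bound on $|\lambda|$ from Corollary \ref{cor:boundslam} and the bound on the sup norm from Proposition \ref{prop:bounds:norm2}---so as to confine both subcontinua to a single bounded region of $\R \times C(\overline{\Omega})$, and then to invoke the global alternative of Theorem \ref{thm:rprob:r}(3)(b). The actual logical content of this theorem lies entirely in assembling these ingredients; the hard analytic work has already been carried out in the preceding results.

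First I would fix $\epsilon \in (0, \epsilon_0]$, shrinking $\epsilon_0$ if necessary so that $\epsilon_0 \leq 1$ (this is needed to apply Proposition \ref{prop:bounds:norm2}, which requires $\epsilon \in (0,1]$). By Corollary \ref{cor:boundslam}, every nontrivial non-negative solution $(\lambda, u)$ of $(P_{\lambda, \epsilon})$ satisfies $|\lambda| < \overline{\lambda}$, with $\overline{\lambda}$ independent of $\epsilon$. Next I would apply Proposition \ref{prop:bounds:norm2} with $\Lambda = \overline{\lambda}$ to obtain a constant $C_{\overline{\lambda}} > 0$, again independent of $\epsilon$, such that $\Vert u \Vert_{C(\overline{\Omega})} \leq C_{\overline{\lambda}}$ for every nontrivial non-negative solution with $|\lambda| \leq \overline{\lambda}$. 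Combining the two, every nontrivial non-negative solution of $(P_{\lambda, \epsilon})$ lies in the fixed box
\[
\mathcal{B} := \{ (\lambda, u) \in \R \times C(\overline{\Omega}) : |\lambda| \leq \overline{\lambda}, \ \Vert u \Vert_{C(\overline{\Omega})} \leq C_{\overline{\lambda}} \},
\]
and $\mathcal{B}$ does not depend on $\epsilon \in (0, \epsilon_0]$.

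Since $\mathcal{C}_0(\epsilon)$ and $\mathcal{C}_1(\epsilon)$ consist of positive solutions---which are in particular nontrivial non-negative solutions---together with the bifurcation points $(0,0)$ and $(\lambda_\epsilon, 0)$, both of which clearly lie in $\mathcal{B}$, each subcontinuum is contained in $\mathcal{B}$. Hence $\mathcal{C}_0(\epsilon)$ and $\mathcal{C}_1(\epsilon)$ are bounded in $\R \times C(\overline{\Omega})$, and the bound is uniform in $\epsilon \in (0, \epsilon_0]$ because $\mathcal{B}$ is. In particular neither subcontinuum is unbounded, so the first horn of the dichotomy in Theorem \ref{thm:rprob:r}(3)(b) is excluded; the only remaining possibility forces $\mathcal{C}_0(\epsilon) = \mathcal{C}_1(\epsilon)$.

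The one point requiring genuine care---and the only place where this short argument could go wrong---is the \emph{uniformity in $\epsilon$}: one must check that both $\overline{\lambda}$ and $C_{\overline{\lambda}}$ may be chosen independently of $\epsilon$ small, for otherwise the confining box $\mathcal{B}$ would depend on $\epsilon$ and the stated uniform boundedness would fail. This is guaranteed precisely because $\overline{\lambda}$ in Corollary \ref{cor:boundslam} is valid for all $\epsilon \in (0, \epsilon_0]$ and $C_\Lambda$ in Proposition \ref{prop:bounds:norm2} for all $\epsilon \in (0,1]$, so no $\epsilon$-dependence enters. I expect the coincidence $\mathcal{C}_0 = \mathcal{C}_1$ to follow immediately once the box is in place, with no further estimate needed.
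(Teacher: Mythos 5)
Your proposal is correct and follows essentially the same route as the paper: combine the $\lambda$-bound from Corollary \ref{cor:boundslam} with the sup-norm bound from Proposition \ref{prop:bounds:norm2} (applied with $\Lambda=\overline{\lambda}$) to trap both subcontinua in a fixed box independent of $\epsilon$, then rule out the unbounded horn of the alternative in Theorem \ref{thm:rprob:r}(3)(b) to conclude $\mathcal{C}_0(\epsilon)=\mathcal{C}_1(\epsilon)$. Your explicit attention to the $\epsilon$-uniformity of the constants is a welcome elaboration of what the paper leaves implicit.
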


\begin{proof} 
By Corollary \ref{cor:boundslam} and Proposition \ref{prop:bounds:norm2} we know that if $u$ is a nontrivial non-negative solution of $(P_{\lambda, \epsilon})$ then $|\lambda| \leq \overline{\lambda}$  and $\|u\|_{C(\overline{\Omega})} \leq C_{\overline{\lambda}}$ for some $\overline{\lambda}$ and $C_{\overline{\lambda}}$. Hence the conclusion follows from Theorem \ref{thm:rprob:r}(3). 
\end{proof}

From now on we write $\mathcal{C}_*(\epsilon)=\mathcal{C}_0 (\epsilon) = \mathcal{C}_1 (\epsilon)$. As a by-product of Theorem \ref{t:ep:global}, we determine the direction of the bifurcation $\mathcal{C}_0(\epsilon)$ at $(0,0)$ if $\int_\Omega a \geq 0$, which partially complements \eqref{asser:lams151011} and \eqref{mu:deri}. To this end we use the following lemma:

\begin{lem} \label{lem:lameq0} 
The following two assertions hold:
\begin{enumerate}

  \item If $\int_\Omega a \geq 0$ then there is no positive solution of \eqref{pla}. \\

  \item Assume $2<p<\frac{2N}{N-2}$ if $N>2$. If $\int_\Omega a < 0$ then there exists $C>0$ such that $\Vert u \Vert_{C(\overline{\Omega})} \geq C$ for all positive solutions of \eqref{pla}.

\end{enumerate}
\end{lem}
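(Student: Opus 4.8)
The plan is to dispatch both assertions through the same elementary integral identities used earlier in the paper, specialized to \eqref{pla}.

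For assertion (1), I would argue by contradiction. Suppose $u$ is a positive solution of \eqref{pla}. Since $u \in C^2(\overline{\Omega})$ is positive on the compact set $\overline{\Omega}$, it is bounded below by a positive constant, so $u^{1-p} \in C^1(\overline{\Omega}) \subset H^1(\Omega)$ is an admissible test function (exactly as in the proof of Theorem \ref{t2}(1)(a)). Inserting it in the weak formulation of \eqref{pla} and using the homogeneous Neumann condition yields
\[
(1-p)\int_\Omega |\nabla u|^2 u^{-p} = \int_\Omega a .
\]
Because $p>2$, the left-hand side is $\leq 0$, with equality if and only if $u$ is constant. Hence $\int_\Omega a \leq 0$, and the hypothesis $\int_\Omega a \geq 0$ forces equality; then $\nabla u \equiv 0$, so $u$ is a positive constant, and the equation gives $a\equiv 0$. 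This is excluded by the standing assumption that $a$ is nontrivial (indeed $\Omega^a_+ \neq \emptyset$ in the relevant setting), and the contradiction proves (1).

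For assertion (2), I would use a blow-down argument. If the conclusion fails, there is a sequence $u_n$ of positive solutions of \eqref{pla} with $\|u_n\|_{C(\overline{\Omega})} \to 0$. Set $v_n = u_n / \|u_n\|_{C(\overline{\Omega})}$, so that $v_n>0$ and $\|v_n\|_{C(\overline{\Omega})} = 1$. Dividing the equation by $\|u_n\|_{C(\overline{\Omega})}$ shows that $v_n$ solves
\[
-\Delta v_n = \|u_n\|_{C(\overline{\Omega})}^{\,p-2}\, a\, v_n^{p-1} \quad \text{in } \Omega, \qquad \frac{\partial v_n}{\partial \n}=0 \quad \text{on } \partial\Omega,
\]
where, since $p>2$, the prefactor $\|u_n\|_{C(\overline{\Omega})}^{\,p-2}\to 0$ while $a\,v_n^{p-1}$ stays bounded in $L^\infty(\Omega)$. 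Integrating the equation and using the Neumann condition gives the Neumann compatibility relation $\int_\Omega a\, v_n^{p-1}=0$ for every $n$. I would then invoke $W^{2,r}$ elliptic regularity for the Neumann problem together with the compact embedding into $C^1(\overline{\Omega})$ to extract a subsequence with $v_n \to v_0$ in $C^1(\overline{\Omega})$, where $v_0$ solves $-\Delta v_0 = 0$ with homogeneous Neumann data and is therefore constant. From $v_n \geq 0$ and $\|v_n\|_{C(\overline{\Omega})} = 1$ we obtain $v_0 \equiv 1$. Passing to the limit in $\int_\Omega a\, v_n^{p-1}=0$ then gives $\int_\Omega a = \int_\Omega a\, v_0^{p-1} = 0$, contradicting $\int_\Omega a < 0$.

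The main obstacle is the compactness step in (2): I must secure strong enough convergence of the rescaled sequence $v_n$ (in $C^1$, say) both to identify the limit as the positive constant $1$ and to pass to the limit in the integral constraint. This rests on uniform elliptic estimates, and it is precisely the vanishing prefactor $\|u_n\|_{C(\overline{\Omega})}^{\,p-2}$ together with the subcritical growth $p < \frac{2N}{N-2}$ that renders the nonlinear term negligible and keeps the right-hand sides bounded in the spaces needed for regularity.
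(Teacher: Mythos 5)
Your proof is correct. Part (1) is essentially the paper's argument (test with $u^{1-p}$, integrate by parts); you are slightly more careful in that you separate the equality case $\nabla u\equiv 0$, which forces $a\equiv 0$ and must be excluded by the standing nontriviality of $a$ --- the paper simply writes the strict inequality $\int_\Omega a=(1-p)\int_\Omega|\nabla u|^2u^{-p}<0$. Part (2) follows a genuinely different route. The paper normalizes by the $H^1$ norm, sets $v_n=u_n/\Vert u_n\Vert$, uses $\int_\Omega|\nabla v_n|^2=\Vert u_n\Vert^{p-2}\int_\Omega a v_n^p\to 0$ (which needs $p<\frac{2N}{N-2}$ to bound $\int_\Omega a v_n^p$ via the Sobolev embedding), concludes from Lemma \ref{lem:wc} that $v_n\to v_0$ with $v_0$ a nonzero non-negative constant, and then derives $\int_\Omega a\,v_0^p\geq 0$ from the sign identity $\int_\Omega a v_n^p=\Vert u_n\Vert^{2-p}\int_\Omega|\nabla v_n|^2\geq 0$. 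You instead normalize by the sup norm, exploit the Neumann compatibility condition $\int_\Omega a v_n^{p-1}=0$, and get compactness from $W^{2,r}$ estimates with uniformly bounded (indeed vanishing) right-hand sides. Both are sound; a small bonus of your version is that, since the rescaled right-hand side is controlled in $L^\infty$ rather than through the Sobolev embedding, the restriction $p<\frac{2N}{N-2}$ is not actually used --- your closing remark attributing a role to subcriticality is an unnecessary (though harmless) concession.
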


\begin{proof}
\strut
\begin{enumerate}
\item If $u$ is a positive solution of \eqref{pla} then 
\begin{align*}
\int_\Omega a = \int_\Omega \frac{-\Delta u}{u^{p-1}} = \int_\Omega |\nabla u|^2 (1-p) u^{-p} < 0. 
\end{align*}\\

\item  Assume by contradiction that $(u_n)$ is a sequence of positive solutions of \eqref{pla} such that $u_n \to 0$ in $C(\overline{\Omega})$ . It follows that $u_n \to 0$ in $X$, since $u_n$ is a positive solution of \eqref{pla}. We set $v_n = \frac{u_n}{\Vert u_n \Vert}$ and assume that $v_n \rightharpoonup v_0$ in $X$, $v_n \to v_0$ in $L^p(\Omega)$, and $v_n \to v_0$ a.e. in $\Omega$, for some $v_0 \in H^1(\Omega)$. We deduce that
\begin{align*}
 \int_\Omega |\nabla v_0|^2 \leq \liminf_{n\to \infty} \int_\Omega |\nabla v_n|^2  \leq \limsup_{n\to \infty} \int_\Omega |\nabla v_n|^2 = \lim_{n\to \infty} \| u_n \|^{p-2} \int_\Omega av_n^p = 0. 
\end{align*}
It follows that $v_n \to v_0$ in $X$ and $v_0$ is a non-negative constant. Since $\Vert v_n \Vert = 1$, we deduce that $v_0 \not= 0$. However, $$\int_\Omega a v_n^p = \Vert u_n \Vert^{2-p} \int_\Omega |\nabla v_n|^p \geq 0.$$ Passing to the limit, we get $v_0^p \int_\Omega a \geq 0$. Hence $\int_\Omega a \geq 0$, which contradicts our assumption.

\end{enumerate} 
\end{proof}

\begin{cor} \label{cor:intaposi:dir}
In addition to the conditions of Theorem \ref{t:ep:global},  assume that $\int_\Omega a \geq 0$. Then $\mathcal{C}_0(\epsilon)$ bifurcates to the region $\lambda > 0$ at $(0,0)$, that is, $\mu (s) > 0$ for $s>0$ small in Theorem \ref{thm:rprob:r}(1) with $m=b_\epsilon$. 
\end{cor}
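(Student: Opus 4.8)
The plan is to combine the global loop structure of Theorem~\ref{t:ep:global} with the nonexistence result Lemma~\ref{lem:lameq0}(1), and then to read off the direction of bifurcation at $(0,0)$ from a connectedness (clopen) argument, rather than from the Taylor expansion in Theorem~\ref{thm:rprob:r}(1)(b). By Theorem~\ref{t:ep:global} we have $\mathcal{C}_0(\epsilon)=\mathcal{C}_1(\epsilon)=\mathcal{C}_*(\epsilon)$, a bounded subcontinuum of positive solutions of $(P_{\lambda,\epsilon})$ whose only intersections with the trivial line $\{(\lambda,0)\}$ are the two bifurcation points $(0,0)$ and $(\lambda_\epsilon,0)$, with $\lambda_\epsilon>0$ (Theorem~\ref{thm:rprob:r}(3)(a)). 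The goal is to show that, near $(0,0)$, the parameter $\mu(s)$ from Theorem~\ref{thm:rprob:r}(1) satisfies $\mu(s)>0$ for small $s>0$.

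First I would identify the $\lambda=0$ slice of $\mathcal{C}_*(\epsilon)$. Setting $\lambda=0$ in $(P_{\lambda,\epsilon})$ reduces the equation exactly to \eqref{pla}, whose positive solutions coincide with the positive solutions of $(P_{0,\epsilon})$. Since $\int_\Omega a\geq 0$, Lemma~\ref{lem:lameq0}(1) rules these out, and Theorem~\ref{thm:rprob:r}(3)(a) excludes any trivial point $(\lambda,0)$ with $\lambda=0$ other than $(0,0)$ itself. Hence $(0,0)$ is the \emph{unique} point of $\mathcal{C}_*(\epsilon)$ lying in $\{\lambda=0\}$.

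The core step is the clopen argument. Suppose, for contradiction, that $\mu(s)<0$ for small $s>0$. The local description in Theorem~\ref{thm:rprob:r}(1) says that in a neighborhood $U$ of $(0,0)$ the continuum is exactly the arc $\{(\mu(s),s(1+z(s))):s\in[0,s_0)\}$, so under this assumption every point of $\mathcal{C}_*(\epsilon)\cap U$ other than $(0,0)$ has $\lambda<0$; in particular $(0,0)\notin\overline{A}$, where $A:=\mathcal{C}_*(\epsilon)\cap\{\lambda>0\}$. Now $A$ is nonempty (it contains an arc near $(\lambda_\epsilon,0)$, since $\gamma(s)\to\lambda_\epsilon>0$) and relatively open in $\mathcal{C}_*(\epsilon)$. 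Any point of $\overline{A}\setminus A$ would be a limit of solutions with $\lambda>0$ and would therefore satisfy $\lambda=0$, forcing it to be $(0,0)$ by the previous step; but $(0,0)\notin\overline{A}$, so $A$ is also relatively closed. Since $A$ is a nonempty proper clopen subset (it omits $(0,0)$), this contradicts the connectedness of $\mathcal{C}_*(\epsilon)$. Thus $\mu(s)<0$ cannot hold for small $s>0$. Finally, $\mu(s)=0$ for a sequence $s\to 0^+$ would produce positive solutions of \eqref{pla} at $\lambda=0$, again contradicting Lemma~\ref{lem:lameq0}(1); hence $\mu$ is nonzero on a punctured interval and, being continuous with $\mu(0)=0$, has constant sign there. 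Combining, $\mu(s)>0$ for small $s>0$, which is the assertion.

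The step I expect to be most delicate is verifying that $(0,0)\notin\overline{A}$: this is exactly where the precise local parametrization of Theorem~\ref{thm:rprob:r}(1) is needed, since it guarantees that the \emph{only} positive solutions near $(0,0)$ lie on the single bifurcating arc, leaving no stray $\lambda>0$ solutions accumulating at the bifurcation point. A pleasant feature of this topological route, compared with differentiating $\mu$ via Theorem~\ref{thm:rprob:r}(1)(b), is that it requires neither $\int_\Omega a=0$ nor that $p$ be an integer, so it covers the full range $\int_\Omega a\geq 0$ and arbitrary $p>2$ at once.
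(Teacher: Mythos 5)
Your proof is correct and follows essentially the same route as the paper: a contradiction argument combining Lemma \ref{lem:lameq0}(1) (no positive solutions of \eqref{pla} when $\int_\Omega a\geq 0$), the exact local parametrization of Theorem \ref{thm:rprob:r}(1), and the global connectedness from Theorem \ref{t:ep:global} to force a point $(0,u)$ with $u\neq 0$ on the continuum. The only difference is presentational: you spell out the clopen/connectedness step and the constant-sign argument for $\mu$ explicitly, whereas the paper compresses these into the single sentence that $\mathcal{C}_*(\epsilon)$ ``should contain $(0,u)$ for some $u\neq 0$.''
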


\begin{proof}
We argue by contradiction:  in view of Lemma \ref{lem:lameq0}(1) we may assume $\lambda_n < 0$ and a positive solution $u_n$ of $(P_{\lambda_n, \epsilon})$ such that 
$\lambda_n \to 0^-$ and $\Vert u_n \Vert_{C(\overline{\Omega})} \to 0$. 
Theorem \ref{thm:rprob:r}(1) shows that $(\lambda_n, u_n) \in \mathcal{C}_*(\epsilon) (= \mathcal{C}_0(\epsilon))$. Hence Theorem \ref{t:ep:global} ensures that $\mathcal{C}_*(\epsilon)$ should contain $(0, u)$ for some $u\not= 0$. However, this contradicts Lemma \ref{lem:lameq0}(1). 
\end{proof}


Let us show now that under $(H_3)$ bifurcation from zero can not occur for $(P_\lambda)$ at any $\lambda\not= 0$, i.e. it never occurs that there are $\lambda_n \to \lambda^* \not= 0$ and nontrivial non-negative solutions $u_n$ of $(P_{\lambda_n})$ such that $ u_n \to 0$ in $C(\overline{\Omega})$.  This result is deduced from Proposition \ref{prop:nobifposi}:

\begin{prop} \label{prop:nobiforip}
Assume $(H_3)$. Then bifurcation from zero never occurs for $(P_\lambda)$ at any $\lambda \not= 0$.  
\end{prop}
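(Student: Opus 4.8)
The plan is to argue by contradiction and reduce everything to Proposition \ref{prop:nobifposi}, which already settles the case $\lambda^* > 0$ under the single hypothesis (from Lemma \ref{l:boundbelow}) that $\Omega^b_+$ be a subdomain. So suppose bifurcation from zero occurred at some $\lambda^* \neq 0$, i.e. there were sequences $\lambda_n \to \lambda^* \neq 0$ and nontrivial non-negative solutions $u_n$ of $(P_{\lambda_n})$ with $u_n \to 0$ in $C(\overline{\Omega})$. I would split the analysis according to the sign of $\lambda^*$.

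In the case $\lambda^* > 0$ the argument is immediate: condition $(H_3)$ guarantees in particular that $\Omega^b_+$ is a subdomain of $\Omega$, which is exactly the hypothesis of Lemma \ref{l:boundbelow}. Hence Proposition \ref{prop:nobifposi} applies verbatim and rules out such a sequence, giving a contradiction. No further work is needed here since the substantive content was carried out in the proof of that proposition (the lower bound of Lemma \ref{l1} forcing $u_n \in B_0^-$, followed by the blow-up/normalization argument on $v_n = u_n/\|u_n\|$).

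In the case $\lambda^* < 0$ I would exploit the sign symmetry already recorded in Remark \ref{rem:t2}: writing the equation in $(P_{\lambda_n})$ as
\begin{align*}
-\Delta u_n = (-\lambda_n)\bigl(-b(x)\bigr)|u_n|^{q-2}u_n + a(x)|u_n|^{p-2}u_n,
\end{align*}
we see that each $u_n$ is a nontrivial non-negative solution of the problem $(P_{-\lambda_n})$ with $b$ replaced by $\widetilde{b} := -b$, and $-\lambda_n \to -\lambda^* > 0$. Now $\Omega^{\widetilde{b}}_+ = \{x : -b(x) > 0\} = \Omega^b_-$, which under $(H_3)$ is again a subdomain of $\Omega$. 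Thus the hypothesis of Lemma \ref{l:boundbelow} holds for the modified coefficient $\widetilde{b}$, and applying Proposition \ref{prop:nobifposi} to the problem with $\widetilde{b}$ at the positive limiting parameter $-\lambda^*$ yields a contradiction with $u_n \to 0$ in $C(\overline{\Omega})$. This covers the remaining case and completes the argument.

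The two cases together exhaust $\lambda^* \neq 0$, so no bifurcation from zero occurs away from $\lambda = 0$. I do not expect any genuine obstacle here: the only point requiring care is the bookkeeping in the sign flip, namely verifying that the hypothesis ``$\Omega^b_+$ is a subdomain'' transfers to ``$\Omega^{\widetilde{b}}_+ = \Omega^b_-$ is a subdomain'', which is precisely the second half of $(H_3)$ and is why both parts of $(H_3)$ are invoked rather than just the first.
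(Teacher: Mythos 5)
Your proposal is correct and follows essentially the same route as the paper: the case $\lambda^*>0$ is delegated to Proposition \ref{prop:nobifposi}, and the case $\lambda^*<0$ is reduced to it by rewriting the equation as $-\Delta u_n = a u_n^{p-1} + (-\lambda_n)(-b)u_n^{q-1}$ and using that $\Omega^{-b}_+=\Omega^b_-$ is a subdomain by the second half of $(H_3)$. Your explicit bookkeeping of why both parts of $(H_3)$ are needed matches the paper's (more terse) argument exactly.
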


\begin{proof}
%
%
The assertion for $\lambda > 0$ has been already verified in Proposition \ref{prop:nobifposi}. Assume that $\lambda_n \to \lambda^* < 0$ and  $u_n$ are nontrivial non-negative solutions of $(P_{\lambda_n})$ with  $u_n \to 0$ in $C(\overline{\Omega})$.  Then $(\lambda_n, u_n)$ satisfies 
\begin{align*}
-\Delta u_n = a u_n^{p-1} + (-\lambda_n) (-b) u_n^{q-1} \quad\mbox{in $\Omega$}.\end{align*}
Since $-\lambda_n \to -\lambda^* > 0$ and $\Omega^{-b}_+$ is a subdomain of $\Omega$, Proposition \ref{prop:nobifposi} provides us with a contradiction. The proof of Proposition \ref{prop:nobiforip} is now complete.
\end{proof}
\medskip

We are now in position to prove Theorem \ref{thm:ep0:exist}.

\begin{proof}[Proof of Theorem \ref{thm:ep0:exist}] 
We use Whyburn's topological method \cite{Wh64}. Let us recall from \cite{Wh64} that if $E_n \subset X$ for a complete metric space $X$, then
\begin{align*}
& \liminf E_n = \{ x \in X : \lim_{n\to \infty}{\rm dist}\, (x, E_n) = 0 \}, 
\\
& \limsup E_n = \{ x \in X : \liminf_{n\to \infty}{\rm dist}\, (x, E_n) = 0 \}. 
\end{align*} 
Note that the values $\overline{\lambda}$ from Corollary \ref{cor:boundslam} and $C_\Lambda$ from Proposition \ref{prop:bounds:norm2} do not depend on the value $\epsilon > 0$ determined in Theorem \ref{t:ep:global}. Hence, we see that for such $\epsilon$ 
\begin{align*}
\mathcal{C}_*(\epsilon) \subset 
\{ (\lambda, u) \in \R \times C(\overline{\Omega}) : |\lambda| \leq \overline{\lambda}+1, \ 0\leq u < C_\Lambda + 1 
\ \mbox{on $\overline{\Omega}$} \}. 
\end{align*}
It's then clear that $\mathcal{C}_*(\epsilon)$ is non-empty and connected, and $(0,0) \in \liminf C_*(\epsilon)$. Moreover, by elliptic regularity,  
$\bigcup_{\epsilon} \mathcal{C}_*(\epsilon)$ is precompact. Indeed, for any $\{ (\lambda_n, u_n) \} \subset \bigcup_{\epsilon} \mathcal{C}_*(\epsilon)$ it follows that $(\lambda_n, u_n) \in \mathcal{C}_*(\epsilon_n)$ for some $\epsilon_n \in (0,1]$, so that $u_n \in C^2(\overline{\Omega})$, and 
\begin{align*}
\begin{cases}
-\Delta u_n = a u_n^{p-1} + \lambda_n b_{\epsilon_n} (u_n + \epsilon_n)^{q-2}u_n & \mbox{in $\Omega$}, \\
\frac{\partial u_n}{\partial \mathbf{n}} = 0 & \mbox{on $\partial \Omega$}.  
\end{cases}
\end{align*}
Since $|\lambda_n|\leq \overline{\lambda} + 1$ and $\Vert u_n \Vert_{C(\overline{\Omega})} \leq C_\Lambda + 1$, by elliptic regularity we find a constant $C > 0$ such that $\Vert u_n \Vert_{\mathcal{C}^{1+\theta}(\overline{\Omega})} \leq C$ for all $n$. By the compact embedding $\mathcal{C}^{1+\theta}(\overline{\Omega}) \subset C(\overline{\Omega})$, a subsequence of $(u_n)$ converges to some $u_0$ in $C(\overline{\Omega})$, as desired.

Now, by \cite[Theorem 9.1]{Wh64}, we deduce that $\mathcal{C} := \limsup_{\epsilon \to 0^+} \mathcal{C}_*(\epsilon)$ is non-empty, closed and connected. In addition, $\mathcal{C}$ is contained in the non-negative solutions set of $(P_\lambda)$. Indeed, let $(\lambda, u) \in \mathcal{C}$. Then we deduce that there exists $(\lambda_n, u_n) \in \mathcal{C}_*(\epsilon_n)$ such that $(\lambda_n, u_n) \to (\lambda, u)$ in $\R \times C(\overline{\Omega})$. Similarly as in the above argument,  elliptic regularity and Schauder estimates yield a constant $C>0$ such that $\Vert u_n \Vert_{\mathcal{C}^{2+\theta}(\overline{\Omega})} \leq C$ for all $n$. By a compactness argument, we deduce that a subsequence of $(u_n)$ converges to $u$ in $C^2(\overline{\Omega})$, so that $u$ is a non-negative  solution of $(P_\lambda)$, as desired. Furthermore, from Lemma \ref{lem:lamepto0cc} we infer that $\mathcal{C}$ is a loop type subcontinuum in the sense that it bifurcates at $(0,0)$ and goes back to $(0,0)$.

It remains to show that $\mathcal{C}$ is nontrivial. Since $\int_\Omega a<0$, assertion \eqref{asser:lams151011} with $m = b_\epsilon$ enables us to deduce that for any $\epsilon > 0$ small  there exists $u_\epsilon \not= 0$ such that $(0, u_\epsilon) \in \mathcal{C}_*(\epsilon)$. Then $u_\epsilon$ is a positive solution of \eqref{pla}. By a standard compactness argument, there exist some sequences $\epsilon_n$ and $u_n := u_{\epsilon_n}$ such that $\epsilon_n \searrow 0$ and $u_n$ converges to a non-negative solution $u_0$ of \eqref{pla} in 
$C(\overline{\Omega})$. By definition, we have $(0, u_0) \in \mathcal{C}$. In addition, we deduce that $u_0$ is positive on $\overline{\Omega}$ thanks to Lemma \ref{lem:lameq0}(2). This implies that $\mathcal{C}$ is nontrivial, i.e. it never shrinks to the $\lambda$ axis. Finally,  by Proposition \ref{prop:nobiforip} we infer that $\mathcal{C}\setminus \{ (0,0)\}$ does not include trivial solutions $(\lambda,0)$ with $\lambda \neq 0$, and by Lemma \ref{lem:lameq0}(2) that there exists $\delta > 0$ such that $\mathcal{C}$ never meets any positive solution $u$ of \eqref{pla} satisfying $\Vert u \Vert_{C(\overline{\Omega})} \leq \delta$. 

The proof of Theorem \ref{thm:ep0:exist} is now complete. 
\end{proof}

\subsection{Remarks and expectations} 

\strut\begin{enumerate}

\item Theorem \ref{thm:ep0:exist} remains true if we replace the condition $\int_\Omega a<0$ by the condition that there exists $\epsilon_n \searrow 0$, $u_n \not=0$ such that $\mathcal{C}_*(\epsilon_n)$ include $(0, u_n)$, respectively. \\


\item  If instead of $\int_\Omega a<0$ we assume now $\int_\Omega a \geq 0$ in Theorem \ref{thm:ep0:exist} then we expect the existence of a loop type subcontinuum of nontrivial non-negative solutions of $(P_\lambda)$ bifurcating at $(0,0)$ as in Figure \ref{fig15_1107a}. Note that, by Corollary \ref{cor:intaposi:dir}, $\mathcal{C}_*(\epsilon)$ bifurcates to the region $\lambda > 0$ at $(0,0)$. In addition, Lemma \ref{lem:lameq0}(1) tells us that $\mathcal{C}_*(\epsilon)$ never meets the vertical axis $\lambda = 0$.  The nontrivial non-negative solutions $u_{1,\lambda}, u_{2,\lambda}$ of $(P_\lambda)$ provided by Theorem \ref{t1} via a variational approach and also the non-existence result in Remark \ref{rem:t2} would strongly support this suggestion. See also Remark \ref{rasyu2} (2) for the case $\int_\Omega a=0$.  However, we couldn't  exclude the possibility that $\mathcal{C}_*(\epsilon)$ shrinks to the origin $(0,0)$ as $\epsilon \to 0^+$. \\

\item Changing $\lambda_\epsilon$ to $-\lambda_\epsilon$ we see that Theorems \ref{t:ep:global} and \ref{thm:ep0:exist} hold true as well under the condition $\Omega^b_- \not= \emptyset$ and $\int_\Omega b > 0$.

\end{enumerate}


	\begin{figure}[!htb]
{\unitlength 0.1in
\begin{picture}( 19.6000, 21.4000)( 21.3000,-24.9000)
%
\special{pn 8}%
\special{pa 2130 2320}%
\special{pa 4090 2320}%
\special{fp}%
\special{sh 1}%
\special{pa 4090 2320}%
\special{pa 4024 2300}%
\special{pa 4038 2320}%
\special{pa 4024 2340}%
\special{pa 4090 2320}%
\special{fp}%
%
\special{pn 8}%
\special{pa 2920 2490}%
\special{pa 2920 660}%
\special{fp}%
\special{sh 1}%
\special{pa 2920 660}%
\special{pa 2900 728}%
\special{pa 2920 714}%
\special{pa 2940 728}%
\special{pa 2920 660}%
\special{fp}%
\put(27.3000,-24.8000){\makebox(0,0){O}}%
\put(42.5000,-21.3000){\makebox(0,0){$\lambda$}}%
\put(29.1000,-4.3000){\makebox(0,0){$u$}}%
%
\special{pn 20}%
\special{pa 2920 2320}%
\special{pa 2952 2322}%
\special{pa 3016 2318}%
\special{pa 3046 2314}%
\special{pa 3078 2308}%
\special{pa 3106 2302}%
\special{pa 3196 2272}%
\special{pa 3226 2260}%
\special{pa 3286 2234}%
\special{pa 3348 2206}%
\special{pa 3380 2190}%
\special{pa 3444 2162}%
\special{pa 3510 2132}%
\special{pa 3544 2116}%
\special{pa 3576 2100}%
\special{pa 3606 2084}%
\special{pa 3636 2066}%
\special{pa 3688 2030}%
\special{pa 3712 2008}%
\special{pa 3732 1986}%
\special{pa 3750 1964}%
\special{pa 3766 1940}%
\special{pa 3776 1914}%
\special{pa 3784 1886}%
\special{pa 3788 1856}%
\special{pa 3788 1826}%
\special{pa 3784 1794}%
\special{pa 3778 1762}%
\special{pa 3768 1730}%
\special{pa 3754 1698}%
\special{pa 3738 1666}%
\special{pa 3720 1636}%
\special{pa 3700 1606}%
\special{pa 3678 1576}%
\special{pa 3626 1524}%
\special{pa 3570 1480}%
\special{pa 3540 1462}%
\special{pa 3480 1434}%
\special{pa 3450 1426}%
\special{pa 3420 1420}%
\special{pa 3390 1418}%
\special{pa 3360 1418}%
\special{pa 3332 1424}%
\special{pa 3306 1434}%
\special{pa 3280 1448}%
\special{pa 3256 1464}%
\special{pa 3232 1484}%
\special{pa 3188 1532}%
\special{pa 3168 1560}%
\special{pa 3148 1590}%
\special{pa 3130 1620}%
\special{pa 3096 1684}%
\special{pa 3082 1716}%
\special{pa 3054 1782}%
\special{pa 3018 1878}%
\special{pa 3008 1910}%
\special{pa 2990 1972}%
\special{pa 2974 2032}%
\special{pa 2962 2092}%
\special{pa 2954 2124}%
\special{pa 2950 2154}%
\special{pa 2938 2216}%
\special{pa 2932 2248}%
\special{pa 2928 2278}%
\special{pa 2922 2310}%
\special{pa 2920 2320}%
\special{fp}%
%
\special{pn 4}%
\special{sh 1}%
\special{ar 2920 2320 16 16 0  6.28318530717959E+0000}%
\special{sh 1}%
\special{ar 2920 2320 16 16 0  6.28318530717959E+0000}%
\end{picture}}%
	  \caption{An expected loop type subcontinuum of $(P_\lambda)$ when $\int_\Omega a \geq 0$, $\Omega^b_+ \not= \emptyset$ and $\int_\Omega b \leq 0$.} 
	\label{fig15_1107a} 
	    \end{figure}
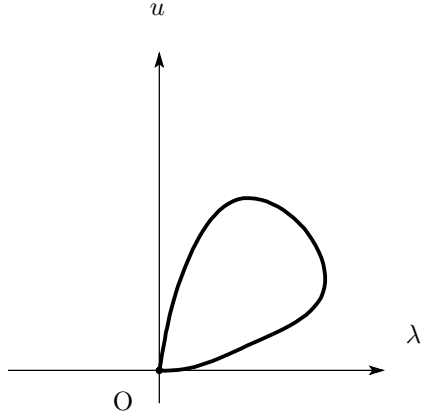 


\end{document}